\theoremstyle{definition}
\newtheorem{theorem}[subsubsection]{Theorem}
\newtheorem{corollary}[theorem]{Corollary}
\newtheorem{definition}[theorem]{Definition}
\newtheorem{example}[theorem]{Example}
\newtheorem{lemma}[theorem]{Lemma}
\newtheorem{proposition}[theorem]{Proposition}
\newtheorem{remark}[theorem]{Remark}
\declaretheoremstyle[
  spaceabove = 3pt,
  spacebelow = 3pt,
  bodyfont=\normalfont\itshape,
]{alpha}
\declaretheorem[numberwithin=section, style=alpha, title=Theorem]{alphatheorem}
\DeclareMathOperator\Aut{Aut}
\DeclareMathOperator\coker{coker}
\DeclareMathOperator\dimvect{\underline{\dim}}
\DeclareMathOperator\dual{D}
\DeclareMathOperator\Hom{Hom}
\DeclareMathOperator\End{End}
\DeclareMathOperator\Ext{Ext}
\DeclareMathOperator\Filt{Filt}
\DeclareMathOperator\Gr{Gr}
\DeclareMathOperator\gr{gr}
\DeclareMathOperator\HHom{\mathbb{H}\mathrm{om}} 
\DeclareMathOperator\lHom{\mathcal{H}\hspace{-1pt}\textit{om}} 
\DeclareMathOperator\im{im}
\DeclareMathOperator\IIsom{\mathbb{I}\mathrm{som}}
\DeclareMathOperator\Maps{Maps}
\DeclareMathOperator\Spec{Spec}
\DeclareMathOperator\relSpec{\underline{Spec}}
\DeclareMathOperator\supp{supp}
\DeclareMathOperator\Sym{Sym}
\DeclareMathOperator\codim{codim}
\newcommand\blank{\underline{\ \ }}
\newcommand{\sslash}{\mathbin{/\mkern-6mu/}}
\mathchardef\mhyphen="2D
\newcommand\can{\mathrm{can}}
\newcommand\Gm{\ensuremath{\mathbb{G}_{\mathrm m}}}
\newcommand\op{\mathrm{op}}
\newcommand\Sch{\mathrm{Sch}}
\newcommand\Sets{\mathrm{Sets}}
\newcommand\univ{\mathrm{univ}}
\newcommand\wt{\mathrm{wt}}
\newcommand\bbA{\ensuremath{\mathbb{A}}}
\newcommand\CC{\ensuremath{\mathbb{C}}}
\newcommand\FF{\ensuremath{\mathbb{F}}}
\newcommand\GG{\ensuremath{\mathbb{G}}}
\newcommand\NN{\ensuremath{\mathbb{N}}}
\newcommand\PP{\ensuremath{\mathbb{P}}}
\newcommand\QQ{\ensuremath{\mathbb{Q}}}
\newcommand\RR{\ensuremath{\mathbb{R}}}
\newcommand\ZZ{\ensuremath{\mathbb{Z}}}
\newcommand\cA{\mathcal{A}}
\newcommand\cB{\mathcal{B}}
\newcommand\cE{\mathcal{E}}
\newcommand\cF{\mathcal{F}}
\newcommand\cG{\mathcal{G}}
\newcommand\cK{\mathcal{K}}
\newcommand\cL{\mathcal{L}}
\newcommand\cM{\mathcal{M}}
\newcommand\cO{\mathcal{O}}
\newcommand\cQ{\mathcal{Q}}
\newcommand\cS{\mathcal{S}}
\newcommand\cT{\mathcal{T}}
\newcommand\cU{\mathcal{U}}
\newcommand\cV{\mathcal{V}}
\newcommand\cX{\mathcal{X}}
\DeclareDocumentCommand\Catrep{O{k}O{Q}}{\operatorname{rep}_{#1}#2}
\DeclareDocumentCommand\CatRep{O{k}O{Q}}{\operatorname{Rep}_{#1}#2}
\DeclareDocumentCommand\Catmod{O{Q}}{k#1\text{-mod}}
\DeclareDocumentCommand\CatMod{O{Q}}{k#1\text{-Mod}}
\DeclareDocumentCommand\Rep{m}{\mathrm{R}_{#1}}
\DeclareMathOperator\GL{GL}
\DeclareMathOperator\Pic{Pic}
\DeclareMathOperator\rk{rk}
\DeclareDocumentCommand\modulistack{om}{\IfNoValueTF{#1}{\mathcal{M}_{#2}}{\mathcal{M}^{#1}_{#2}}}
\DeclareDocumentCommand\modulispace{om}{\IfNoValueTF{#1}{\mathrm{M}_{#2}}{\mathrm{M}^{#1}_{#2}}}
\newcommand\semistable[1]{#1\mhyphen\mathrm{ss}}
\newcommand\stable[1]{#1\mhyphen\mathrm{s}}
\newcommand\STbar{\ensuremath{\overline{\mathrm{ST}}}}
\newcommand*\quot[2]{\left.\raisebox{.0em}{$#1$}\big/\raisebox{-.0em}{$#2$}\right.}
\title{Projectivity and effective global generation of determinantal line bundles on quiver moduli}
\author{Pieter Belmans \and Chiara Damiolini \and Hans Franzen \and Victoria Hoskins \and Svetlana Makarova \and Tuomas Tajakka}
\begin{document}

\maketitle

\begin{abstract}
  We give a moduli-theoretic treatment of the existence and properties of moduli spaces of semistable quiver representations, avoiding methods from geometric invariant theory.
  Using the existence criteria of Alper--Halpern-Leistner--Heinloth, we show that for many stability functions, the stack of semistable  representations admits an adequate moduli space,
  and prove that this moduli space is proper over the moduli space of semisimple representations.
  We construct a natural determinantal line bundle that descends to a semiample line bundle on the moduli space and provide new effective bounds for global generation.
  For an acyclic quiver, we show that this line bundle is ample, thus giving a modern proof of the fact that the moduli space is projective.
\end{abstract}

\tableofcontents

\section{Introduction}

Many important natural problems in linear algebra such as Jordan normal forms, the classification of tuples of matrices up to simultaneous conjugation, and the classification of tuples of subspaces in a fixed vector space can all be encoded as moduli problems for representations of a quiver, where properties of the classification problem are described by the geometry of the moduli space. Moreover, quiver moduli spaces closely interact with other important moduli spaces and play a fundamental role in representation theory.

The theory of quivers and their representations goes back to Gabriel,
who showed that the quivers for which the moduli problem is \emph{discrete}
are precisely the Dynkin quivers \cite{MR0332887}.
After pioneering work of Kac \cite{MR0557581}, the study of \emph{continuous} moduli problems
began with King's construction \cite{MR1315461} of moduli spaces of quiver representations
using geometric invariant theory (GIT),
in which the GIT notion of stability obtains a
reinterpretation as stability of representations with respect to a stability function.
For the trivial stability function, all quiver representations are semistable and the moduli space is just an affine GIT quotient which classifies semisimple quiver representations. For a nontrivial stability function, the moduli space of semistable representations is projective over the affine GIT quotient. The ring of invariants turns out to be generated by taking traces along oriented cycles \cite{MR958897}, so in particular when the quiver is acyclic, the semisimple moduli space is a point and moduli spaces of semistable quiver representations are projective varieties. An excellent survey of the geometry of these moduli spaces is given in \cite{MR2484736}; for further details on their GIT construction, see also \cite{MR3202702}.

In this paper, we instead take an approach that combines Alper's theory of adequate moduli spaces with determinantal line bundle techniques to provide a construction of projective moduli spaces of semistable representations of an acyclic quiver that avoids the methods of GIT.
Our method follows the blueprint set out in recent papers on the projectivity of moduli spaces of stable curves \cite{sponge-Mgbar,MR1064874}
and semistable vector bundles on curves \cite{sponge}.
Let us recall the basic steps in these approaches:
\begin{enumerate}
  \item[(1)] Interpret the moduli problem as an algebraic stack $\cM$ of finite type.
  \item[(2)] Prove that $\mathcal{M}$ admits an adequate moduli space $M$, which is a proper algebraic space.
  \item[(3)] Find a line bundle on $\mathcal{M}$ which descends to an ample line bundle on $M$.
\end{enumerate}
In the case of stable curves \cite{sponge-Mgbar},
the second step uses the Keel--Mori theorem for proper Deligne--Mumford stacks,
whereas the analogue for the moduli space of semistable vector bundles on a curve \cite{sponge} relies on
the recent existence result
on adequate moduli spaces of algebraic stacks \cite{1812.01128} due to Alper--Halpern-Leistner--Heinloth.

In the case of stable curves,
the construction of the ample line bundle
in the third step above
is due to Koll\'ar \cite{MR1064874},
who considers the determinant of the direct image of a relative pluricanonical bundle
on the universal family over the moduli stack.
The case of vector bundles on a curve \cite{sponge},
which similarly centers around proving ampleness of a certain determinantal line bundle
constructed from the universal vector bundle,
follows arguments in Esteves \cite[Section~5]{MR1695802} and Esteves--Popa \cite[Section~3]{esteves.popa:2004:veryampleness}
improving upon the original GIT-free approach of Faltings \cite{MR1211997}.

There are several good reasons for pursuing such an approach:
i) it provides an intrinsic moduli-theoretic proof,
ii) it illustrates the theory of adequate moduli spaces,
and iii) it can yield insight into how to
construct projective moduli spaces in situations where GIT cannot be applied. It should be noted however that this approach does not automatically yield projectivity, and we need to rely on the properties of the specific moduli problem.

\paragraph{Main results.}
Working over a noetherian base scheme $S$, we will moduli-theoretically define the stack $\modulistack{d,S}$ parameterizing families of representations of a quiver $Q$ of dimension vector $d$. 
To a stability function $\theta$ we associated the open substack $\modulistack[\semistable{\theta}]{d,S} \subseteq \modulistack{d,S}$ of $\theta$-semistable representations and construct a natural determinantal line bundle $\cL_\theta$ on it.

We then give moduli-theoretic proofs that these stacks are $\Theta$-reductive and S-complete
and deduce that both stacks admit adequate moduli spaces by applying the existence result of Alper--Halpern-Leistner--Heinloth \cite{1812.01128}.
A reader who is used to working in characteristic~$0$ can read the word ``adequate'' as ``good'', since the two notions only differ in positive characteristic.

The following is a summary of the main results of the paper.
\begin{alphatheorem}
  \label{theorem:main-thm}
  Let~$Q$ be an acyclic quiver
  and let~$S$ be a noetherian scheme.
  \begin{enumerate}[label=\normalfont(\roman*)]
    \item\label{enumerate:main-proper}
      The stack $\modulistack[\semistable{\theta}]{d,S}$ of $\theta$-semistable quiver representations
      over $S$
      admits an adequate moduli space $\modulispace[\semistable{\theta}]{d,S}$ which is proper over $S$.

    \item\label{enumerate:main-ample}
      The line bundle $\mathcal{L}_\theta$ on $\modulistack[\semistable{\theta}]{d,S}$
      descends to a relatively ample line bundle $L_\theta$ on $\modulispace[\semistable{\theta}]{d,S}$.
      In particular, $\modulispace[\semistable{\theta}]{d,S}$ is 
      projective
      over $S$.
  \end{enumerate}
\end{alphatheorem}
The last part of \ref{enumerate:main-ample} is of course well-known (when working over a field usually),
but the novelty here is that we illustrate how it can be obtained using the modern methods of algebraic stacks and adequate moduli spaces.

\medskip
Our methods yield partial results also when $Q$ has oriented cycles. In this case we require that the stability function $\theta$ is of the form $-\langle \blank, \beta \rangle$ for a dimension vector $\beta$ with $\langle d, \beta \rangle =0$, where $\langle \blank, \blank \rangle$ denotes the Euler pairing; this condition holds for every stability function when $Q$ is acyclic by \cref{lemma:stability-conditions-are-dim-vectors}.
Under this assumption we prove the analogues of \ref{enumerate:main-proper} and \ref{enumerate:main-ample}:
\begin{enumerate}[label=\normalfont(\roman*$^\prime$)]
  \item
    \label{enumerate:main-separated}
    {\it The stack $\modulistack[\semistable{\theta}]{d,S}$ of $\theta$-semistable quiver representations admits a separated adequate moduli space $\modulispace[\semistable{\theta}]{d,S}$, and the semisimplification map $\modulispace[\semistable{\theta}]{d,S} \to \modulispace{d,S}$ on adequate moduli spaces is proper.}
  \item
    \label{enumerate:main-semiample}
    {\it When $S = \Spec{k}$ is the spectrum of a field,
    the line bundle $\mathcal{L}_\theta$ on $\modulistack[\semistable{\theta}]{d,k}$ 
    descends to a semiample line bundle $L_\theta$ on $\modulispace[\semistable{\theta}]{d,k}$.}
\end{enumerate}

In this setting, there are two limitations to our approach. First, our methods are unable to handle the true analogue of \ref{enumerate:main-ample} in the non-acyclic case --
that $\modulispace{d,k}$ is affine and
the semisimplification map $\modulispace[\semistable{\theta}]{d,k} \to \modulispace{d,k}$ is projective; this statement usually follows from the methods of GIT. Second, we can only develop \ref{enumerate:main-semiample} over a field.

It should moreover be possible to remove the condition on~$\theta$ in the non-acyclic case.
It is currently used to produce sections of determinantal line bundles, which we use to check local reductivity of the stack of semistable representations in order to apply the existence criteria \cite[Theorem 5.4]{1812.01128} in arbitrary characteristic.
If~$S$ has characteristic~$0$, we can use good moduli spaces instead of adequate moduli spaces,
and the required local reductivity follows from \cite{MR4088350,1812.01128}. We also use determinantal sections in the proof of semiampleness.

\medskip
We prove \ref{enumerate:main-proper} and \ref{enumerate:main-separated} in
\cref{corollary:Md-ss-separated-gms} and
\cref{proposition:gms-map-proper}.
The main projectivity result \ref{enumerate:main-ample} is given in \cref{theorem:projectivity}.
We prove \ref{enumerate:main-semiample} in \cref{proposition:semiampleness}.

Our proof that the semisimplification map $\modulispace[\semistable{\theta}]{d,S} \to \modulispace{d,S}$ is proper
follows an adaptation of Langton's semistable reduction argument for semistable coherent sheaves \cite{MR0364255},
see~\cref{proposition:langton}.
It says that if a representation of~$Q$ over a discrete valuation ring~$R$ has semistable generic fiber,
then there exists a subrepresentation which agrees at the generic point
such that its special fiber is semistable.
For an acyclic quiver,
we moreover argue moduli-theoretically that
$\modulispace[\semistable{\theta}]{d,S}$ is proper over $S$ 
(\cref{corollary:adequate-proper-arbitrary-base}) and that
the adequate moduli space $\modulispace{d,S}$ of all representations
is isomorphic $S$ (\cref{proposition:acyclic-full-ams-is-S}).

\medskip

The proof of projectivity in \cref{theorem:main-thm}~\ref{enumerate:main-ample} (when~$Q$ is acyclic)
is obtained by bootstrapping from~$\Spec k$ to~$\Spec\ZZ$ and finally to an arbitrary base~$S$.
The main idea over a field~$k$ is
to show that the line bundle~$\cL_\theta$ is semiample,
and that the induced map~$\modulispace[\semistable{\theta}]{d,k} \to \PP_k^n$
is finite,
and thus the proper algebraic space $\modulispace[\semistable{\theta}]{d,k}$ is in fact a projective variety. Instead of appealing to methods from GIT,
we give a new moduli-theoretic proof of global generation of a power of $\mathcal{L}_\theta$
inspired by the approach of Esteves \cite{MR1695802}
and Esteves--Popa \cite{esteves.popa:2004:veryampleness} for moduli of vector bundles on curves
using dimension-counting techniques.

Let us outline how we produce sections of $\cL_\theta$. Since $Q$ is acyclic, we can write $\theta = -\langle \blank, \beta \rangle$ for a dimension vector $\beta$ with $\langle d, \beta \rangle =0$. For $m > 0$ and a representation $N$ of dimension vector $m \beta$,
we define a 2-term complex $\mathcal{E}^\bullet_N$ on $\modulistack{d,S}$
whose associated determinant line bundle is $\mathcal{L}_\theta^{\otimes m}$ and,
since $\langle d, \beta \rangle = 0$,
comes with a section $\sigma_N$ which is nonzero at a rep\-re\-sen\-ta\-tion~$M \in \modulistack{d,S}$
if and only if $\Hom(M,N) = \Ext(M,N) = 0$.

The sections constructed in this way are often called \emph{determinantal semi-invariants}
in the representation theory literature,
and were first studied by Schofield~\cite{MR1113382}.
A key result is that determinantal  semi-invariants
span the global sections of powers of determinantal line bundles;
GIT-based proofs are due to
Derksen--Weyman \cite{MR1758750},
Domokos--Zubkov \cite{MR1825166},
and Schofield--Van den Bergh \cite{MR1908144}.

\medskip
Interestingly,
this approach enables us to produce new effective bounds for global generation,
by keeping track of the estimates in the dimension counting.
This is analogous to the effective bounds from \cite{MR1695802,esteves.popa:2004:veryampleness} for moduli of vector bundles on curves.
Moreover, the bound is independent of the orientation of the quiver.
We show the following result combining \cref{proposition:effective-bounds} and \cref{proposition:semiampleness}.

\begin{alphatheorem}
  \label{theorem:effective}
  Let $k$ be a field and let~$Q$ be a quiver.
  Let $\lambda$ be the negative of the minimal eigenvalue of the Tits form.
  If $m$ is a positive integer greater than~$\lambda\|d\|^2$,
  then $\cL_\theta^{\otimes m}$ is globally generated on $\modulistack[\semistable{\theta}]{d,k}$.
\end{alphatheorem}

In \cref{remark:effective-bpf} we comment further on the context for this result.

\paragraph{Similarities and differences with vector bundles on curves.}

There is an important parallel between
moduli of semistable quiver representations
and moduli of semistable vector bundles on a smooth projective curve:
both parameterize objects in a category of global dimension~1 and have smooth moduli stacks.
Some aspects of this dictionary, including similarities between their constructions via GIT and symplectic reduction (over $k = \mathbb{C}$),
are described in \cite{MR3882963}.

However, we also see several instances where this dictionary breaks down.
One example, mentioned above, is the preservation of stability under natural dualities:
Serre duality preserves stability and semistability of vector bundles on curves,
but the Auslander--Reiten translations are only a partial duality, and,
although they preserve semistability,
they only preserve stability under some additional assumptions (see \cref{lemma:auslander-reiten-stable}).

As a second example, the theory of elementary Hecke modifications for vector bundles on curves
does not have an immediate analogue for quiver representations (lacking a notion of torsion sheaves),
meaning that the proof in \cite{sponge} cannot be directly translated to quiver representations.
We can nevertheless find a close enough analogue of Hecke modifications (as in the proof of \cref{theorem:separation-base-case}),
so that we can stay close to the global structure of the proof for vector bundles.

A third difference is that for a curve of genus $g\geq 2$,
the moduli space of stable vector bundles of any rank and degree is non-empty with dimension determined by the Euler pairing,
the analogue of which fails for quiver representations -- the stable locus may well be empty,
in which case the dimension of the semistable locus is difficult to control.
This results in us adopting an alternative approach to the dimension counts for quiver representations in \cref{section:vanishing-results}.

\paragraph{Structure of the paper}

In \cref{section:quiver-background} we recall quiver representations,
their stability properties, and the Auslander--Reiten translations.
In \cref{section:construction-moduli-stack} we construct the stack of quiver representations from the ground up
and prove its algebraicity moduli-theoretically, as well as explain how to produce determinantal line bundles and their sections.
\Cref{section:vanishing-results} is the technical heart of the paper, where we prove the key vanishing results required for later sections.
In \cref{section:good-moduli-spaces} we
construct adequate moduli spaces for stacks of representations
by verifying the conditions for the existence result of \cite{1812.01128}.
Here we also discuss semistable reduction.
The main results, namely the projectivity of the adequate moduli space for~$Q$ acyclic
as well as the effective bounds on global generation,
are finally established in \cref{section:projectivity}.
In \cref{section:appendix} we explain how one can obtain the same results
using Halpern--Leistner's theory of stability for stacks; this gives an alternative modern proof, albeit one that still relies on methods of GIT and only works in characteristic $0$.

\paragraph{Acknowledgements}
This project started during the ``Moduli problems beyond geometric invariant theory'' workshop at the American Institute in Mathematics, in 2021.
We would like to thank Jarod Alper, Andres Fernandez Herrero, Daniel Halpern-Leistner, Jochen Heinloth and Markus Reineke for interesting discussions. 

P.B.~was partially supported by a postdoctoral fellowship from the Research Foundation--Flanders (FWO) in the earlier stages of this project.
T.T.~was partially supported by the Knut and Alice Wallenberg foundation grant no. 2019.0493, as well as the Swedish Research Council grant no. 2016-06596 during the research program \textit{Moduli and Algebraic Cycles} at Institut Mittag-Leffler.

\section{Background on quiver representations}
\label{section:quiver-background}

We recall here some terminology from the theory of quiver representations that will be used throughout this paper. We refer to \cite{MR3727119} for more details.

\subsection{Quiver representations}
\label{sec:quiverrepr}

A \textit{quiver} $Q=(Q_0,Q_1, s,t)$ is a finite directed graph with vertex set $Q_0$, arrow set $Q_1$, and maps $s,t \colon Q_1 \to Q_0$ that assign to each arrow its source and target. A \textit{path} in a quiver is a sequence of composable arrows; that is, the target of the previous arrow is the source of the next arrow. We formally include a path of length 0 at every vertex $i \in Q_0$. A vertex $i \in Q_0$ is a \textit{source} (respectively a \textit{sink}) of $Q$ if there are no arrows whose target (respectively source) equals $i$.
We will assume that~$Q$ is connected throughout, and in \cref{section:construction-moduli-stack} we will see why this is a harmless assumption to make.

An \textit{oriented cycle} in a quiver is a path of positive length starting and ending at the same vertex; a special case is a \textit{loop}, which is an arrow whose source and target are equal. A quiver is \textit{acyclic} if it has no oriented cycles. Note that if $Q$ is acyclic, there is an \emph{admissible ordering} of $Q_0$, meaning that $i < j$ whenever there is an arrow $i \to j$; in particular, an acyclic quiver has both a source and a sink.

Given a field $k$, a $k$-\textit{representation} of $Q$ is a tuple $M=((M_i)_{i \in Q_0}, (M_a)_{a \in Q_1})$ consisting of finite-dimensional $k$-vector spaces $M_i$ for each vertex $i \in Q_0$ and $k$-linear maps $M_a \colon M_{s(a)} \to M_{t(a)}$ for each arrow $a \in Q_1$.
The \textit{dimension vector} $\dimvect(M) \in \NN^{Q_0}$ of $M$ is the tuple $(\dim(M_i))_{i \in Q_0}$.
A \textit{morphism} of $k$-representations $\phi\colon M \to N$ is a tuple of $k$-linear maps $(\phi_i\colon M_i \to N_i)_{i\in Q_0}$ such that $\phi_{t(a)} \circ M_a = N_a \circ \phi_{s(a)}$ for every arrow $a \in Q_1$.
The repre\-sen\-tations of $Q$ over $k$ form an abelian category $\Catrep$.
If $k \subset k'$ is a field extension, there is a base change functor $(\blank) \otimes_k k'\colon \mathrm{rep}_k Q \to \mathrm{rep}_{k'} Q$ that
preserves dimension vectors.

A representation $M \neq 0$ is called \textit{simple} if it has exactly two subrepresentations $0$ and $M$, and \textit{semisimple} if it is the direct sum of simple representations. Any representation has a filtration by simple representations, making $\Catrep$ a category of \textit{finite length}. For each vertex $i \in Q_0$, there is a simple representation $S(i)$ with $S(i)_i = k$ and $S(i)_j = 0$ for $j \neq i$. If $Q$ is acyclic, these are the only simple representations.

A representation $M \neq 0$ is called \textit{indecomposable} if it cannot be written as the direct sum of two nonzero subrepresentations.
By the Krull--Remak--Schmidt theorem, every representation can be written as a direct sum of indecomposable subrepresentations in an essentially unique way; we will use this fact without further mention.

The \textit{Euler pairing} or \textit{Euler form} of $Q$
is $\langle\blank,\blank \rangle\colon \ZZ^{Q_0} \times \ZZ^{Q_0} \to \ZZ$ where
\[
    \langle \alpha,\beta \rangle\colonequals \sum_{i \in Q_0} \alpha_i\beta_i - \sum_{a \in Q_1} \alpha_{s(a)}\beta_{t(a)}.
\]
For representations $M$ and $N$ of $Q$, we write $\langle M,N \rangle\colonequals \langle \dimvect(M), \dimvect(N) \rangle$.
Given an ordering $Q_0 = \{1, 2, \ldots, n\}$ of the vertices of $Q$, one has an isomorphism $\mathbb{Z}^{Q_0} \cong \mathbb{Z}^n$. Hence the Euler pairing is represented by a matrix $A \in \text{Mat}_n(\mathbb{Z})$, called the \textit{Euler matrix}, which satisfies
\[
  \langle \alpha,\beta \rangle = \alpha^\mathrm{T} A\beta \qquad \mbox{for all}\;\; \alpha,\beta \in \ZZ^{n}.
\]
If $Q$ is acyclic, then for an admissible ordering of the vertices, the Euler matrix is upper unitriangular and hence invertible over $\ZZ$.
In particular, when $Q$ is acyclic, the Euler pairing is perfect.

\subsection{Modules over the path algebra}

Let $k$ be a field. The \emph{path algebra} of $Q$ is the $k$-algebra $kQ$ with basis given by all paths in $Q$, including a path $\epsilon_i$ of length $0$ at each vertex $i$,
and multiplication given by the concatenation of paths; see for example~\cite[Section~1.5]{MR1758750}.
The category $\Catrep$ of $k$-representations of $Q$ is equivalent to
the category $\Catmod$ of finite-dimensional left modules over $kQ$.
The path algebra is finite-dimensional if and only if $Q$ is acyclic.
If~$Q$ is not acyclic we will also need to consider the category $\CatRep$
of not necessarily finite-dimensional representations,
which is equivalent to the category $\CatMod$ of all left modules over the path algebra.

For each $i \in Q_0$, there are projective and injective representations $P(i) = kQ\epsilon_i$ and $I(i) = (\epsilon_ikQ)^*$; thus
for each $j \in Q_0$
\begin{itemize}
  \item $P(i)_j$ is the $k$-vector space with basis the set of paths from $i$ to $j$,
  \item $I(i)_j$ is the $k$-vector space dual to the one whose basis is the set of paths from $j$ to $i$.
\end{itemize}

The representation $P(i)$ is finite-dimensional if and only if there is no path from $i$ to any vertex in an oriented cycle. Similarly, $I(i)$ is finite-dimensional if and only is there is no path from any vertex in an oriented cycle to $i$.

We will focus on the case of projective modules, as injective modules are the dual notion, see for example~\cite[Section~2.2]{MR3727119} and \cref{subsection:auslander-reiten-translation} below.
The projective representations $P(i)$ are indecomposable, and in the case when $Q$ is acyclic, these are the only indecomposable projective representations of $Q$ up to isomorphism.
For any representation $M$ of $Q$, we have a canonical isomorphism
\begin{equation}
  \label{eq:hom-out-of-projective}
  \Hom(P(i),M) \cong M_i.
\end{equation}

The category $\Catrep$ is \textit{hereditary},
meaning that any subrepresentation of a projective representation is again projective
and dually any quotient of an injective representation is injective.
In particular, $\Catrep$ has homological dimension at most one,
so we will write $\Ext(M,N)$ for $\Ext^1(M,N)$.
In fact, any representation $M$ has a \textit{canonical projective resolution}
\begin{equation}
  \label{eq:canonical-projective-resolution}
  0 \to \bigoplus_{a \in Q_1} M_{s(a)} \otimes P(t(a)) \to \bigoplus_{i \in Q_0} M_i \otimes P(i) \to M \to 0
\end{equation}
and an analogous canonical injective resolution,
working in $\CatRep$ in case $Q$ is not acyclic.
Applying the functor $\Hom(\blank, N)$ to \eqref{eq:canonical-projective-resolution}
and using \eqref{eq:hom-out-of-projective} gives the useful exact sequence
\begin{equation}
  \label{eq:exact-seq-hom-ext}
  0 \to \Hom(M,N) \to \bigoplus_{i \in Q_0} \Hom_k(M_i,N_i) \to \bigoplus_{a \in Q_1} \Hom_k(M_{s(a)},N_{t(a)}) \to \Ext(M,N) \to 0,
\end{equation}
where the middle morphism is given by $(f_i)_{i \in Q_0} \mapsto (f_{t(a)} \circ M_a - N_a \circ f_{s(a)})_{a \in Q_1}$.
From this we deduce that the Euler pairing computes the Euler characteristic:
\[
  \langle M,  N \rangle = \dim \Hom(M,N) - \dim \Ext(M,N).
\]
In particular, $\langle P(i),M \rangle = \dim(M_i)$
since $\Ext(P(i),M) = 0$ as $P(i)$ is projective.

\subsection{Stability of representations}
\label{subsection: background on stability of reps}

Following \cite{MR1315461,reineke:2003:harder} we introduce a standard notion of stability for a representation of $Q$.
A \emph{stability function}
on $Q$ is a linear map $\theta \colon \ZZ^{Q_0} \to \ZZ$.
By convention we will write $\theta(M)$ instead of $\theta(\dimvect M)$ for a representation $M$.

Given a stability function $\theta$, a $k$-representation $M$ of $Q$ is called
  \begin{itemize}
    \item \emph{$\theta$-semistable} if $\theta(M) = 0$ and $\theta({M'}) \leq 0$ for every subrepresentation $M' \subseteq M$;
    \item \emph{$\theta$-stable} if it is $\theta$-semistable and it has exactly two subrepresentations $M' \subseteq M$ with $\theta(M') = 0$, namely $M' = 0$ and $M' = M$ with $M \neq 0$;
    \item \emph{geometrically $\theta$-stable} if $M\otimes_k k'$ is stable for every field extension $k'/k$;
    \item \emph{$\theta$-polystable} if it is a direct sum of $\theta$-stable representations;
    \item \emph{geometrically $\theta$-polystable} if $M\otimes_k k'$ is polystable for every field extension $k'/k$
  \end{itemize}

\begin{remark}\label{remark:stability-under-field-extension}
  There is no need for a notion of geometric $\theta$-semistability
  since a representation $M$ is $\theta$-semistable if and only if $M \otimes_k k'$ is for some field extension $k\subset k'$ \cite[Proposition~2.4]{MR4117060}. A representation $M$ is geometrically $\theta$-stable if either one of $M \otimes_k k^s$ or $M \otimes_k \bar{k}$ is $\theta$-stable, where $k^s$ and $\bar{k}$ denote a separable and an algebraic closure of $k$ respectively, see \cite[Corollary~2.12]{MR4117060}. In particular, over an algebraically closed field, $\theta$-stability and geometric $\theta$-stability coincide. Over a perfect field, $\theta$-polystability and geometric $\theta$-polystability coincide, although a field extension may give rise to more stable factors \cite[Lemma 4.2]{MR2250021}.
\end{remark}

\begin{example}\label{example:Jordan-quiver-polystability-not-preserved-field-ext}
This example shows that polystability may not be preserved under field extension when the base field is not perfect.
Consider the Jordan quiver~$Q$:
\begin{equation*}
  \begin{tikzcd}[every label/.append style={font=\small}]
    \bullet \arrow[out=45, in=135, loop]
  \end{tikzcd}
\end{equation*}
Isomorphism classes of representations of $Q$ are given by square matrices up to conjugation. We consider the trivial stability function $\theta = 0$ and give an example of a 2-dimensional representation $M$ of $Q$ which is stable over $\FF_2(t)$ but not polystable over $\FF_2(\sqrt t)$. The representation $M$ is given over $\FF_2(t)$ by the matrix
\[
\begin{bmatrix}
1 & t \\ 1 & 1
\end{bmatrix}
\]
whose only eigenvalue is $1+\sqrt t$, so $M$ is simple and in particular stable over $\FF_2(t)$.
Over $\FF_2(\sqrt t)$, this matrix is similar to
\[
\begin{bmatrix}
1 & 1 + \sqrt t \\ 1 & \sqrt t
\end{bmatrix}
\begin{bmatrix}
1 & t \\ 1 & 1
\end{bmatrix}
\begin{bmatrix}
\sqrt t & 1+\sqrt t \\ 1 & 1
\end{bmatrix}
=
\begin{bmatrix}
1+\sqrt t & 1 \\ 0 & 1+\sqrt t
\end{bmatrix}
\]
and since it is a $2\times 2$ Jordan block, it gives a semistable representation that no longer splits into a direct sum of simple
representations.
\end{example}

Every $\theta$-semistable representation $M$ has a \textit{Jordan-H\"older filtration}
\[
  0 = M^0 \subsetneq M^1 \subsetneq M^2 \subsetneq \cdots \subsetneq M^{r-1} \subsetneq M^r = M
\]
with the property that the quotients $M^\ell/M^{\ell-1}$ are $\theta$-stable for $\ell=1,\ldots,r$.
The filtration is not unique, but the isomorphism type of the \emph{associated graded}
representation
\[
  \gr{M} \coloneqq \bigoplus_{\ell=1}^r M^\ell/M^{\ell-1}
\]
is independent of the filtration.

To define a second type of filtration,
we introduce a slope function $\mu_\theta$
which associates to a dimension vector $d \in \mathbb{N}^{Q_0}\setminus\{0\}$ the rational number
\[
  \mu_\theta(d) \coloneqq \frac{\theta(d)}{\sum_{i\in Q_0} d_i}.
\]
We say that a representation $M$ is $\mu_\theta$-semistable
if $\mu_\theta(M') \leq \mu_\theta(M)$ for every subrepresentation $M' \subseteq M$.
Observe that when $\theta(M)=0$,
stability with respect to $\mu_\theta$ and $\theta$ coincide.
Moreover, an arbitrary representation $M$ has a \textit{Harder-Narasimhan filtration}
\[
  0 = M^0 \subsetneq M^1 \subsetneq M^2 \subsetneq \cdots \subsetneq M^{r-1} \subsetneq M^r = M
\]
such that $M^i/M^{i-1}$ is $\mu_\theta$-semistable for every $i=1,\ldots,r$ and
\[
  \mu_\theta(M^1/M^0) > \mu_\theta(M^2/M^1) > \ldots > \mu_\theta(M^r/M^{r-1}).
\]
See for example \cite[Section~4]{MR2484736}.
This filtration is unique and the representation $M^1$ is called the \textit{maximally destabilizing} subrepresentation. More generally, one can define different slope functions by giving positive weights to the dimensions in the denominator; these give the same notion of semistability for dimension vectors $d$ for which $\theta(d) = 0$ but possibly different Harder--Narasimhan filtrations \cite[Section~5.2]{MR3261979}.

We record some elementary properties of semistable representations.

\begin{proposition}
  \label{proposition:stability-properties}
  Let $M$ and $N$ be $\mu_\theta$-semistable $k$-representations of the same slope.
  \begin{enumerate}[label=(\roman*)]
    \item If $f\colon M \to N$ is any morphism, then $\ker(f)$, $\im(f)$ and $\coker(f)$ are $\mu_\theta$-semistable.
    \item If $M$ and $N$ are $\theta$-stable, then any nonzero morphism $M \to N$ is an isomorphism. In particular, if $M$ is geometrically $\theta$-stable, then the canonical map $k \to \End(M)$ is an isomorphism.
  \end{enumerate}
\end{proposition}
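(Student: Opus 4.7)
The plan for (i) is to exploit additivity of $\theta$ and of the total dimension $\sum_{i \in Q_0} d_i$ along short exact sequences, combined with the slope bounds coming from $\mu_\theta$-semistability. Writing $\mu \coloneqq \mu_\theta(M) = \mu_\theta(N)$ and setting $K \coloneqq \ker(f)$, $I \coloneqq \im(f)$, $C \coloneqq \coker(f)$, the short exact sequence $0 \to K \to M \to I \to 0$ expresses $\mu$ as a weighted average of $\mu_\theta(K)$ and $\mu_\theta(I)$; since $K \subseteq M$ and $I \subseteq N$, semistability gives $\mu_\theta(K), \mu_\theta(I) \leq \mu$, so both must equal $\mu$. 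Applying the same weighted-average identity to $0 \to I \to N \to C \to 0$ then yields $\mu_\theta(C) = \mu$ whenever $C \neq 0$.

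To promote these slope equalities to semistability, every subrepresentation of $K$ (respectively $I$) is already a subrepresentation of $M$ (respectively $N$) and so has slope at most $\mu$. A subrepresentation of $C = N/I$ lifts to some $I \subseteq N' \subseteq N$, and the weighted-average identity applied to $0 \to I \to N' \to N'/I \to 0$, with $\mu_\theta(I) = \mu$ and $\mu_\theta(N') \leq \mu$, produces the required bound $\mu_\theta(N'/I) \leq \mu$.

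For the first assertion of (ii) I would argue directly from $\theta$-stability rather than via (i): if $f \colon M \to N$ is nonzero, then $\theta(\ker f) + \theta(\im f) = \theta(M) = 0$, and the semistability inequalities $\theta(\ker f), \theta(\im f) \leq 0$ force both summands to vanish. The strictness clause in the definition of $\theta$-stability then restricts $\ker f$ to $\{0, M\}$ and $\im f$ to $\{0, N\}$, and $f \neq 0$ selects $\ker f = 0$ and $\im f = N$, so $f$ is an isomorphism.

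Specialising $N = M$ shows that $\End(M)$ is a finite-dimensional division algebra over $k$. The main step here, and the only place where plain stability is genuinely insufficient, is to rule out that this division algebra is a nontrivial extension of $k$: applying the exact sequence \eqref{eq:exact-seq-hom-ext} vertex-by-vertex together with flat base change gives $\End_k(M) \otimes_k \bar k \cong \End_{\bar k}(M \otimes_k \bar k)$, and geometric stability ensures that $M \otimes_k \bar k$ is itself stable, so the right-hand side is a finite-dimensional division algebra over the algebraically closed field $\bar k$, hence $\bar k$ itself. A dimension count then yields $\End(M) = k$. The only mild subtlety in the whole argument is the bookkeeping for the cases where one of $K$, $I$, $C$ is zero, which is easily handled.
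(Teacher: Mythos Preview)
Your argument is correct and is the standard proof of these facts. The paper itself does not supply a proof of this proposition; it is stated as a record of elementary properties of semistable representations and left without justification. Your write-up fills this gap in the expected way, and the only points worth flagging are minor: in part (i) you should note explicitly that the weighted-average identity requires the relevant object to be nonzero (otherwise the slope is undefined), which you acknowledge at the end; and in part (ii) the step ``$f \neq 0$ selects $\ker f = 0$'' is really ``$\ker f = M$ would force $f = 0$'', which is what you mean. The base-change computation for $\End$ via \eqref{eq:exact-seq-hom-ext} is the right way to handle the geometric-stability clause.
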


Given $\alpha, \beta \in \ZZ^{Q_0}$ we define stability functions $\theta_\alpha,\eta_\beta\colon\ZZ^{Q_0} \to \ZZ$ by
\begin{equation}\label{eq:theta-alpha-and-eta-beta}
    \theta_\alpha(d) \colonequals \langle \alpha, d \rangle  \qquad \text{ and } \qquad \eta_\beta(d) \colonequals - \langle d, \beta  \rangle.
\end{equation}
When $Q$ is an acyclic quiver, the matrix of the Euler form is invertible and therefore defines an isomorphism $\ZZ^{Q_0} \cong (\ZZ^{Q_0})^\vee$. This implies the following.

\begin{lemma}\label{lemma:every-theta-is-theta-alpha}
  Suppose $Q$ is an acyclic quiver and let $\theta\colon \ZZ^{Q_0} \to \ZZ$ be a stability function.
  \begin{enumerate}[label=(\alph*)]
    \item \label{enumerate:every-theta-is-theta-alpha}
      There is a unique $\alpha \in \ZZ^{Q_0}$ such that $\theta = \theta_\alpha = \langle \alpha,\blank \rangle$, given by $\alpha_i = \theta(I(i))$.
    \item \label{enumerate:every-theta-is-theta-beta}
      There is a unique $\beta \in \ZZ^{Q_0}$ such that $\theta = \eta_\beta = - \langle \blank,\beta \rangle$, given by $\beta_i = -\theta(P(i))$.
  \end{enumerate}
\end{lemma}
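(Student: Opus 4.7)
The plan is to invoke the perfectness of the Euler pairing for acyclic quivers, which is precisely the observation made just before the statement: once we fix an admissible ordering of $Q_0$, the Euler matrix $A$ is upper unitriangular and therefore invertible over $\ZZ$. Both maps $\ZZ^{Q_0} \to \Hom_\ZZ(\ZZ^{Q_0},\ZZ)$ sending $\alpha \mapsto \langle \alpha,\blank \rangle$ and $\beta \mapsto -\langle \blank,\beta \rangle$ are therefore $\ZZ$-linear isomorphisms. Since a stability function is by definition an element of $\Hom_\ZZ(\ZZ^{Q_0},\ZZ)$, this at once gives the existence and uniqueness of $\alpha$ and $\beta$ in parts \ref{enumerate:every-theta-is-theta-alpha} and \ref{enumerate:every-theta-is-theta-beta} respectively.

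It remains to verify the two explicit formulas $\alpha_i = \theta(I(i))$ and $\beta_i = -\theta(P(i))$. For this I would use the dual identities for projectives and injectives. Since $P(i)$ is projective we have $\Ext(P(i),N)=0$, and combined with \eqref{eq:hom-out-of-projective} this gives
\[
  \langle P(i), N \rangle = \dim \Hom(P(i),N) = \dim N_i
\]
for every representation $N$. Dually, $I(i)$ is injective and $\Hom(M,I(i)) \cong (M_i)^*$, so
\[
  \langle M, I(i) \rangle = \dim \Hom(M,I(i)) = \dim M_i.
\]
These identities say that $\dimvect P(i)$ and $\dimvect I(i)$ are the dual bases to the standard basis of $\ZZ^{Q_0}$ with respect to the Euler pairing, from the left and right respectively.

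Applying these to the formulas in the lemma is now immediate. For \ref{enumerate:every-theta-is-theta-alpha}, evaluating $\theta_\alpha$ on $\dimvect I(j)$ using the second identity (with $M$ of dimension vector $\alpha$) gives $\theta(I(j)) = \langle \alpha, \dimvect I(j) \rangle = \alpha_j$. For \ref{enumerate:every-theta-is-theta-beta}, evaluating $\eta_\beta$ on $\dimvect P(i)$ using the first identity (with $N$ of dimension vector $\beta$) gives $\theta(P(i)) = -\langle \dimvect P(i), \beta \rangle = -\beta_i$.

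There is no real obstacle here: the content of the lemma is exactly the nondegeneracy of the Euler form in the acyclic case, with the formulas being a bookkeeping consequence of the fact that projectives and injectives form dual bases for this pairing. The only thing to be slightly careful about is that $\{P(i)\}$ and $\{I(i)\}$ are indeed all finite-dimensional when $Q$ is acyclic, which is noted just after the definition of $P(i)$ and $I(i)$, so evaluating $\theta$ on their dimension vectors is legitimate.
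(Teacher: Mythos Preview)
Your proof is correct and follows the same approach as the paper: the paper simply observes, in the sentence immediately preceding the lemma, that the Euler matrix is invertible and hence defines an isomorphism $\ZZ^{Q_0} \cong (\ZZ^{Q_0})^\vee$, and then states the lemma without further argument. Your write-up gives more detail than the paper does, in particular verifying the explicit formulas for $\alpha_i$ and $\beta_i$ via the dual-basis identities for projectives and injectives, which the paper leaves implicit.
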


The next lemma shows how stability functions for acyclic quivers
are in fact given by dimension vectors.
\begin{lemma}
  \label{lemma:stability-conditions-are-dim-vectors}
  Suppose $Q$ is an acyclic quiver and let $\theta\colon\ZZ^{Q_0} \to \ZZ$ be a stability function
  for which there exists a semistable representation $M$ such that $\operatorname{supp} M = Q_0$.
  \begin{enumerate}[label=(\alph*)]
    \item\label{enumerate:stability-conditions-are-dim-vectors-alpha}
      Let $\alpha\in\ZZ^{Q_0}$ be the unique vector such that~$\theta=\theta_\alpha$.
      Then~$\alpha\in\NN^{Q_0}$.
    \item\label{enumerate:stability-conditions-are-dim-vectors-beta}
      Let $\beta\in\ZZ^{Q_0}$ be the unique vector such that~$\theta=\eta_\beta$.
      Then~$\beta\in\NN^{Q_0}$.
  \end{enumerate}
\end{lemma}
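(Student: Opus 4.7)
Fix an admissible ordering $Q_0 = \{1 < 2 < \cdots < n\}$. My plan is to prove part (b) by descending induction on $i$ in this ordering, and to obtain part (a) by a dual argument using injective representations.

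For the inductive step of (b) at vertex $i$, assume $\theta(P(j)) \leq 0$ for all $j > i$ (this hypothesis is automatic for $i = n$, giving the base case). The idea is to study the canonical evaluation
\[
    \mathrm{ev}_i \colon P(i) \otimes_k M_i \longrightarrow M
\]
corresponding under \eqref{eq:hom-out-of-projective} to the identity on $M_i = \Hom(P(i), M)$, together with the associated short exact sequence $0 \to K_i \to P(i) \otimes_k M_i \to N_i \to 0$, where $N_i = \im(\mathrm{ev}_i)$ and $K_i = \ker(\mathrm{ev}_i)$. The additivity of $\theta$ combined with $\theta(P(i) \otimes_k M_i) = (\dim M_i)\,\theta(P(i))$ would then give $\theta(P(i)) \leq 0$, provided $\theta(N_i) \leq 0$, $\theta(K_i) \leq 0$, and $\dim M_i > 0$. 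The last point is where the full support hypothesis enters (at every stage of the induction), while the first is immediate since $N_i \subseteq M$ is a subrepresentation of the $\theta$-semistable $M$.

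The main technical step will be to control $\theta(K_i)$. Since $\Catrep$ is hereditary, the subrepresentation $K_i$ of the projective $P(i) \otimes_k M_i$ is itself projective, hence a direct sum of indecomposable projectives $P(j)$. By construction $\mathrm{ev}_i$ restricts to the identity on $M_i$ at vertex $i$, so $(K_i)_i = 0$; combined with the support constraint $\supp P(i) \subseteq \{j \geq i\}$ provided by the admissible ordering, this would force
\[
    K_i \cong \bigoplus_{j > i} P(j)^{\oplus k_j}
\]
for some nonnegative integers $k_j$, after which the inductive hypothesis yields $\theta(K_i) \leq 0$.

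For part (a), I would run the dual induction in ascending order on $i$, replacing the evaluation by the canonical coevaluation $M \to I(i) \otimes_k M_i$ that corresponds to the identity of $M_i$ under $\Hom(M, I(i)) \cong M_i^*$. Its image $Q_i$ is a quotient of $M$, so $\theta(Q_i) \geq 0$ by semistability, and its cokernel $C_i$ is a quotient of the injective $I(i) \otimes_k M_i$, which is again injective by hereditariness. The dual calculation gives $(C_i)_i = 0$, which together with $\supp I(i) \subseteq \{j \leq i\}$ forces $C_i \cong \bigoplus_{j < i} I(j)^{\oplus c_j}$, and the induction concludes as in part (b). The principal technical hurdle in both parts is the support analysis that identifies $K_i$ (respectively $C_i$) as a sum of $P(j)$'s (respectively $I(j)$'s) indexed by vertices strictly later (respectively strictly earlier) than $i$ in the admissible ordering.
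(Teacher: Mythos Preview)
Your proposal is correct and follows essentially the same approach as the paper: descending induction on an admissible ordering, using hereditariness to identify the kernel as a direct sum of $P(j)$'s with $j > i$, then combining the inductive hypothesis with $\theta$-semistability of $M$. The only cosmetic difference is that you use the full evaluation map $P(i)\otimes_k M_i \to M$ and divide by $\dim M_i$ at the end, whereas the paper picks a single nonzero $x \in M_i$ and works with the corresponding map $P(i) \to M$; both versions rely on the same key observation that the kernel vanishes at vertex $i$.
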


\begin{proof}
  We show that the entries of $\beta$ are non-negative.
  The proof for $\alpha$ is analogous.
  As $\beta_i = -\theta(P(i))$,
  we need to show that $\theta(P(i)) \leq 0$ for every $i$.
  We choose an admissible ordering of the vertices of $Q$
  and write $Q_0 = \{1, \dots, n\}$.
  We show the claim by descending induction on $i \in \{1, \dots, n\}$.
  For $i=n$, the vertex $i$ is a sink and therefore $P(i)$ is simple.
  Choose a nonzero vector $x \in M_i$ and let $f \in \Hom(P(i),M) \cong M_i$
  be the corresponding morphism.
  As $f \neq 0$ and $P(i)$ is simple, $f$ is injective.
  Since $M$ is $\theta$-semistable, we see that $\theta(P(i)) \leq 0$.

  Now let $i < n$.
  Again, choose any nonzero element $x \in M_i$ and consider the corresponding morphism $f\colon P(i) \to M$.
  The kernel $K$ of $f$ is again projective and a proper subrepresentation of $P(i)$, and thus $K \cong \bigoplus_{j > i} P(j)^{\oplus m_j}$ for some multiplicities $m_j \geq 0$.
  By the inductive assumption, we know that $\theta(P(j)) \leq 0$ for all $j > i$, which implies that $\theta(K) \leq 0$.
  Now $f\colon P(i) \to M$ gives rise to an injective homomorphism
  \[
    P(i)/K \hookrightarrow M
  \]
  which by $\theta$-semistability of $M$ implies that $0 \geq \theta(P(i)/K) = \theta(P(i)) - \theta(K)$, and so
  $\theta(P(i)) \leq \theta(K) \leq 0$.
\end{proof}

Without the acyclicity assumption \cref{lemma:stability-conditions-are-dim-vectors} can fail.
\begin{example}
  Consider the quiver
  \begin{equation*}
    Q\colon
    \begin{tikzcd}
      1 \arrow[r, bend left] \arrow[r, bend left=45] & 2 \arrow[l,bend left] \arrow[l, bend left=45]
    \end{tikzcd}.
  \end{equation*}
  The stability function $\theta$ given by the inner product with $(-3,3)$ equals $\theta_\alpha$ for $\alpha=(-1,1)$, and $\eta_\beta$ for $\beta=(1,-1)$.
  Note that since the Euler
  matrix~$\begin{psmallmatrix} 1 & -2 \\ -2 & 1 \end{psmallmatrix}$
  is invertible 
  over the rationals, both $\alpha$ and $\beta$ are uniquely determined, but neither of them is a dimension vector.
  Consider the dimension vector~$d=(1,1)$, so that~$\theta(d)=0$.
  There exists a~$\theta$-semistable representation with dimension vector~$d$:
  it suffices that one of the maps from vertex $2$ to vertex $1$ is non-zero.
  In this case there is only one non-trivial subrepresentation,
  which necessarily is of dimension $(1,0)$ and hence does not destabilize the representation.
\end{example}

\subsection{The Auslander--Reiten translations}
\label{subsection:auslander-reiten-translation}

The final standard construction for quivers and their representations that we need to recall is that of the Auslander--Reiten translations.
These are endofunctors of the category of quiver representations,
and their interaction with $\theta$-stability will be discussed in \cref{subsection:ar-translations-vs-semistability}.

The \emph{opposite quiver} of a quiver $Q = (Q_0, Q_1, s, t)$ is the quiver $Q^{\op}\colonequals(Q_0,Q_1,t,s)$,
where all arrows have been reversed.
The path algebra of $Q^{\op}$
is canonically isomorphic to the opposite algebra of $kQ$.
Taking the dual $k$-vector space gives a contravariant functor $\dual$
from the category $\Catmod$ to $\Catmod[Q^{\text{op}}]$,
or equivalently between the category of representations of $Q$ and $Q^{\op}$.
The duality functor $\dual$ is an antiequivalence of categories
and exchanges injective and projective modules.

Let $Q$ be an acyclic quiver.
Consider a representation $M$ as a left module over the path algebra $kQ$ and $kQ$ as a bimodule over itself.
The \emph{Auslander--Reiten translates} of~$M$ are the left $kQ$-modules
\begin{align*}
  \tau M &:= \dual\Ext(M,kQ) \\
  \tau^-M &:= \Ext(\dual M,kQ)
\end{align*}
and these constructions provide two endofunctors of the category $\Catmod \cong \Catrep$,
called the \emph{Auslander--Reiten translations}.
It follows from the construction that
\[
  \tau P = \tau^- I = 0
\]
whenever $P$ is projective and $I$ is injective.
The following proposition records the key properties we will need later;
for the proof, see \cite[Section~6.4]{MR1758750}.

\begin{proposition}\label{proposition:AR-translation}
  Let $M$ and $N$ be representations of $Q$.
  The Auslander--Reiten translations $\tau$ and $\tau^-$ satisfy the following properties:
  \begin{enumerate}[label=(\roman*)]
    \item\label{item:AR-inverse} \textbf{Partial inverse property:} We have $\tau^- \tau M \cong M$ and $\tau \tau^- N \cong N$,
      provided that $M$ has no projective summands and $N$ has no injective summands.
    \item\label{item:AR-duality} \textbf{Auslander--Reiten duality:} We have isomorphisms of $k$-vector space valued functors
    \[
      \Hom(\blank, \tau M) \cong \Ext(M,\blank)^\vee \quad \text{and}  \quad \Hom(\tau^- N, \blank) \cong \Ext(\blank,N)^\vee.
    \]
    In particular $\tau^-$ is the left adjoint to $\tau$.
    If $M$ has no projective summands and $N$ has no injective summands, we have additional isomorphisms
    \[
      \Ext(\blank, \tau M) \cong \Hom(M, \blank)^\vee \quad \text{and}  \quad \Ext(\tau^- N, \blank) \cong \Hom(\blank, N)^\vee,
    \]
      and in particular
      \[ \langle \blank, \tau M \rangle = - \langle M, \blank \rangle \quad \text{and}  \quad  \langle \tau^- N, \blank \rangle = - \langle \blank, N \rangle. \]
  \end{enumerate}
\end{proposition}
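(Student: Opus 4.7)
The plan is to reduce everything to the Nakayama functor $\nu \colonequals \dual \Hom_{kQ}(\blank, kQ)$ together with its partial inverse $\nu^- \colonequals \Hom_{kQ}(\dual\blank, kQ)$, and to unpack the definition of $\tau$ using the canonical projective resolution \eqref{eq:canonical-projective-resolution}.

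The preliminary observation is that, since $\Hom_{kQ}(kQ\epsilon_i, kQ) \cong \epsilon_i kQ$, the Nakayama functor takes $P(i)$ to $I(i)$ and similarly $\nu^-$ takes $I(i)$ to $P(i)$. By additivity, this yields the \emph{Nakayama formula}: for any projective $P$, injective $I$, and representation $N$ there are natural isomorphisms
\[
  \dual \Hom(P, N) \cong \Hom(N, \nu P) \quad\text{and}\quad \Hom(\nu^- I, N) \cong \dual \Hom(N, I).
\]
To establish the first AR duality isomorphism in \ref{item:AR-duality}, I would apply $\Hom(\blank, kQ)$ and then $\dual$ to the canonical projective resolution $0 \to P_1 \to P_0 \to M \to 0$, obtaining a four-term exact sequence whose first three terms are $0 \to \tau M \to \nu P_1 \to \nu P_0$. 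Applying $\Hom(N, \blank)$ to the kernel description of $\tau M$ and invoking the Nakayama formula term-by-term identifies this with the $\dual$-dual of the long exact sequence for $\Ext(M, N)$, yielding $\Hom(N, \tau M) \cong \dual \Ext(M, N)$. The second isomorphism, for $\tau^- N$, is then proved dually by starting from the canonical injective coresolution of $N$ and applying $\nu^-$. The adjunction $\tau^- \dashv \tau$ is immediate from these two formulas by composing with $\dual$.

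For the partial inverse property \ref{item:AR-inverse}, the key point is that the hypothesis that $M$ has no projective summands ensures no cancellation occurs when the construction is reversed: the sequence $0 \to \tau M \to \nu P_1 \to \nu P_0$ constructed above becomes the beginning of an injective coresolution of $\tau M$, and applying $\nu^-$ termwise recovers the original projective resolution of $M$, giving $\tau^- \tau M \cong M$. The identity $\tau \tau^- N \cong N$ follows dually. The additional isomorphisms in \ref{item:AR-duality} then follow formally: substituting $\tau^- \tau M \cong M$ into the second AR duality formula gives $\Hom(\tau^- \tau M, \blank) \cong \dual \Ext(\blank, \tau M)$, which rearranges to $\Ext(\blank, \tau M) \cong \dual \Hom(M, \blank)$. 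The Euler-form identities then come from comparing $k$-dimensions in the two AR duality formulas, using that $\langle X, Y \rangle = \dim \Hom(X, Y) - \dim \Ext(X, Y)$.

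The main obstacle is purely bookkeeping: tracking naturality of the Nakayama formula and verifying that the ``no projective/injective summands'' hypothesis is invoked precisely where it is needed (only for the partial inverse and the second batch of duality formulas), and not for the basic AR duality isomorphisms. Otherwise the proof is essentially a mechanical translation of the definitions through the canonical resolutions; full details appear in \cite[Section~6.4]{MR1758750}.
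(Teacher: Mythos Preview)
The paper does not give its own proof of this proposition; it simply states the properties and directs the reader to \cite[Section~6.4]{MR1758750} for details. Your sketch via the Nakayama functor and the canonical projective resolution is the standard argument and is essentially what one finds in that reference, which you also cite at the end.
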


\section{Moduli stacks of representations and determinantal line bundles}
\label{section:construction-moduli-stack}

In this section, we introduce moduli stacks parameterizing representations of a quiver with fixed dimension vector and study their first properties. Throughout $d$ will denote a dimension vector for the fixed quiver $Q$.

\subsection{The moduli stack of all representations}
Let $S$ be a fixed base scheme. For
an $S$-scheme
$T$, a \emph{family $\cF$ of quiver representations} of dimension vector $d$
consists of locally free sheaves $\cF_i$ of rank $d_i$ over
$T$ for each $i \in Q_0$
and homomorphisms $\cF_a \colon  \cF_{s(a)} \to \cF_{t(a)}$ of
$\cO_T$-modules for each $a \in Q_1$. If $f\colon T' \to T$ is a morphism of $S$-schemes,
we obtain a family of representations $\cF_{T'}:=f^* \cF$ on $T'$  by pullback along $f$.
If $x \in T$ is a point, then pulling back along the inclusion of
the residue field $\Spec \kappa(x) \hookrightarrow T$ gives a $\kappa(x)$-representation
which we denote by $\cF|_x$.

\begin{definition}
\label{definition: moduli stack M_d}
  The moduli stack
  $\modulistack{d,S}$ of representations of $Q$ of dimension vector $d$
  is the category fibered in groupoids over the big \'etale site of the category of
  $S$-schemes
  whose objects are pairs
  $(T, \cF)$, where $T$ is an $S$-scheme
  and $\cF$ is a family of representations of $Q$ of dimension vector $d$ over
  $T$.
  A morphism
  $(T', \cF') \to (T, \cF)$ is the data of a morphism
  $f \colon T' \to T$ of $S$-schemes
  together with morphisms $\phi_i\colon \cF_i \to f_*\cF'_i$ of
  $\cO_T$-modules such that the squares
  \begin{equation*}
    \begin{tikzcd}
      \cF_{s(a)} \arrow[r, "\phi_{s(a)}"] \arrow[d, swap, "f_* \cF'_a"] & f_* \cF'_{s(a)} \arrow[d, "\cF_a"] \\
      \cF_{t(a)} \arrow[r, swap, "\phi_{t(a)}"] & f_* \cF'_{t(a)}
    \end{tikzcd}
  \end{equation*}
  commute for every $a \in Q_1$ and whose adjoints are isomorphisms $f^*\cF_i \to \cF'_i$.
\end{definition}
We will frequently omit one of the subscripts in $\modulistack{d,S}$ when either the base scheme or the dimension vector is clear from the context. In \cref{section:vanishing-results} and \cref{section:projectivity}, we will take our base scheme to be $S = \Spec A$ for a ring $A$, in which case we may denote $\modulistack{d,S}$ by $\modulistack{d,A}$ or simply $\modulistack{A}$. Similarly, when we below define the substacks $\modulistack[\semistable{\theta}]{d,S}$, the moduli spaces $\modulispace{d,S}$ and $\modulispace[\semistable{\theta}]{d,S}$, and the representation space $\Rep{d,S}$, we apply that same conventions for the subscripts.

We will make the harmless assumption that~$Q$ is connected: 
if~$Q = Q' \sqcup Q''$
with corresponding decomposition~$d = (d', d'')$
with $d' \in \NN^{Q'_0}, d'' \in \NN^{Q''_0}$, then
$\modulistack{d,S}$
is isomorphic to the product of moduli stacks of representations of $Q'$ and $Q''$ of dimension vectors $d'$ and $d''$, respectively,
and similarly for all other constructions.

The stack $\modulistack{d,S}$ commutes with base change, meaning that if $S' \to S$ is a morphism of schemes, then we have a cartesian diagram
  \begin{equation*}
    \begin{tikzcd}
      \modulistack{d,S'} \arrow[r] \arrow[d] & \modulistack{d,S} \arrow[d] \\
      S' \arrow[r] & S
    \end{tikzcd}
  \end{equation*}
where in the top horizontal map, a family of representations $\cF$ on $T \to S'$ is viewed as a family on $T \to S$ via the composition $T \to S' \to S$. In particular, the stack $\modulistack{d,\ZZ}$ over the final object $\Spec \ZZ$ is universal in the sense that for any scheme $S$, the stack $\modulistack{d,S}$ is obtained by base change from $\modulistack{d,\ZZ}$ by the structure morphism $S \to \Spec \ZZ$.

There is a universal family of representations $\cF^\univ$ of dimension vector $d$ on the stack $\modulistack{d,S}$.
If $T$ is
an $S$-scheme and $\cF$ is a family of representations of dimension vector $d$ on $T$,
there exists a unique morphism $f\colon T \to \modulistack{d,S}$ such that $\cF \cong f^*\cF^\univ$.

\begin{proposition}
  \label{proposition:affine-diagonal}
  The diagonal of the stack $\modulistack{d,S}$ is represented by affine
  $S$-schemes.
\end{proposition}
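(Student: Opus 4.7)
The plan is to unwind what affineness of the diagonal means stack-theoretically and then reduce to a computation with locally free sheaves. For any $S$-scheme $T$ and any pair of families $\cF,\cG$ of quiver representations of dimension vector $d$ over $T$, the $2$-fiber product $T \times_{\modulistack{d,S} \times_S \modulistack{d,S}} \modulistack{d,S}$ is the isomorphism sheaf $\IIsom_T(\cF,\cG)$. So the task reduces to showing that $\IIsom_T(\cF,\cG)$ is representable by an affine $T$-scheme.

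I would first prove that the $\Hom$-sheaf $\HHom_T(\cF,\cG)$ is representable by an affine $T$-scheme. At each vertex $i$, the sheaf of $\cO_T$-linear maps $\cF_i \to \cG_i$ is representable by the total space $\VV(\lHom_{\cO_T}(\cF_i,\cG_i)^\vee) = \relSpec_T \Sym(\cF_i \otimes \cG_i^\vee)$, which is an affine $T$-scheme since $\cF_i$ and $\cG_i$ are locally free of finite rank. Taking the product over the finite set $Q_0$ yields an affine $T$-scheme $\prod_{i\in Q_0} \HHom_T(\cF_i,\cG_i)$. The morphism-of-representations condition---namely the commutativity of the squares with $\cF_a$ and $\cG_a$ for every $a\in Q_1$---is cut out by an equalizer of two morphisms into the affine $T$-scheme $\prod_{a\in Q_1} \HHom_T(\cF_{s(a)},\cG_{t(a)})$, namely $(f_i)\mapsto(f_{t(a)}\circ \cF_a)$ and $(f_i)\mapsto(\cG_a\circ f_{s(a)})$. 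Since $Q_1$ is finite and the target is separated over $T$, this equalizer is a closed subscheme of an affine $T$-scheme and hence affine.

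Next I would realize $\IIsom_T(\cF,\cG)$ as a closed subscheme of $\HHom_T(\cF,\cG)\times_T \HHom_T(\cG,\cF)$, via the standard trick: a pair $(\phi,\psi)$ defines a point of $\IIsom_T$ if and only if the compositions $\psi\circ\phi$ and $\phi\circ\psi$ equal the respective identity morphisms. This is a closed condition (an equalizer with a constant section into the affine schemes $\HHom_T(\cF,\cF)$ and $\HHom_T(\cG,\cG)$), so $\IIsom_T(\cF,\cG)$ is a closed subscheme of an affine scheme, hence affine over $T$. Since the pair $(\cF,\cG)$ was arbitrary, the diagonal of $\modulistack{d,S}$ is representable by affine $S$-schemes.

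I do not expect any serious obstacle here: the argument is a direct unpacking of definitions combined with the standard fact that $\HHom$ between locally free sheaves is representable by an affine scheme. The only point demanding minor care is the ``Isom = closed in $\Hom\times\Hom$'' reduction, which is needed because the naive description of $\IIsom$ as the locus where each $\phi_i$ is an isomorphism would only give an open, not an affine, subscheme of $\HHom_T(\cF,\cG)$.
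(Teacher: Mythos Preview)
Your proposal is correct and follows essentially the same approach as the paper: reduce to showing $\IIsom_T(\cF,\cG)$ is affine, build $\HHom_T(\cF,\cG)$ as a closed subscheme of $\prod_{i\in Q_0}\HHom_T(\cF_i,\cG_i)$ via the compatibility conditions along arrows, and then realize $\IIsom$ as a closed subscheme of $\HHom_T(\cF,\cG)\times_T\HHom_T(\cG,\cF)$ via the compositions-equal-identity condition. The paper phrases the last step as a base change along the section $(\mathrm{id}_\cF,\mathrm{id}_\cG)$ of a separated morphism, which is exactly your equalizer argument in different words.
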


\begin{proof}
  We follow a similar argument in \cite[Tag \href{https://stacks.math.columbia.edu/tag/08K9}{08K9}]{stacks-project}. Let $\cF$ and $\cG$ be two representations over
  an $S$-scheme $T$ and consider the $2$-cartesian diagram
  \begin{equation*}
    \begin{tikzcd}
      \IIsom(\cF, \cG) \arrow[r] \arrow[d] & T \arrow[d, "(\cF{,}\cG)"] \\
      \modulistack{d,S} \arrow[r, "\Delta"] & \modulistack{d,S} \times \modulistack{d,S}
    \end{tikzcd}
  \end{equation*}
  The fiber product $\IIsom(\cF, \cG)$
  is the functor associating to a $T$-scheme $U$
  the set of isomorphisms $\cF_U \cong \cG_U$ of representations of $Q$ over $U$.
  This functor is represented by an affine $T$-scheme constructed as follows.
  First, for each $i \in Q_0$,
  the functor of maps $\cF_i \to \cG_i$ of $\cO_T$-modules
  is represented by the affine $T$-scheme
  \[
    \mathbb{V}_i \coloneqq \relSpec_{\cO_T}\Sym^\bullet {\lHom_{\cO_T}(\cF_i, \cG_i)^\vee}.
  \]
  Next, the functor parameterizing homomorphisms of representations $\cF \to \cG$
  is represented by the closed subscheme
  \[
    \HHom(\cF, \cG) \coloneqq \relSpec_{\cO_T} \Sym^\bullet \cK \subseteq \prod_{i \in Q_0} \mathbb{V}_i
  \]
  where $\cK\colonequals\coker{\phi^\vee}$
  is the cokernel of the morphism dual to the map
  \begin{align*}
      \phi \colon \bigoplus_{i \in Q_0} \lHom(\cF_i, \cG_i)
      &\to \bigoplus_{a \in Q_1} \lHom(\cF_{s(a)}, \cG_{t(a)}) \\
      \left( \alpha_i \right)_{i\in Q_0}
      &\mapsto
      \left( \cG_a \circ \alpha_{s(a)} - \alpha_{t(a)} \circ \cF_a \right)_{a \in Q_1}
  \end{align*}
  Finally, $\IIsom(\cF, \cG)$ is represented by the base change of the morphism
  \[
    \HHom(\cF, \cG) \times_T \HHom(\cG, \cF) \to \HHom(\cF, \cF) \times_T \HHom(\cG, \cG), (\phi, \psi) \mapsto (\psi \circ \phi, \phi \circ \psi)
  \]
  along the section $\sigma\colon T \to \HHom(\cF, \cF) \times_T \HHom(\cG, \cG)$
  given by $(\mathrm{id}_\cF, \mathrm{id}_\cG)$.
  As a section of a separated morphism,
  $\sigma$ is a closed embedding,
  and so $\IIsom(\cF, \cG)$ is a closed subscheme of an affine $T$-scheme,
  hence itself affine over $T$.
\end{proof}

We will next construct a smooth atlas for $\modulistack{d,S}$.
For each vertex $i \in Q_0$, let
$V_i^d\colonequals \cO_S^{\oplus d_i}$ denote the standard free $\cO_S$-module of rank $d_i$.
For each arrow $a \in Q_1$, we set
\[
  \bbA_{a,S}= \bbA_{a}  \coloneqq \relSpec_{\cO_S} \Sym_{\cO_S}^\bullet \left(\lHom(V^d_{s(a)}, V^d_{t(a)})^\vee \right) \cong \bbA_{S}^{d_{s(a)}d_{t(a)}}.
\]
\begin{definition}
  \label{definition:representation-space}
  The space of representations of $Q$ of dimension vector $d$ is the affine
  $S$-space
  \[
    \Rep{d,S} = \Rep{d} \colonequals \prod_{a \in Q_1} \bbA_{a}.
  \]
\end{definition}
Each affine space $\bbA_{a}$ parameterizes a universal linear map $\varphi_a\colon \cO_{\bbA_a}^{\oplus s(a)} \to \cO_{\bbA_a}^{\oplus t(a)}$.
Letting $\pi_a\colon\Rep{d} \to \bbA_a$ denote the projection,
we obtain a canonical family of representations $\cF^\can$ on $\Rep{d}$ as follows.
For each vertex $i \in Q_0$, we set $\cF^\can_i = \cO_{\Rep{d}}^{\oplus d_i}$
and for each arrow $a \in Q_1$ we set
\[
  \cF^\can_a = \pi_a^*\varphi_a\colon\cO_{\Rep{d}}^{\oplus d_{s(a)}} \to \cO_{\Rep{d}}^{\oplus d_{t(a)}}.
\]
This induces a morphism $\varphi\colon\Rep{d} \to \modulistack{d}$.
We can view $\Rep{d}$ as representing the functor on
$S$-schemes that to
an $S$-scheme $T$ associates the set of representations $\cF$ over $T$
with $\cF_i = \cO_T^{\oplus d_i}$ for each $i \in Q_0$.

The $S$-group scheme
\begin{equation}
  \label{equation:definition-Gd}
  \textrm{G}_{d,S} = \textrm{G}_{d}\colonequals \prod_{i \in Q_0} \GL_{d_i}
\end{equation}
acts on $\Rep{d}$ as follows.
The $T$-points of $\textrm{G}_d$ are tuples $g = (g_i)_{i \in Q_0}$
where $g_i$ is an automorphism of $\cO_T^{\oplus d_i}$.
If $T \to \Rep{d} $ corresponds to a representation $\cF=(\cF_a\colon \cO_T^{\oplus d_{s(a)}} \to \cO_T^{\oplus d_{t(a)}})_{a \in A}$, then
the action of $g$ sends $\cF$ to the representation $\cF'=(\cF'_a = g_{t(a)} \circ \cF_a \circ g_{s(a)}^{-1})_{a \in A}$.

\begin{proposition}
  \label{proposition:smooth-cover-by-Rep}
  The morphism $\varphi\colon \Rep{d} \to \modulistack{d}$ is schematic, smooth, and surjective, and induces an isomorphism of stacks
  \begin{equation}
    \label{equation:identification-with-quotient-stack}
    [\Rep{d}/\textrm{G}_d] \cong \modulistack{d}.
  \end{equation}
  In particular, the stack $\modulistack{d}$ is smooth and of finite type over~$S$.
\end{proposition}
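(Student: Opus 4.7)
The plan is to exhibit $\varphi$ as a $\textrm{G}_d$-torsor over $\modulistack{d}$; the stated properties then follow essentially formally.

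First I would analyze the fiber product $T \times_{\modulistack{d}} \Rep{d}$ for an $S$-scheme $T$ together with a family $\cF$ on $T$ classifying a morphism $T \to \modulistack{d}$. Unwinding the definitions, a $U$-point consists of a morphism $U \to \Rep{d}$ (equivalently, a representation $\cF'$ on $U$ with $\cF'_i = \cO_U^{\oplus d_i}$) together with an isomorphism of representations $\cF' \cong \cF_U$. Since the structure maps $\cF'_a$ are then determined by the $\cF_{U,a}$ together with the component isomorphisms, the fiber product is identified with the frame bundle
\[
  \prod_{i \in Q_0} \underline{\mathrm{Isom}}_{\cO_T}\bigl(\cO_T^{\oplus d_i}, \cF_i\bigr),
\]
which, because each $\cF_i$ is locally free of rank $d_i$, is represented by a $\textrm{G}_d$-torsor over $T$ in the Zariski topology. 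Representability is consistent with \cref{proposition:affine-diagonal}, and since $\textrm{G}_d$ is smooth over $S$, this shows that $\varphi$ is schematic, smooth, and surjective.

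Next I would verify the identification $[\Rep{d}/\textrm{G}_d] \cong \modulistack{d}$. The canonical family $\cF^\can$ on $\Rep{d}$ is visibly $\textrm{G}_d$-equivariant under the action described above, so by étale descent it descends to the quotient stack and yields a morphism $[\Rep{d}/\textrm{G}_d] \to \modulistack{d}$. Essential surjectivity follows because any family $\cF$ over $T$ is Zariski-locally trivialized and is therefore Zariski-locally in the image of $\varphi$, while full faithfulness amounts to the statement that two $U$-points of $\Rep{d}$ classify isomorphic families exactly when they differ by an element of $\textrm{G}_d(U)$; both assertions are immediate from the torsor structure established above.

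Finally, smoothness and finite type of $\modulistack{d}$ over $S$ propagate from $\Rep{d}$, which is a product of affine spaces, along the smooth surjective atlas $\varphi$. The only nontrivial step in this plan is the careful unwinding of the fiber product and the identification of it as the frame bundle in the first paragraph; everything after that is formal torsor descent.
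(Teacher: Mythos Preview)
Your proposal is correct and follows essentially the same approach as the paper: both identify the fiber product $T \times_{\modulistack{d}} \Rep{d}$ with the frame bundle $\prod_{i} \IIsom(\cO_T^{\oplus d_i}, \cF_i)$, recognize it as a $\textrm{G}_d$-torsor, and deduce the quotient-stack description from this. The only cosmetic difference is that the paper spells out the quasi-inverse explicitly, whereas you phrase the equivalence in terms of essential surjectivity and full faithfulness.
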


\begin{proof}
   By \cref{proposition:affine-diagonal}, the diagonal of $\modulistack{d}$ is affine, in particular representable by schemes,
   and this implies that $\varphi$ is schematic.

  Let $T$ be an $S$-scheme and $T \to \modulistack{d}$ a morphism corresponding to
  a representation $\cG$ over $T$.
  The fiber product $\cT \coloneqq \Rep{d} \times_{\modulistack{d}} T$
  is isomorphic to the functor that sends a morphism $g\colon U \to T$ of
  $S$-schemes
  to the set of isomorphisms $\phi = (\phi_i\colon \cO_{U}^{\oplus d_i} \xrightarrow{\sim} g^*\cG_i)_{i \in Q_0}$ of $\cO_U$-modules.
  The group $\textrm{G}_d(U)$ acts on $\cT$ by sending $\phi$ to $g \cdot \phi = (\phi_i \circ g_{i}^{-1})_{i \in Q_0}$ for $g = (g_i)_{i \in Q_0} \in \textrm{G}_d(U)$, making $\cT$ into a $\textrm{G}_d$-torsor over $T$.
  This implies that $\phi$ is smooth and surjective.
  In particular, $\modulistack{d}$ is smooth and of finite type over $S$ since $\Rep{d}$ is.

  The induced morphism $[\Rep{d}/\textrm{G}_d] \to \modulistack{d}$ is given as follows. For an
  $S$-scheme $T$, a map $T \to [\Rep{d}/\textrm{G}_d]$ corresponds to a $\textrm{G}_d$-torsor $\cT$ over $T$ and a $\textrm{G}_d$-equivariant morphism $\pi\colon \cT \to \Rep{d}$. This induces a representation $\cG'$ on $\cT$ given by $\cG'_i = \cO_{\cT}^{\oplus d_i}$ and $\cG'_a = \pi^*\cF^\can_a$. From the equivariance, we deduce descent data for the sheaves $\cG'_i$ and the maps $\cG'_a$ on the fppf cover $\cT \to T$, giving a representation $\cG$ over $T$ which determines a map $T \to \modulistack{d}$.

  A quasi-inverse is given as follows. To
  an $S$-scheme $T$ and a map $T \to \modulistack{d}$
  corresponding to a representation $\cG$,
  we assign the $\textrm{G}_d$-torsor $\cT = \prod_{i \in Q_0} \IIsom(\cO_T^{\oplus d_i}, \cG_i)$ over $T$.
  Each vector bundle $\cG_i$ trivializes after pulling back to $\cT$, so the pullback of $\cG$ provides a $\textrm{G}_d$-equivariant morphism $\cT \to \Rep{d}$.
\end{proof}

We observe that since $\Rep{d}$ is separated and $\textrm{G}_d$ is affine over $S$,
the above proposition gives a second proof that $\modulistack{d}$ has affine diagonal.
We gave a direct argument in \cref{proposition:affine-diagonal} to emphasize the moduli-theoretic point of view.

\subsection{The moduli stack of semistable representations}
\label{subsection:mst_ss_reps}

For a stability function $\theta$, a family $\cF$ of representations over a scheme $T$ is said to be a family of $\theta$-semistable, respectively geometrically $\theta$-stable, representations if for each point $x \in T$, the restriction $\cF|_x$ is a $\theta$-semistable, respectively geometrically $\theta$-stable, over the residue field $\kappa(x)$. By \cref{remark:stability-under-field-extension}, this is equivalent to requiring that given a field $k$ and a morphism $\Spec k \to T$ with image $x$, the $k$-representation $\cF|_x \otimes_{\kappa(x)} k$ is $\theta$-semistable, respectively geometrically $\theta$-stable.
\begin{definition}
  The substack $\modulistack[\semistable{\theta}]{d,S}$, respectively $\modulistack[\stable{\theta}]{d,S}$, of $\modulistack{d,S}$ is the full subcategory whose objects are those $(T, \cF) \in \modulistack{d,S}$ for which $\cF$ is a family of $\theta$-semistable, respectively geometrically $\theta$-stable, representations.
\end{definition}

Since $\theta$-semistablility and geometric $\theta$-stability are pointwise conditions on families of representations, it is clear that both moduli stacks are preserved under base change along morphisms $S' \to S$. If $\theta = 0$ is the trivial stability function, then $\modulistack[\semistable{\theta}]{d} = \modulistack{d}$ since every representation is $\theta$-semistable. We will next see that~$\modulistack[\semistable{\theta}]{d,S}\subseteq\modulistack{d,S}$  and~$\modulistack[\stable{\theta}]{d,S}\subseteq\modulistack{d,S}$ are open substacks, or equivalently that in a family $\cF$ of representations over $T$, the locus of points $x \in T$ such that $\cF|_x$ is $\theta$-semistable (respectively geometrically $\theta$-stable) is open.

For this, we first construct a relative version of a quiver Grassmannian as follows.
For a family $\cF$ of $d$-dimensional representations over $T$ and a dimension vector  $d' < d$,
consider the functor $(\Sch/T)^{\op} \to (\Sets)$ that sends $f\colon U \to T$ to the set of families of $d'$-dimensional subrepresentations $\cF' \subseteq f^*\cF$ with locally free quotient.
We claim that this functor is representable by a projective
$T$-scheme
$\Gr(d', \cF/T)$. Indeed, it is represented by a closed subscheme of the
$T$-fiber product $\prod_{i \in Q_0} \Gr_i$ of the Grassmannian bundles
$\Gr_i:=\Gr(d_i',\cF_i) \to T$ parameterizing rank $d_i'$ locally free subsheaves of $\cF_i$
with locally free quotients.
More precisely,
\[
  \Gr \coloneqq
  \Gr(d',\cF/T) \subset \prod_{i \in Q_0} \Gr_i
\]
is the scheme-theoretic intersection over all $a \in Q_1$ of the scheme-theoretic vanishing loci of the compositions
\[
  p_{s(a)}^* \cS_{s(a)} \xrightarrow{p_{s(a)}^* \iota_{s(a)}} p^* \cF_{s(a)} \xrightarrow{p^* \cF_a} p^* \cF_{t(a)} \xrightarrow{p_{t(a)}^* \eta_{t(a)}} p_{t(a)}^* \cQ_{t(a)},
\]
where $0 \to \cS_i \xrightarrow{\iota_i} \pi_i^* \cF \xrightarrow{\eta_i} \cQ_i \to 0 $ denotes the universal exact sequence on $\Gr_i$ and $p_i\colon \Gr \to \Gr_i$ and
$p\colon\Gr \to T$ are the natural projections.

\begin{lemma}\label{lemma:openness-of-stability}
  In a family of quiver representations $\cF$ of dimension vector $d$ over a scheme $T$,
  the set of $\theta$-semistable,
  respectively geometrically $\theta$-stable,
  representations is open.
\end{lemma}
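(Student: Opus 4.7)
The plan is to exhibit the non-semistable locus, and separately the non-(geometrically)-stable locus within the semistable locus, as closed subsets of $T$ built as finite unions of images of the projective relative quiver Grassmannians just constructed. If $\theta(d) \neq 0$, no fiber of $\cF$ can be $\theta$-semistable, so both loci are empty (and hence trivially open); I would therefore assume $\theta(d) = 0$ throughout.

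For $\theta$-semistability, I would iterate over the finitely many dimension vectors $d'$ with $d' < d$, $d' \neq 0$, and $\theta(d') > 0$. For each such $d'$, the morphism $\Gr(d', \cF/T) \to T$ is projective, hence proper, and so its set-theoretic image is a closed subset of $T$, consisting exactly of those points $x$ at which $\cF|_x$ admits a subrepresentation of dimension vector $d'$. The union of these finitely many closed images is therefore closed, and its complement is precisely the set of $x \in T$ at which $\cF|_x$ is $\theta$-semistable.

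For geometric $\theta$-stability, I would invoke \cref{remark:stability-under-field-extension}: $\cF|_x$ is geometrically $\theta$-stable if and only if it is $\theta$-semistable and admits no proper nonzero subrepresentation of $\theta$-value zero after passing to an algebraic closure of $\kappa(x)$. The existence of such a subrepresentation of dimension vector $d'$ over some extension of $\kappa(x)$ is equivalent to the scheme-theoretic fiber of $\Gr(d', \cF/T)$ over $x$ being nonempty, which is in turn equivalent to $x$ lying in the image of $\Gr(d', \cF/T) \to T$. Removing the union of these images over the finitely many dimension vectors $d'$ with $d' < d$, $d' \neq 0$, and $\theta(d') = 0$ from the open semistable locus yields the geometrically stable locus as an open subset.

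The only step that requires slight care is the translation between \emph{some field extension of $\kappa(x)$ admits a subrepresentation of dimension $d'$} and \emph{the Grassmannian fiber over $x$ is nonempty}; this reduces to the elementary fact that a scheme of finite type over a field is nonempty if and only if it has a point in some algebraic extension. Once this dictionary is in place, the proof is essentially the standard closedness-under-proper-image argument applied to each $d'$ in a finite list.
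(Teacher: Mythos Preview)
Your proof is correct and follows essentially the same approach as the paper: both exhibit the non-semistable and non-geometrically-stable loci as finite unions of images of the proper morphisms $\Gr(d',\cF/T)\to T$, indexed by the relevant dimension vectors $d'$. The paper compares $\theta(d')$ to $\theta(d)$ directly rather than first reducing to $\theta(d)=0$, and treats the two loci in parallel rather than nesting the stable case inside the semistable one, but these are cosmetic differences.
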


\begin{proof}
  For each nonzero dimension vector $d' < d$, with
  let $\pi_{d'} \colon \Gr(d',\cF/T) \rightarrow T$ denote the relative quiver Grassmannian of $d'$-dimensional subrepresentations of $\cF$.
  Since $\Gr(d',\cF/T)$ is proper over $T$,
  the image $T_{d'} \subseteq T$ of $\pi_{d'}$ is closed.
  Consider the two subsets
  \[ Z_1 = \bigcup_{\begin{smallmatrix}0 < d' < d: \\ \theta(d') \ge \theta(d) \end{smallmatrix}} T_{d'} \quad \mathrm{and} \quad Z_2 = \bigcup_{\begin{smallmatrix}0 < d' < d: \\ \theta(d') > \theta(d) \end{smallmatrix}} T_{d'}. \]
  As there are only finitely many dimension vectors $d'$ with $d' < d$,
  the subsets $Z_1$ and $Z_2$ are closed in $T$,
  and consequently their complements
  $U_1 = T\setminus Z_1$ and $U_2 = T\setminus Z_2$
  are open. We claim that $U_1$ and $U_2$ are the loci of geometrically $\theta$-stable and $\theta$-semistable
  representations parameterized by $\cF$ respectively.
  Let $x \in T$ be a point.
  If $x \in Z_1$, then $x = \pi_{d'}(y)$ for some $d' < d$ with $\theta(d') \ge \theta(d)$
  and some $y \in \Gr(d', \cF/T)$, meaning that
  the representation $\cF|_x \otimes_{\kappa(x)} \kappa(y)$
  has a subrepresentation of dimension vector $d'$.
  Thus, $\cF|_x$ is not geometrically $\theta$-stable.

  Conversely, if $\cF|_x$ is not geometrically $\theta$-stable, then for some field extension $k/\kappa(x)$, the representation $\cF|_x \otimes_{\kappa(x)} k$ has a subrepresentation of dimension vector $d'$ with $\theta(d') \geq \theta(d)$,
  which gives a map $\Spec k \to \Gr(d', \cF/T)$ whose composite with $\pi_{d'}$ has image $x$; thus $x \in Z_1$.

  The argument for $Z_2$ is exactly the same, except by \cref{remark:stability-under-field-extension} the question of whether $\cF|_x$ is $\theta$-semistable is insensitive to extending the residue field $\kappa(x)$.
\end{proof}

\begin{remark}
  \label{remark:geometrically-stable}
  The following example shows that $\theta$-stability is \emph{not} an open condition. Consider the Jordan quiver from \cref{example:Jordan-quiver-polystability-not-preserved-field-ext} and the family of 2-dimensional representations over $\RR$ parameterized by $\mathbb{A}_\RR^1$ with coordinate $t$ given by the matrix $M_t:=\begin{psmallmatrix} t & -1 \\ 1 & 0 \end{psmallmatrix}$. For the trivial stability function $\theta =0$, all representations are semistable and $M_t \otimes_\mathbb{R} \mathbb{C}$ is polystable unless $t = \pm 2$, but over $\RR$ a point $M_t$ is stable if and only if $M_t$ has no real eigenvalues i.e. $|t| < 2$; this set is not even constructible, let alone Zariski-open.
\end{remark}

The next result directly follows from  \cref{lemma:openness-of-stability}.

\begin{corollary}\label{corollary:open-substacks}
  For a dimension vector $d$ and a stability function $\theta$, the moduli stacks
  \begin{equation}
    \label{equation:substacks}
    \modulistack[\stable{\theta}]{d,S} \subseteq \modulistack[\semistable{\theta}]{d,S}\subseteq\modulistack{d,S}
  \end{equation}
  are open substacks of $\modulistack{d,S}$. In particular, both $\modulistack[\stable{\theta}]{d,S}$ and $\modulistack[\semistable{\theta}]{d,S}$ are smooth and of finite type and have affine diagonal over $S$.
\end{corollary}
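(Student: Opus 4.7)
The plan is to deduce the corollary as a direct consequence of the openness statement from the preceding lemma, together with the properties of $\modulistack{d,S}$ already established in this section. There are really two things to verify: that the inclusions in \eqref{equation:substacks} are open immersions of substacks, and that the indicated geometric properties transfer from the ambient stack $\modulistack{d,S}$.

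For the first point, I would appeal to the fact that an open substack of an algebraic stack $\cX$ is determined by giving, for every morphism $T \to \cX$ from a scheme, an open subset $U_T \subseteq T$ compatible with base change. Unwinding the definition of $\modulistack{d,S}$, a morphism $T \to \modulistack{d,S}$ corresponds to a family $\cF$ of representations of dimension vector $d$ on $T$, and the preimage of $\modulistack[\semistable{\theta}]{d,S}$ (respectively $\modulistack[\stable{\theta}]{d,S}$) is precisely the set of points $x \in T$ for which $\cF|_x$ is $\theta$-semistable (respectively geometrically $\theta$-stable). By \cref{lemma:openness-of-stability}, both loci are open in $T$. Compatibility with base change along a morphism $g\colon T' \to T$ is automatic from the pointwise definition of the (semi)stability conditions together with \cref{remark:stability-under-field-extension}, since the fiber of $g^*\cF$ at a point $x' \in T'$ with image $x \in T$ is $\cF|_x \otimes_{\kappa(x)} \kappa(x')$. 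Hence both inclusions in \eqref{equation:substacks} are open immersions.

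For the second point, once $\modulistack[\semistable{\theta}]{d,S}$ and $\modulistack[\stable{\theta}]{d,S}$ are open substacks of $\modulistack{d,S}$, they inherit smoothness, finite type, and affine diagonal from the ambient stack. Smoothness and finite type over $S$ for $\modulistack{d,S}$ were established in \cref{proposition:smooth-cover-by-Rep}, and affine diagonal in \cref{proposition:affine-diagonal}; each of these properties is local on the source and is preserved under restriction to an open substack (the diagonal of an open substack is the restriction of the original diagonal along the open immersion $\cU \times_S \cU \hookrightarrow \cX \times_S \cX$).

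There is no real obstacle here: the content has been placed in \cref{lemma:openness-of-stability}, and the role of this corollary is merely to repackage that openness as a statement about substacks and to record the inherited geometric properties. The only point that requires a moment of care is checking that geometric $\theta$-stability is compatible with base change in the sense required, but this is built into the definition.
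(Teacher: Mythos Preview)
Your proposal is correct and takes essentially the same approach as the paper: the paper's proof is the single line ``The next result directly follows from \cref{lemma:openness-of-stability},'' and you have simply unpacked this by explaining why pointwise openness in families gives an open substack and why the geometric properties are inherited under open immersion.
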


We can express these moduli stacks as quotient stacks: the open subschemes
\[
    \Rep{d,S}^{\stable{\theta}} \subseteq \Rep{d,S}^{\semistable{\theta}} \subseteq \Rep{d,S},
\]
parameterizing geometrically $\theta$-stable and $\theta$-semistable representations 
are subschemes invariant under the action of $\mathrm{G}_d$,
and we obtain identifications
\[
    \modulistack[\stable{\theta}]{d,S} \cong [\Rep{d,S}^{\stable{\theta}}/\textrm{G}_d],
    \quad
    \modulistack[\semistable{\theta}]{d,S} \cong [\Rep{d,S}^{\semistable{\theta}}/\textrm{G}_d]
\]
for the open substacks in \cref{corollary:open-substacks},
similar to \eqref{equation:identification-with-quotient-stack} in \cref{proposition:smooth-cover-by-Rep}.

\begin{remark}
  \label{remark:emptiness}
  One interesting way in which the moduli theory of
  quiver representations
  differs from that of
  vector bundles on a curve of genus $g \geq 2$
  is that the analogous moduli stacks for stable vector bundles are always non-empty.
  For any integers~$r\geq 1$ and $d$ there always exists a \emph{stable} vector bundle of rank $r$
  and degree $d$ \cite[Lemma~4.3]{MR0242185},
  making the analogue of \eqref{equation:substacks} an inclusion of dense open substacks. However, the following standard example shows that whether or not $\modulistack[\stable{\theta}]{d,S}$ or $\modulistack[\semistable{\theta}]{d,S}$ is empty may depend on~$\theta$.
\end{remark}

\begin{example}
  Consider a representation of the $\mathrm{A}_2$-quiver ${\bullet} \xrightarrow{\, a \,} {\bullet}$ over a field~$k$ of dimension vector $d_n=(n,n)$ for a positive integer $n$. We can reduce the study of stability functions $\theta$ such that $\theta(d_n)=0$ to three cases:
  \begin{description}
    \item[$\theta=(0,0)$:] all representations of dimension vector $d_n$ are semistable, but none of them can be geometrically stable, as the unique subrepresentation of dimension vector $(0,n)$ has slope $0$.
    \item[$\theta=(-1,1)$:] no representation of dimension vector $d_n$ can be geometrically stable and even semistable, as the unique subrepresentation of dimension vector $(0,n)$ is a destabilizing subrepresentation.
    \item[$\theta=(1,-1)$:] a representation $M_a \colon k^n \to k^n$ is semistable if and only if $M_a$ is injective, while it is geometrically stable if and only if $n=1$ and $M_a$ is injective.
  \end{description}
\end{example}

For an acyclic quiver, a general criterion for the existence of a stable representation
is given in \cite{MR1972892}.

\subsection{Determinantal line bundles} \label{subsection:determinantal-line-bundles}
In this section, we explain how to construct line bundles $\mathcal{L}_\theta$ on the stack $\modulistack{d}$ depending on a stability function $\theta\colon \ZZ^{Q_0} \to \ZZ$. For an $S$-scheme $T$ and a family $\cF$ of representations of $Q$ over $T$, we define a line bundle on $T$
\[
  \cL_{\theta,\cF} \coloneqq
  \bigotimes_{i \in Q_0} \det(\cF_i)^{\otimes -\theta_i}
\]
where $\theta_i = \theta(S(i))$ for $i \in Q_0$. We list some basic properties of this construction.
\begin{proposition}
  \label{proposition:determinantal-properties}
  The following statements hold.
  \begin{enumerate}[label=(\roman*)]
    \item\label{enumerate:determinantal-1} The assignment
    $\theta \mapsto  \cL_{\theta,\cF}$ defines a homomorphism
    $\Hom(\ZZ^{Q_0}, \ZZ) \to \Pic(T)$.
    \item\label{enumerate:determinantal-2} If $0 \to \cF' \to \cF \to \cF'' \to 0$ is a short exact sequence of families of representations on $T$, then there is a canonical isomorphism
    $\cL_{\theta,\cF'} \otimes_{\cO_T} \cL_{\theta,\cF''} \cong \cL_{\theta,\cF}$.
    \item\label{enumerate:determinantal-3} If $f\colon T' \to T$ is a morphism of $S$-schemes, then there is a canonical isomorphism
    $f^*\cL_{\theta, \cF} \cong \cL_{\theta, f^*\cF}$.
  \end{enumerate}
\end{proposition}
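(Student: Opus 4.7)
The plan is to unpack the definition $\cL_{\theta,\cF}=\bigotimes_{i\in Q_0}\det(\cF_i)^{\otimes-\theta_i}$ and reduce each claim to a standard property of the determinant of a locally free sheaf, applied vertex-by-vertex.

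For \ref{enumerate:determinantal-1}, I would first observe that since the simple representations $S(i)$ form a basis of $\ZZ^{Q_0}$ (identifying a representation with its dimension vector), a homomorphism $\theta\colon\ZZ^{Q_0}\to\ZZ$ is determined by the integers $\theta_i=\theta(S(i))$, and addition of such homomorphisms corresponds to coordinatewise addition of the $\theta_i$. The claim $\cL_{\theta+\theta',\cF}\cong\cL_{\theta,\cF}\otimes\cL_{\theta',\cF}$ in $\Pic(T)$ then follows immediately by rewriting $\det(\cF_i)^{\otimes-(\theta_i+\theta'_i)}=\det(\cF_i)^{\otimes-\theta_i}\otimes\det(\cF_i)^{\otimes-\theta'_i}$ and reindexing the tensor product.

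For \ref{enumerate:determinantal-2}, the short exact sequence $0\to\cF'\to\cF\to\cF''\to 0$ of families of representations restricts at every vertex $i\in Q_0$ to a short exact sequence $0\to\cF'_i\to\cF_i\to\cF''_i\to 0$ of locally free $\cO_T$-modules. The standard canonical isomorphism $\det(\cF_i)\cong\det(\cF'_i)\otimes_{\cO_T}\det(\cF''_i)$ (induced by the wedge product pairing) then gives, after taking $(-\theta_i)$-th tensor powers and tensoring over $i\in Q_0$, the desired canonical isomorphism $\cL_{\theta,\cF}\cong\cL_{\theta,\cF'}\otimes_{\cO_T}\cL_{\theta,\cF''}$.

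For \ref{enumerate:determinantal-3}, I would just combine the facts that pullback of locally free sheaves commutes with the determinant (i.e.\ $f^*\det(\cF_i)\cong\det(f^*\cF_i)$) and with tensor products of line bundles, to obtain $f^*\cL_{\theta,\cF}=\bigotimes_{i\in Q_0}f^*\det(\cF_i)^{\otimes-\theta_i}\cong\bigotimes_{i\in Q_0}\det(f^*\cF_i)^{\otimes-\theta_i}=\cL_{\theta,f^*\cF}$.

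There is no real obstacle here; every ingredient is a formal property of determinants of locally free sheaves. The only point worth a line of justification is the identification of $\Hom(\ZZ^{Q_0},\ZZ)$ with $\ZZ^{Q_0}$ via the simples, which ensures the map $\theta\mapsto\cL_{\theta,\cF}$ is well-defined in \ref{enumerate:determinantal-1}.
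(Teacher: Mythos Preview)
Your proposal is correct and follows essentially the same approach as the paper's proof, which simply states that (i) is clear from the definition and refers to standard Stacks Project tags for the determinant's multiplicativity in short exact sequences (ii) and compatibility with pullback (iii). You have merely unpacked these references explicitly, which is fine.
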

\begin{proof}
  Part (i) is clear from the definition. For part (ii), see for example  \cite[Tag \href{https://stacks.math.columbia.edu/tag/0FJB}{0FJB}]{stacks-project}, and for part (iii), see \cite[Tag \href{https://stacks.math.columbia.edu/tag/0FJY}{0FJY}]{stacks-project}.
\end{proof}

Since $\modulistack{d}$ parameterizes families of representations, we have described how to construct a line bundle on $T$ associated to every map $T \to \modulistack{d}$. From property \ref{enumerate:determinantal-3}, it follows that this assignment gives rise to a line bundle on $\modulistack{d}$ which we denote $\mathcal{L}_\theta$.
Equivalently, we obtain $\mathcal{L}_\theta$
by applying the above construction to the universal representation $\cF^{\mathrm{univ}}$. Moreover, this construction defines a group homomorphism
$\Hom(\ZZ^{Q_0}, \ZZ) \to \Pic(\modulistack{d})$,
which is analogous to the one constructed in \cite[Definition~8.1.1]{MR2665168}.

\begin{proposition}
  The map
  $\Hom(\ZZ^{Q_0}, \ZZ) \to \Pic(\modulistack{d,\ZZ}), \theta \mapsto \cL_\theta$
  is an isomorphism.
\end{proposition}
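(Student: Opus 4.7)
The plan is to use the presentation $\modulistack{d,\ZZ}\cong[\Rep{d,\ZZ}/\mathrm{G}_d]$ from \cref{proposition:smooth-cover-by-Rep} and identify the Picard group of this quotient stack with the character group $X^*(\mathrm{G}_d)$. Recall that line bundles on a quotient stack $[X/G]$ correspond to $G$-equivariant line bundles on $X$. Here $\Rep{d,\ZZ}\cong\bbA_\ZZ^N$ is affine space over $\Spec\ZZ$ with $N=\sum_{a\in Q_1}d_{s(a)}d_{t(a)}$, so $\Pic(\Rep{d,\ZZ})=0$ and $H^0(\Rep{d,\ZZ},\Gm)=\{\pm 1\}$ consists of $\mathrm{G}_d$-invariants; these facts follow from $\ZZ$ being a UFD.

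The standard low-degree exact sequence associated with the atlas $\varphi\colon\Rep{d,\ZZ}\to\modulistack{d,\ZZ}$,
\[
  1\to H^0(\Rep{d,\ZZ},\Gm)^{\mathrm{G}_d}\to H^0(\Rep{d,\ZZ},\Gm)\to X^*(\mathrm{G}_d)\to\Pic(\modulistack{d,\ZZ})\to\Pic(\Rep{d,\ZZ}),
\]
then yields an isomorphism $X^*(\mathrm{G}_d)\xrightarrow{\sim}\Pic(\modulistack{d,\ZZ})$ sending a character $\chi$ to the line bundle on $\modulistack{d,\ZZ}$ whose pullback to $\Rep{d,\ZZ}$ is $\cO_{\Rep{d,\ZZ}}$ with linearization $\chi$. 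Equivalently, one can argue directly: since $\Pic(\Rep{d,\ZZ})=0$ every equivariant line bundle is of the form $\cO_{\Rep{d,\ZZ}}$ with a linearization, and two linearizations differ by a $\mathrm{G}_d$-invariant unit if and only if the associated characters coincide, so the isomorphism classes are parameterized by $X^*(\mathrm{G}_d)$. Combined with the computation $X^*(\mathrm{G}_d)=\prod_{i\in Q_0}X^*(\GL_{d_i})=\ZZ^{Q_0}=\Hom(\ZZ^{Q_0},\ZZ)$, where each factor is generated by the determinant character, this produces the desired isomorphism.

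Finally I would verify that this abstract isomorphism agrees with the explicit map $\theta\mapsto\cL_\theta$. The universal family pulls back along $\varphi$ to the canonical family $\cF^\can$ with $\cF^\can_i=\cO_{\Rep{d,\ZZ}}^{\oplus d_i}$ endowed with the tautological $\mathrm{G}_d$-linearization in which $g_i\in\GL_{d_i}$ acts on $\cF^\can_i$. Hence $\det(\cF^\can_i)$ is $\cO_{\Rep{d,\ZZ}}$ with linearization given by the character $\det\colon\GL_{d_i}\to\Gm$, and consequently $\varphi^*\cL_\theta=\bigotimes_{i\in Q_0}\det(\cF^\can_i)^{\otimes -\theta_i}$ is $\cO_{\Rep{d,\ZZ}}$ with linearization the character $(g_i)_{i\in Q_0}\mapsto\prod_{i\in Q_0}\det(g_i)^{-\theta_i}$. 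This is precisely the character in $X^*(\mathrm{G}_d)$ associated to $\theta\in\Hom(\ZZ^{Q_0},\ZZ)$, so the two maps agree. The only nontrivial input, beyond this bookkeeping, is the low-degree exact sequence for the atlas, together with the computations of $\Pic$ and units of $\bbA^N_\ZZ$.
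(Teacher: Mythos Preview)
Your proof is correct and takes essentially the same approach as the paper: the paper's proof simply records the chain of isomorphisms $\Hom(\ZZ^{Q_0}, \ZZ) \cong X^*(\mathrm{G}_d) \cong \Pic^{\mathrm{G}_d}(\Rep{d}) \cong \Pic(\modulistack{d})$ without further comment, whereas you supply the justification for each step via the low-degree exact sequence and additionally verify that the abstract isomorphism agrees with the explicit map $\theta\mapsto\cL_\theta$.
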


\begin{proof} 
    The map
    corresponds to the isomorphisms
    \[
    \Hom(\ZZ^{Q_0}, \ZZ) \cong X^*(\textrm{G}_d) \cong \Pic^{\textrm{G}_d}(\Rep{d}) \cong \Pic( \modulistack{d}),
    \]
    where $X^*(\textrm{G}_d)$ denotes the character group of $\textrm{G}_d$.
\end{proof}

Suppose now that we are given two families of representations $\cF$ and $\cG$ on an $S$-scheme $T$. We can construct a $2$-term complex of locally free sheaves concentrated in degrees $-1$ and $0$ by setting
\begin{equation} \label{eq:2term-hom-complex}
  \cE_{\cF, \cG}^\bullet \colon \bigoplus_{i \in Q_0} \lHom_{\cO_T}(\cF_i, \cG_i) \xrightarrow{d_{\cF,\cG}} \bigoplus_{a \in Q_1} \lHom_{\cO_T}(\cF_{s(a)}, \cG_{t(a)})
\end{equation}
where the differential of degree one is given by
\[
  d_{\cF,\cG} \colon (\phi_i)_{i \in Q_0} \mapsto (\phi_{t(a)} \circ \cF_a - \cG_a \circ \phi_{s(a)})_{a \in Q_1}.
\]
As a first application of \eqref{eq:2term-hom-complex}, we see that the dimensions
\[
  \dim_{\kappa(x)} \Hom(\cF|_x, \cG|_x) \quad \mathrm{and} \quad \dim_{\kappa(x)} \Ext(\cF|_x, \cG|_x)
\]
are upper semicontinuous functions on $T$ -- by  comparing with \eqref{eq:exact-seq-hom-ext}, we see that the latter vector space is nothing but the fiber of the quasi-coherent sheaf $\coker(d_{\cF,\cG})$, while the former is dual to the fiber of $\coker(d_{\cF,\cG}^\vee)$, both of which are finitely presented.

Our second application of \eqref{eq:2term-hom-complex} is a moduli-theoretic construction of determinantal semi-invariants mentioned in the introduction. If the two families $\cF$ and $\cG$ satisfy the condition $\langle \dimvect{\cF}, \dimvect{\cG} \rangle = 0$, then
\[
  \rk(\cE_{\cF, \cG}^{-1}) = \sum_{i \in Q_0} \rk(\cF_i) \rk(\cG_i) = \sum_{a \in Q_1} \rk(\cF_{s(a)}) \rk(\cG_{t(a)}) = \rk(\cE_{\cF, \cG}^{0}),
\]
and thus we obtain a global section of the determinant of $\cE_{\cF, \cG}^\bullet$:
\[ \sigma_{\cF, \cG} = \det(d_{\cF, \cG})\colon \cO_T \to \det(\cE_{\cF, \cG}^\bullet) = \det(\cE_{\cF, \cG}^{-1})^\vee \otimes \det(\cE_{\cF, \cG}^0). \]

Suppose that the stability function $\theta\colon\ZZ^{Q_0} \to \ZZ$ is of the form $\eta_\beta = - \langle \blank, \beta \rangle$ for a dimension vector $\beta \in \NN^{Q_0}$; that is, we have
\[
  \theta_i = - \langle \dimvect S(i), \beta \rangle = -\beta_i + \sum_{a \in Q_1: s(a) = i} \beta_{t(a)}.
\]
By applying the construction of the $2$-term complex to the universal family $\mathcal{F}:=\cF^{\mathrm{univ}}$ on $\modulistack{d}$ and a family $\mathcal{G}$ of representations on $\modulistack{d}$ of dimension vector $m\beta$ for $m \in \NN$ such that $\mathcal{G}_i$ is \textit{free} for each $i \in Q_0$,
we get an associated determinantal line bundle that is isomorphic to $\mathcal{L}_\theta^{\otimes m}$, as we have
\[
  \lHom_{\cO_{\modulistack{d}}}(\cF_i, \cG_i) \cong \cF_i^{\oplus -m \beta_i}, \quad \lHom_{\cO_{\modulistack{d}}}(\cF_{s(a)}, \cG_{t(a)}) \cong \cF_{s(a)}^{\oplus -m \beta_{t(a)}},
\]
and so
\[
  \det(\cE_{\cF^{\mathrm{univ}}, \cG}^\bullet) \coloneqq \det(\cE_{\cF^{\mathrm{univ}}, \cG}^{-1})^\vee \otimes \det(\cE_{\cF^{\mathrm{univ}}, \cG}^0) \cong \bigotimes_{i \in Q_0} \det(\cF_i)^{\otimes -m \theta_i} = \mathcal{L}_\theta^{\otimes m}.
\]
If moreover $\theta(d) = -\langle d, \beta \rangle = 0$
then the ranks of the two terms in the complex $\cE_{\cF^{\mathrm{univ}}, \cG}^\bullet$ are equal, so the determinant of the differential $d_{\cF^{\mathrm{univ}}, \cG}$ gives a section
\[
  \sigma_{\cG} = \sigma_{\cF^{\mathrm{univ}}, \cG} \coloneqq \det(d_{\cF^{\mathrm{univ}}, \cG}) \colon \; \cO_{\modulistack{d}} \to \cL_\theta^{\otimes m}.
\]

The most important case is when
$\cG = V \otimes \cO_{\modulistack{d}}$ is a ``constant family'', that is, the pullback of a family $V$ on $S$,
in which case we denote the section by
$\sigma_V\colon \cO_{\modulistack{d}} \to \mathcal{L}_\theta^{\otimes m}$.
The following result, corresponding to \cite[Theorem~1.1]{MR1113382}, describes the vanishing locus of $\sigma_V$.

\begin{proposition} \label{proposition:vanishing-sections}
Let $d$ be a dimension vector and $\theta$ a stability function with $\theta(d) = 0$.
Let $k$ be a field and $x \colon \Spec{k} \to \modulistack[\semistable{\theta}]{d}$ a morphism
corresponding to a representation $M$ over $k$, and let $y \in S$ denote its image in $S$.
\begin{enumerate}[label=(\alph*)]
    \item If $\theta = \theta_\alpha$ for a dimension vector $\alpha$ and $V$ is a
    family of representation of dimension vector $m\alpha$ on $S$, then the section $\sigma_V$ of $\mathcal{L}_\theta^{\otimes m}$ is nonzero at
    $x$ if and only if
    \[\Hom_k(V|_y \otimes_{\kappa(y)} k, M) = 0, \; \text{ or equivalently } \Ext(V|_y \otimes_{\kappa(y)} k, M) = 0.
    \]
    \item If $\theta = \eta_\beta$ for a dimension vector $\beta$ and $V$ is a
    family of representations of dimension vector $m\beta$ on $S$, then the section $\sigma_V$ of $\mathcal{L}_\theta^{\otimes m}$ is nonzero at
    $x$ if and only if
    \[\Hom(M, V|_y \otimes_{\kappa(y)} k) = 0, \; \text{ or equivalently } \Ext(M, V|_y \otimes_{\kappa(y)} k) = 0.
    \]
\end{enumerate}
\end{proposition}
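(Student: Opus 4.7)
The plan is to reduce the statement to a fiberwise computation that is controlled by the canonical four-term exact sequence \eqref{eq:exact-seq-hom-ext}. Since $\sigma_V = \det(d_{\cF^{\mathrm{univ}},\cG})$ with $\cG = V\otimes\cO_{\modulistack{d}}$, its value $\sigma_V|_x$ is the determinant of the pullback of the two-term complex $\cE^\bullet_{\cF^{\mathrm{univ}},\cG}$ along $x\colon \Spec k \to \modulistack[\semistable{\theta}]{d}$. The key point is to identify this pulled-back complex with the middle segment of \eqref{eq:exact-seq-hom-ext} for the appropriate pair of representations.

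Concretely, for case (b), this fiber complex of $k$-vector spaces is
\[
\bigoplus_{i \in Q_0} \Hom_k(M_i, (V|_y \otimes_{\kappa(y)} k)_i) \xrightarrow{\,\bar d\,} \bigoplus_{a \in Q_1} \Hom_k(M_{s(a)}, (V|_y \otimes_{\kappa(y)} k)_{t(a)}),
\]
whose differential $\bar d$ is precisely the one appearing in \eqref{eq:exact-seq-hom-ext} for $(M, V|_y\otimes_{\kappa(y)} k)$, so $\ker \bar d = \Hom(M, V|_y\otimes k)$ and $\coker \bar d = \Ext(M, V|_y\otimes k)$. The hypothesis $\theta(d)=0$ is exactly the statement that the two terms of the complex have equal rank, so $\bar d$ is a square matrix. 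Therefore $\sigma_V|_x = \det(\bar d) \ne 0$ iff $\bar d$ is an isomorphism iff both $\ker \bar d$ and $\coker \bar d$ vanish. The equivalence of the two vanishing conditions in the statement follows independently from the Euler characteristic identity: $\dim\Hom - \dim\Ext = \langle M, V|_y\otimes k \rangle = m\langle d,\beta\rangle = -m\theta(d) = 0$.

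For case (a), I would carry out the entirely parallel construction using the mirror complex $\cE^\bullet_{\cG,\cF^{\mathrm{univ}}}$. A bookkeeping rank calculation using $\theta_i = \alpha_i - \sum_{a\colon t(a)=i}\alpha_{s(a)}$ shows that the determinant of this complex is again $\mathcal{L}_\theta^{\otimes m}$, so the determinant of its differential defines a section $\sigma_V$ of $\mathcal{L}_\theta^{\otimes m}$. Its fiber at $x$ is the differential of \eqref{eq:exact-seq-hom-ext} for the pair $(V|_y\otimes k, M)$, and the same argument as in (b), together with $\langle V|_y\otimes k, M\rangle = m\langle\alpha,d\rangle = m\theta(d) = 0$, yields the conclusion.

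There is no serious obstacle here: the proof is essentially a direct translation of the construction of $\sigma_V$ into a fiberwise picture combined with the elementary fact that a square matrix is invertible iff its determinant is nonzero. The small care needed is to verify that forming the two-term $\Hom$-complex commutes with pullback along $x$, so that its pullback really is the canonical complex computing $\Hom$ and $\Ext$ of the relevant pair; this is immediate from \cref{proposition:determinantal-properties}\ref{enumerate:determinantal-3} and the identification $\cG|_x = V|_y\otimes_{\kappa(y)} k$.
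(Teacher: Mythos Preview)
Your proposal is correct and follows essentially the same approach as the paper: identify the fiber of the two-term complex $\cE^\bullet_{\cF^{\mathrm{univ}},\cG}$ (respectively $\cE^\bullet_{\cG,\cF^{\mathrm{univ}}}$) at $x$ with the middle map of \eqref{eq:exact-seq-hom-ext}, and observe that its determinant is nonzero precisely when this square map is invertible, i.e.\ when $\Hom$ and $\Ext$ both vanish. Your write-up is in fact a little more explicit than the paper's in spelling out the Euler-characteristic reason for the equivalence of the two vanishing conditions and the pullback compatibility, but there is no substantive difference.
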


\begin{proof}
To prove (b), we note that the fiber of the complex $\cE_{\cF^{\mathrm{univ}}, \cG}^\bullet$ at $x$ is identified with the complex
\[  \begin{tikzcd}
\bigoplus\limits_{i \in Q_0} \Hom_{k}(M_i, V_i|_y \otimes_{\kappa(y)} k) \arrow{r} & \bigoplus\limits_{a \in Q_1} \Hom_{k}(M_{s(a)}, V_{t(a)}|_y \otimes_{\kappa(y)} k)\end{tikzcd}, \]
defined in \eqref{eq:exact-seq-hom-ext} whose kernel and cokernel are the $k$-vector spaces $\Hom(M, V)$ and $\Ext(M, V)$ respectively, and these vanish if and only if the differential of the complex is invertible, if and only its determinant is nonzero.

For part (a) we instead consider the complex $\cE_{\cG, \cF^{\mathrm{univ}}}^\bullet$ and proceed similarly.
\end{proof}

\section{Vanishing results}
\label{section:vanishing-results}

In this section we take our base scheme to be $S = \Spec k$ where $k = \overline{k}$ is an algebraically closed field. The results obtained here will be used in later sections to construct adequate moduli spaces for stacks of semistable representations and to show that they are projective when the quiver is acyclic.

The results in this section are inspired by the proof of global generation of determinantal line bundles on the moduli of vector bundles on a curve using dimension-counting techniques in \cite{esteves.popa:2004:veryampleness}. We will obtain parallel results in the case of moduli of quiver representations. The starting point is \cref{proposition:vanishing-sections}, which describes the non-vanishing locus of determinantal semi-invariants. In \cref{subsection:characterizing-semistable-reps}, for a given semistable representation $M$ over $k$, we will find another representation $N$ satisfying $\Hom(M, N) = \Ext(M, N) = 0$ by showing that in the appropriate representation space, the locus of those $N$ for which $\Hom(M, N) \neq 0$ has positive codimension; this will enable us to show a power of the determinantal line bundle on the moduli space of semistable representations is globally generated in \cref{section:projectivity}. For acyclic quivers, we study the preservation of (semi)stability under the Auslander-Reiten translations in \cref{subsection:ar-translations-vs-semistability} and use this to establish generic vanishing of $\Ext$ groups in \cref{subsection:generic-ext-vanishing}, in order to ultimately prove a key result in \cref{subsection:sep-det-inv} required to later show ampleness of the determinantal line bundle.

Throughout this section, we will formulate many results for stability functions of the form (a) $\theta =\theta_\alpha$ and (b) $\theta =\eta_\beta$, where $\alpha$ and $\beta$ are dimension vectors. This condition is automatically satisfied if $Q$ is acyclic and there exists a semistable representation supported on $Q_0$, see \cref{lemma:stability-conditions-are-dim-vectors}. Moreover, we only give the proof in one of these two cases, as the other is proved analogously. Since we will ultimately be concerned with case (b) $\theta = \eta_\beta$ in \cref{section:projectivity}, we mostly apply these results in this case and thus give the proofs in this case.

\subsection{Characterizing semistable representations}
\label{subsection:characterizing-semistable-reps}

We begin with the key dimension estimates. Let $d', d'' \in \NN^{Q_0}$ and write $d = d' + d''$. Let $M$ and $N$ be representations of dimension vector $d'$ and $d''$ respectively. Define the following subsets of $\Rep{d}$:
\begin{align}
    \label{eq:rep-subset-B} B_{d'}=B_{d',d} & = \{ V \in \Rep{d} \mid \text{there exists } V' \subset V \text{ with } \dimvect{V'} = d'  \}, \\
    \label{eq:rep-subset-K} K_M=K_{M,d} & = \{ V \in \Rep{d} \mid \text{there exists an injection } M \hookrightarrow V \},\\
    \label{eq:rep-subset-Q} Q_N=Q_{N,d} & = \{ V \in \Rep{d} \mid \text{there exists a surjection } V \twoheadrightarrow N \}.
\end{align}

\begin{lemma}\label{lemma:estimates}
   We have the following estimates.
   \begin{enumerate}[label=(\roman*)]
       \item $\codim_{\Rep{d}} B_{d'} \ge -\langle d', d'' \rangle$,
       \item $\codim_{\Rep{d}} K_M \ge 1 - \langle d', d \rangle$,
       \item $\codim_{\Rep{d}} Q_N \ge 1 - \langle d, d'' \rangle$.
   \end{enumerate}
\end{lemma}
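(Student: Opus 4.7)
The plan is to bound the dimensions of $B_{d'}$, $K_M$, and $Q_N$ from above by constructing incidence varieties fibered over products of Grassmannians and carrying out a short block-matrix dimension count.

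For part~(i), I would set
\[
  Z_{d'} \colonequals \bigl\{(V,(U_i)) \in \Rep{d} \times \textstyle\prod_{i \in Q_0} \Gr(d'_i, d_i) : V_a(U_{s(a)}) \subseteq U_{t(a)} \text{ for every } a \in Q_1\bigr\}.
\]
The first projection surjects onto $B_{d'}$. The second projection to $\prod_i \Gr(d'_i, d_i)$ realizes $Z_{d'}$ as a subbundle of the pullback of $\Rep{d}$: in the block-triangular form adapted to each inclusion $U_{s(a)} \hookrightarrow V_{s(a)}$, the only constraint on $V_a$ is the vanishing of the $U_{s(a)} \to V_{t(a)}/U_{t(a)}$ block, of rank $d'_{s(a)} d''_{t(a)}$. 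Combining with the Grassmannian dimension $\sum_i d'_i d''_i$ yields $\dim Z_{d'} = \dim \Rep{d} + \langle d',d''\rangle$ and hence the desired inequality.

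For part~(ii), the plan is to refine $Z_{d'}$ by additionally requiring that the induced subrepresentation $U$ is isomorphic to $M$. I would refine the target of the second projection to $\prod_i \Gr(d'_i, d_i) \times \Rep{d'}$, where the second factor records the diagonal block of $V_a$ acting on $U$; this is well-defined locally on an affine cover of the Grassmannian trivializing the tautological subbundles, or equivalently $G_{d'}$-equivariantly. The locus $R^{U\cong M}$ then lies over $\prod_i \Gr(d'_i, d_i)$ times the $G_{d'}$-orbit of $M$, which has dimension $\sum_i d_i'^{2} - \dim\End(M)$, with $\dim\End(M) \ge 1$ because the scalars always lie in $\End(M)$. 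Plugging this into the bundle dimension and rewriting using $\langle d', d''\rangle = \langle d', d\rangle - \langle d', d'\rangle$ together with $\langle d', d'\rangle = \dim\End(M) - \dim\Ext(M,M)$ gives
\[
  \dim K_M \le \dim\Rep{d} + \langle d', d\rangle - \dim\End(M) \le \dim\Rep{d} + \langle d', d\rangle - 1,
\]
which proves~(ii).

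For part~(iii), the dual plan is to parameterize $Q_N$ via pairs $(V, U)$ where $U \subset V$ is a subrepresentation of dimension $d - d''$ such that $V/U \cong N$; the same Grassmannian-bundle argument applied to the quotient block, combined with $\dim\End(N) \ge 1$, gives $\codim_{\Rep{d}} Q_N \ge 1 - \langle d, d''\rangle$. The only mildly delicate point in all three parts is that recording the subrepresentation (or quotient) structure of $U$ requires a local trivialization of the tautological sub- or quotient bundle, which is not globally defined on the Grassmannian; since we use this construction only to extract a dimension estimate, working on an affine cover is harmless, and no serious obstacle arises.
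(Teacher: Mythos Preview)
Your proof is correct and follows essentially the same approach as the paper. The paper phrases the construction via the parabolic subgroup $P \subset \mathrm{G}_d$ stabilizing a fixed coordinate flag and the associated fiber bundle $\mathrm{G}_d \times^P S$ (with $S \subset \Rep{d}$ the block-upper-triangular locus), but since $\mathrm{G}_d/P \cong \prod_i \Gr(d'_i, d_i)$ this is exactly your incidence variety $Z_{d'}$, and the ensuing dimension counts---including the use of $\dim\End(M) \ge 1$, equivalently that the stabilizer of $M$ contains the diagonal $\Gm$---are the same.
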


\begin{proof}
  View $\Rep{d}$ as parameterizing representations $V$ where $V_i = k^{\oplus d_i}$ for each $i$. Let $V'_i \subseteq V_i$ be the subspace spanned by the $d'_i$ first standard basis vectors and define
  the subset of all representations for which $V'$ is a subrepresentation:
  \[
    S \colonequals \{ V     \in \Rep{d} \mid V_a(V'_{s(a)}) \subseteq V'_{t(a)} \text{ for all $a \in Q_1$}\}.
  \]
  A representation $V \in \Rep{d}$ has a subrepresentation of dimension vector $d'$ if and only if it lies in the $\textrm{G}_d$-saturation of $S$, that is,
  \[ B_{d'} = \textrm{G}_d \cdot S. \]

  Consider the parabolic subgroup $P \subseteq \textrm{G}_d$ given by
  \[
    P \colonequals \{ g \in \textrm{G}_d \mid g_i(V'_i) \subseteq V'_i \text{ for all $i \in Q_0$} \}.
  \]
  The subgroup $P$ acts on $S$, which implies that the action map $\textrm{G}_d \times S \to \Rep{d}$ factors as
  \[
  \begin{tikzcd}
      \textrm{G}_d \times S \arrow{d}{} \arrow{dr}{} & \\
      \textrm{G}_d \times^P S \arrow[dashed]{r}{\exists !} & \Rep{d}
  \end{tikzcd}
  \]
  where $\textrm{G}_d \times^P S$ is the associated fiber bundle
  \cite[Proposition 4]{serre:1958}.
  Thus, from the surjection $\textrm{G}_d \times^P S \to B_{d'}$ we obtain the bound
  \begin{equation*}
    \dim B_{d'} \le \dim
    \left( \textrm{G}_d \times^P S \right)
    = \dim \textrm{G}_d + \dim S - \dim P
  \end{equation*}
  and hence
  \begin{equation*}
    \codim_{\Rep{d}} B_{d'} \ge \dim \Rep{d} - \dim S - (\dim \textrm{G}_d - \dim P) = \codim_{\Rep{d}} S - \codim_{\textrm{G}_d} P.
  \end{equation*}
  Now $S$ is a linear subspace of $\Rep{d}$ of codimension $\sum_{a \in Q_1} d'_{s(a)} d''_{t(a)}$ and $P$ is a subgroup of $\textrm{G}_d$ of codimension $\sum_{i \in Q_0} d'_i d''_i$, so we have
  \[ \codim_{\Rep{d}} B_{d'} \ge \sum_{a \in Q_1} d'_{s(a)} d''_{t(a)} - \sum_{i \in Q_0} d'_i d''_i = - \langle d', d'' \rangle, \]
  proving (i).

  We have projection maps $p\colon S \to \Rep{d'}$ and $q\colon S \to \Rep{d''}$ taking a representation $V$ to the subrepresentation $V'$ and the quotient $V'' = V/V'$ respectively. Identifying $M$ and $N$ with points in $\Rep{d'}$ and $\Rep{d''}$ and letting $O_M \subset \Rep{d'}$ and $O_N \subset \Rep{d''}$ denote their orbits under the actions of $G_{d'}$ and $G_{d''}$ respectively, we see that
  \[ K_M = \textrm{G}_{d} \cdot p^{-1}(O_M), \quad Q_N = \textrm{G}_{d} \cdot q^{-1}(O_N) . \]

  To prove (ii) and (iii), we note the group $P$ acts on $\Rep{d'}$ and $\Rep{d''}$ via its natural projections to $\mathrm{G}_{d'}$ and $\mathrm{G}_{d''}$, and that the projections $p$ and $q$ are $P$-equivariant under these actions. Thus, $P$ acts on the preimages $p^{-1}(O_M)$ and $q^{-1}(O_N)$, so we have surjections
  \[
    \textrm{G}_d \times^P p^{-1}(O_M) \to K_M, \quad \textrm{G}_d \times^P q^{-1}(O_N) \to Q_N,
  \]
  from which we obtain the bounds
  \begin{equation*}
    \codim_{\Rep{d}} K_M \ge \dim \Rep{d} - \dim p^{-1}(O_M) - \codim_{\textrm{G}_d} P
  \end{equation*}
  and
  \begin{equation*}
    \codim_{\Rep{d}} Q_N \ge \dim \Rep{d} - \dim q^{-1}(O_N) - \codim_{\textrm{G}_d} P.
  \end{equation*}
  Now the map $p$ is a projection along a linear subspace of dimension $\sum_{a \in Q_1} d''_{s(a)} d_{t(a)}$ so
  \[ \dim p^{-1}(O_M) = \dim O_M + \sum_{a \in Q_1} d''_{s(a)} d_{t(a)}. \]
  On the other hand,
  \[ \dim O_M = \dim \textrm{G}_{d'} - \dim \textrm{Stab}_{\textrm{G}_{d'}}(M) \le \dim \textrm{G}_{d'} - 1 = \sum_{i \in Q_0} (d'_i)^2 - 1,  \]
  as the stabilizer of any representation contains the multiplicative group $\mathbb{G}_\mathrm{m}$. Thus,
  \begin{align*}
      \codim_{\Rep{d}} K_M & \ge \dim \Rep{d} - \dim \textrm{G}_{d'} + 1 - \codim_{\textrm{G}_d} P \\
      & = \sum_{a \in Q_1} d_{s(a)} d_{t(a)} - \sum_{i \in Q_0} (d'_i)^2 + 1 - \sum_{a \in Q_1} d''_{s(a)} d_{t(a)} - \sum_{i \in Q_0} d'_i d''_i \\
      & = 1 + \sum_{a \in Q_1} d'_{s(a)} d_{t(a)} - \sum_{i \in Q_0} d'_i d_i \\
      & = 1 - \langle d', d \rangle,
  \end{align*}
  proving (ii).

  Similarly, as $q$ is a projection along a linear subspace of dimension $\sum_{a \in Q_1} d_{s(a)} d'_{t(a)}$ and $\dim O_N \le \dim \mathrm{G}_{d''} - 1$, the corresponding calculation yields (iii).
\end{proof}

As in \cite{MR1162487} we say that a property of representations holds for a \emph{general} representation of dimension vector~$d$ if there exists a nonempty open substack~$\mathcal{U}$ of~$\modulistack{d}$ such that the property holds for all~$M$ in~$\mathcal{U}$.
This is equivalent to giving a nonempty $\mathrm{G}_{d}$-invariant open subscheme of~$\Rep{d}$.
Note that since $\modulistack{d}$ is irreducible, any nonempty open substack is dense, and in particular any finitely many general properties will hold simultaneously on a dense open substack.

Let $\theta\colon \ZZ^{Q_0} \to \ZZ$ be a stability function. We will use \cref{lemma:estimates} to relate various vanishing results with $\theta$-semistability for representations.
\begin{lemma}\label{lemma:image-with-bound}
  Let $M$ be a $\theta$-semistable representation and let $\epsilon \in \NN^{Q_0}$.
  \begin{enumerate}[label=(\alph*)]
      \item\label{enumerate:image-with-bound-1} Suppose $\theta = \theta_\alpha$ for a dimension vector $\alpha$. If $m$ is sufficiently large, then for a general representation $V$ of dimension vector $m \alpha + \epsilon$, every nonzero map $f\colon V \to M$ satisfies $\theta(\im f) = 0$. In fact, it suffices to take
      \begin{equation}\label{eq:m-inequality-alpha}
        m > \frac{\langle \gamma, \gamma \rangle - \langle \epsilon, \gamma \rangle}{\langle \alpha, \gamma \rangle}
      \end{equation}
      for the finitely many dimension vectors $0 < \gamma < \dimvect M$ such that $\langle \alpha, \gamma \rangle < 0$.

      \item\label{enumerate:image-with-bound-2} Suppose $\theta = \eta_\beta$ for a dimension vector $\beta$. If $m$ is sufficiently large, then for a general representation $V$ of dimension vector $m \beta + \epsilon$, every nonzero map $f\colon M \to V$ satisfies $\theta(\im f) = 0$. In fact, it suffices to take
      \begin{equation}\label{eq:m-inequality-beta}
        m > \frac{\langle \gamma, \gamma \rangle - \langle \gamma, \epsilon \rangle}{\langle \gamma, \beta \rangle}
      \end{equation}
      for the finitely many dimension vectors $0 < \gamma < \dimvect M$ such that $\langle \gamma, \beta \rangle < 0$.
  \end{enumerate}
\end{lemma}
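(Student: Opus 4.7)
The plan is to prove (a) and (b) by dimension counting on $\Rep{d}$ (with $d=m\alpha+\epsilon$ in case (a) and $d=m\beta+\epsilon$ in case (b)), using only \cref{lemma:estimates}\,(i). The key observation is that the image $N\colonequals\im f$ of a nonzero map is either a subrepresentation of~$M$ (case (a)) or a quotient of~$M$ (case (b)), and its dimension vector $\gamma\colonequals\dimvect N$ satisfies $0<\gamma\leq\dimvect M$. Combined with the $\theta$-semistability of~$M$ and $\theta(M)=0$, this already forces $\theta(N)\leq 0$ in case (a) and (via the short exact sequence $0\to\ker f\to M\to N\to 0$) $\theta(N)\geq 0$ in case (b), so only the opposite \emph{strict} inequality on $\theta(N)$ remains to be excluded.

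Fix $\gamma$ with $0<\gamma\leq\dimvect M$ and $\langle\alpha,\gamma\rangle<0$ in case (a), resp.\ $\langle\gamma,\beta\rangle<0$ in case (b). In case (a), the short exact sequence $0\to\ker f\to V\to N\to 0$ places $V$ in the locus $B_{d-\gamma}$ of \cref{lemma:estimates}; in case (b), the subrepresentation $N\subseteq V$ places $V$ in $B_\gamma$. Applying \cref{lemma:estimates}\,(i) yields
\[
\codim_{\Rep{d}} B_{d-\gamma}\ \geq\ \langle\gamma,\gamma\rangle-\langle d,\gamma\rangle,\qquad \codim_{\Rep{d}} B_\gamma\ \geq\ \langle\gamma,\gamma\rangle-\langle\gamma,d\rangle.
\]
Expanding $d$ and then dividing through by the negative quantity $\langle\alpha,\gamma\rangle$, resp.\ $\langle\gamma,\beta\rangle$---which flips the inequality---shows that the strict positivity of each bound is equivalent to \eqref{eq:m-inequality-alpha}, resp.\ \eqref{eq:m-inequality-beta}.

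Since there are only finitely many dimension vectors $\gamma$ with $0<\gamma\leq\dimvect M$, choosing $m$ so that the stated inequality holds simultaneously for every such $\gamma$ makes the union of the corresponding $\textrm{G}_d$-invariant closed loci a proper subset of $\Rep{d}$; its complement gives the desired open set of general~$V$. The one conceptual point where the argument can go astray is the choice of codimension estimate: the sharper-looking estimates \cref{lemma:estimates}\,(ii)--(iii) would require tracking the specific image inside~$M$, hence controlling the dimension of the quiver Grassmannian of subrepresentations of~$M$ of dimension vector~$\gamma$, which is not bounded for us. Instead, one must safely \emph{enlarge} the bad locus to ``$V$ admits any sub- (or quotient-) representation of dimension vector $\gamma$ or $d-\gamma$,'' which is precisely what part~(i) controls.
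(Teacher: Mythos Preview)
Your proposal is correct and follows essentially the same argument as the paper's proof: reduce to ruling out the strict inequality on $\theta(\im f)$ using semistability of $M$, then for each offending $\gamma$ enlarge the bad locus to $B_\gamma$ (resp.\ $B_{d-\gamma}$) and apply \cref{lemma:estimates}\,(i) to get a codimension bound that becomes positive precisely when \eqref{eq:m-inequality-beta} (resp.\ \eqref{eq:m-inequality-alpha}) holds. The paper only writes out case (b) and leaves (a) as dual; your observation that the constraint $\langle\gamma,\beta\rangle<0$ (resp.\ $\langle\alpha,\gamma\rangle<0$) automatically forces $\gamma\neq\dimvect M$ is implicit there as well.
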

\begin{proof}
  We only prove (b), as (a) is dual.
  Since $M$ is $\theta$-semistable, if $f\colon M \to V$ is any nonzero map, then $\theta(\im f) \ge 0$, so it suffices to rule out the case $\theta(\im f) > 0$.

  Let $0 < \gamma < \dimvect M$ be the dimension vector of a quotient representation of $M$ such that $\theta(\gamma) = -\langle \gamma, \beta \rangle > 0$. The subset
  \[
    B \colonequals \{ V \in \Rep{m\beta+\epsilon} \mid \text{there exists } f \in \Hom(M, V) \text{ such that } \dimvect \im f = \gamma \}
  \]
  is contained in the set $B_{\gamma, m\beta+\epsilon}$
  defined in \eqref{eq:rep-subset-B} so from \cref{lemma:estimates} we deduce that
  \begin{align*}
    \codim_{\Rep{m\beta+\epsilon}} B & \ge \codim_{\Rep{m\beta+\epsilon}} B_{\gamma, m\beta+\epsilon} \\
    & \ge - \langle \gamma, m \beta + \epsilon - \gamma \rangle \\
    & = - m \langle \gamma, \beta \rangle - \langle \gamma, \epsilon \rangle + \langle \gamma, \gamma \rangle.
  \end{align*}
  If $m$ satisfies the inequality in \eqref{eq:m-inequality-beta}, we see that $\codim_{\Rep{m\beta+\epsilon}} B > 0$, so for a general representation $V$ of dimension vector $m\beta+\epsilon$, there are no maps $f\colon M \to V$ with $\dimvect \im f = \gamma$.
\end{proof}

Using the above lemma, we obtain $\Hom$-vanishing conditions for stable and semistable representations with respect to $\theta_\alpha$ and $\eta_\beta$.

\begin{corollary}\label{corollary:hom-vanishing-for-stables}
  Let $M$ be a $\theta$-stable representation of dimension $d$ and let $\epsilon \in \NN^{Q_0}$.
  \begin{enumerate}[label=(\alph*)]
      \item Suppose $\theta = \theta_\alpha$ for some $\alpha \in \NN^{Q_0}$ and assume that $\langle \epsilon, d \rangle \le 0$. If $m$ satisfies \eqref{eq:m-inequality-alpha}, then $\Hom(V, M) = 0$ for a general representation $V$ of dimension vector $m \alpha + \epsilon$.
      \item Suppose $\theta = \eta_\beta$ for some $\beta \in \NN^{Q_0}$ and assume that $\langle d, \epsilon \rangle \le 0$. If $m$ satisfies \eqref{eq:m-inequality-beta}, then $\Hom(M, V) = 0$ for a general representation $V$ of dimension vector $m \beta + \epsilon$.
  \end{enumerate}
\end{corollary}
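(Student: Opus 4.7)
The plan is to combine the two lemmas just proved; I focus on part (b), since (a) is dual (one can pass to the opposite quiver). The idea is that Lemma \ref{lemma:image-with-bound}(b) already handles the case of nonzero maps $M \to V$ whose image is a \emph{proper} quotient, while the extra hypothesis $\langle d, \epsilon \rangle \leq 0$ is what is needed to also exclude the only remaining possibility, namely that $f$ is an injection.

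First I would invoke Lemma \ref{lemma:image-with-bound}(b): for $m$ satisfying \eqref{eq:m-inequality-beta}, on some nonempty open $U_1 \subseteq \Rep{m\beta+\epsilon}$, every nonzero map $f \colon M \to V$ has $\theta(\im f) = 0$. For any such $f$, additivity of $\theta$ gives $\theta(\ker f) = \theta(M) - \theta(\im f) = 0$, and $\theta$-stability of $M$ forces $\ker f \in \{0, M\}$. Since $f \neq 0$ rules out $\ker f = M$, we conclude that $f$ must be an injection; so it suffices to produce an open locus of $V$ which admits no injection from $M$.

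For this I would apply Lemma \ref{lemma:estimates}(ii) to the subset $K_M \subseteq \Rep{m\beta+\epsilon}$ with $d' = d$ and total dimension $m\beta+\epsilon$, yielding
\[
  \codim_{\Rep{m\beta+\epsilon}} K_M \geq 1 - \langle d, m\beta + \epsilon \rangle = 1 - m \langle d, \beta \rangle - \langle d, \epsilon \rangle.
\]
Here the key cancellation occurs: since $M$ is $\theta$-semistable we have $\theta(M) = -\langle d, \beta \rangle = 0$, so the $m$-dependent term vanishes, leaving $\codim K_M \geq 1 - \langle d, \epsilon \rangle \geq 1$ by the hypothesis $\langle d, \epsilon \rangle \leq 0$. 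Thus $K_M$ is not dense, and its complement contains a nonempty open $U_2 \subseteq \Rep{m\beta+\epsilon}$.

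Since $\Rep{m\beta+\epsilon}$ is an irreducible affine space, $U_1 \cap U_2$ is a nonempty open, and any $V \in U_1 \cap U_2$ admits neither a proper $\theta = 0$ quotient from $M$ nor an injection from $M$, so $\Hom(M, V) = 0$. There is no real obstacle in this argument; the one point to emphasize is that it is precisely the identity $\langle d, \beta \rangle = 0$ (and its analogue $\langle \alpha, d \rangle = 0$ for part (a), which follows from $\theta(d) = 0$) that makes the codimension bound on $K_M$ independent of $m$, so that no further constraint on $m$ beyond \eqref{eq:m-inequality-beta} is needed.
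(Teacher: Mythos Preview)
Your proof is correct and follows essentially the same approach as the paper: invoke \cref{lemma:image-with-bound}(b) to reduce to the injective case via $\theta$-stability, then use the codimension estimate \cref{lemma:estimates}(ii) on $K_M$ together with $\langle d,\beta\rangle = 0$ and the hypothesis $\langle d,\epsilon\rangle \le 0$ to rule out injections. The paper's write-up is slightly more compressed (it just says ``since $M$ is $\theta$-stable, any such nonzero map is injective'' without unwinding the $\ker f$ argument), but the logic is identical.
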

\begin{proof}
  We prove (b). By \cref{lemma:image-with-bound} we have $\theta(\im f) = 0$ for a general representation $V$ of dimension vector $m \beta + \epsilon$ and any nonzero map $f\colon M \to V$, so since $M$ is $\theta$-stable, any such nonzero map is injective. However, by \cref{lemma:estimates}(ii), the locus $K_M \subseteq \Rep{m \beta + \epsilon}$ defined in \eqref{eq:rep-subset-K} of representations $V$ for which there exists an injection $M \hookrightarrow V$ has codimension
  \[ \codim_{\Rep{m \beta + \epsilon}} K_M \ge
  1 - \langle d, m \beta + \epsilon \rangle = 1 - \langle d, \epsilon \rangle \ge 1 \]
  since by assumption $\langle d, \beta \rangle = 0$ and $\langle d, \epsilon \rangle \le 0$. Thus, a general representation $V$ does not admit such an injection.

  The proof of (a) is dual and uses $Q_M$ as defined in \eqref{eq:rep-subset-Q} in place of $K_M$.
\end{proof}

\begin{corollary} \label{corollary:hom-vanishing}
  Let $M$ be a $\theta$-semistable representation and let $\epsilon \in \NN^{Q_0}$.
  \begin{enumerate}[label=(\alph*)]
    \item\label{enumerate:hom-vanishing-alpha}
      Suppose that $\theta = \theta_\alpha$ for a dimension vector $\alpha$
      and assume that $\langle \epsilon, \gamma \rangle \leq 0$ for the dimension vectors $\gamma$
      of all $\theta$-stable subquotients of $M$.
      If $m$ satisfies \eqref{eq:m-inequality-alpha}, then
      for a general representation $V$ of dimension vector~$m\alpha+\epsilon$,
      we have $\Hom(V,M) = 0$.
    \item\label{enumerate:hom-vanishing-beta}
      Suppose that $\theta = \eta_\beta$ for a dimension vector $\beta$
      and assume that $\langle \gamma, \epsilon \rangle \leq 0$ for the dimension vectors $\gamma$
      of all $\theta$-stable subquotients of $M$.
      If $m$ satisfies \eqref{eq:m-inequality-beta}, then
      for a general representation $V$ of dimension vector~$m\beta+\epsilon$,
      we have $\Hom(M,V) = 0$.
  \end{enumerate}
\end{corollary}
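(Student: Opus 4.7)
The strategy is to reduce to the $\theta$-stable case treated in \cref{corollary:hom-vanishing-for-stables} via the Jordan--Hölder filtration; I focus on part~(b), as (a) is dual. Since $M$ is $\theta$-semistable, it admits a Jordan--Hölder filtration
\[
  0 = M^0 \subsetneq M^1 \subsetneq \cdots \subsetneq M^r = M
\]
whose successive quotients $N_\ell \colonequals M^\ell/M^{\ell-1}$ are $\theta$-stable; set $\gamma_\ell \colonequals \dimvect N_\ell$. Each $N_\ell$ is a $\theta$-stable subquotient of~$M$, so the hypothesis yields $\langle \gamma_\ell, \epsilon \rangle \leq 0$, while $\theta$-stability of $N_\ell$ gives $\langle \gamma_\ell, \beta \rangle = -\theta(\gamma_\ell) = 0$. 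Either $r = 1$ and $M$ is itself $\theta$-stable, in which case \cref{corollary:hom-vanishing-for-stables}\,(b) applies directly, or $r \geq 2$ and each $\gamma_\ell$ is strictly less than $\dimvect M$ coordinate-wise, so that $\{\gamma : 0 < \gamma < \gamma_\ell\} \subseteq \{\gamma : 0 < \gamma < \dimvect M\}$, and the assumed bound on $m$ is already sufficient to apply \cref{corollary:hom-vanishing-for-stables}\,(b) to each $N_\ell$ in place of $M$.

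Doing so, for each $\ell \in \{1, \ldots, r\}$ I obtain a nonempty open substack of $\modulistack{m\beta+\epsilon}$ on which $\Hom(N_\ell, V) = 0$. Since $\modulistack{m\beta+\epsilon}$ is irreducible, the finite intersection of these open substacks is again nonempty open, so a general representation $V$ of dimension vector $m\beta + \epsilon$ satisfies $\Hom(N_\ell, V) = 0$ for all $\ell$ simultaneously. I then conclude by induction on $\ell$: applying the left-exact functor $\Hom(\blank, V)$ to the short exact sequence $0 \to M^{\ell-1} \to M^\ell \to N_\ell \to 0$ yields an exact sequence
\[
  0 \to \Hom(N_\ell, V) \to \Hom(M^\ell, V) \to \Hom(M^{\ell-1}, V),
\]
and vanishing of the two outer terms---the first by the previous step, the third by induction starting from $\Hom(M^0, V) = 0$---forces $\Hom(M^\ell, V) = 0$. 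Taking $\ell = r$ gives the desired $\Hom(M, V) = 0$.

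There is no substantial obstacle in this argument; the one subtle bookkeeping point, already flagged above, is that the bound \eqref{eq:m-inequality-beta} in the hypothesis runs over dimension vectors $0 < \gamma < \dimvect M$, whereas applying \cref{corollary:hom-vanishing-for-stables} to each $N_\ell$ nominally requires it to run over $0 < \gamma < \gamma_\ell$. The strict inequality $\gamma_\ell < \dimvect M$ that holds whenever $r \geq 2$ makes the inclusion of index sets automatic, and the case $r = 1$ is literally the stable case.
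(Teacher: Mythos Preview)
Your proof is correct and follows essentially the same approach as the paper: reduce to the stable case via a Jordan--H\"older filtration, apply \cref{corollary:hom-vanishing-for-stables} to each stable subquotient, and then induct along the filtration using left-exactness of $\Hom(\blank,V)$. One small imprecision: the claim that $\gamma_\ell$ is \emph{strictly less than $\dimvect M$ coordinate-wise} can fail (a stable subquotient may have full dimension at some vertex), but the inclusion $\{0<\gamma<\gamma_\ell\}\subseteq\{0<\gamma<\dimvect M\}$ you actually need already follows from $\gamma_\ell\le\dimvect M$ componentwise together with $\gamma_\ell\neq\dimvect M$ when $r\ge 2$.
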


\begin{proof}
    We show the (b), as (a) is analogous.
    Let $M^1, \ldots, M^r$ denote the $\theta$-stable subquotients
    of a Jordan-H\"older filtration of $M$.
    By assumption
    $\langle \dimvect M^\ell , \epsilon \rangle \le 0$,
    so by \cref{corollary:hom-vanishing-for-stables},
    a general representation $V$ of dimension vector~$m\beta+\epsilon$ satisfies
    \[
      \Hom(M^\ell, V) = 0
    \]
    for each $\ell$.
    By breaking up the Jordan-H\"older filtration of $M$ into
    short exact sequences, we inductively deduce that
    $\Hom(M, V)$ vanishes for a general $V$ of dimension vector~$m\beta+\epsilon$.
\end{proof}

\Cref{corollary:hom-vanishing} can be used to derive a characterization of semistability in terms of vanishing of $\Hom$ and $\Ext$. Note that for representations~$M$ and~$N$ such that $\langle\dimvect M,\dimvect N \rangle = 0$, we have $\Hom(M,N) = 0$ if and only if $\Ext(M,N) = 0$.

\begin{proposition} \label{proposition:sst}
  Let $\theta$ be a stability function and let $M$ be a representation with $\theta(M)=0$.
  \begin{enumerate}[label=(\alph*)]
    \item\label{enumerate:sst-alpha}
      If $\theta = \theta_\alpha$ for a dimension vector $\alpha$,
      then $M$ is $\theta$-semistable if and only if there exists $m > 0$ and a representation $V$ of dimension vector $m \alpha$ such that $\Hom(V, M) = 0$.
    \item\label{enumerate:sst-beta}
      If $\theta = \eta_\beta$ for a dimension vector $\beta$,
      then $M$ is $\theta$-semistable if and only if there exists $m > 0$ and a representation $V$ of dimension vector $m \beta$ such that $\Hom(M, V) = 0$.
  \end{enumerate}
\end{proposition}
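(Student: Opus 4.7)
The plan is to prove case (b); case (a) follows by an entirely dual argument, replacing subrepresentations with quotient representations and $\beta$ with $\alpha$ (or equivalently, by passing to the opposite quiver).

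For the forward implication, assume $M$ is $\theta$-semistable. The existence of a suitable $V$ is essentially a direct specialization of Corollary \ref{corollary:hom-vanishing}\ref{enumerate:hom-vanishing-beta} to $\epsilon = 0$: the auxiliary hypothesis $\langle \gamma, 0 \rangle = 0 \leq 0$ holds trivially, so choosing any $m$ large enough to satisfy \eqref{eq:m-inequality-beta} for the finitely many dimension vectors $\gamma < d$ with $\langle \gamma, \beta \rangle < 0$ produces a general representation $V$ of dimension vector $m\beta$ with $\Hom(M, V) = 0$. In particular, such a $V$ exists.

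For the backward implication, I would argue by contradiction. Suppose $M$ is not $\theta$-semistable. Then there exists a subrepresentation $M' \subsetneq M$ with $\theta(M') > 0$, equivalently $\langle M', \beta \rangle < 0$. For every representation $V$ of dimension vector $m\beta$ with $m \geq 1$, the Euler pairing gives
\[
  \langle M', V \rangle \;=\; m\,\langle M', \beta \rangle \;<\; 0,
\]
which forces $\Ext(M', V) \neq 0$. Since $\Catrep$ is hereditary, the short exact sequence $0 \to M' \to M \to M/M' \to 0$ induces a long exact sequence terminating in a surjection $\Ext(M, V) \twoheadrightarrow \Ext(M', V)$, so $\Ext(M, V) \neq 0$. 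Using the hypothesis $\theta(M) = 0$, we compute $\langle M, V \rangle = -m\,\theta(M) = 0$, whence $\dim \Hom(M, V) = \dim \Ext(M, V) > 0$. Thus $\Hom(M, V) \neq 0$ for \emph{every} choice of $m \geq 1$ and every $V$ of dimension vector $m\beta$, contradicting the hypothesis.

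The substantive work, namely the dimension counting that yields the vanishing $V$ in the forward direction, has already been established in Lemma \ref{lemma:estimates} and propagated through Corollaries \ref{corollary:hom-vanishing-for-stables} and \ref{corollary:hom-vanishing}, so no significant obstacle remains. The backward direction is a short bookkeeping exercise with the Euler pairing and the long exact sequence in a hereditary category.
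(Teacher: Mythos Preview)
Your proof is correct and essentially matches the paper's approach: the forward direction is obtained as the special case $\epsilon = 0$ of \cref{corollary:hom-vanishing}, and the backward direction uses the long exact sequence from $0 \to M' \to M \to M/M' \to 0$ together with the Euler pairing identity $\langle M, V \rangle = 0$. The only cosmetic difference is that the paper proves the backward direction directly (assuming $\Hom(M,V)=0$ and deducing $\theta(M') \le 0$ for every $M' \subseteq M$), whereas you argue via the contrapositive; the underlying computation is the same.
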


\begin{proof}
  We will prove \ref{enumerate:sst-beta},
  as the proof of \ref{enumerate:sst-alpha} is dual.
  The forward implication of \ref{enumerate:sst-beta} is a special case of \cref{corollary:hom-vanishing} with $\epsilon = 0$.
  For the other direction,
  let $M' \subseteq M$ be a subrepresentation of dimension vector $d'$.
  After applying $\Hom(\blank,V)$ to the short exact sequence
  \[ 0 \to M' \to M \to M/M' \to 0,
  \]
  we get $\Ext(M', V) = 0$, and therefore
  \begin{align*}
    m \eta_\beta(\dimvect M') &=
    - \langle \dimvect M', m \beta \rangle
    = - \dim \Hom(M',V) + \dim \Ext(M',V) \\
    &= - \dim \Hom(M', V) \leq 0.
  \end{align*}
  This shows that $M$ is $\eta_\beta$-semistable.
\end{proof}

\begin{remark}
  \Cref{proposition:sst} appears as \cite[Corollary~1.1]{MR1908144} in characteristic $0$,
  using possibly infinite-dimensional representations when $Q$ is not acyclic,
  and with a proof using GIT methods.
  Our result stays completely within the realm of finite-dimensional representations
  and holds in all characteristics.
  In arbitrary characteristic the forward implication is proved in \cite[Corollary~2]{MR1348149}.

  In addition, \cref{proposition:sst} is the quiver analogue of
  Faltings's characterization of semistability for vector bundles, and more generally Higgs bundles, on a curve \cite[Theorem~I.2]{MR1211997}.
\end{remark}

\subsection{Effective bounds for vanishing of \texorpdfstring{$\Hom$}{Hom}}

We use the inequalities of \cref{lemma:image-with-bound} to derive an
upper bound for the vanishing of $\Hom$ and $\Ext$ that only depends on the underlying undirected graph of $Q$. This will turn into an upper bound for global generation of determinantal line bundles in \cref{section:projectivity}.

Recall from \cref{sec:quiverrepr} that we denote the Euler matrix of $Q$ by $A$ for a choice of an ordering of the vertices. Let
$B = \frac{1}{2} \left( A + A^{\mathrm{T}} \right)$ be the symmetrization of $A$.
These matrices $A$ and $B$ define the same quadratic form, called the \emph{Tits form}, that for any vector $x\in \ZZ^{Q_0}$ associates its self-pairing:
\[
  \langle x , x \rangle = x^{\mathrm{T}} A x = x^{\mathrm{T}} B x.
\]
Notice that the Tits form is independent of the orientation of the quiver.
In addition to the Euler pairing, we will also use the standard
inner product on $\ZZ^{Q_0}$ and write the induced norm of $x$ as $\|x\|$.

\begin{proposition}
  \label{proposition:effective-bounds}
  Let $d \in \NN^{Q_0}$ be a dimension vector
  and let $\theta$ be a stability function such that~$\theta(d)=0$.
  Denote
  \[
    \lambda\colonequals-\min\{\mu\mid\text{$\mu$ eigenvalue of $B$}\}
  \]
  and let~$m$ be a positive integer greater than~$\lambda\|d\|^2$.

  \begin{enumerate}[label=(\alph*)]
    \item If $\theta = \theta_\alpha = \langle \alpha , \blank \rangle$ for a dimension vector $\alpha$, then for every $\theta$-semistable representation $M$ of dimension vector $d$, a general representation $V$ of dimension vector $m \alpha$ satisfies
    \[
      \Hom(V,M) = \Ext(V,M) = 0.
    \]
    \item If $\theta = \eta_\beta = - \langle \blank , \beta \rangle$ for a dimension vector $\beta$, then for every $\theta$-semistable representation $M$ of dimension vector $d$, a general representation $V$ of dimension vector $m \beta$ satisfies
    \[
      \Hom(M,V) = \Ext(M,V) = 0.
    \]
  \end{enumerate}
\end{proposition}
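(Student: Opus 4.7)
The plan is to apply \cref{corollary:hom-vanishing} with $\epsilon = 0$, reducing the statement to a uniform bound on the ratio appearing in inequality~\eqref{eq:m-inequality-beta} (respectively~\eqref{eq:m-inequality-alpha}). I will discuss only part~(b), since part~(a) is dual. Taking $\epsilon = 0$, the hypothesis $\langle \gamma, \epsilon \rangle \leq 0$ of \cref{corollary:hom-vanishing}\ref{enumerate:hom-vanishing-beta} is automatic, and the inequality to verify becomes $m > \langle \gamma, \gamma \rangle / \langle \gamma, \beta \rangle$, ranging over all dimension vectors $0 < \gamma < d$ with $\langle \gamma, \beta \rangle < 0$.

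The heart of the argument will be a uniform estimate showing that this ratio is bounded above by $\lambda \|d\|^2$ for every such $\gamma$. For the numerator, I use that the Tits form is given by $\langle \gamma, \gamma \rangle = \gamma^{\mathrm{T}} B \gamma$ together with the fact that $-\lambda$ is by definition the smallest eigenvalue of the symmetric matrix $B$; this yields $\langle \gamma, \gamma \rangle \geq -\lambda \|\gamma\|^2$, and since $\gamma \leq d$ componentwise we get $\|\gamma\|^2 \leq \|d\|^2$, hence $\langle \gamma, \gamma \rangle \geq -\lambda \|d\|^2$. For the denominator, I simply note that $\langle \gamma, \beta \rangle$ is a negative integer, and so at most $-1$ in absolute value in the right direction. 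If the numerator is nonnegative the ratio is already $\leq 0 < m$, while otherwise the ratio equals $|\langle \gamma, \gamma \rangle|/|\langle \gamma, \beta \rangle| \leq \lambda \|d\|^2 < m$, giving the bound.

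With \cref{corollary:hom-vanishing} now applicable, I obtain $\Hom(M, V) = 0$ for a general $V$ of dimension vector $m\beta$. The vanishing $\Ext(M, V) = 0$ then comes for free from the Euler pairing: the assumption $\theta(d)=0$ unravels as $\langle d, \beta \rangle = 0$, so $\langle M, V \rangle = m \langle d, \beta \rangle = 0$, and the identity $\langle M, V \rangle = \dim\Hom(M,V) - \dim\Ext(M,V)$ forces the two dimensions to coincide. I do not anticipate any genuine obstacle: the lemma and corollary of \cref{subsection:characterizing-semistable-reps} have already packaged the bulk of the geometric work, and the one substantive new input — the spectral bound on the Tits form — is elementary linear algebra over $\RR$.
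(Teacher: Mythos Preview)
Your proposal is correct and follows essentially the same approach as the paper: both apply \cref{corollary:hom-vanishing} with $\epsilon = 0$ and bound the ratio $\langle \gamma,\gamma\rangle / \langle \gamma,\beta\rangle$ via the spectral inequality $\langle \gamma,\gamma\rangle \geq -\lambda\|\gamma\|^2$ together with $\|\gamma\| \leq \|d\|$ and $|\langle\gamma,\beta\rangle| \geq 1$. The only cosmetic difference is that the paper separates out the Dynkin and extended Dynkin cases explicitly (where $\lambda \leq 0$ and no $\gamma$ with $\langle\gamma,\gamma\rangle < 0$ exists), whereas your argument handles these implicitly through the case split on the sign of the numerator.
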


\begin{proof}
We will only prove (b), as the argument for (a) is identical and leads to the same bound.
Given a $\theta$-semistable representation $M$ of dimension vector $d$ and a positive integer $m$, \cref{corollary:hom-vanishing} with $\epsilon = 0$ implies that as soon as
\[ m > f(\gamma) \coloneqq \frac{\langle \gamma, \gamma \rangle}{\langle \gamma, \beta \rangle} \]
for the finitely many dimension vectors $0 < \gamma < d$ for which $\langle \gamma, \beta \rangle < 0$, we have $\Hom(M, V) = \Ext(M, V) = 0$ for a general representation $V$ of dimension vector $m \beta$.

Clearly it is enough to consider only those $\gamma$ for which $\langle \gamma, \gamma \rangle < 0$, since otherwise 
$f(\gamma) \le 0$.
If $Q$ is either a Dynkin or extended Dynkin quiver,
there are no such $\gamma$, and we may take any $m \ge 1$.
Hence we will assume that $Q$ is not of these types, or equivalently~$\lambda>0$.
We now prove the claim by showing that $f(\gamma) \leq \lambda ||d||^2$ for all dimension vectors $0 < \gamma < d$ for which $\langle \gamma, \beta \rangle < 0$ and $\langle \gamma,\gamma \rangle < 0$.

Since the denominator of $f(\gamma)$ is assumed to be negative, in order to obtain an upper bound for $f(\gamma)$, we need to minimize the numerator.
Notice that since $B$ is symmetric, the minimal value of the Tits form on the unit sphere is
\[ -\lambda = \min\left\{ \langle \rho, \rho \rangle \mid \rho \in \RR^{Q_0}, \|\rho\| = 1 \right\}. \]
For~$\gamma < d$ that satisfies $\langle \gamma, \beta \rangle < 0$ and $\langle \gamma, \gamma \rangle \leq 0$, write $\gamma = \|\gamma\| \gamma_0$ where $\|\gamma_0\| = 1$.
We now have the following inequalities between nonnegative numbers
\[ \|\gamma\| < \|d\| ,
    \quad \frac{1}{|\langle \gamma, \beta \rangle|} \leq 1 ,
    \quad | \langle \gamma_0, \gamma_0 \rangle| \leq \lambda,
\]
where the third one follows from the assumption $\langle \gamma, \gamma \rangle < 0$. Combining these inequalities gives the estimate
\[ f(\gamma) = \frac{\langle \gamma, \gamma \rangle}{\langle \gamma, \beta \rangle}
\le
\frac{\|\gamma\|^2 \cdot |\langle \gamma_0, \gamma_0 \rangle|}
{|\langle \gamma, \beta \rangle|}
\le
\frac{\|d\|^2 \lambda}{|\langle \gamma, \beta \rangle|} \le
\lambda \|d\|^2. \]
\end{proof}

\begin{example}
  \label{example:nKronecker}
  The $n$-Kronecker quiver
  \begin{equation*}
    Q\colon
    \begin{tikzcd}[every label/.append style={font=\small}]
     1 \arrow[r, draw=none, "\raisebox{+1.5ex}{\vdots}" description] \arrow[r, bend left] \arrow[r, bend right, swap] & 2
    \end{tikzcd}
  \end{equation*}
  has Tits matrix $\begin{psmallmatrix} 1 & -\frac{n}{2} \\ -\frac{n}{2} & 1 \end{psmallmatrix}$. It follows that its eigenvalues are $1\pm\frac{n}{2}$ and so
  $\lambda= \frac{n}{2}-1$, which is positive when $n=1$ and zero when $n=2$.
\end{example}

\begin{remark}
  \label{remark:extended-dynkin}
  In \cref{example:nKronecker}, the case where~$n=1,2$ is a Dynkin, resp.~extended Dynkin quiver.
  As observed in the proof of \cref{proposition:effective-bounds} any~$m\geq 1$ will work.
  This is consistent with the effective basepoint-freeness results following from \cref{proposition:effective-bounds}
  discussed in \cref{subsection:global-generation},
  because by \cite[Theorem~3.1]{MR2747139} the moduli spaces are all affine or projective spaces.
  
  In certain cases, the effective bounds described in \cref{proposition:effective-bounds} are not optimal. For instance let $Q$ be the $(n+1)$-Kronecker quiver, $d=(1,1)$, and $\theta=\eta_\beta$ with $\beta=(n,1)$. \Cref{proposition:effective-bounds} shows that $\Hom(M,V) = 0$ for $M$ a $\theta$-semistable representation of $Q$ of dimension $d$ and $V$ a general representation of dimension $m \beta$ for $m \geq 2(n-1)$. A direct computation shows that it is in fact enough to let $m$ be any positive integer. 
\end{remark}

\subsection{Auslander--Reiten translations and semistability}
\label{subsection:ar-translations-vs-semistability}

In this section we investigate how stability behaves under the Auslander--Reiten translations, and therefore assume throughout that $Q$ is acyclic. For this reason, whenever we write $\theta = \theta_\beta$ or $\theta = \eta_\beta$, we implicitly assume that $\beta$ is a dimension vector.

Only \cref{lemma:auslander-reiten-semistable} from this section is used as an ingredient to our main theorem. The other results are included to give a complete picture of the interaction between stability and the translation functors, which does not seem to appear in the literature.

Let $\tau, \tau^-\colon \Catrep \to \Catrep$ denote the Auslander--Reiten translations defined in \cref{subsection:auslander-reiten-translation}.
Let $\theta$ be a stability function for $Q$; recall from \cref{lemma:every-theta-is-theta-alpha} that since $Q$ is acyclic, the stability function $\theta$ can be identified with $\theta_\alpha$ (resp. $\eta_\beta$) for a unique dimension vector $\alpha$ (resp. $\beta$).

\begin{lemma}
  \label{lemma:auslander-reiten-semistable}
  Let $M$ be a $\theta$-semistable representation of $Q$.
  \begin{enumerate}[label=(\alph*)]
    \item \label{item:auslander-reiten-semistable-a}
    If $\theta = \theta_\alpha$, then $\tau^- M$ is $\eta_\alpha$-semistable.
    \item \label{item:auslander-reiten-semistable-b}
    If $\theta = \eta_\beta$, then $\tau M$ is $\theta_\beta$-semistable.
  \end{enumerate}
\end{lemma}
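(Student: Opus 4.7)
The plan is to prove (b); part (a) follows by a completely dual argument, swapping $\tau \leftrightarrow \tau^-$, projective and injective summands, and the two halves of Auslander--Reiten duality. The strategy combines the Hom-vanishing characterization of semistability from \cref{proposition:sst} with the duality formulas of \cref{proposition:AR-translation}\ref{item:AR-duality}.

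First I will reduce to the case where $M$ has no projective summands. Decomposing $M = M' \oplus P$ with $P$ projective and $M'$ without projective summands, one has $\tau M = \tau M'$, so it suffices to treat $M'$. Semistability of $M$ forces $\eta_\beta(P) \le 0$ and $\eta_\beta(M') \le 0$, and their sum $\eta_\beta(M)$ vanishes, so in fact $\eta_\beta(P) = \eta_\beta(M') = 0$; since every subrepresentation of $M'$ is one of $M$, the summand $M'$ is itself $\eta_\beta$-semistable with $\eta_\beta(M') = 0$. After this reduction the sharper Euler-pairing identity $\langle -, \tau M \rangle = -\langle M, - \rangle$ from \cref{proposition:AR-translation}\ref{item:AR-duality} becomes available.

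The slope condition is then immediate: $\theta_\beta(\tau M) = \langle \beta, \tau M \rangle = -\langle M, \beta \rangle = \eta_\beta(M) = 0$. To upgrade this to full semistability I will invoke \cref{proposition:sst}\ref{enumerate:sst-alpha}, which reduces the problem to producing a representation $V$ of dimension vector $m\beta$ (for some $m > 0$) with $\Hom(V, \tau M) = 0$. Such a $V$ will come from the semistability of $M$ itself: by \cref{proposition:sst}\ref{enumerate:sst-beta} there exists $V$ of dimension $m\beta$ with $\Hom(M, V) = 0$, and then the sharper AR duality isomorphism $\Ext(V, \tau M) \cong \Hom(M, V)^\vee$ (valid because $M$ has no projective summands) yields $\Ext(V, \tau M) = 0$. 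Combined with $\langle V, \tau M \rangle = -m \langle M, \beta \rangle = 0$, taking Euler characteristics forces $\Hom(V, \tau M) = 0$ as well, as required.

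I do not expect any serious obstacle. The only real subtlety is the careful disposal of projective (respectively injective) summands so that the sharper half of Auslander--Reiten duality is available, and this is handled cleanly by the semistability hypothesis as in the reduction above.
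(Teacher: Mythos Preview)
Your proposal is correct and follows essentially the same approach as the paper: reduce to the case of no projective summands (using that direct summands of a semistable object are semistable), invoke \cref{proposition:sst}\ref{enumerate:sst-beta} to obtain $V$ with $\Hom(M,V)=0$, apply the sharper Auslander--Reiten duality $\Ext(V,\tau M)\cong \Hom(M,V)^\vee$ to get $\Ext(V,\tau M)=0$, and combine with $\langle V,\tau M\rangle = 0$ to conclude $\Hom(V,\tau M)=0$, whence $\theta_\beta$-semistability via \cref{proposition:sst}\ref{enumerate:sst-alpha}. The only difference is cosmetic: the paper does the no-projective-summand case first and then the reduction, whereas you do the reduction upfront.
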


\begin{proof}
  We only show the second assertion, as the first follows similarly.
  By \cref{proposition:sst}~\ref{enumerate:sst-beta} there exists $m>0$
  and a representation $V$ of dimension vector $m \beta$ such that $\Hom(M,V) = 0$.
  Assume first that $M$ has no projective summands.
  By Auslander--Reiten duality we have
  \[
    \Ext(V, \tau M) \cong \Hom(M, V)^\vee = 0.
  \]
  Moreover, from \cref{proposition:AR-translation}~\ref{item:AR-duality} we have
  \[
    \dim \Hom(V, \tau M) - \dim \Ext(V, \tau M) = \langle V, \tau M \rangle = - \langle M, V \rangle = - m \langle \dimvect M, \beta \rangle = 0
  \]
  since $M$ is $\eta_\beta$-semistable. Hence also $\Hom(V, \tau M) = 0$, and so $\tau M$ is $\theta_\beta$-semistable by \cref{proposition:sst}~\ref{enumerate:sst-alpha}.

  In the general case, we can decompose $M$ into indecomposable summands
  and this decomposition is unique up to isomorphism. Thus, we can write $M = U \oplus P$, where $P$ is projective and $U$ has no projective summands.
  Since $M$ is $\eta_\beta$-semistable, both summands $U$ and $P$ are also $\eta_\beta$-semistable. As $U$ has no projective summands, we conclude that $\tau U$ is $\theta_\beta$-semistable. Moreover,
  \[
    \tau M = \tau U \oplus \tau P = \tau U,
  \] and so $\tau M$ is $\theta_\beta$-semistable as well.
\end{proof}

\begin{lemma}\label{lemma:analysis-stability}
  Let $M$ be a $\theta$-stable representation.
  \begin{enumerate}[label=(\alph*)]
    \item \label{item:analysis-stability-a} Suppose $\theta = \theta_\alpha$. If $\supp M \not\subset \supp \alpha$, then $\supp M \setminus \supp \alpha = \{j\}$ and $M \cong I_{Q'}(j)$, where $I_{Q'}(j)$ is the indecomposable injective of the full subquiver $Q'$ supported on $\supp \alpha \cup \{ j \}$, viewed as a representation of $Q$.
    \item\label{item:analysis-stability-b} Suppose $\theta = \eta_\beta$. If $\supp M \not\subset \supp \beta$, then $\supp M \setminus \supp \beta = \{j\}$ and $M \cong P_{Q'}(j)$, where $P_{Q'}(j)$ is the indecomposable projective of the full subquiver $Q'$ supported on $\supp \beta \cup \{ j \}$, viewed as a representation of $Q$.
  \end{enumerate}
\end{lemma}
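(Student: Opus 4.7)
I will prove (b); part (a) follows by the dual argument, replacing $Q$ with $Q^{\op}$ and the indecomposable projective with the indecomposable injective via the duality $\dual$. The plan is to produce an isomorphism $M \cong P_{Q'}(j)$ in two stages: first a surjection $P_Q(j)\twoheadrightarrow M$ obtained from the universal property of the projective, whose kernel is controlled by stability; then a bootstrap of the same computation inside the subquiver $Q'$ that promotes this surjection to an isomorphism.

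Fix any $j \in \supp M\setminus\supp\beta$ and a nonzero $v\in M_j$. The universal property of $P_Q(j)$ produces a morphism $\phi\colon P_Q(j)\to M$ with $\phi(\epsilon_j)=v$. Since $\Catrep$ is hereditary, $K:=\ker\phi$ is projective, hence $K \cong \bigoplus_l P_Q(l)^{\oplus m_l}$. Using $\langle P_Q(l),\beta\rangle=\beta_l$ and $\beta_j=0$, a short Euler-form computation on the short exact sequence $0\to K\to P_Q(j)\to \im\phi\to 0$ yields $\theta(\im\phi)=\sum_l m_l\beta_l\geq 0$. Stability of $M$ then forces $\im\phi=M$ (so $\phi$ is surjective) and $m_l=0$ for every $l\in\supp\beta$.

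Next I deduce uniqueness of $j$. The surjection forces $\supp M\subseteq R_j$, the set of vertices reachable from $j$ in $Q$. Applying the same construction to another $j'\in\supp M\setminus\supp\beta$ gives $\supp M\subseteq R_{j'}$, so $j\in R_{j'}$ and $j'\in R_j$; acyclicity of $Q$ forces $j=j'$. In particular $\supp M\subseteq Q'_0$, so $M$ may be viewed as a representation of $Q'$. A short check (arrows of $Q$ outside $Q'$ act as zero on $M$, and $\supp\beta\subseteq Q'_0$) shows that the subrepresentations of $M$ and the values of $\theta$ on them agree whether computed in $Q$ or $Q'$; hence $M$ is $\eta_{\beta|_{Q'}}$-stable as a representation of $Q'$.

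Finally, I rerun the first step inside $Q'$, where $\beta|_{Q'}$ is now supported on all of $Q'_0\setminus\{j\}$. The resulting surjection $\phi'\colon P_{Q'}(j)\twoheadrightarrow M$ has projective kernel $\bigoplus_l P_{Q'}(l)^{\oplus m'_l}$; the Euler-form step forces $m'_l=0$ for \emph{every} $l\in Q'_0\setminus\{j\}$, while the indecomposability $\End P_{Q'}(j)=k$ together with $M\neq 0$ rules out a summand at $l=j$. Hence the kernel vanishes and $\phi'$ is the desired isomorphism $M\cong P_{Q'}(j)$.

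I expect the final step to be the main obstacle: a priori, $P_{Q'}(j)$ viewed as a representation of $Q$ is neither projective nor visibly $\theta$-stable, and the kernel of $P_Q(j)\twoheadrightarrow M$ need not coincide with the kernel of $P_Q(j)\twoheadrightarrow P_{Q'}(j)$. The essential trick is the reduction to $Q'$: by restricting attention to a smaller acyclic quiver where $\supp\beta|_{Q'}\cup\{j\}$ exhausts $Q'_0$, the Euler-form computation of the first step eliminates every potential summand of the kernel.
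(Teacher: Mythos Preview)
Your proof is correct and shares the paper's core idea (map the relevant indecomposable projective into $M$, analyze the kernel via the Euler form, and invoke stability), but the organization differs in a notable way.

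The paper works inside $Q'$ from the very start and, rather than picking an arbitrary vertex, chooses $j$ to be a \emph{sink} of the subquiver $Q''$ on $\supp M \setminus \supp \beta$. This sink choice is the decisive trick: any indecomposable summand $P_{Q'}(i)$ of the kernel of $P_{Q'}(j)\to M$ must admit a path $j\to i$ of positive length in $Q'$, and since $j$ is a sink of $Q''$ this forces $i\in\supp\beta$. The kernel computation then \emph{immediately} yields a contradiction unless the kernel vanishes, so injectivity and surjectivity come out in a single pass. Uniqueness of $j$ is deduced afterward by observing that any other sink $j'$ would give $M\cong P_{Q'}(j')$.

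Your argument instead picks an arbitrary $j\in\supp M\setminus\supp\beta$, works first in $Q$ to get surjectivity of $P_Q(j)\twoheadrightarrow M$ and uniqueness of $j$ (via reachability and acyclicity), and only then passes to $Q'$ to kill the kernel. This costs you a second pass, since in $Q$ the kernel may well have summands $P_Q(l)$ with $l\notin\supp\beta$. The payoff is that you never need the sink observation, and the reduction to $Q'$ is made fully explicit. Both routes are clean; the paper's sink trick buys brevity, while yours is perhaps more transparent about why the passage to $Q'$ is necessary.
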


\begin{proof}
  We again only show (b). Assume that $\supp M \not\subset \supp \beta$. Let $Q'' \subset Q' \subseteq Q$ denote the full subquivers on $\supp M \setminus \supp \beta$ and $\supp M \cup \supp \beta$ respectively. We first observe that if $V$ and $W$ are representations supported on $Q'$, then it follows from \eqref{eq:exact-seq-hom-ext} that
  \[ \Hom_{Q}(V, W) = \Hom_{Q'}(V, W) \quad \mathrm{and} \quad \Ext_{Q}(V, W) = \Ext_{Q'}(V, W), \]
  so we may drop the subscripts. The subquiver $Q''$ is also acyclic so it has a sink $j$, and by assumption $\beta_j = 0$ and $\dim M_j > 0$. Consider the projective representation $P_{Q'}(j)$ of $Q'$. For any representation $V$ supported on $Q'$, we have
  \[
    \langle P_{Q'}(j), V \rangle = \dim \Hom(P_{Q'}(j), V) = \dim V_j
  \]
  because $\Ext(P_{Q'}(j), V) = 0$ and $\Hom(P_{Q'}(j), V) \cong V_j$. This implies that
  \[
    \eta_\beta(P_{Q'}(j)) = -\langle \dimvect P_{Q'}(j), \beta \rangle = -\beta_j = 0
  \]
  whereas $\Hom(P_{Q'}(j), M) \cong M_j \neq 0$. Let $f \in \Hom(P_{Q'}(j), M)$ be a nonzero homomorphism and consider its kernel $P$. We have $P \subsetneq P_{Q'}(j)$, and since the category $\Catrep[k][Q']$ is hereditary, $P$ is again a projective representation of $Q'$.

  Suppose that $P \neq 0$. Any indecomposable direct summand of $P$ is of the form $P_{Q'}(i)$ for some vertex $i \in Q'_0$ for which there exists a path $j \to i$ in $Q'$ because it must embed into $P_{Q'}(j)$, and this path cannot have length $0$ because $P_j = 0$. As $j$ is a sink of $Q''$, we must have $i \in \supp \beta$. This shows that
  \[
    \eta_\beta(P_{Q'}(i)) = -\beta_i < 0.
  \]
  We conclude that $\eta_\beta(P) < 0$ and therefore $\eta_\beta(P_{Q'}(j)/P) > 0$. However, $P_{Q'}(j)/P$ embeds into $M$ via $f$, which contradicts the  fact that $M$ is stable. Thus, we have $P = 0$ and the map $f\colon P_{Q'}(j) \to M$ is injective. Since the image of $f$ is a nonzero subrepresentation of $M$ with $\eta_\beta(\im f) = \eta_\beta(P_{Q'}(j)) = 0$, we conclude that $f$ must also be surjective and thus $M \cong P_{Q'}(j)$.

  If $j'$ is another sink of $Q''$, then the same argument shows that also $M \cong P_{Q'}(j')$, which implies that $j = j'$ and so $Q'' = \{j\}$ as claimed.
\end{proof}

\begin{lemma}\label{lemma:dimension-inequality}
    Let $M$ be a $\theta$-stable representation of dimension vector $d$ and let $\epsilon \in \NN^{Q_0}$.
    \begin{enumerate}[label=(\alph*)]
    \item
    If $\theta = \theta_\alpha$ and $\langle \epsilon, d \rangle > 0$, then $m\alpha+\epsilon \ge d$ for $m \gg 0$.
    \item
    If $\theta = \eta_\beta$ and $\langle d, \epsilon \rangle > 0$, then $m\beta+\epsilon \ge d$ for $m \gg 0$.
  \end{enumerate}
\end{lemma}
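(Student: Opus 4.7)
The plan is to prove part (b); part (a) will follow by a dual argument using the indecomposable injective $I_{Q'}(j)$ and homomorphisms into $M$ instead of out of it. We want to show $m\beta_i + \epsilon_i \geq d_i$ for every $i \in Q_0$ and $m \gg 0$. For vertices $i$ with $\beta_i > 0$ this holds as soon as $m \geq d_i/\beta_i$, and for $i$ with $d_i = 0$ it is trivial. Hence the only potentially problematic vertices are those in $\supp M \setminus \supp \beta$.

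If $\supp M \subseteq \supp \beta$, no problematic vertices exist and we are done by choosing $m$ sufficiently large. Otherwise, \cref{lemma:analysis-stability}\ref{item:analysis-stability-b} applies: there is a unique problematic vertex $j$, and $M \cong P_{Q'}(j)$ where $Q'$ is the full subquiver of $Q$ on $\supp \beta \cup \{j\}$. In particular $d_j = \dim M_j = 1$, so the problem reduces to showing that $\epsilon_j \geq 1$.

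The key step, and the place where the hypothesis $\langle d, \epsilon \rangle > 0$ enters, is the following. Let $V$ be any representation of $Q$ of dimension vector $\epsilon$. Then
\[
  0 < \langle d, \epsilon \rangle = \dim \Hom(M, V) - \dim \Ext(M, V),
\]
so in particular $\Hom(M, V) \neq 0$. On the other hand, $M = P_{Q'}(j)$ is cyclic as a $kQ$-module, generated by the trivial path at $j$, so any homomorphism $M \to V$ is determined by the image of this generator in $V_j$; hence $\Hom(M, V)$ embeds into $V_j$. Were $\epsilon_j = 0$, this would force $\Hom(M, V) = 0$, contradicting the above. The step carrying the weight of the argument is precisely this cyclicity of $P_{Q'}(j)$ supplied by \cref{lemma:analysis-stability}, which converts the numerical positivity $\langle d, \epsilon \rangle > 0$ into the required bound $\epsilon_j \geq 1 = d_j$ at the single new vertex.
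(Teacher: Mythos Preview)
Your proof is correct and follows the same route as the paper: both reduce via \cref{lemma:analysis-stability} to the single vertex $j$ with $d_j=1$ and then argue $\epsilon_j\ge 1$. The paper's proof asserts the sharper identity $\epsilon_j=\langle d,\epsilon\rangle$, which is only literally correct when $P_{Q'}(j)$ happens to be projective as a $Q$-representation (in general one only has $\epsilon_j\ge\langle d,\epsilon\rangle$, since arrows from $\supp d$ to vertices outside $Q'_0$ contribute nonpositively); your cyclicity argument sidesteps this and gives $\epsilon_j\ge 1$ directly.
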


\begin{proof}
  As above, we just prove the second claim. The result is clear if $\supp d \subseteq \supp \beta$. By \cref{lemma:analysis-stability}, the only other case is that $\supp M \setminus \supp \beta = \{ j \}$ and $M \cong P'(j)$, the indecomposable projective of the full subquiver on $\supp \beta \cup \{j\}$. In this case we have $d_j = (\dimvect{P'(j)})_j = 1$, while $\epsilon_j = \langle d, \epsilon \rangle > 0$, so in particular $\epsilon_j \ge d_j$.
\end{proof}

\begin{lemma} \label{lemma:auslander-reiten-stable}
Let $M$ be a $\theta$-stable representation.
  \begin{enumerate}[label=(\alph*)]
    \item If $\theta=\theta_\alpha$,
    then either $\tau^-M$ is $\eta_\alpha$-stable, or $M$ is isomorphic to an injective representation $I$ of the full subquiver $Q'$ supported on $\supp M \cup \supp \alpha$,
    viewed as a representation of~$Q$.
    \item If $\theta=\eta_\beta$, then either $\tau M$ is $\theta_\beta$-stable, or $M$ is isomorphic to a projective representation $P$ of the full subquiver $Q'$ supported on $\supp M \cup \supp \beta$,
    viewed as a representation of~$Q$.
  \end{enumerate}
\end{lemma}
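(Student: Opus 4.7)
I will prove part (b); part (a) is dual, exchanging the roles of $\tau$ and $\tau^-$, projectives and injectives, and $\theta_\alpha$ and $\eta_\beta$. Since $M$ is $\eta_\beta$-stable it is indecomposable, so the argument proceeds by case analysis on the structure of $M$.

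First, if $M$ has a projective summand in $Q$, then $M \cong P_Q(j)$ for some $j$ and $\tau M = 0$, which is not $\theta_\beta$-stable. Writing $Q'$ for the full subquiver on $\supp M \cup \supp \beta$, every vertex reachable from $j$ in $Q$ lies in $\supp M \subseteq Q'_0$, so every path out of $j$ in $Q$ is already a path in $Q'$, and therefore $P_Q(j) \cong P_{Q'}(j)$ as a representation of $Q$. Second, if $M$ has no projective summand but $\supp M \not\subseteq \supp \beta$, then part (b) of \cref{lemma:analysis-stability} directly gives $M \cong P_{Q'}(j)$ for a uniquely determined $j$, since the subquiver appearing there agrees with the $Q'$ of the present lemma.

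In the remaining case, $M$ has no projective summand and $\supp M \subseteq \supp \beta$, so $Q'$ is the full subquiver on $\supp \beta$. By \cref{lemma:auslander-reiten-semistable}, $\tau M$ is $\theta_\beta$-semistable; assume that $\tau M$ is not $\theta_\beta$-stable, so there exists a proper nonzero $\theta_\beta$-stable subrepresentation $N \subsetneq \tau M$ with $\theta_\beta(N) = 0$. Auslander--Reiten duality (\cref{proposition:AR-translation}) yields an isomorphism $\Hom(N, \tau M) \cong \Ext(M, N)^\vee$, so the inclusion $N \hookrightarrow \tau M$ forces $\Ext(M, N) \neq 0$ and hence $N$ is not injective. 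Consequently $\tau^- N \neq 0$ and, by \cref{lemma:auslander-reiten-semistable} applied to $N$, it is $\eta_\beta$-semistable of slope $0$. The adjunction $\Hom(\tau^- N, M) \cong \Hom(N, \tau M)$ then produces a nonzero morphism $\phi \colon \tau^- N \to M$, which is surjective because $M$ is $\eta_\beta$-stable of the same slope as the semistable source.

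The main obstacle is to pass from this surjection $\phi$ to the identification $M \cong P_{Q'}(j)$. The plan is to analyse $N$ further using part (a) of \cref{lemma:analysis-stability} with $\alpha = \beta$, which either places $\supp N \subseteq \supp \beta$ or forces $N$ to be a specified injective of a certain subquiver; in either subcase the structure of $\tau^- N$ is pinned down. Combining this structural information with the indecomposability of $M$, its lack of a projective summand in $Q$, and the surjectivity of $\phi$, one identifies $M$ with an indecomposable projective $P_{Q'}(j)$ as required.
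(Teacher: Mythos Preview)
Your Cases~1 and~2 are fine, but Case~3 is not a proof: the final paragraph is a plan rather than an argument, and the plan does not close. Having a surjection $\phi\colon\tau^-N\twoheadrightarrow M$ gives you no control over how $N$ sits inside $\tau M$; knowing that $N\cong\tau\tau^-N$ does not help either, because $\tau$ has no useful exactness property that would let you push the surjection $\phi$ back to conclude $N=\tau M$. Your appeal to \cref{lemma:analysis-stability}(a) for $N$ splits into two subcases, but in neither does the ``structure of $\tau^-N$'' become concrete enough to force $M\cong P_{Q'}(j)$ from the surjection~$\phi$ together with the stated hypotheses. As written, the argument simply stops at the hard step.

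The paper takes the contrapositive and a dual route that does close. Assume $M$ is not projective as a representation of $Q'$; then \cref{lemma:analysis-stability}(b) forces $\supp M\subseteq\supp\beta$, i.e.\ $Q'_0=\supp\beta$. Now test stability of $\tau M$ via \emph{quotients}: take an indecomposable quotient $\tau M\twoheadrightarrow U$ with $\theta_\beta(U)=0$. The point is that $U$ cannot be injective, since $\theta_\beta(I(i))=\beta_i>0$ for $i\in\supp\beta$; hence $U\cong\tau\tau^-U$. Because $\tau^-$ is right exact, $\tau^-U$ is a quotient of $\tau^-\tau M$, which is itself a quotient of $M$; since $\eta_\beta(\tau^-U)=\theta_\beta(U)=0$ and $M$ is $\eta_\beta$-stable, one gets $\tau^-U\in\{0,M\}$, whence $U\in\{0,\tau M\}$. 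The asymmetry that makes this work is that quotients of $\tau M$ produce quotients of $M$ via right exactness of $\tau^-$, whereas your subobject approach lacks the analogous mechanism.
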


\begin{proof}
  We again only show (b).
  Suppose that $M$ is not of the form $P$ as in the statement.
  We know by \cref{lemma:auslander-reiten-semistable}~\ref{item:auslander-reiten-semistable-b} that $\tau M$ is $\theta_\beta$-semistable and want to conclude that it is $\theta_\beta$-stable.
  Viewing $M$ as a representation of the full subquiver $Q'$ supported on $\supp M \cup \supp \beta$, our assumption means that $M$ is not projective.
  Therefore, by \cref{lemma:analysis-stability}~\ref{item:analysis-stability-b},
  we have $\supp \beta = Q'_0$.
  Let $\tau M \twoheadrightarrow U$ be a surjection such that $\theta_\beta(U) = 0$ and write $U = \bigoplus_\ell U^\ell$ as a direct sum of indecomposables.
  Each $U^\ell$ is a quotient of the semistable representation $\tau M$, so $\theta_\beta(U^\ell) \geq 0$, and since these quantities sum to $0$, we have
  $\theta_\beta(U^\ell) = 0$ for each $\ell$. Thus, we may assume that $U$ is itself indecomposable.

  The quotient $U$ cannot be injective, because if $U \cong I(i)$ for some $i \in Q'$, then
  \[
    0 = \theta_\beta(U) = \langle \beta , \dimvect I(i) \rangle = \beta_i,
  \]
  whereas $\beta_i > 0$ since $i \in \supp{\beta}$.
  Thus,~$U \cong \tau\tau^- U$ by \cref{proposition:AR-translation}~\ref{item:AR-inverse}.
  By \cref{proposition:AR-translation}~\ref{item:AR-duality}, the functor $\tau^-$ is a left adjoint, thus right exact,
  so $\tau^- U$ is a quotient of $M$, as $\tau^- \tau M$ is in any case a quotient of $M$.
  Using \cref{proposition:AR-translation}~\ref{item:AR-duality} we obtain
  \[
    \eta_\beta(\tau^- U) =
    - \langle \dimvect \tau^- U , \beta \rangle =
    \langle \beta , \dimvect U \rangle = \theta_\beta(U) = 0 .
  \]
  As $M$ is $\eta_\beta$-stable, we have either $\tau^- U = 0$ or $\tau^- U = M$, and since $U \cong \tau \tau^- U$, we conclude that either $U = 0$ or $U = \tau M$ which proves the claim.
\end{proof}

\subsection{Generic vanishing of Ext}\label{subsection:generic-ext-vanishing}

We continue to assume that $Q$ is acyclic.

\begin{lemma}\label{lemma:ext-vanishing-semistability}
  Let $M$ be a $\theta$-semistable representation and let $\epsilon \in \mathbb{N}^{Q_0}$.
  \begin{enumerate}[label=(\alph*)]
    \item If $\theta = \theta_\alpha$ and $\epsilon = \dimvect P$ for some
    projective representation $P$, then for all sufficiently large integers $m$, a general
    representation $V$ of dimension vector $m\alpha + \epsilon$
    satisfies $\Ext(V, M) = 0$. In fact, it is enough that
    \[ m > \frac{\langle\gamma,\gamma\rangle}{\langle\gamma,\alpha\rangle}
    \quad \mbox{for all dimension vectors} \quad
    \gamma < \dimvect \tau^- M.\]
    \item If $\theta = \eta_\beta$ and $\epsilon = \dimvect I$ for some
    injective representation $I$, then for all sufficiently large integers $m$, a general
    representation $V$ of dimension vector $m\beta + \epsilon$
    satisfies $\Ext(M, V) = 0$. In fact, it is enough that
    \[ m > \frac{\langle\gamma,\gamma\rangle}{\langle\beta,\gamma\rangle}
    \quad \mbox{for all dimension vectors} \quad
    \gamma < \dimvect \tau M.\]
  \end{enumerate}
\end{lemma}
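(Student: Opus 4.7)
I will only sketch part (b); part (a) is dual, using the AR translate $\tau^-$ in place of $\tau$. The plan is to use Auslander--Reiten duality (\cref{proposition:AR-translation}) to convert the vanishing of $\Ext(M,V)$ into the vanishing of $\Hom(V,\tau M)$, and then to reduce to the $\Hom$-vanishing estimates of \cref{corollary:hom-vanishing}. Concretely, $\Ext(M,V) \cong \Hom(V,\tau M)^\vee$, so it suffices to show $\Hom(V,\tau M) = 0$ for a general $V$ of dimension vector $m\beta + \epsilon$. By \cref{lemma:auslander-reiten-semistable} the representation $\tau M$ is $\theta_\beta$-semistable, bringing us into the setting of \cref{corollary:hom-vanishing}.

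Rather than attack this directly on $\Rep{m\beta+\epsilon}$, I would exhibit a single representation $V_0$ of dimension vector $m\beta+\epsilon$ with $\Hom(V_0,\tau M)=0$ and then upgrade to the generic statement by upper semi-continuity. Take $V_0 \colonequals V' \oplus I$, with $V'$ a sufficiently general representation of dimension vector $m\beta$, giving
\[
  \Hom(V_0,\tau M) = \Hom(V',\tau M) \oplus \Hom(I,\tau M).
\]
The second summand vanishes because $\Hom(I,\tau M) \cong \Ext(M,I)^\vee$ by AR duality, and $\Ext(M,I) = 0$ since $I$ is injective. The first summand vanishes by applying \cref{corollary:hom-vanishing}(a) to the $\theta_\beta$-semistable representation $\tau M$ with auxiliary parameter $\epsilon = 0$: the hypothesis $\langle 0, \gamma\rangle \le 0$ is automatic, and the bound \eqref{eq:m-inequality-alpha} specializes to $m > \langle \gamma, \gamma \rangle / \langle \beta, \gamma \rangle$ for all $\gamma < \dimvect \tau M$ with $\langle \beta, \gamma \rangle < 0$, matching the bound in the statement.

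Finally, $V \mapsto \dim\Ext(M,V) = \dim\Hom(V,\tau M)$ is upper semi-continuous on the irreducible affine scheme $\Rep{m\beta+\epsilon}$, so the locus where it vanishes is open, nonempty (as it contains $V_0$), and hence dense, yielding the claim for a general $V$. The main conceptual step is the recognition that passing from $M$ to its AR translate $\tau M$ converts an apparently awkward $\Ext$-vanishing whose bound depends on $\dimvect\tau M$ into a $\Hom$-vanishing to which the machinery of \cref{subsection:characterizing-semistable-reps} applies directly, with the ``extra'' term $\epsilon=\dimvect I$ being harmlessly absorbed into the injective summand of $V_0$; once that reduction is in place, no further dimension estimates beyond those already in \cref{lemma:image-with-bound} are needed.
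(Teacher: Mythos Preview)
Your argument is correct and essentially identical to the paper's: both use \cref{lemma:auslander-reiten-semistable} to pass to the $\theta_\beta$-semistable $\tau M$, invoke \cref{corollary:hom-vanishing}(a) with $\epsilon=0$ to obtain $\Hom(V',\tau M)=0$ for general $V'$ of dimension $m\beta$, absorb the injective summand via $\Ext(M,I)=0$, and conclude by upper semicontinuity. The only cosmetic difference is that the paper applies Auslander--Reiten duality to $V'$ first and then takes the direct sum with $I$ inside $\Ext(M,-)$, whereas you take the direct sum first inside $\Hom(-,\tau M)$ and dualize each summand separately; these are the same computation.
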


\begin{proof}
  We prove (b). By \cref{lemma:auslander-reiten-semistable}, the representation $\tau M$ is $\theta_\beta$-semistable, so \cref{corollary:hom-vanishing} (a) with $\epsilon = 0$ implies that
  for $m$ satisfying the inequality, there exists a representation $V'$ of dimension vector $m \beta$ such that $\Hom(V', \tau M) = 0$. By Auslander-Reiten duality, this implies $\Ext(M, V') \cong \Hom(V', \tau M)^\vee = 0$.

  Now the representation $V = V' \oplus I$ has dimension vector $m \beta + \epsilon$ and satisfies
  \[ \Ext(M, V) = \Ext(M, V') \oplus \Ext(M, I) = 0 \]
  since $I$ is injective. Thus, by upper semicontinuity this must hold for a general representation of dimension vector $m\beta + \epsilon$.
\end{proof}

\subsection{Separating stable representations}\label{subsection:sep-det-inv}

In this section, we assume that $Q$ is acyclic and $\theta = \eta_\beta$ for concreteness, and leave formulating the dual statements for $\theta = \theta_\alpha$ to the reader. Our aim is to prove the following result.
\begin{theorem}
  \label{theorem:separation}
  Let~$M^0,M^1,\ldots,M^r$ be non-isomorphic~$\eta_\beta$-stable representations. For all sufficiently large integers~$m$, there exists a representation~$N$ of dimension vector~$m\beta$ such that
  \begin{equation}
    \Hom(M^0,N)\neq 0,\quad \text{and} \quad \Hom(M^\ell,N)=0\quad \mathrm{for} \; \ell=1,\ldots,r.
  \end{equation}
\end{theorem}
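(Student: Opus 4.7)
The plan is to construct $N$ as a non-trivial extension
\[
  0 \to M^0 \to N \to W \to 0
\]
with $W$ a sufficiently general representation of dimension vector $\epsilon' := m\beta - \dimvect M^0$ and $\xi \in \Ext(W, M^0)$ a generic extension class. Under the assumption $\supp M^0 \subseteq \supp \beta$, the dimension vector $\epsilon'$ lies in $\NN^{Q_0}$ for $m \gg 0$; the exceptional case, in which $M^0$ is an indecomposable projective of a subquiver by \cref{lemma:analysis-stability}, will need a separate argument.

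The inclusion $M^0 \hookrightarrow N$ immediately gives $\Hom(M^0, N) \neq 0$, so the real work is to kill the remaining homomorphism groups. For $\ell \geq 1$, applying $\Hom(M^\ell, \blank)$ to the short exact sequence, together with the vanishing $\Hom(M^\ell, M^0) = 0$ from \cref{proposition:stability-properties} (since $M^\ell$ and $M^0$ are non-isomorphic stable of slope zero), yields
\[
  \Hom(M^\ell, N) = \ker\!\bigl(\Hom(M^\ell, W) \xrightarrow{\cdot \xi} \Ext(M^\ell, M^0)\bigr),
\]
where the connecting map sends $f$ to the pullback extension $f^{\ast} \xi$.

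The next step is to arrange, via the generic-$\Ext$ vanishing results of \cref{subsection:generic-ext-vanishing}, that $\Ext(M^\ell, W) = 0$ for a general $W$. Combined with the Euler identity $\langle M^\ell, W \rangle = -\langle M^\ell, M^0 \rangle = \dim \Ext(M^\ell, M^0)$ (using $\langle M^\ell, \beta \rangle = 0$ and $\Hom(M^\ell, M^0) = 0$), this forces $\dim \Hom(M^\ell, W) = \dim \Ext(M^\ell, M^0)$, so $\cdot\, \xi$ becomes a linear map between vector spaces of equal dimension whose injectivity is equivalent to being an isomorphism. For any nonzero $f \in \Hom(M^\ell, W)$, stability forces $f$ to be injective, and since $\Catrep$ is hereditary the pullback $f^{\ast}\colon \Ext(W, M^0) \twoheadrightarrow \Ext(M^\ell, M^0)$ is surjective. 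The ``bad locus''
\[
  B_\ell := \bigcup_{[f]\in \PP\Hom(M^\ell, W)} \ker f^{\ast} \subseteq \Ext(W, M^0)
\]
therefore has dimension at most $(\dim\Hom(M^\ell, W) - 1) + (\dim\Ext(W, M^0) - \dim\Ext(M^\ell, M^0)) = \dim\Ext(W, M^0) - 1$, so a generic $\xi$ avoids $\bigcup_{\ell = 1}^r B_\ell$, producing the desired $N$.

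The most technical step will be the generic-$\Ext$ vanishing at the start: one must ensure that $\epsilon'$ admits a decomposition allowing \cref{lemma:ext-vanishing-semistability} to apply directly, or transport the vanishing through the Auslander--Reiten translation using \cref{lemma:auslander-reiten-semistable} to reduce it to a $\Hom$-vanishing already handled by \cref{corollary:hom-vanishing}.
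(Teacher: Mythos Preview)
Your strategy—building $N$ directly as a generic extension $0 \to M^0 \to N \to W \to 0$ with $W$ general of dimension $m\beta - \dimvect M^0$ and $\xi \in \Ext(W,M^0)$ generic—differs substantially from the paper's. The paper instead \emph{overshoots}: it first produces a representation $V$ of dimension $m\beta + \epsilon_0$ (with $\epsilon_0 = \dimvect I(i_0)$ positive, $i_0$ minimal in $\supp M^0$) into which $M^0$ embeds, uses a Bertini--Kleiman hyperplane argument in $V_{i_0}$ to find a kernel $N \subseteq V$ containing $\phi(M^0)$ but no nonzero image of any $M^\ell$, and then trims the leftover injective part $\epsilon$ down to zero by iteratively passing to kernels of maps to indecomposable injectives supported away from $M^0$.

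The extension-class dimension count is a nice idea, but there is a genuine gap in your plan. The generic-$\Ext$ vanishing you invoke, \cref{lemma:ext-vanishing-semistability}(b), is stated only for dimension vectors $m\beta + \epsilon$ with $\epsilon = \dimvect I$ for an \emph{injective} $I$; you need it with the \emph{negative} offset $\epsilon = -\dimvect M^0$. Likewise, the claim that every nonzero $f\colon M^\ell \to W$ is injective cannot be deduced from stability of $M^\ell$ alone, since $W$ has no reason to be semistable; you need \cref{lemma:image-with-bound}(b) applied with the same non-positive offset. The codimension estimates in \cref{lemma:estimates} are insensitive to the sign of $\epsilon$, so these lemmas probably do extend, but carrying this out is real work that your plan leaves unaddressed. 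The paper's choice to \emph{add} $\dimvect I(i_0)$ keeps all offsets in $\NN^{Q_0}$ and of injective type, so the existing lemmas apply verbatim; the price is the iterative reduction at the end.

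Your exceptional case, $\supp M^0 \not\subseteq \supp \beta$, is also not peripheral. The paper's construction handles it uniformly: even when $M^0 \cong P_{Q'}(j)$ as in \cref{lemma:analysis-stability}, the minimal vertex $i_0 = j$ gets an extra dimension via $\epsilon_0$, and the trimming step only touches vertices outside $\supp M^0$. In your set-up $m\beta - \dimvect M^0$ has a negative entry at $j$ for every $m$, so you genuinely need a separate construction there, not just a remark.
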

This will be used in \cref{theorem:projectivity} below by considering two polystable representations $M$ and $M'$ such that $M^1, \ldots, M^r$ are the non-isomorphic stable summands of $M'$ while $M^0$ appears as a stable summand of $M$.
In view of \cref{proposition:vanishing-sections}, we will see that $\sigma_N$ separates the polystable representations $M$ and $M'$; this will ultimately enable us to prove ampleness of the determinantal line bundle on the moduli space of semistable representations in Section~\ref{section:projectivity}.

Our argument is inspired by the proof of a similar statement for moduli of vector bundles on a curve,
due to Esteves \cite[Section~5]{MR1695802} and Esteves--Popa \cite[Section~3]{esteves.popa:2004:veryampleness}.

We break up the proof of \cref{theorem:separation} into several steps.

\begin{proposition}\label{theorem:separation-base-case}
  For~$M^0,M^1,\ldots,M^r$ as in \cref{theorem:separation} and for all sufficiently large integers~$m$, there exists a representation $N$
  such that
  \begin{enumerate}[label=(\roman*)]
    \item\label{enumerate:separation-1} $\Hom(M^0, N) \neq 0$;
    \item\label{enumerate:separation-2} $\Hom(M^\ell, N) = 0\quad \mathrm{for} \; \ell =1, \dots, r$;
    \item\label{enumerate:separation-3} $\dimvect N = m \beta + \epsilon$,
      where $\epsilon$ is the dimension vector of an injective representation and $\supp \epsilon \cap \supp M^0 = \varnothing$.
  \end{enumerate}
\end{proposition}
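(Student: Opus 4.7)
The plan is to construct $N$ as the middle term of a short exact sequence
\[
  0 \to M^0 \to N \to V \to 0
\]
for a general representation $V$, combined with dimension counting in the representation space $\Rep{m\beta+\epsilon-d^0}$ (where $d^0=\dimvect M^0$), mirroring the Esteves--Popa~\cite{esteves.popa:2004:veryampleness} approach for vector bundles on curves.

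First I would choose $\epsilon=\dimvect I$ for a suitable injective $I$ with $\supp I\cap\supp M^0=\varnothing$, which enforces~(iii). Using the freedom in $I$, one additionally arranges $\supp I\cap\supp M^\ell=\varnothing$ for every $\ell\geq 1$ whenever possible. Because $\Ext(M^\ell,I)=0$ for $I$ injective, this choice makes $\langle d^\ell,\epsilon\rangle=\dim\Hom(M^\ell,I)=0$ for every $\ell\geq 0$; the minimal choice $I=0$ (so $\epsilon=0$) is always admissible. For $m$ sufficiently large and $\supp M^0\subseteq\supp\beta$, the dimension vector $m\beta+\epsilon-d^0$ is componentwise non-negative, so the extension is well-posed. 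The complementary case $\supp M^0\not\subseteq\supp\beta$ from \cref{lemma:analysis-stability}(b) should be handled by invoking the Auslander--Reiten translate $\tau$ via \cref{lemma:auslander-reiten-stable}(b) and reducing to the dual $\theta_\beta$-stability setup.

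Condition~(i) is immediate from the injection $M^0\hookrightarrow N$, and~(iii) holds by construction. For~(ii), applying $\Hom(M^\ell,\blank)$ to the sequence and using $\Hom(M^\ell,M^0)=0$ from \cref{proposition:stability-properties} (the $M^\ell$ being non-isomorphic $\eta_\beta$-stable representations of slope zero) produces an injection $\Hom(M^\ell,N)\hookrightarrow\Hom(M^\ell,V)$, so the remaining task is to arrange $\Hom(M^\ell,V)=0$ for the chosen general $V$.

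By \cref{corollary:hom-vanishing}(b), this vanishing follows for a general $V$ of dimension vector $m\beta+\epsilon-d^0$ provided $\langle d^\ell,\epsilon-d^0\rangle\leq 0$. Using $\langle d^\ell,\epsilon\rangle=0$ (from the disjoint-support choice of $I$) together with $\langle d^\ell,d^0\rangle=-\dim\Ext(M^\ell,M^0)$ (which uses $\Hom(M^\ell,M^0)=0$), this pairing simplifies to $\dim\Ext(M^\ell,M^0)$. The hard part will be the case $\Ext(M^\ell,M^0)\neq 0$ for some $\ell\geq 1$, where the bound becomes strictly positive and the naive genericity argument fails. Following the Esteves--Popa strategy, the remedy is to restrict $V$ to the sub-locus of extensions for which the connecting map $\Hom(M^\ell,V)\to\Ext(M^\ell,M^0)$ is injective, and to invoke the codimension estimates of \cref{lemma:estimates} for the loci $K_{M^\ell}$ and $Q_{M^\ell}$ to show that a generic extension in the $\Ext(V,M^0)$-family avoids every bad locus $\{V:\Hom(M^\ell,V)\neq 0\}$, producing the required $N$.
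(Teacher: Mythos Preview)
Your construction is genuinely different from the paper's, and the place you flag as ``the hard part'' is a real gap that your sketch does not close.

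The issue is this. With $\epsilon$ chosen so that $\langle d^\ell,\epsilon\rangle=0$, the Euler pairing gives
\[
\langle d^\ell,\dimvect V\rangle \;=\; \langle d^\ell,\,m\beta+\epsilon-d^0\rangle \;=\; \dim\Ext(M^\ell,M^0),
\]
so whenever $\Ext(M^\ell,M^0)\neq 0$ one has $\langle d^\ell,\dimvect V\rangle>0$, and hence $\Hom(M^\ell,V)\neq 0$ for \emph{every} $V$ of this dimension vector. Thus the ``bad locus'' $\{V:\Hom(M^\ell,V)\neq 0\}$ is all of $\Rep{m\beta+\epsilon-d^0}$, and no genericity argument on $V$ alone (in particular none coming from the codimension estimates for $K_{M^\ell}$ or $Q_{M^\ell}$ in \cref{lemma:estimates}) can avoid it. Your alternative suggestion, that for a generic extension class $\xi\in\Ext(V,M^0)$ the connecting map $\Hom(M^\ell,V)\to\Ext(M^\ell,M^0)$ is injective, is not obviously true either: you would need both $\dim\Hom(M^\ell,V)\le\dim\Ext(M^\ell,M^0)$ (equivalently $\Ext(M^\ell,V)=0$, which does not follow from the available lemmas for this dimension vector) and a non-degeneracy statement about the Yoneda pairing $\Ext(V,M^0)\otimes\Hom(M^\ell,V)\to\Ext(M^\ell,M^0)$ that you have not established. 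The final sentence of your proposal conflates these two separate strategies without making either one work.

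For comparison, the paper takes a different route that sidesteps this obstruction entirely. It does \emph{not} build $N$ as an extension containing $M^0$; instead it first chooses a specific $\epsilon_0=\dimvect I(i_0)$ with $i_0$ the minimal vertex of $\supp M^0$ (so $\langle d^\ell,\epsilon_0\rangle=(d^\ell)_{i_0}=:m_\ell$, and in particular $m_0\ge 1$), takes a general $V$ of dimension vector $m\beta+\epsilon_0$, fixes one injection $\phi\colon M^0\hookrightarrow V$, and then defines $N$ as the \emph{kernel} of a map $\pi\colon V\to I(i_0)$. The choice of $\pi$ is made via a Bertini--Kleiman argument: one shows that for $\ell\ge 1$ every nonzero $\psi\colon M^\ell\to V$ remains injective modulo $M^0$, so the assignment $\psi\mapsto\im(\overline\psi)_{i_0}$ defines a morphism $\PP(\Hom(M^\ell,V))\to\Gr(m_\ell,W_{i_0})$ whose image has dimension at most $m_\ell-1$; a generic hyperplane in $V_{i_0}$ containing $\phi_{i_0}(M^0_{i_0})$ then avoids all of these images. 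This is the ``Hecke modification'' analogue alluded to in the introduction, and it handles the case $\Ext(M^\ell,M^0)\neq 0$ uniformly, without ever needing $\Hom(M^\ell,V)=0$.
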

Before proving \cref{theorem:separation-base-case} we first use it to establish \cref{theorem:separation}.
\begin{proof}[Proof of \cref{theorem:separation}]
  Let $N$ be as in \cref{theorem:separation-base-case}. If $\epsilon = 0$, we are already done, so assume $\epsilon > 0$. It suffices to find a subrepresentation $N' \subset N$ that satisfies properties \ref{enumerate:separation-1}-\ref{enumerate:separation-3} with $\epsilon' < \epsilon$, since repeating the construction results in a sequence of subrepresentations with the same properties, and the sequence must terminate at a subrepresentation with $\epsilon = 0$.

  By assumption we have $\epsilon = \dimvect I$ for some nonzero injective representation $I$. Since $\langle V, I \rangle = \dim \Hom(V, I) \ge 0$ for any representation $V$, and since $\beta$ is a dimension vector since $Q$ is acyclic, we have
  \[
    \dim \Hom(N, I) = \langle N, I \rangle = \langle m \beta + \epsilon, \epsilon \rangle \ge \langle \epsilon, \epsilon \rangle = \dim \Hom(I,I) \ge 1,
  \]
  so there is a nonzero map $f \colon N \to I$. Let $N' \subset N$ and $I'$ denote the kernel and cokernel of $f$ respectively and set $\epsilon' = \dimvect I'$. Note that $\epsilon' < \epsilon$. We claim that $N'$ satisfies properties \ref{enumerate:separation-1}-\ref{enumerate:separation-3}.

  To verify \ref{enumerate:separation-1} and \ref{enumerate:separation-2} for $N'$, we apply $\Hom(M^\ell, \blank)$ to the exact sequence
  \[
    0 \to N' \to N \xrightarrow{f} I
  \]
  to get
  \[
    0 \to \Hom(M^\ell, N') \to \Hom(M^\ell, N) \to \Hom(M^\ell, I).
  \]
  Since $\Hom(M^\ell, N) = 0$ for $\ell \ge 1$, we also have $\Hom(M^\ell, N') = 0$,
  giving \ref{enumerate:separation-2}.
  For \ref{enumerate:separation-1}, we have $\Hom(M^0, I) = 0$ since $\supp I \cap \supp M^0 = \varnothing$, and so
  \[
    \Hom(M^0, N') \cong \Hom(M^0, N) \neq 0.
  \]

  For \ref{enumerate:separation-3}, from the exact sequence $0 \to N' \to N \to I \to I' \to 0$
  we obtain
  \[
    \dimvect N' = \dimvect N - \dimvect I + \dimvect I' = m \beta + \epsilon - \epsilon + \epsilon' = m \beta + \epsilon'.
  \]
  Clearly $\supp \epsilon' \cap \supp M^0 = \varnothing$ since $\supp \epsilon' \subseteq \supp \epsilon$, and moreover, as $I'$ is a quotient of $I$, it is also injective since $\Catrep$ is hereditary. Thus, we have proven \ref{enumerate:separation-3} for $N'$.
\end{proof}

The rest of the section is devoted to proving \cref{theorem:separation-base-case}.
Fix an admissible ordering of $Q_0$ as in \cref{sec:quiverrepr},
let $i_0 \in \supp M^0$ be the minimal vertex in the support of $M^0$,
and set $\epsilon_0\colonequals\dimvect I(i_0)$.
Note that for any dimension vector $\xi$ we have
\[
  \langle \xi, \epsilon_0 \rangle = \xi_{i_0}
\]
and that $\supp \epsilon_0 \cap \supp M^0 = \{i_0\}$ by the choice of $i_0$.

\begin{lemma}
  \label{lemma:Mell-to-V}
  Let~$M^0,M^1,\ldots,M^r$ be as in \cref{theorem:separation} and let $m_\ell = \dim (M^\ell)_{i_0}$, where~$i_0$ is the minimal vertex in $\supp M^0$. For all sufficiently large integers~$m$,
  there exists a representation $V$ of dimension vector $m \beta + \epsilon_0$ such that
  \begin{itemize}
    \item $\Ext(M^\ell, V) = 0$ for each $\ell = 0, \ldots, r$, and so
      \[
        \dim \Hom(M^\ell, V) = \langle \dimvect M^\ell, m \beta + \epsilon_0 \rangle = \langle \dimvect M^\ell, \epsilon_0 \rangle = m_\ell;
      \]
    \item For $\ell = 0, \ldots, r$, every nonzero map $M^\ell \to V$ is injective;
    \item For $\ell = 1, \ldots, r$, every nonzero map $f\colon M^0 \oplus M^\ell \to V$ satisfies
      \[
        \langle \dimvect \ker f, \beta \rangle = 0.
      \]
  \end{itemize}
\end{lemma}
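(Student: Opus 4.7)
The plan is to verify that each of the three listed properties defines a nonempty open subset of $\Rep{m\beta+\epsilon_0}$ for $m$ sufficiently large, and then take a general $V$ in their intersection. Since $\Rep{m\beta+\epsilon_0}$ is irreducible, a finite intersection of nonempty open subsets remains dense and nonempty, so this yields the required $V$.

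For the first property, recall that $\epsilon_0 = \dimvect I(i_0)$ is the dimension vector of an injective representation. Each $M^\ell$ is $\eta_\beta$-stable and hence $\eta_\beta$-semistable, so part (b) of \cref{lemma:ext-vanishing-semistability} shows that for $m$ exceeding the bound depending on $\dimvect \tau M^\ell$ stated there, a general $V$ of dimension vector $m\beta+\epsilon_0$ satisfies $\Ext(M^\ell,V)=0$. Taking $m$ larger than all $r+1$ such bounds, the vanishing holds simultaneously for all $\ell$. The Euler pairing then gives
\[
  \dim \Hom(M^\ell,V) = \langle M^\ell,\, m\beta+\epsilon_0 \rangle = m\langle M^\ell,\beta\rangle + \langle M^\ell,\epsilon_0\rangle = 0 + \dim(M^\ell)_{i_0} = m_\ell,
\]
where the first term vanishes because $\eta_\beta(M^\ell)=0$, and $\langle M^\ell,\epsilon_0\rangle = \dim\Hom(M^\ell,I(i_0)) = \dim(M^\ell)_{i_0}$ using $\Ext(M^\ell,I(i_0))=0$.

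For the second property, I apply \cref{lemma:image-with-bound}\,\ref{enumerate:image-with-bound-2} to each $\eta_\beta$-semistable representation $M^\ell$. For $m$ sufficiently large, a general $V$ has the property that every nonzero $f\colon M^\ell \to V$ satisfies $\eta_\beta(\im f)=0$, and additivity of $\eta_\beta$ on the exact sequence $0 \to \ker f \to M^\ell \to \im f \to 0$ together with $\eta_\beta(M^\ell)=0$ yields $\eta_\beta(\ker f)=0$. The $\eta_\beta$-stability of $M^\ell$ then forces $\ker f \in \{0, M^\ell\}$, and since $f\neq 0$, we conclude $\ker f = 0$.

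For the third property, the key observation is that $M^0 \oplus M^\ell$ is $\eta_\beta$-semistable: for any subrepresentation $U \subseteq M^0 \oplus M^\ell$, the exact sequence $0 \to U \cap M^0 \to U \to \pi_\ell(U) \to 0$ expresses $U$ as an extension of subrepresentations of $M^\ell$ and $M^0$, so $\eta_\beta(U) \leq 0$ by stability of each summand, while $\eta_\beta(M^0 \oplus M^\ell)=0$. Applying \cref{lemma:image-with-bound}\,\ref{enumerate:image-with-bound-2} to each $M^0 \oplus M^\ell$ with $\ell = 1,\dots,r$, for $m$ sufficiently large a general $V$ satisfies $\eta_\beta(\im f)=0$ for every nonzero $f\colon M^0 \oplus M^\ell \to V$, which translates to $\langle \dimvect \ker f, \beta\rangle = -\eta_\beta(\ker f) = 0$. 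Taking $m$ larger than all the finitely many bounds from the three parts and intersecting the corresponding open loci finishes the proof. The task is essentially bookkeeping of the bounds supplied by the preceding lemmas; I do not anticipate a conceptual obstacle.
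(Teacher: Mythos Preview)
Your proof is correct and follows essentially the same approach as the paper: apply \cref{lemma:ext-vanishing-semistability}(b) to each $M^\ell$ for the first item, and \cref{lemma:image-with-bound}\,\ref{enumerate:image-with-bound-2} to each $M^\ell$ and each $M^0\oplus M^\ell$ for the remaining two. You have simply spelled out in more detail why $M^0\oplus M^\ell$ is $\eta_\beta$-semistable and how stability of $M^\ell$ converts $\eta_\beta(\im f)=0$ into injectivity, both of which the paper leaves implicit.
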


\begin{proof}
  Since each $M^\ell$
  and each $M^0 \oplus M^\ell$ is $\eta_\beta$-semistable and $\epsilon_0$ is the dimension vector of an injective representation, we obtain the claim by applying
  \cref{lemma:ext-vanishing-semistability}(b)
  to each $M^\ell$ for $\ell = 0, \ldots, r$, and by applying
  \cref{lemma:image-with-bound}~\ref{enumerate:image-with-bound-2} to each $M^\ell$ for $\ell = 0, \ldots, r$ as well as each $M^0 \oplus M^\ell$ for $\ell = 1, \ldots, r$.
\end{proof}

Denote the cokernel of $\phi$ by $W$, so that we have an exact sequence
\[    \begin{tikzcd}
  0 \arrow[r]& M^0 \arrow[r, "\phi"] & V \arrow[r]&W  \arrow[r]& 0.\end{tikzcd}
\]

\begin{lemma}
  \label{lemma:sum-Mell-injects-into-V}
  If $\ell \ge 1$ and $\psi\colon M^\ell \to V$ is a nonzero map, then the induced map $\overline \psi\colon M^\ell \to W$ is injective.
\end{lemma}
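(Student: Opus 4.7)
The plan is to reduce the injectivity of $\overline{\psi}$ to a statement about the kernel of the combined map $f\colon M^0 \oplus M^\ell \to V$ defined by $f(x,y) = \phi(x) - \psi(y)$, and then use the bound from \cref{lemma:Mell-to-V} together with stability.

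First, I would observe that the projection $\pi_1\colon M^0 \oplus M^\ell \to M^\ell$ restricts to an isomorphism of representations from $K \colonequals \ker f$ onto $\ker \overline{\psi} \subseteq M^\ell$. Indeed, $(x,y) \in K$ if and only if $\phi(x) = \psi(y)$, so the image of $\pi_1|_K$ is exactly the set of $y \in M^\ell$ whose image $\psi(y)$ lies in $\phi(M^0)$, which is $\ker \overline{\psi}$; and $\pi_1|_K$ is injective because $\phi$ is injective by the second bullet of \cref{lemma:Mell-to-V}. So it is enough to prove that $K = 0$.

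Since $\phi$ and $\psi$ are both nonzero, $f$ is nonzero; applying the third bullet of \cref{lemma:Mell-to-V} to $f$ gives $\langle \dimvect K, \beta \rangle = 0$, i.e.\ $\eta_\beta(K) = 0$. Via $\pi_1|_K$, the kernel $K$ is identified with a subrepresentation of the $\eta_\beta$-stable representation $M^\ell$, so by $\eta_\beta$-stability of $M^\ell$ either $K = 0$ or $K \cong M^\ell$.

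The last step is to rule out the case $K \cong M^\ell$. If $\pi_1|_K$ were an isomorphism $K \xrightarrow{\sim} M^\ell$, then $\pi_0 \circ (\pi_1|_K)^{-1}$ would yield a morphism $\alpha\colon M^\ell \to M^0$ with $\phi \circ \alpha = \psi$; but $M^0$ and $M^\ell$ are non-isomorphic $\eta_\beta$-stables of the same slope, so $\Hom(M^\ell, M^0) = 0$ by \cref{proposition:stability-properties}~(ii), forcing $\alpha = 0$ and hence $\psi = 0$, a contradiction. Therefore $K = 0$ and $\overline{\psi}$ is injective. The argument is short once the right map $f$ is written down; no dimension count or hard input beyond \cref{lemma:Mell-to-V} and simplicity of $\Hom$ between non-isomorphic stables is needed.
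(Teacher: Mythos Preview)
Your proof is correct and follows essentially the same route as the paper: both consider the kernel $K$ of the combined map $M^0 \oplus M^\ell \to V$, use the third bullet of \cref{lemma:Mell-to-V} to get $\eta_\beta(K)=0$, and then use stability of $M^0$ and $M^\ell$ to force $K=0$. The only cosmetic difference is that the paper classifies the $\eta_\beta$-zero subrepresentations of $M^0\oplus M^\ell$ directly and then reads off the injectivity of $\overline{\psi}$ from a short exact sequence, whereas you identify $K$ with $\ker\overline{\psi}$ via the projection to $M^\ell$ from the outset; both arguments are equally short and use the same ingredients.
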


\begin{proof}
  By \cref{lemma:Mell-to-V},
  the map $\psi$ is injective
  and the kernel $K$ of the induced map
  \[
    (\phi, \psi)\colon M^0 \oplus M^\ell \longrightarrow V
  \]
  satisfies $\langle \dimvect K, \beta \rangle = 0$.
  Since $M^0$ and $M^\ell$ are $\eta_\beta$-stable and non-isomorphic,
  the only nonzero subrepresentations of $M^0 \oplus M^\ell$ with this property
  are $M^0, M^\ell$, and $M^0 \oplus M^\ell$.
  Since both $\phi$ and $\psi$ are injective,
  we must have $K = 0$.
  The successive inclusions
  \[ \begin{tikzcd}
  M^0 \arrow[hookrightarrow]{r} & M^0 \oplus M^\ell \arrow{r}{(\phi,\psi)}  & V
  \end{tikzcd}
  \]
  induce a short exact sequence
  \[\begin{tikzcd}
    0 \arrow{r} & M^\ell \arrow{r}{\overline{\psi}} & W \arrow{r} & V / (M^0 \oplus M^\ell)  \arrow{r} & 0, \end{tikzcd}
  \]
  so we see that the induced map $\overline{\psi}\colon M^\ell \xrightarrow{\psi} V \to W$ is injective.
\end{proof}

\begin{lemma}
  \label{lemma:hyperplane-construction}
  There exists a hyperplane $H \subset V_{i_0}$
  such that $\phi_{i_0}(M^0_{i_0}) \subseteq H$
  but $\psi_{i_0}(M^\ell_{i_0}) \nsubseteq H$ for every $\ell \ge 1$ and every nonzero $\psi\colon M^\ell \to V$.
\end{lemma}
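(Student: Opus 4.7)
The plan is to parameterize hyperplanes $H \subset V_{i_0}$ containing $\phi_{i_0}(M^0_{i_0})$ by a projective space and show, via a dimension count on an incidence variety, that for each $\ell \ge 1$ the locus of ``bad'' hyperplanes---those that also contain $\psi_{i_0}(M^\ell_{i_0})$ for some nonzero $\psi\colon M^\ell \to V$---forms a proper closed subvariety. Since there are only finitely many indices $\ell$, the complement of the union of these bad loci will be nonempty and furnish the required hyperplane.

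Concretely, I would set $\Pi \colonequals \PP((V_{i_0}/\phi_{i_0}(M^0_{i_0}))^{\vee})$, which has dimension $m\beta_{i_0}-m_0$ and is therefore arbitrarily large for $m \gg 0$. For each $\ell \ge 1$ with $m_\ell>0$ I then study the incidence variety
\[
  \Sigma_\ell = \{(H,[\psi]) \in \Pi \times \PP(\Hom(M^\ell,V)) : \psi_{i_0}(M^\ell_{i_0}) \subseteq H\},
\]
whose second factor has dimension $m_\ell-1$ by \cref{lemma:Mell-to-V}. The crucial input, supplied by \cref{lemma:sum-Mell-injects-into-V}, is that the map $\phi \oplus \psi \colon M^0 \oplus M^\ell \to V$ is injective for every nonzero $\psi$, which at the vertex $i_0$ forces $\phi_{i_0}(M^0_{i_0}) \cap \psi_{i_0}(M^\ell_{i_0}) = 0$. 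Consequently $\phi_{i_0}(M^0_{i_0}) + \psi_{i_0}(M^\ell_{i_0})$ has dimension $m_0+m_\ell$, so each fiber of $\Sigma_\ell \to \PP(\Hom(M^\ell,V))$ is a projective space of dimension $m\beta_{i_0}-m_0-m_\ell$. Summing gives $\dim\Sigma_\ell = m\beta_{i_0}-m_0-1 = \dim\Pi - 1$, and the image of $\Sigma_\ell$ in $\Pi$ is therefore a proper closed subvariety.

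For the remaining indices $\ell$ with $m_\ell=0$, \cref{lemma:Mell-to-V} gives $\Hom(M^\ell,V)=0$ and so there are no nonzero $\psi$'s to worry about. Taking the union of the bad loci over $\ell=1,\ldots,r$ still yields a proper closed subset of $\Pi$, whose complement contains a $k$-point since $k$ is algebraically closed; this point is the desired hyperplane $H$.

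I expect the main obstacle to be verifying the transversality statement $\phi_{i_0}(M^0_{i_0}) \cap \psi_{i_0}(M^\ell_{i_0}) = 0$: this is the only step that uses the pairwise non-isomorphism and stability of the $M^\ell$, and it is precisely here that \cref{lemma:sum-Mell-injects-into-V} is indispensable. The rest of the argument is a routine incidence-variety dimension count, together with the harmless observation that the base field being infinite guarantees rational points in the open complement.
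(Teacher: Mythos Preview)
Your argument is correct and essentially the same as the paper's: both identify hyperplanes containing $\phi_{i_0}(M^0_{i_0})$ with hyperplanes in $W_{i_0}=V_{i_0}/\phi_{i_0}(M^0_{i_0})$, use the injectivity of $\overline{\psi}$ from \cref{lemma:sum-Mell-injects-into-V} to control the image of $\psi_{i_0}$, and carry out the identical dimension count. The only cosmetic difference is that the paper packages the count as an application of Bertini--Kleiman to the Schubert variety $Y_{\ell,P}\subset\Gr(m_\ell,W_{i_0})$ of codimension $m_\ell$ against the image $X_\ell$ of dimension at most $m_\ell-1$, whereas you compute directly on the incidence variety $\Sigma_\ell\subset\Pi\times\PP(\Hom(M^\ell,V))$.
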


\begin{proof}
  As before, let $m_\ell\colonequals\dim (M^\ell)_{i_0}$.
  Consider the Grassmannian $\Gr(m_\ell, W_{i_0})$
  of $m_\ell$-dimensional subspaces of the vector space $W_{i_0}$.
  By \cref{lemma:sum-Mell-injects-into-V}, we obtain a morphism
  \[
    \mathbb{P}(\Hom(M^\ell, V)) \longrightarrow \Gr(m_\ell, W_{i_0})
  \]
  that sends a map $\psi \colon M^\ell \to V$ to the subspace $\im (\overline{\psi})_{i_0} \subseteq W_{i_0}$.
  The image $X_\ell \subseteq \Gr(m_\ell, W_{i_0})$ of this morphism
  has dimension at most $\dim \mathbb{P}(\Hom(M^\ell, V)) = m_\ell - 1$.

  For a hyperplane $P \subseteq W_{i_0}$,
  denote by $Y_{\ell, P} \subseteq \Gr(m_\ell, W_{i_0})$ the Schubert variety of subspaces contained in $P$.
  Since the codimension of $Y_{\ell, P}$ is $m_\ell$,
  by the Bertini--Kleiman theorem there exists a hyperplane $P$
  such that $Y_{\ell, P} \cap X_\ell = \varnothing$ for each $\ell = 1,\ldots, r$.
  The preimage $H \subseteq V_{i_0}$ of this $P$ satisfies the conditions in the lemma.
\end{proof}

\begin{proof}[Proof of \cref{theorem:separation-base-case}]
  Let $H \subset V_{i_0}$ be a hyperplane as in \cref{lemma:hyperplane-construction} and let $p\colon V_{i_0} \to k$ be a linear map with kernel equal to $H$.
  Recall that restricting to $V_{i_0}$ gives an isomorphism $\Hom_Q(V, I(i_0)) \xrightarrow{\sim} \Hom_k(V_{i_0},k)$.
  Let $\pi$ be the unique map $V \to I(i_0)$ corresponding to $p$ under this isomorphism.
  We claim that the representation $N = \ker\pi$ satisfies conditions \ref{enumerate:separation-1}-\ref{enumerate:separation-2} in the statement.

  Note that a morphism $f\colon M^\ell \to V$ factors through $N$ if and only if the composition $\pi \circ f\colon M^\ell \to I(i_0)$ is zero, if and only if the composition $p \circ f_{i_0}\colon M^\ell_{i_0} \to I(i_0)_{i_0}$ is zero, if and only if $f_{i_0}(M^\ell_{i_0}) \subset H$. Thus, it follows from the choice of $H$ that $\phi\colon M^0 \to V$ factors through $N$ but no nonzero map $M^\ell \to V$ does for $\ell = 1,\ldots, r$, which proves \ref{enumerate:determinantal-1} and \ref{enumerate:determinantal-2}.

  For \ref{enumerate:determinantal-3}, we note that the representation $I = \coker\pi$ is injective, as it is a quotient of $I(i_0)$ and the category of representations is hereditary. Setting $\epsilon = \dimvect I$, the exact sequence
  \[
    0 \to N \to V \xrightarrow{\pi} I(i_0) \to I \to 0.
  \]
  implies that
  \[ \dimvect N = \dimvect V - \dimvect I(i_0) + \dimvect I = m \beta + \epsilon_0 - \epsilon_0 + \epsilon = m\beta+\epsilon. \]
  Finally, since $\supp I(i_0) \cap \supp M^0 = \{i_0\}$ and $(\im \pi)_{i_0} = (I(i_0))_{i_0}$,
  we must have $\supp \epsilon \cap \supp M^0 = \varnothing$.
\end{proof}

\section{Moduli spaces of quiver representations}
\label{section:good-moduli-spaces}

In this section we show that under certain assumptions (see \cref{remark:stability-function}), stacks of semistable quiver representations
admit adequate moduli spaces by verifying the existence criteria of~\cite{1812.01128}.

Throughout this section we let~$S$ denote a noetherian scheme.
The locally noetherian hypothesis is required
for the notions of $\Theta$-reductivity and S-completeness to be well-defined
and for the existence criteria to be applicable as in \cref{subsection:existence-criteria},
while we add a quasi-compactness condition to ensure that points specialize to closed points,
so that the local reductivity in \cref{subsection:local-reductivity} is better behaved.

\subsection{Good and adequate moduli spaces}

We begin by recalling the definition of good and adequate moduli spaces due to Alper \cite{MR3237451, MR3272912}.

\begin{definition}
  \label{definition:moduli-space}
  Let $\cX$ be a quasi-separated algebraic stack over $S$. An \emph{adequate moduli space} is a  quasi-compact quasi-separated  morphism~$f\colon\cX\to X$ to an algebraic space~$X$ over $S$ such that
  \begin{enumerate}[label=(\roman*)]
    \item $f$ is \emph{adequately affine}, meaning that for every surjection $\cA \to \cB$ of quasi-coherent $\cO_\cX$-algebras and every section $s$ of $f_*\cB$ over an affine \'etale neighborhood $U$ of $X$, some power of $s$ lifts to a section of $f_*\cA$ over $U$, and
    \item\label{enumerate:moduli-space-structure-sheaf} the natural morphism $\cO_X\to f_*\cO_{\cX}$ is an isomorphism.
  \end{enumerate}
  The map $f$ is a \emph{good moduli space} if instead of (i), we have the stronger hypothesis
  \begin{enumerate}[label=(\roman*$^\prime$)]
    \item $f$ is \emph{cohomologically affine}, meaning that the functor~$f_*\colon\operatorname{Qcoh}\cX\to\operatorname{Qcoh}X$ is exact.
  \end{enumerate}
\end{definition}

The notion of a good moduli space is inspired by good quotients in GIT.

\begin{example}
  \cite[Section~13]{MR3237451}
  If $G$ is a linearly reductive algebraic group 
  over a field $k$ acting on an affine $k$-scheme $\Spec(A)$,
  then a good moduli space of the quotient stack is given by the affine GIT quotient:
  \[
    [\Spec(A)/G] \rightarrow \Spec (A) \sslash G \coloneqq \Spec(A^G).
  \]
  More generally, if $G$ acts on a quasi-projective $k$-scheme $X$ with a fixed ample $G$-linearization
  and $X^{\mathrm{ss}} \rightarrow X \sslash G$ denotes the corresponding GIT quotient, then
  \[
    [X^{\mathrm{ss}}/G] \rightarrow X \sslash G
  \]
  is a good moduli space.
\end{example}

In this example, it is vital that $G$ is linearly reductive, since $\pi\colon \mathrm{B}G \rightarrow \Spec(k)$ is cohomologically affine if and only if $G$ is linearly reductive, that is, the functor $\pi_*$, which corresponds to taking $G$-invariants, is exact \cite[Proposition 12.2]{MR3237451}. In characteristic $0$, the notions of linearly reductive and reductive coincide; however, in positive characteristic, many reductive groups, $\GL_n$ among them, are not linearly reductive.

In order to bridge this gap,
Alper introduced the broader notion of adequate moduli spaces in \cite{MR3272912} which also covers many interesting cases in positive characteristic. In particular a flat, separated, finitely presented group scheme $G$ over $S$ is reductive if and only if the morphism $\mathrm{B}G \rightarrow S$ is adequately affine.
Consequently, Alper's notion of adequate moduli space enables a generalization of GIT to stacks for all reductive groups in arbitrary characteristic.

\begin{example}
  \cite[Section~9]{MR3272912}
  If $G$ is a smooth affine reductive group over a field $k$ acting on an affine $k$-scheme $\Spec(A)$, then
  \[
    [\Spec(A)/G] \rightarrow \Spec (A) \sslash G \coloneqq \Spec(A^G)
  \]
  is an adequate moduli space.
  More generally, for a reductive $k$-group $G$ acting on a quasi-projective $k$-scheme $X$ with an ample $G$-linearization,
  \[
    [X^{\mathrm{ss}}/G] \rightarrow X \sslash G
  \]
  is an adequate moduli space.
\end{example}

Since the stacks $\modulistack{d}$ and $\modulistack[\semistable{\theta}]{d}$
can both be described as quotient stacks via such a GIT setup (see \cref{proposition:smooth-cover-by-Rep}),
one can conclude that these moduli stacks have adequate moduli spaces given by the GIT quotient.
We will instead take a modern approach which avoids GIT and establish the existence of adequate
moduli spaces by applying the criteria of \cite{1812.01128}.

We summarize some of the properties of good and adequate moduli spaces that will be relevant for us.
Recall that~$S$ is assumed to be noetherian, hence quasi-separated, which is a standing assumption in the works we build on.

\begin{theorem}
\label{theorem:gms-properties}
  Let $\cX$ be an algebraic stack of finite type over $S$, and let $f\colon\cX \to X$ be an adequate moduli space.
  \begin{enumerate}[label=(\roman*)]
    \item\label{enumerate:ams-surj-univclosed-initial} \cite[Theorems 5.3.1 and 7.2.1]{MR3272912} The map $f$ is surjective, universally closed, and initial for maps to schemes and to separated algebraic spaces over $S$.

    \cite[Theorem 6.6]{MR3237451} If $f$ is a good moduli space, it is initial for maps to algebraic spaces.

    In particular, adequate moduli spaces which are schemes and good moduli spaces are unique up to a unique isomorphism.

    \item\label{enumerate:ams-finite-type} \cite[Theorem 6.3.3]{MR3272912} The algebraic space $X$ is of finite type over $S$.

    \item\label{enumerate:gms-points} \cite[Theorem 5.3.1]{MR3272912} For an algebraically closed $\cO_S$-field $k$, the map $f \times_S k$ identifies two $k$-points $x, y\colon \Spec k \to \cX \times_S k$ if and only if the closures of $\{x\}$ and $\{y\}$ in $|\cX \times_S k|$ intersect.

    \item\label{enumerate:ams-commutes-with-flat-base-change} \cite[Proposition 5.2.9~(1)]{MR3272912} If $X' \to X$ is a flat morphism of algebraic spaces, then $\cX \times_X X' \to X'$ is an adequate moduli space. In particular, if $S' \to S$ is a flat morphism of schemes, then $\cX \times_S S' \to X \times_S S'$ is an adequate moduli space.

      \cite[Proposition 4.7~(i)]{MR3237451} If $\cX \to X$ is a good moduli space, then for any morphism $X' \to X$ the base change $\cX \times_X X' \to X'$ is a good moduli space.

    \item\label{enumerate:ams-base-change-univ-homeo} \cite[Proposition 5.2.9~(3)]{MR3272912} More generally, if $X' \to X$ is any morphism of algebraic spaces, then the morphism $\cX \times_X X' \to X'$ is a universal homeomorphism.
    In particular, if $S' \to S$ is any morphism of schemes, then the closed points of $\cX \times_S S'$ are in natural bijection with those of $X \times_S S'$.
  \end{enumerate}
\end{theorem}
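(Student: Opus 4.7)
The plan is pragmatic: since each clause of \cref{theorem:gms-properties} is explicitly attributed to a result in Alper's foundational papers on (adequate) good moduli spaces, the proof consists of verifying that our setup — an algebraic stack $\cX$ of finite type with adequate moduli space $f\colon\cX\to X$ over a noetherian base $S$ — meets the hypotheses of each cited theorem and then quoting it. No new mathematical content is produced; the work is to translate between conventions and keep track of hypotheses.

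For \ref{enumerate:ams-surj-univclosed-initial}, I would invoke \cite[Theorems~5.3.1 and 7.2.1]{MR3272912} for surjectivity, universal closedness, and initiality among maps to (separated) algebraic spaces; the sharper statement that $f$ is initial among maps to \emph{all} algebraic spaces in the good case is \cite[Theorem~6.6]{MR3237451}. Uniqueness of schematic adequate moduli spaces and of good moduli spaces is then a formal consequence of the universal property, by comparing two purported moduli spaces in both directions. For \ref{enumerate:ams-finite-type}, finite type of $X$ over $S$ is \cite[Theorem~6.3.3]{MR3272912}, whose proof reduces, via an \'etale local structure result, to finite generation of invariants for a reductive group action on a finite-type affine scheme.

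For \ref{enumerate:gms-points}, the identification of $k$-points through closure intersection is the geometric content of adequate affineness on fibers over algebraically closed points, proved in \cite[Theorem~5.3.1]{MR3272912}; the idea is that disjoint closed substacks are separated by global functions, and adequate affineness then forces identification precisely on the relation generated by specialization to a common closed point. For \ref{enumerate:ams-commutes-with-flat-base-change}, flat base change preserves adequate affineness, respectively cohomological affineness, by a direct computation with the push-pull formula; this is \cite[Proposition~5.2.9~(1)]{MR3272912} and \cite[Proposition~4.7~(i)]{MR3237451}. Finally, \ref{enumerate:ams-base-change-univ-homeo} is \cite[Proposition~5.2.9~(3)]{MR3272912}; the underlying reason is that, while $f_*$ need not commute with non-flat base change at the level of sheaves, it does on the level of topological spaces because closed points of $\cX$ correspond to closed points of $X$ and this correspondence is stable under arbitrary base change.

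The only place requiring genuine care, rather than routine quotation, is the juncture between \ref{enumerate:ams-commutes-with-flat-base-change} and \ref{enumerate:ams-base-change-univ-homeo}: one must emphasize that non-flat base change in general produces only a universal homeomorphism (not an adequate moduli space), and that the closed-point bijection in \ref{enumerate:ams-base-change-univ-homeo} is what we will ultimately exploit when later bootstrapping projectivity from $\Spec k$ to $\Spec\ZZ$ to an arbitrary $S$. Since we have added the standing noetherian and quasi-compactness assumptions on $S$ precisely to guarantee that the hypotheses of the cited theorems hold verbatim, no further verification is needed.
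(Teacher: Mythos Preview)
Your proposal is correct and matches the paper's approach exactly: the paper gives no proof of this theorem at all, treating it purely as a summary of results quoted from Alper's foundational papers with the citations embedded in the statement itself. Your added commentary on why the hypotheses are met is a reasonable elaboration, but there is nothing to compare against since the paper simply states and cites.
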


Condition \ref{enumerate:moduli-space-structure-sheaf} in \cref{definition:moduli-space} allows us to prove the following.
\begin{lemma}\label{lemma:descending-vector-bundles-to-ams}
Let $\cX$ be an algebraic stack over a scheme $S$ and let $f\colon \cX \to X$ be an adequate moduli space.
  \begin{enumerate}[label=(\roman*)]
      \item \label{item:descending-vector-bundles-to-ams-i} The pullback $f^*$ is fully faithful for vector bundles on $X$.
  \end{enumerate}
  In particular, given a vector bundle $\cF$ on $\cX$, there is up to isomorphism at most one vector bundle $F$ on $X$ such that $\cF \cong f^*F$. If such an $F$ exists, we say that $\cF$ \emph{descends} to $X$.
  \begin{enumerate}[label=(\roman*)]
    \setcounter{enumi}{1}
      \item \label{item:descending-vector-bundles-to-ams-ii} Let $X' \to X$ be an fpqc cover, let $\cX' \to X'$ be the base change of $\cX$ along $X'$ with the second projection morphism $u\colon \cX' \to \cX$, and let $\cF$ be a vector bundle on $\cX$.
      If $u^*\cF$ descends to $X'$, then $\cF$ descends to $X$.

      \item \label{item:descending-vector-bundles-to-ams-iii} If a line bundle $\cL$ on $\cX$ is generated by finitely many global sections, then $\cL$ descends to $X$.
  \end{enumerate}
\end{lemma}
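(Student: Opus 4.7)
The plan is to prove the three parts in order, with (i) supplying both the uniqueness statement preceding (ii) and the key technical input for (ii) and (iii). For the uniqueness statement, if $f^*F \cong \cF \cong f^*F'$, then full faithfulness of $f^*$ turns the composed isomorphism $f^*F \cong f^*F'$ into a unique isomorphism $F \cong F'$.

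For (i), given vector bundles $F,G$ on $X$, the adjunction between $f^*$ and $f_*$ identifies $\Hom_\cX(f^*F, f^*G)$ with $\Hom_X(F, f_*f^*G)$, so it suffices to show that the unit $G \to f_*f^*G$ is an isomorphism for every locally free $G$ on $X$. Since $f$ is quasi-compact and quasi-separated, $f_*$ commutes with restriction to \'etale opens of $X$, so one may argue \'etale-locally where $G$ is free of finite rank; the claim then reduces to the identity $f_*\cO_\cX = \cO_X$ built into condition \ref{enumerate:moduli-space-structure-sheaf} of \cref{definition:moduli-space}.

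For (ii), flatness of $X' \to X$ together with \cref{theorem:gms-properties}~\ref{enumerate:ams-commutes-with-flat-base-change} ensures that the base changes $f' \colon \cX' \to X'$, $f'' \colon \cX'' \to X''$, and $f''' \colon \cX''' \to X'''$ along the iterated fibered self-products of $X'$ over $X$ are all adequate moduli spaces. The vector bundle $u^*\cF$ carries canonical descent data along the fpqc cover $\cX' \to \cX$, and via $u^*\cF \cong f'^*F'$ this yields an isomorphism $f''^*\pi_1^*F' \cong f''^*\pi_2^*F'$ between the two pullbacks to $\cX''$. Applying (i) to $f''$ descends this uniquely to an isomorphism $\pi_1^*F' \cong \pi_2^*F'$ on $X''$, and its cocycle condition on $X'''$ is inherited from that on $\cX'''$ through full faithfulness of $f'''^*$. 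Fpqc descent for vector bundles on the algebraic space $X$ then produces an $F$ on $X$ with $F|_{X'} \cong F'$, and the identity $f^*F \cong \cF$ follows by pulling back to $\cX'$ and descending the resulting isomorphism along the fpqc cover $\cX' \to \cX$.

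For (iii), the generating sections $s_0,\dots,s_n$ of $\cL$ define a surjection $\cO_\cX^{\oplus(n+1)} \twoheadrightarrow \cL$ and hence a morphism $g \colon \cX \to \PP^n_S$ with $g^*\cO(1) \cong \cL$. By \cref{theorem:gms-properties}~\ref{enumerate:ams-surj-univclosed-initial}, $g$ factors uniquely as $g = \bar g \circ f$ for some $\bar g \colon X \to \PP^n_S$, and setting $L \colonequals \bar g^*\cO_{\PP^n_S}(1)$ gives $f^*L \cong \cL$ by functoriality. The main obstacle I anticipate is the descent bookkeeping in (ii): one must verify that every iterated base change involved in the cocycle check is still covered by \cref{theorem:gms-properties}~\ref{enumerate:ams-commutes-with-flat-base-change} so that (i) can be applied at each stage, and then lift the descended isomorphism $F|_{X'} \cong F'$ back to an identification $f^*F \cong \cF$ on $\cX$ compatibly with all the intermediate identifications.
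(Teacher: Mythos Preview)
Your proposal is correct and follows essentially the same route as the paper's proof: part (i) via adjunction and $f_*\cO_\cX = \cO_X$ (the paper packages this as the projection formula, you unpack it by reducing \'etale-locally to free $G$), part (ii) by transporting descent data using (i) at each level of the fibered self-products (which are adequate moduli spaces by \cref{theorem:gms-properties}~\ref{enumerate:ams-commutes-with-flat-base-change}), and part (iii) by factoring the morphism to $\PP^n$ through $X$ using initiality from \cref{theorem:gms-properties}~\ref{enumerate:ams-surj-univclosed-initial}.
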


\proof \ref{item:descending-vector-bundles-to-ams-i} Given vector bundles $F$ and $G$ on $X$, using the projection formula and the condition $f_* \cO_\cX = \cO_X$, we have
\[ \Hom_{\cX}(f^*F, f^*G) = \Hom_X(F, f_* f^* G) = \Hom_X(F, f_* \cO_\cX \otimes G) = \Hom_X(F, G). \]

\ref{item:descending-vector-bundles-to-ams-ii} The vector bundle $\cF' = u^*\cF$ comes equipped with a canonical descent datum; that is, there is a canonical isomorphism $\sigma \colon \mathrm{pr}_1^* \cF' \xrightarrow{\; \sim \;} \mathrm{pr}_2^* \cF'$, where $\mathrm{pr}_1, \mathrm{pr}_2 \colon \cX' \times_\cX \cX' \to \cX'$ are the projections, and $\sigma$ satisfies the cocycle condition on $\cX'\times_\cX \cX' \times_\cX \cX'$.
By assumption, $\cF'$ descends to a vector bundle $F'$ on $X'$, and by \cref{theorem:gms-properties}~\ref{enumerate:ams-commutes-with-flat-base-change},  $X'$ is an adequate moduli space of $\cX'$,
so it follows from \ref{item:descending-vector-bundles-to-ams-i} that $\sigma$ descends to an isomorphism $\tau\colon \text{pr}_1^* F' \cong \mathrm{pr}_2^*F'$, where $\mathrm{pr}_1, \mathrm{pr}_2 \colon X' \times_X X' \to X'$ are the projections.
Moreover $X'\times_X X' \times_X X'$ is an adequate moduli space for $\cX'\times_\cX \cX' \times_\cX \cX'$, so the isomorphism $\tau$ satisfies the cocycle condition on $X'\times_X X' \times_X X'$. In other words, we obtain a descent datum for $F'$, so there exists a unique vector bundle $F$ on $X$ whose restriction to $X'$ is $F'$, and since the descent datum for $F$ pulls back to that of $\cF$, it follows that $f^*F \cong \cF$.

\ref{item:descending-vector-bundles-to-ams-iii}  Let $s_0, \ldots, s_n \in \Gamma(\cX, \cL)$ be global sections that generate $\cL$ and denote by $\phi \colon\cX \to \PP^n$ the morphism induced by these sections. Since the adequate moduli space~$f \colon \cX \to X$ is initial for maps to separated algebraic spaces, there is a map $\psi \colon X \to \PP^n$ such that $\phi = \psi \circ f$. This implies that the line bundle $L:= \psi^*\cO_{\PP^n}(1)$ satisfies $f^*L = \cL$, proving the claim.
\endproof

If $f \colon \cX \to X$ is a good moduli space, then a more general result holds: the pullback $f^*$ is fully faithful for all quasi-coherent sheaves, and in fact quasi-coherent sheaves satisfy descent along $f$ \cite[Lemma 2.12]{Sveta}. We do not know if the analogous result holds for adequate moduli spaces.

\subsection{Existence criteria for moduli spaces}
\label{subsection:existence-criteria}

The existence criteria for good and adequate moduli spaces of \cite{1812.01128} are expressed in terms of two valuative criteria for algebraic stacks, known as $\Theta$-reductivity and S-completeness, that we now recall.
Both are certain codimension-2 filling conditions and,
in order to specify them,
we introduce two quotient stacks associated to a given discrete valuation ring.

For a DVR $R$ with uniformizer $\pi \in R$, fraction field $K$, and residue field~$\kappa$, we define
\[
  \Theta_R = [\Spec(R[t])/\Gm] \quad \mathrm{and} \quad \STbar_R = \left[
  \quot{
    \Spec\left(\frac{R[s,t]}{s t - \pi}\right)
    }
    {\Gm} \right],
\]
where $\Gm$ acts with weight $1$ on $s$ and weight $-1$ on $t$.
We denote by $0 \in \Theta_R$ and $0 \in \STbar_R$ the unique closed point in each stack.

\begin{definition}
  \label{def:ThetaS}
  An algebraic stack $\cX$ of finite type over $S$ is \emph{$\Theta$-reductive},
  respectively S\emph{-complete},
  if for any discrete valuation ring $R$ and any commutative diagram of solid arrows on the left,
  respectively on the right,
  \begin{equation}
    \begin{tikzcd}
      \Theta_R\setminus\{0\} \arrow[r] \arrow[d, hook] & \cX \arrow[d] \\
      \Theta_R \arrow[r] \arrow[ru, dashed, "\exists!"] & S
    \end{tikzcd}
  \qquad \qquad
    \begin{tikzcd}
      \overline{\mathrm{ST}}_R\setminus\{0\} \arrow[r] \arrow[d, hook] & \cX \arrow[d] \\
      \overline{\mathrm{ST}}_R \arrow[r] \arrow[ru, dashed, "\exists!"] & S
    \end{tikzcd}
  \end{equation}
  there exists a unique dashed arrow making the diagram commute.
\end{definition}

\begin{remark}
\label{remark:hartogs}
  A stack $\cX$ satisfies \emph{Hartogs's principle} if for any regular local ring $A$ of dimension 2
  with closed point $0 \in \Spec A$ and any map $\Spec{A} \setminus \{0\} \to \cX$, there exists a unique extension $\Spec{A} \to \cX$ \cite[Remark 3.51]{1812.01128}. In particular, it follows from descent that $\cX$ satisfying Hartogs's principle is both $\Theta$-reductive and S-complete. Moreover, if $\cX$ has affine diagonal, any such extension is unique if it exists.
\end{remark}

We can now state the existence criteria of Alper--Halpern-Leistner--Heinloth (in a slightly more restrictive version).

\begin{theorem}[{\cite[Theorem 5.4]{1812.01128}}] \label{theorem:AHLH}
  Let $\cX$ be an algebraic stack of finite type, with affine stabilizers and separated diagonal,
  over a noetherian scheme $S$.
  \begin{enumerate}[label=(\roman*)]
    \item\label{enumerate:gms-existence}
      If $S$ is a scheme of characteristic $0$,
      then $\cX$ admits a separated good moduli space over $S$ if and only if $\cX$ is $\Theta$-reductive and S-complete.
    \item\label{enumerate:ams-existence}
      If $\cX$ is locally reductive (see \cref{definition:locallyred} below),
      then $\cX$ admits a separated adequate moduli space if and only if $\cX$ is $\Theta$-reductive and S-complete.
  \end{enumerate}
\end{theorem}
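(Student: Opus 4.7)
The statement \cref{theorem:AHLH} is quoted from \cite{1812.01128}, so here is only the outline of how I would approach it. The equivalence naturally splits into a necessity and a sufficiency direction, of vastly different difficulty. Necessity follows from general properties of (good/adequate) moduli spaces, while sufficiency requires a local-to-global construction using the recent étale-local structure theorems of Alper--Hall--Rydh together with the two filling hypotheses.

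For necessity, assume $f\colon\cX\to X$ is a separated adequate (or good) moduli space. Given a map $g\colon \Theta_R\setminus\{0\} \to \cX$, I would use the fact that $\Theta_R\to\Spec R$ is itself an adequate moduli space, and the point~$0\in \Theta_R$ has codimension~$2$ (it maps to the closed point of $\Spec R$). Composing $g$ with $f$ gives a morphism $\Theta_R\setminus\{0\}\to X$; since $X$ is a noetherian algebraic space and the complement of $\{0\}$ in $\Theta_R$ has an adequate moduli space equal to $\Spec R$ minus a point of codimension one (the closed point), one extends this to $\Spec R \to X$ by the valuative criterion for separatedness applied to~$f$ together with the universal closedness of adequate moduli spaces (\cref{theorem:gms-properties}~\ref{enumerate:ams-surj-univclosed-initial}). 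Lifting back to $\cX$ then uses that adequate moduli spaces commute with passing to a closed substack, combined with the cartesian picture around the closed point of $\Theta_R$. The argument for S-completeness is analogous, with $\STbar_R$ playing the role of $\Theta_R$; uniqueness of extensions in both cases follows from the separated diagonal hypothesis.

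The sufficiency direction is the real theorem. The strategy I would follow is the one of~\cite{1812.01128}: given a closed point $x\in |\cX|$ with stabilizer $G_x$, the local reductivity hypothesis (or characteristic $0$ plus the smoothness of the unipotent radical of an embedded reductive group, by~\cite{MR4088350}) produces a reductive group acting, and the étale-local structure theorem of Alper--Hall--Rydh produces an affine étale neighborhood of the form $[\Spec A/G_x]\to\cX$ whose image contains $x$. Such a quotient stack admits an adequate moduli space $\Spec A^{G_x}$ by Alper's GIT-style theorems in~\cite{MR3272912}. The resulting collection of local adequate moduli spaces must then be glued into a global algebraic space, and this is where $\Theta$-reductivity and S-completeness enter: $\Theta$-reductivity ensures that closed orbits are preserved under passing to étale neighborhoods (so that the notion of ``closed point'' is well-defined globally and two different local neighborhoods produce the same set-theoretic identifications), while S-completeness ensures that the resulting equivalence relation on the étale covers is effective and gives rise to a separated algebraic space. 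One then checks that the gluing is indeed an adequate moduli space by verifying adequate affineness étale-locally, which reduces to the local quotient stack case.

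The main obstacle, by a wide margin, is the local structure theorem and the verification that the two codimension-two filling conditions produce a well-defined algebraic space after gluing. In particular, S-completeness is exactly the condition that allows one to compare two étale-local moduli spaces along a common specialization pattern — without it, the gluing could produce a non-separated or even non-algebraic quotient. Once the local quotient presentations are in hand, the passage from good to adequate (part \ref{enumerate:ams-existence}) is essentially bookkeeping: replace ``linearly reductive'' with ``reductive'' and ``cohomologically affine'' with ``adequately affine'' throughout, using the results of~\cite{MR3272912} in place of those in~\cite{MR3237451}. The characteristic-$0$ refinement in~\ref{enumerate:gms-existence} is the observation that in characteristic zero every reductive group is linearly reductive, so the adequate moduli space one constructs is automatically good.
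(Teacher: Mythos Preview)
The paper does not prove this statement; it is quoted verbatim as \cite[Theorem 5.4]{1812.01128} and used as a black box, so there is nothing in the paper to compare your outline against. You correctly identify this in your first sentence, and your sketch of the Alper--Halpern-Leistner--Heinloth argument (necessity from general properties of adequate moduli spaces, sufficiency via the \'etale-local structure theorems and gluing controlled by $\Theta$-reductivity and S-completeness) is a reasonable summary of the strategy in the cited reference.
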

When~$\cX$ has affine diagonal -- as it will have in our case -- the condition that it has affine stabilizers and separated diagonal is automatic.

\begin{definition}\label{definition:locallyred}
  A quasi-separated algebraic stack $\cX$ with affine stabilizers is \emph{locally reductive} if every point of $\cX$ specializes to a closed point and for every closed point $x \in \cX$, there exists a pointed \'etale morphism
  $([\Spec(A)/ \GL_n], w) \to (X,x)$
  inducing an isomorphism of stabilizers at $w$.
\end{definition}

If $\cX$ is S-complete, the automorphism group of any closed point is reductive \cite[Proposition 3.47]{1812.01128}.
In characteristic $0$, this implies that $\cX$ has \'{e}tale local presentations by quotient stacks of the form $[\Spec(A)/ G]$ with $G$ linearly reductive \cite{MR4088350}.
In positive characteristic, one instead has to assume the existence of such local presentations.
In \cref{subsection:ThetaS}, we show that the stacks $\modulistack{d,S}$ and $\modulistack[\semistable{\theta}]{d,S}$ are $\Theta$-reductive and S-complete, which implies that they have good moduli spaces when $S$ is a scheme of characteristic $0$.
However, to obtain adequate moduli spaces in positive characteristic, we show in \cref{subsection:local-reductivity} that $\modulistack[\semistable{\theta}]{d,S}$ is locally reductive under the additional assumption $\theta = \theta_\beta$ or $\theta=\eta_\beta$ for a dimension vector $\beta$.

\subsection{Points of moduli spaces of quiver representations}
\label{subsection: points of moduli spaces}

In this section, we describe closed points of the moduli stacks $\modulistack{d,S}$ and $\modulistack[\semistable{\theta}]{d,S}$.

\begin{lemma}\label{lemma:JH-filtration-closure}
  Let $k$ be a field and let $M$ be a $\theta$-semistable representation of dimension vector $d$. The point $|\gr{M}| \in |\modulistack[\semistable{\theta}]{d,k}|$ corresponding to the associated graded object $\gr{M}$ of the Jordan-H\"older filtration lies in the closure of $|M|$.
  Consequently, a closed point of $|\modulistack[\semistable{\theta}]{d,k}|$ corresponds to a geometrically polystable representation.
\end{lemma}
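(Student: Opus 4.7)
The plan is to prove the first assertion by constructing an explicit one-parameter degeneration of $M$ to $\gr M$ via the Rees construction, and then to deduce the second assertion by combining the specialization with a base change to an algebraic closure and a faithfully flat descent argument.

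For the first part, fix a Jordan--Hölder filtration $0=M^0\subsetneq M^1\subsetneq\cdots\subsetneq M^r=M$ and choose vector-space splittings $M_i = \bigoplus_{\ell=1}^r N^\ell_i$ at each vertex compatible with the filtration, so that $N^\ell\cong M^\ell/M^{\ell-1}$ and $M^\ell_i=\bigoplus_{j\le\ell}N^j_i$. In these coordinates each arrow decomposes as $M_a=\sum M_a^{\ell,m}$ with $M_a^{\ell,m}\colon N^\ell_{s(a)}\to N^m_{t(a)}$, and the subrepresentation condition on each $M^\ell$ forces $M_a^{\ell,m}=0$ whenever $m>\ell$. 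Define a family of representations over $\bbA^1_k=\Spec k[t]$ with underlying sheaves $M_i\otimes_k k[t]$ and arrows
\[
\widetilde M_a \;=\; \sum_{m\le\ell} t^{\ell-m}\,M_a^{\ell,m}.
\]
The fiber at $t=0$ is $\gr M$ (only the diagonal terms $\ell=m$ survive), while for every $t\ne 0$ the simultaneous conjugation by $g_i(t)=\operatorname{diag}(t,t^2,\ldots,t^r)$ on the decomposition $\bigoplus_\ell N^\ell_i$ identifies $\widetilde M|_t$ with $M$. Since $M$ and $\gr M$ are both $\theta$-semistable, openness of semistability (\cref{lemma:openness-of-stability}) ensures that the family factors through $\modulistack[\semistable{\theta}]{d,k}$, sending $\bbA^1_k\setminus\{0\}$ to $|M|$ and the origin to $|\gr M|$, whence $|\gr M|\in\overline{\{|M|\}}$.

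For the second assertion, suppose $|M|$ is closed. To deal with potentially non-perfect $k$, I would first pass to an algebraic closure $\bar k/k$ and apply the first part to $M\otimes_k\bar k$, obtaining $|\gr(M\otimes\bar k)|\in\overline{\{|M\otimes\bar k|\}}$ in $|\modulistack[\semistable{\theta}]{d,\bar k}|$. The continuous base-change map $|\modulistack[\semistable{\theta}]{d,\bar k}|\to|\modulistack[\semistable{\theta}]{d,k}|$ sends $|M\otimes\bar k|$ to $|M|$, so by closedness of $|M|$ it must also send $|\gr(M\otimes\bar k)|$ to $|M|$. Now two $\bar k$-points of $\modulistack[\semistable{\theta}]{d,k}$ give the same class in $|\modulistack[\semistable{\theta}]{d,k}|$ if and only if the corresponding $\bar k$-representations are isomorphic (any isomorphism over a common field extension descends since $\mathrm{Isom}$ is an open subscheme of an affine $\bar k$-scheme of finite type, hence has a $\bar k$-point whenever nonempty). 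Therefore $M\otimes\bar k\cong\gr(M\otimes\bar k)$, and in particular $M\otimes\bar k$ is $\theta$-polystable over $\bar k$.

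To upgrade this to geometric polystability of $M$, let $K/k$ be an arbitrary field extension and pick a common extension $L/k$ of $K$ and $\bar k$. Since polystability is preserved by base change from the perfect field $\bar k$ (\cref{remark:stability-under-field-extension}), $M\otimes L$ is polystable over $L$. Consider the Jordan--Hölder filtration of $M\otimes K$ over $K$: each of its short exact sequences is classified by a class in $\operatorname{Ext}^1_K(\blank,\blank)$, and the canonical projective resolution \eqref{eq:canonical-projective-resolution} shows that the natural map $\operatorname{Ext}^1_K(A,B)\otimes_K L\to\operatorname{Ext}^1_L(A\otimes L,B\otimes L)$ is an isomorphism. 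Since these extension classes vanish after base change to $L$ (by polystability of $M\otimes L$), they already vanish over $K$, so the filtration splits and $M\otimes K$ is polystable. Hence $M$ is geometrically polystable.

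The main obstacle is precisely the last step: as \cref{example:Jordan-quiver-polystability-not-preserved-field-ext} shows, stability over a non-perfect $k$ need not yield polystability after base change, so the argument cannot be carried out directly over $k$. The workaround is to run the Rees degeneration already over $\bar k$ to obtain polystability there, and then descend back to arbitrary $K$ via the faithfully flat argument on $\operatorname{Ext}^1$.
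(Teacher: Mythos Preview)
Your proof is correct and follows essentially the same approach as the paper. For the first assertion, the paper uses the line in $\Ext(N'',N')$ through a nonsplit extension class to degenerate $N$ to $N'\oplus N''$ one step at a time, whereas you carry out the whole Jordan--H\"older degeneration in one go via the Rees construction; these are standard equivalent packagings of the same idea. For the second assertion, both you and the paper pass to the algebraic closure $\bar k$, apply the degeneration there, and use that two $\bar k$-points determining the same class in $|\modulistack[\semistable{\theta}]{d,k}|$ must be isomorphic. The paper stops at the observation that $M\otimes\bar k$ is polystable and invokes \cref{remark:stability-under-field-extension} implicitly; your $\Ext^1$ descent argument makes explicit why polystability over $\bar k$ propagates to every intermediate extension $K/k$, which is a welcome clarification but not a different strategy.
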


\begin{proof}
  Given a nonsplit short exact sequence $0 \to N' \to N \to N'' \to 0$ of representations where $\dimvect{N} = d$, the line $\bbA^1$ in $\Ext(N'', N')$
  spanned by this class parameterizes a family of representations $\mathcal{N}$ such that $\mathcal{N}_t \cong N$ for $t \neq 0$ and $\mathcal{N}_0 \cong N' \oplus N''$. Considering the induced map $\bbA^1 \to \modulistack{d}$, we see that the point $|N' \oplus N''| \in |\modulistack{d}|$ is in the closure of the point $|N|$.

  If now
  \[
  0 = M^0 \subset M^1 \subset M^2 \subset \cdots \subset M^{r-1} \subset M^r = M
  \]
  is a Jordan-H\"older filtration of $M$, then by inductively applying the above argument, we see that the closure of $|M|$ contains the point corresponding to
  \[ \left( \bigoplus_{\ell=1}^i M^\ell/M^{\ell-1} \right) \oplus M/M^i \]
  for each $i = 1, \ldots, r$, hence in particular the point $|\gr{M}|$.

  Now if $M$ corresponds to a closed point in $|\modulistack[\semistable{\theta}]{d,k}|$, then it defines the same point as its base change $M'$ to an algebraically closed field. By the above, $|\gr M'|$ lies in the closure of $|M'|$ which only contains one point, hence $M' = \gr M'$, and so $M$ is geometrically polystable.
\end{proof}

\begin{proposition}
  \label{proposition:closed-points-arbitrary-base}
  Let $\pi\colon \modulistack[\semistable{\theta}]{d,S} \to S$ denote the structure morphism and let $p \in |\modulistack[\semistable{\theta}]{d,S}|$ be a point. If $\pi(p) \in S$ is closed and $p$ is represented by a geometrically polystable representation $M$ defined over a finite extension $L$ of the residue field $\kappa(\pi(p))$, then $p$ is closed. If $S$ is a Jacobson scheme, the converse holds.
\end{proposition}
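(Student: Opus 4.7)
The overall strategy is to reduce both implications to a question about closed points in the fiber $\modulistack[\semistable{\theta}]{d,\kappa(\pi(p))}$ over $\pi(p)$, and then to invoke the classical GIT principle that, over an algebraically closed field, a semistable representation is polystable if and only if its $\textrm{G}_d$-orbit in $\Rep{d}^{\semistable{\theta}}$ is closed. The quotient-stack presentation of \cref{proposition:smooth-cover-by-Rep} translates closed points of the moduli stack into closed $\textrm{G}_d$-orbits in the semistable representation space.

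For the forward direction, since $\pi(p)$ is closed in $S$, the inclusion $\Spec \kappa(\pi(p)) \hookrightarrow S$ is a closed immersion whose base change yields a closed immersion $\modulistack[\semistable{\theta}]{d,\kappa(\pi(p))} \hookrightarrow \modulistack[\semistable{\theta}]{d,S}$. Base changing further along the finite morphism $\Spec L \to \Spec \kappa(\pi(p))$ gives a finite, hence topologically closed, morphism $\modulistack[\semistable{\theta}]{d,L} \to \modulistack[\semistable{\theta}]{d,\kappa(\pi(p))}$, so it suffices to prove that $|M|$ is closed in $|\modulistack[\semistable{\theta}]{d,L}|$. Through the atlas, this amounts to showing that the $\textrm{G}_d$-orbit of $M$ is closed in $\Rep{d,L}^{\semistable{\theta}}$. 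Since closedness of subsets descends along the faithfully flat extension $\Spec \overline{L} \to \Spec L$, it is enough to verify this after base change to $\overline{L}$, where the resulting orbit is that of the polystable representation $M \otimes_L \overline{L}$ and is closed by the GIT principle.

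For the converse, assume $S$ is Jacobson and $p$ is closed. The atlas $U \colonequals \Rep{d,S}^{\semistable{\theta}}$ is of finite type over $S$ and hence Jacobson. The set-theoretic preimage of $p$ under the smooth cover $U \to \modulistack[\semistable{\theta}]{d,S}$ is a nonempty closed subset of $U$, so it contains a closed point $u$, and it coincides with the $\textrm{G}_d$-orbit of $u$ as a subset of $U$. Because $U \to S$ is of finite type between Jacobson schemes, $\pi(p)$ is closed in $S$ and $L \colonequals \kappa(u)$ is finite over $\kappa(\pi(p))$. Taking $M \colonequals u^*\cF^\univ$, we obtain a representation of $Q$ over $L$ representing $p$; the orbit $\textrm{G}_d \cdot u$ stays closed after base change to $\overline{\kappa(\pi(p))}$, so the orbit of $M \otimes_L \overline{\kappa(\pi(p))}$ in $\Rep{d,\overline{\kappa(\pi(p))}}^{\semistable{\theta}}$ is closed. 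The GIT principle then yields that $M \otimes_L \overline{\kappa(\pi(p))}$ is polystable, i.e.\ $M$ is geometrically polystable.

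The main delicacy is reconciling geometric polystability, which involves arbitrary field extensions, with the purely topological closedness of orbits over the specific field of definition. This is bridged by faithfully flat descent of closed subsets between $L$ and $\overline{L}$, together with the Nullstellensatz in the Jacobson setting, which guarantees that closed points of the atlas have residue fields finite over those of their images in $S$.
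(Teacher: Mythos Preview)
Your approach differs substantially from the paper's. You rely on the GIT characterization that over an algebraically closed field a semistable representation is polystable if and only if its $\textrm{G}_d$-orbit in $\Rep{d}^{\semistable{\theta}}$ is closed. The paper instead gives a moduli-theoretic argument that avoids this principle entirely: for the forward direction it argues by contradiction, letting $p$ specialize to some closed point $p'$ represented by a geometrically polystable $M'$, and then uses upper semicontinuity of $\dim\Hom(E,\blank)$ for each stable summand $E$ to force $M$ and $M'$ (after a common field extension) to have identical stable summands with identical multiplicities; for the converse it simply invokes \cref{lemma:JH-filtration-closure}, since if $M$ were not geometrically polystable then $|\gr(M\otimes L')|$ would lie properly in the closure of $p$ for some finite $L'/L$. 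Your route is shorter because it outsources the work to GIT, but it runs against the paper's stated aim of giving a GIT-free treatment.

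There is also a gap in your converse. You claim that the closed set $\textrm{G}_d\cdot u\subset U$ base-changes to the single orbit of $M\otimes_L\overline{\kappa(\pi(p))}$ in $\Rep{d,\overline{\kappa(\pi(p))}}^{\semistable{\theta}}$. But $\Spec L\times_{\kappa(\pi(p))}\Spec\overline{\kappa(\pi(p))}$ has one point for each embedding $\sigma\colon L\hookrightarrow\overline{\kappa(\pi(p))}$, so the base change is the union of the orbits of the conjugates $M\otimes_{L,\sigma}\overline{\kappa(\pi(p))}$, and an individual orbit need not a priori be closed inside this finite closed union. The repair is short: these orbits are permuted transitively by the Galois action, and the underlying topological space of the stack is $T_0$, so either all the corresponding stack points are closed or they all coincide; in either case the orbit of $M\otimes_L\overline{\kappa(\pi(p))}$ is closed and your appeal to the GIT principle goes through.
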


\begin{proof}
  Suppose that $\pi(p)$ is closed and that $p$ is represented by a geometrically polystable representation $M$ defined over a finite extension $L$ of $\kappa(\pi(p))$. Since $\pi$ is continuous, the fiber $\pi^{-1}(\pi(p))$ is closed, so we may assume $S = \Spec{k}$ where $k = \kappa(\pi(p))$. Suppose for a contradiction that $p$ is not closed. The closed substack with underlying set $\overline{\{p\}}$ with its reduced substack structure is of finite type over $k$, hence contains a closed point $p'$ represented by a map $\Spec{L'} \to \modulistack[\semistable{\theta}]{d,k}$ corresponding to a representation $M'$. If $M'$ is not geometrically polystable, then there is a finite extension $L''$ such that $M' \otimes_{L'} L''$ is not polystable, and by \cref{lemma:JH-filtration-closure}, the point $|\gr(M' \otimes_{L'} L'')|$ is in the closure of $p'$, contrary to the fact that $p'$ is closed. Thus, $M'$ is geometrically polystable.

  Let now $K$ be a compositum of $L$ and $L'$ over $k$. On the one hand, by assumption $p \neq p'$, so $M \otimes K$ and $M' \otimes K$ are not isomorphic. On the other hand, for any stable summand $E$ of $M \otimes K$, by upper semicontinuity we have
  \[ \dim \Hom(E, M \otimes K) \le \dim \Hom(E, M' \otimes K). \]
  Since both $M \otimes K$ and $M' \otimes K$ are polystable of the same dimension vector and the above dimensions give the multiplicities of the stable summand $E$, we conclude they must be isomorphic, which gives a contradiction. Thus, $p$ is closed.

  Conversely, suppose that $S$ is Jacobson and $p \in |\modulistack[\semistable{\theta}]{d}|$ is closed.
  Since $\pi$ is of finite type, the image point $\pi(p) \in S$ is closed by \cite[Tag \href{https://stacks.math.columbia.edu/tag/01TB}{01TB}]{stacks-project}.
  On the other hand since $p$ is in particular closed in the fiber $\pi^{-1}(\pi(p)) = \modulistack[\semistable{\theta}]{d,\kappa(\pi(p))}$, by the Nullstellensatz $p$ is represented by a map $\Spec{L} \to \modulistack[\semistable{\theta}]{d}$ corresponding to a representation $M$ over $L$, where $L$ is a finite extension of $\kappa(\pi(p))$. If $M$ is not geometrically polystable, then there is a finite extension $L'$ of $L$ such that the point $|\gr(M \otimes L')|$ is distinct from $p$ but lies in the closure of $p$ by \cref{lemma:JH-filtration-closure}. Thus, $M$ must be geometrically polystable.
\end{proof}

If $S$ is not Jacobson, the image of a closed point $p \in |\modulistack[\semistable{\theta}]{d,S}|$ in $S$ may not be closed. As an example, given a discrete valuation ring $R$ with uniformizer $\pi$ and fraction field $K$, the representation $K \to K$ of the Jordan quiver given by multiplication by $\pi^{-1}$ corresponds to a closed point in $\modulistack{1,\Spec{R}}$ whose image in $\Spec{R}$ is not closed.

\begin{corollary}\label{corollary:points-are-semisimple-reps}
  Over an algebraically closed field $k$, the closed points of $\modulistack{d}$ are in bijection with isomorphism classes of semisimple $k$-representations.
\end{corollary}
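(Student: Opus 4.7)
The statement follows almost immediately by unwinding \cref{proposition:closed-points-arbitrary-base} for the trivial stability function $\theta = 0$, so the plan is essentially an exercise in bookkeeping.

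First I would observe that with $\theta = 0$, every representation is $\theta$-semistable, so $\modulistack{d,k} = \modulistack[\semistable{0}]{d,k}$. Next I would unpack the meaning of $\theta$-stability and $\theta$-polystability in this trivial case: since $\theta(M') = 0$ automatically for every subrepresentation $M'$, the condition of being $\theta$-stable reduces to the existence of exactly two subrepresentations, namely the condition of being simple. Consequently, $\theta$-polystable representations are precisely the semisimple ones.

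Now I would invoke \cref{proposition:closed-points-arbitrary-base} with $S = \Spec k$. The structure morphism $\pi\colon \modulistack{d,k} \to \Spec k$ has the property that every point of $S$ is trivially closed, and $\Spec k$ is a Jacobson scheme. Hence the proposition yields that the closed points of $|\modulistack{d,k}|$ are exactly the points represented by geometrically polystable representations defined over some finite extension $L$ of $k$. Since $k$ is algebraically closed, the only such $L$ is $k$ itself, and the notions of polystability and geometric polystability coincide over $k$ (see \cref{remark:stability-under-field-extension}). Combined with the identification from the previous step, the closed points of $\modulistack{d,k}$ are in bijection with isomorphism classes of semisimple $k$-representations of dimension vector $d$, as desired.

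There is no real obstacle here; the only minor point to verify carefully is the identification of $\theta = 0$-polystability with semisimplicity, which follows directly from the definitions recalled in \cref{subsection: background on stability of reps}. All the substantive content has been done in \cref{lemma:JH-filtration-closure} and \cref{proposition:closed-points-arbitrary-base}.
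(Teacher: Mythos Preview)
Your proof is correct and follows exactly the paper's approach: set $\theta = 0$ so that $\modulistack{d} = \modulistack[\semistable{0}]{d}$ and polystable means semisimple, then apply \cref{proposition:closed-points-arbitrary-base}. The paper's proof is simply a terser version of what you wrote.
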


\begin{proof}
  We have $\modulistack{d} = \modulistack[\semistable{\theta}]{d}$ for the stability function $\theta = 0$,
  with respect to which a representation is polystable if and only if it is semisimple.
\end{proof}

\begin{remark}
  As an application of the classification of closed points of $\modulistack[\semistable{\theta}]{d}$,
  one can show that the line bundle $\cL_\theta$ on $\modulistack[\semistable{\theta}]{d}$
  descends to the moduli space $\modulispace[\semistable{\theta}]{d}$
  (whose existence we will obtain in \cref{corollary:Md-ss-separated-gms,remark:stability-function}),
  provided we work over a noetherian base of characteristic~0,
  so that we can apply \cite[Theorem 10.3]{MR3237451}.
  Indeed it suffices to show the stabilizer of any closed geometric point $x\colon \Spec k \to \modulistack[\semistable{\theta}]{d}$
  acts trivially on $x^* \cL_\theta$ and since such closed points correspond to $\theta$-polystable representations,
  whose automorphism groups are products of general linear groups,
  one can directly check the action on $x^* \cL_\theta$ is trivial.
  We do not give the details, as we will prove in greater generality that $\cL_\theta$ descends in \cref{section:projectivity}.
\end{remark}

\subsection{Local reductivity}
\label{subsection:local-reductivity}

We first give a criterion to ensure that points on a stack specialize to closed points.

\begin{lemma}
\label{lemma:specializing to closed points}
If $\cX$ is a quasi-compact algebraic stack with quasi-compact diagonal, then every point in the topological space $|\cX|$ specializes to a closed point.
\end{lemma}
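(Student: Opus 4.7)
The plan is to reduce this purely topological claim about $|\cX|$ to a standard fact about spectral topological spaces. Under the hypotheses on $\cX$, one has ``quasi-compact diagonal'' $=$ ``quasi-separated'', so I would invoke the theorem that the underlying topological space of a quasi-compact and quasi-separated algebraic stack is spectral (in particular quasi-compact and sober); see \cite[Tag \href{https://stacks.math.columbia.edu/tag/0DQM}{0DQM}]{stacks-project}. Granted that, the lemma becomes a general topological statement: every point of a non-empty quasi-compact sober space specializes to a closed point.

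To establish the topological statement, I would fix $x \in |\cX|$ and consider the collection $\mathcal{C}$ of non-empty closed subsets of $|\cX|$ contained in $\overline{\{x\}}$, ordered by reverse inclusion. This collection is non-empty since $\overline{\{x\}} \in \mathcal{C}$. To apply Zorn's lemma I would bound chains: given a chain $\{C_i\}_{i \in I} \subseteq \mathcal{C}$, the finite-intersection-property formulation of quasi-compactness (applied inside the quasi-compact space $|\cX|$) guarantees that $\bigcap_{i} C_i$ is non-empty, and it is clearly closed and contained in $\overline{\{x\}}$, hence an upper bound in $\mathcal{C}$. Zorn's lemma then produces a minimal element $C \in \mathcal{C}$.

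It remains to identify $C$ with a closed point. For any $z \in C$, the closure $\overline{\{z\}}$ is a non-empty closed subset of $C$, so by minimality $\overline{\{z\}} = C$. If $z' \in C$ is another point, then the same reasoning gives $\overline{\{z'\}} = C = \overline{\{z\}}$, and sobriety of $|\cX|$ forces $z = z'$. Thus $C = \{z\}$ is a singleton, $z$ is closed in $|\cX|$, and since $C \subseteq \overline{\{x\}}$ we have the required specialization $x \rightsquigarrow z$.

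There is no serious obstacle here once spectrality is cited: the hypotheses ``quasi-compact'' and ``quasi-compact diagonal'' are precisely what is needed to import the spectral-space input, and the rest is the standard Zorn argument. If one wanted a proof avoiding the spectrality statement, the main work would be in verifying sobriety of $|\cX|$ directly using a smooth atlas $U \to \cX$ and the quasi-compactness of $U \times_\cX U$ coming from the quasi-compact diagonal, but this is essentially what the cited result encapsulates.
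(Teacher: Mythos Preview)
Your proof is correct and shares the same opening move as the paper: both invoke the Stacks Project result that a quasi-compact quasi-separated algebraic stack has spectral underlying space (the paper cites Tag 0DQN rather than 0DQM, but this is the same result). From there the two arguments diverge. The paper takes a shortcut through Hochster's theorem, which says that every spectral space is homeomorphic to $|\Spec A|$ for some ring $A$, and then appeals to the familiar fact that every point of an affine scheme specializes to a closed point. Your approach instead unpacks the topological content directly: a Zorn's lemma argument on closed subsets of $\overline{\{x\}}$, using quasi-compactness for the finite intersection property and sobriety to pin down the minimal closed set as a singleton. Your route is more self-contained and avoids importing Hochster's characterization as a black box; the paper's route is terser once that theorem is granted. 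Both are standard and equally valid.
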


\begin{proof}
  By \cite[Tag \href{https://stacks.math.columbia.edu/tag/0DQN}{0DQN}]{stacks-project}, $|\cX|$ is a spectral topological space,
  and by \cite[Theorem 6]{Hochster}, a spectral topological space underlies some affine scheme, meaning that there is a homeomorphism $|\cX| \cong | \Spec A |$ for some commutative ring $A$.
  But on an affine scheme, every point specializes to a closed point.
\end{proof}

We remark that \cref{lemma:specializing to closed points} is not true for arbitrary quasi-compact stacks, not even algebraic spaces. For example, if $k$ is a field of characteristic $0$ and $X$ is the quotient of $\bbA^1_k$ by the free action of $\ZZ$ given by the dual action $n\cdot x \mapsto x + n$ on $k[x]$, then 
$X$ has infinitely many points but the trivial topology.

Now we will start applying \cref{lemma:specializing to closed points} by proving that the moduli stack of all representations is locally reductive.

\begin{lemma}
\label{lemma:full-stack-loc-red}
  The stack $\modulistack{d,S}$ is locally reductive.
\end{lemma}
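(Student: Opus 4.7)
The plan is to verify the two conditions in \cref{definition:locallyred} for $\modulistack{d,S}$.

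For the first condition, I would note that $\modulistack{d,S}$ is of finite type over the noetherian base $S$ (hence quasi-compact) and has affine, in particular quasi-compact, diagonal by \cref{proposition:affine-diagonal}. Thus \cref{lemma:specializing to closed points} applies, and every point specializes to a closed point.

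For the second condition, my strategy is to upgrade the presentation $\modulistack{d,S}\cong[\Rep{d,S}/\textrm{G}_d]$ from \cref{proposition:smooth-cover-by-Rep} into a presentation by a single general linear group. Set $N \colonequals \sum_{i\in Q_0}d_i$ and embed $\textrm{G}_d = \prod_i \GL_{d_i}\hookrightarrow \GL_N$ as block-diagonal matrices; the associated $\GL_N$-scheme
\[
Y\colonequals \GL_N\times^{\textrm{G}_d}\Rep{d,S}
\]
satisfies $[Y/\GL_N]\cong [\Rep{d,S}/\textrm{G}_d]\cong \modulistack{d,S}$. Once I show that $Y$ is affine over $S$, writing $Y = \Spec A$ exhibits $\modulistack{d,S}\cong [\Spec A/\GL_N]$ as a global quotient stack, and then the identity map furnishes the required étale pointed morphism at every closed point, trivially inducing the identity on stabilizers.

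The key remaining observation is that $Y\to \GL_N/\textrm{G}_d$ is the total space of the $\GL_N$-equivariant vector bundle associated, via the $\textrm{G}_d$-torsor $\GL_N\to \GL_N/\textrm{G}_d$, to the linear $\textrm{G}_d$-representation $\Rep{d,S}$. A vector bundle over an affine scheme is again affine, so it suffices to show that $\GL_N/\textrm{G}_d$ is affine over $S$. I expect this to be the main technical step, and would address it by appealing to Matsushima's theorem and its relative generalizations -- the quotient of a reductive group scheme by a reductive closed subgroup scheme is affine. Since both $\textrm{G}_d\subset \GL_N$ are already defined and reductive over $\ZZ$, the quotient $\GL_N/\textrm{G}_d$ is an affine $\ZZ$-scheme, and affineness is preserved by base change to $S$.
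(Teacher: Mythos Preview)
Your approach is essentially the same as the paper's: both verify specialization to closed points via \cref{lemma:specializing to closed points}, then embed $\textrm{G}_d\hookrightarrow\GL_N$ and rewrite $\modulistack{d,S}\cong[Y/\GL_N]$ with $Y=\GL_N\times^{\textrm{G}_d}\Rep{d,S}$ affine, so the identity gives the required \'etale presentation at every closed point.

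One small slip: you pass from ``$Y$ is affine over $S$'' to ``$Y=\Spec A$'', but these coincide only when $S$ itself is affine. The paper handles this by first covering $S$ by affine opens and treating each piece separately; you should insert the same reduction. After that, your argument via Matsushima's theorem (affineness of $\GL_N/\textrm{G}_d$ over $\ZZ$, preserved under base change, then $Y$ a vector bundle over an affine base) actually supplies a justification the paper leaves implicit --- it simply asserts that $X$ is affine.
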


\begin{proof}
  Since the stack $\modulistack{d,S}$ is of finite type over the noetherian base scheme $S$, it is quasi-compact, and by \cref{proposition:affine-diagonal} the diagonal of $\modulistack{d,S}$ is affine, hence quasi-compact,
  so it follows from \cref{lemma:specializing to closed points} that every point of $|\cX|$ specializes to a closed point.

  Thus, it remains to find \'{e}tale local quotient presentations.
  By covering $S$ by affine open subschemes, we may assume that $S$ itself is affine. Since $\modulistack{d,S}$ is now a global quotient stack of the affine scheme $\Rep{d,S}$ by the reductive group scheme $\textrm{G}_{d,S}$, by choosing an embedding $\textrm{G}_{d,S} \hookrightarrow \GL_{N,S}$, we have
  \[
    \modulistack{d,S} \cong [\Rep{d,S}/\textrm{G}_{d,S}] \cong [X/\GL_{N,S}],
  \]
  where $X$ is the quotient of the affine scheme $\Rep{d,S} \times_S \GL_{N,S}$ by the free diagonal action of $\textrm{G}_{d,S}$. Since $X$ is an affine scheme, this presentation shows that $\modulistack{d,S}$ is locally reductive.
\end{proof}

We next show that $\modulistack[\semistable{\theta}]{d,\ZZ}$ is locally reductive using the determinantal sections constructed in \cref{subsection:characterizing-semistable-reps}.
We comment on the (non-)necessity of the condition on the stability function~$\theta$ in \cref{remark:stability-function}. First we make an observation that will 
be used in the proof of 
\cref{proposition:locally-reductive} and 
\cref{theorem:proj-over-noeth-S}.

\begin{remark}\label{remark:spread-out-etale-locally}
  \begin{enumerate}[label=(\roman*)]
      \item Let $k$ be a field, $B \twoheadrightarrow k$ a surjection of rings, and let $V$ be a representation of dimension vector $d$ over $k$. We can extend $V$ to a representation $\cV$ over $\Spec{B}$ as follows. Since  $V_i$ is free for each vertex $i$, we can take $\cV_i = B^{\oplus d_i}$, and since each map $V_a: V_{s(a)} \to V_{t(a)}$ is represented by a matrix with entries in $k$, we can lift these matrices to $B$ to obtain $\cV_a$.
      
      \item If $x \in \Spec{A}$ is a point in an affine scheme and $k$ is a finite separable extension of the residue field $\kappa(x)$, we can find an \'etale morphism $\Spec{B} \to \Spec{A}$ whose fiber over $x$ is $\Spec{k}$. For instance, we can take $A = A_a[T]/(f)$, where $f \in A[T]$ is a monic polynomial whose image in $\kappa(x)[T]$ is the minimal polynomial of a primitive element of $k$ over $\kappa(x)$ and $a \in A$ is chosen suitably.
  \end{enumerate}
\end{remark}

\begin{proposition}
  \label{proposition:locally-reductive}
  If $\theta = \theta_\beta$ or $\theta = \eta_\beta$ for a dimension vector $\beta$, then the stack
  $\modulistack[\semistable{\theta}]{d,S}$ is locally reductive.
\end{proposition}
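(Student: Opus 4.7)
The plan is to verify the two conditions of \cref{definition:locallyred} in turn. The specialization-to-closed-points condition will be immediate: $\modulistack[\semistable{\theta}]{d,S}$ is of finite type over the noetherian base $S$ -- hence quasi-compact -- and has affine (in particular quasi-compact) diagonal by \cref{corollary:open-substacks}, so \cref{lemma:specializing to closed points} applies directly.

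The real work is in producing \'etale-local presentations by quotient stacks $[\Spec A/\GL_n]$ at each closed point $x$. My idea is to build an open affine substack around $x$ using the determinantal semi-invariants from \cref{subsection:determinantal-line-bundles}; this is precisely why the hypothesis on $\theta$ is imposed. Since the assertion allows me to replace $S$ by an \'etale neighborhood of $\pi(x)$, I will first invoke \cref{proposition:closed-points-arbitrary-base} together with the second part of \cref{remark:spread-out-etale-locally} to reduce to the case $S = \Spec B$ affine with $x$ represented by a geometrically polystable $M$ over $\kappa(\pi(x))$. By \cref{proposition:sst} (applied according to whether $\theta = \theta_\beta$ or $\theta = \eta_\beta$), there exists a representation $V$ over $\kappa(\pi(x))$ of dimension vector $m\beta$ with $\Hom(V,M)=0$, respectively $\Hom(M,V)=0$. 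Using the first part of \cref{remark:spread-out-etale-locally}, possibly after further shrinking $B$, I will extend $V$ to a family $\cV$ of dimension vector $m\beta$ over $\Spec B$.

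With $\cV$ in hand, I will consider the determinantal section $\sigma_\cV\colon \cO_{\modulistack{d,S}} \to \cL_\theta^{\otimes m}$. By \cref{proposition:vanishing-sections} it is non-zero at $x$, and by a second application of \cref{proposition:sst} its non-vanishing locus in $\modulistack{d,S}$ automatically sits inside $\modulistack[\semistable{\theta}]{d,S}$. Pulling back to the affine smooth cover $\Rep{d,S} \to \modulistack{d,S}$ trivialises the line bundle $\cL_\theta^{\otimes m}$, so $\sigma_\cV$ becomes a $\textrm{G}_d$-semi-invariant function whose non-vanishing locus $U$ is a principal affine open in $\Rep{d,S}$. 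The corresponding open substack $[U/\textrm{G}_d] \subset \modulistack[\semistable{\theta}]{d,S}$ then contains $x$, and embedding $\textrm{G}_d \hookrightarrow \GL_N$ as in the proof of \cref{lemma:full-stack-loc-red} rewrites it as $[Y/\GL_N]$ for an affine $Y$. Composing with the \'etale base change to the original $S$ yields the required pointed \'etale morphism, and stabilizers are preserved because the map factors through an open immersion.

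The hardest part will be coordinating the spread-out step: \cref{proposition:sst} delivers $V$ only over the residue field $\kappa(\pi(x))$, whereas the section $\sigma_\cV$ needs a family over an \'etale neighborhood of $S$. This is precisely what \cref{remark:spread-out-etale-locally} is designed to arrange, and once that coordination is done, the rest of the argument is formal.
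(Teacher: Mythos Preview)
Your overall strategy coincides with the paper's: use a determinantal semi-invariant $\sigma_\cV$ to carve out an affine open neighborhood of $x$ inside $\modulistack[\semistable{\theta}]{d}$, then rewrite that open as $[\Spec A/\GL_N]$ via the embedding $\textrm{G}_d \hookrightarrow \GL_N$. The specialization-to-closed-points step and the end-game with the affine open are handled exactly as in the paper.

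There is, however, a genuine gap in the reduction you perform before invoking \cref{proposition:sst}. You appeal to \cref{proposition:closed-points-arbitrary-base} to obtain a geometrically polystable representative $M$ over $\kappa(\pi(x))$, but the converse direction of that proposition requires $S$ to be Jacobson, which a general noetherian $S$ is not (see the example following \cref{proposition:closed-points-arbitrary-base}). Moreover, \cref{proposition:sst} is only proved in \cref{section:vanishing-results} over an algebraically closed field, so even granting a representative $M$, the auxiliary representation $V$ is a priori defined only over some finite extension of $\overline{\kappa(\pi(x))}$; to spread $V$ out via \cref{remark:spread-out-etale-locally}(ii) you need this extension to be \emph{separable} over $\kappa(\pi(x))$, which is not guaranteed when $\kappa(\pi(x))$ is imperfect.

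The paper sidesteps all three issues simultaneously by first reducing to $S = \Spec\ZZ$ via base change (an \'etale cover of $\modulistack[\semistable{\theta}]{d,\ZZ}$ by quotient stacks pulls back to one of $\modulistack[\semistable{\theta}]{d,C}$). Over $\Spec\ZZ$ the base is Jacobson, so \cref{proposition:closed-points-arbitrary-base} yields a representative over a finite field $k$; then one passes to $\bar k$ to apply \cref{proposition:sst}, descends $V$ to a finite extension $k'$ of $k$, and since finite fields are perfect, $k'/\FF_p$ is automatically separable so \cref{remark:spread-out-etale-locally}(ii) applies. Inserting this reduction-to-$\Spec\ZZ$ step at the start of your argument would close the gap.
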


\begin{proof}
We consider the case $\theta = \eta_\beta$. 
By \cref{corollary:open-substacks}, $\modulistack[\semistable{\theta}]{d,S}$ is quasi-compact with quasi-compact diagonal, so by \cref{lemma:specializing to closed points} every point specializes to a closed point.
We are again left with finding \'etale local quotient presentations as in \cref{definition:locallyred}.
The question is Zariski local on $S$, so we may assume $S = \Spec{C}$ is affine.
We begin by reducing to the case $S = \Spec{\ZZ}$. Namely, suppose $[\Spec{A}/\GL_N] \to \modulistack[\semistable{\theta}]{d,\ZZ}$ is \'etale and consider the diagram
\begin{equation*}
\begin{tikzcd}
    {[}\Spec{A}/\GL_N] \times_{\Spec{\ZZ}} \Spec{C} \arrow[r] \arrow[d] & {[}\Spec{A}/\GL_N] \arrow[d] \\
    \modulistack[\semistable{\theta}]{d,C} \arrow[r] \arrow[d] & \modulistack[\semistable{\theta}]{d,\ZZ} \arrow[d] \\
    \Spec C \arrow[r] & \Spec \ZZ
\end{tikzcd}
\end{equation*}
The top-left vertical arrow is \'etale, and moreover
\[ [\Spec{A}/\GL_N] \times_{\Spec{\ZZ}} \Spec{C}  \cong [\Spec(A \otimes_\ZZ C)/ \GL_N] \]
is of the required form, so base changing an \'etale cover of $\modulistack[\semistable{\theta}]{d,\ZZ}$ by quotient presentations yields the desired cover of $\modulistack[\semistable{\theta}]{d,C}$. Thus, we may assume $S = \Spec{\ZZ}$.

Since $\Spec\ZZ$ is Jacobson, by \cref{proposition:closed-points-arbitrary-base} any closed point of $\modulistack[\semistable{\theta}]{d,\ZZ}$ is represented by
a map $x\colon \Spec k \to \modulistack[\semistable{\theta}]{d,\ZZ}$ corresponding to a $\theta$-semistable 
representation $M$ defined over a finite field $k$.
Using \cref{proposition:sst}, we can find a representation $\overline{V}$ over $\bar{k}$
with $\dimvect{\overline{V}} = m \beta$ for some $m > 0$ such that
\[ \Hom(M \otimes \bar{k}, \overline{V}) = \Ext(M \otimes \bar{k}, \overline{V}) = 0. \]
The representation $\overline{V}$ is defined over some finite extension $k'$ of $k$, meaning that $\overline{V} \cong V' \otimes_{k'} \bar{k}$ for some representation $V'$ over $k'$. Note that
\[ \Hom(M \otimes k', V') \otimes \bar{k} = \Hom(M \otimes \bar{k}, \overline{V}) = 0, \]
hence $\Hom(M \otimes k', V') = 0$. We now replace $k$ by $k'$ and $M$ by $M \otimes_k k'$, as the two define the same point of $\modulistack[\semistable{\theta}]{d,\ZZ}$.
Since $k$ is separable over its prime field $\FF_p$,
using \cref{remark:spread-out-etale-locally} we can find an \'etale map $\Spec{B} \to \Spec{\ZZ}$ whose fiber over $\Spec{\FF_p}$ is $\Spec{k}$ and an extension $\cV$ of $V$ to $\Spec{B}$.

Now consider the cartesian diagram
\begin{equation*}
  \begin{tikzcd}
    \modulistack[\semistable{\theta}]{d,B} \arrow[r, "f"] \arrow[d] & \modulistack[\semistable{\theta}]{d,\ZZ} \arrow[d] \\
    \Spec B \arrow[r] & \Spec \ZZ
  \end{tikzcd}
\end{equation*}
The bottom morphism is \'etale, hence so is the top morphism $f$.
Moreover, $f$ induces an isomorphism on automorphism groups
because it arises as the base change of a morphism of schemes \cite[Tag \href{https://stacks.math.columbia.edu/tag/0DUB}{0DUB}]{stacks-project}.
Observe that the map $x$ can be factored through $f$ since we chose $B$ that admits a quotient isomorphic to $k$.
Now the representation $\cV$ defines a section $\sigma$ of the line bundle $\cL_{\theta}^{\otimes m}$ on the larger stack $\modulistack{d,B}$ and by \cref{proposition:vanishing-sections} this section is nonzero at $x$. Let $\cU \subset \modulistack{d,B}$ be the nonvanishing locus of $\sigma$. Using \cref{proposition:sst}(b) we see that in fact $\cU \subseteq \modulistack[\semistable{\theta}]{d,B}$.

We claim that $\cU$ is of the desired form $[\Spec A/\GL_N]$. To see this, recall that $\modulistack{d,B} \cong [\Rep{d,B}/\mathrm{G}_{d,B}]$ and let $\varphi \colon \Rep{d,B} \to \modulistack{d,B}$ denote the quotient map. The preimage
$U = \varphi^{-1}(\cU) \subset \Rep{d,B}$  is $\mathrm{G}_{d,B}$-invariant, and moreover
$U$ is the nonvanishing locus of the section $\varphi^*\sigma$, hence affine since $\varphi^*\cL_{\theta} \cong \cO_{\Rep{d,B}}$. Thus, we see that
$\cU \cong [
U/\mathrm{G}_{d,B}]$. Finally, since we can embed $\mathrm{G}_{d,B}$ into $\GL_{N,B}$ as a closed subgroup for a suitable $N$, we can write
$[U/\mathrm{G}_{d,B}] \cong [\Spec A/\GL_{N,B}]$ as in \cref{lemma:full-stack-loc-red}. In conclusion, we have exhibited an \'etale neighborhood $[\Spec A/\GL_N] \to \modulistack[\semistable{\theta}]{d,\ZZ}$ of the point $x$.
\end{proof}

\subsection{\texorpdfstring{$\Theta$}{Theta}-reductivity and S-completeness for quiver representations}
\label{subsection:ThetaS}
In this section we give a direct moduli-theoretic proofs that
the stacks $\modulistack{d,S}$ and $\modulistack[\semistable{\theta}]{d,S}$
are $\Theta$-reductive and S-complete.
First, we show that the stack $ \modulistack{d,S}$ satisfies Hartogs's principle.
For this, we use the following version of Hartogs's lemma.

\begin{lemma}
\label{lemma:Hartogs}
Let $A$ be a regular local ring of dimension $2$ with closed point $0 \in \Spec{A}$.
\begin{enumerate}[label=(\roman*)]
    \item \label{item:hartogs1} Any locally free sheaf $\cF$ of finite rank on $\Spec{A} \setminus \{ 0 \}$ is free.
    \item \label{item:hartogs2} If $\cF$ is a free sheaf of finite rank on $\Spec{A}$, then any automorphism of $\cF$ over $\Spec{A} \setminus \{0\}$ extends uniquely to $\Spec{A}$.
\end{enumerate}
\end{lemma}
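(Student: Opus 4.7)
The plan for part~(i) is to exploit the fact that the regular local ring $A$ has depth~$2$ at the maximal ideal. Let $U \coloneqq \Spec A \setminus \{0\}$ and let $j\colon U \hookrightarrow \Spec A$ denote the inclusion. I would first form the $A$-module $M \coloneqq \Gamma(U, \cF)$. Covering $U$ by finitely many basic affine opens and using that $\cF$ is locally free of finite rank shows that $M$ is finitely generated. Since $A$ is a $2$-dimensional normal domain and $\cF$ is locally free on $U$, the module $M$ is reflexive, and on a normal Cohen--Macaulay local ring of dimension~$2$, a finitely generated reflexive module has depth $\geq 2$. The Auslander--Buchsbaum formula for the regular local ring $A$ then yields
\[
    \operatorname{pd}_A M + \operatorname{depth}_A M = \operatorname{depth} A = 2,
\]
so $\operatorname{pd}_A M = 0$, meaning $M$ is free of finite rank. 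Finally, the canonical map $\widetilde{M}|_U \to \cF$ is an isomorphism of locally free sheaves on $U$ because it is an isomorphism on stalks, so $\cF$ is free.

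For part~(ii), since $\cF$ is a free sheaf of some rank $n$ on $\Spec A$, its endomorphism sheaf is isomorphic to $\cO_{\Spec A}^{\oplus n^2}$. Applying part~(i) to $\cO_A$ itself (or, more directly, using that $A$ has depth~$2$) we get $\Gamma(U, \cO_U) = A$, so any endomorphism $\varphi\colon \cF|_U \to \cF|_U$ extends uniquely to an endomorphism $\overline{\varphi}\colon \cF \to \cF$ on $\Spec A$. If $\varphi$ is an automorphism, I would also extend its inverse $\varphi^{-1}$ to obtain $\overline{\varphi^{-1}}$. The two compositions $\overline{\varphi} \circ \overline{\varphi^{-1}}$ and $\overline{\varphi^{-1}} \circ \overline{\varphi}$ restrict to the identity on the schematically dense open subset~$U$, hence by the uniqueness of the extension they equal the identity on all of $\Spec A$. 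Thus $\overline{\varphi}$ is an automorphism, and uniqueness is built into the extension construction.

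The main obstacle, and the technical heart of the argument, is the reflexivity-to-freeness step in part~(i): showing that finitely generated reflexive modules over a $2$-dimensional regular local ring have depth at least~$2$ and invoking Auslander--Buchsbaum. Everything else is formal once the depth~$2$ property of $A$ is in hand, and part~(ii) is essentially Hartogs for sections of a trivial vector bundle combined with a symmetric argument applied to the inverse.
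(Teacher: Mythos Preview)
Your proof is correct and follows the same core idea as the paper's: both rely on the fact that reflexive modules over a $2$-dimensional regular local ring are free (you spell this out via Auslander--Buchsbaum, while the paper simply asserts it). The main difference in packaging is how you produce a reflexive extension: the paper picks an arbitrary coherent extension $\cG$ of $\cF$ and takes its double dual $\cG^{\vee\vee}$, whereas you form $M = \Gamma(U,\cF) = j_*\cF$ directly and argue it is reflexive---these constructions yield the same module, but your justification that $M$ is finitely generated is incomplete as written (covering $U$ by finitely many affines does not by itself bound the global sections as an $A$-module), and the cleanest fix is exactly the paper's detour through a coherent extension and its double dual. For part~(ii) the paper extends the matrix entries and verifies invertibility by showing the determinant is a unit, while you extend $\varphi^{-1}$ separately and invoke uniqueness; both arguments are valid.
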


\begin{proof}
  For \ref{item:hartogs1}, note that since $\Spec{A}$ is noetherian, the coherent sheaf $\cF$ on $\Spec{A} \setminus \{0\}$   admits a coherent extension $\cG$ to all of $\Spec{A}$.
  The double dual $\cG^{\vee\vee}$ is reflexive, hence free since $A$ is regular local of dimension $2$. Since $\cG^{\vee\vee}$ agrees with $\cG$ wherever $\cG$ is locally free, we see that $\cG^{\vee\vee}$ is also an extension of $\cF$, and so $\cF$ is free itself.

  For \ref{item:hartogs2}, let $n = \rk \cF$. After choosing an isomorphism $\cF \cong \cO^{\oplus n}$, an automorphism $\phi$ of $\cF|_{\Spec{A} \setminus \{0\}}$ corresponds to an invertible $n \times n$ matrix consisting of functions on $\Spec{A} \setminus \{0\}$. By the usual Hartogs's lemma, these functions extend uniquely to functions on all of $\Spec{A}$. The determinant of the resulting matrix is nonzero at every codimension 1 point of $\Spec{A}$, hence is a unit of $A$, so the corresponding endomorphism of $\cF$ is invertible.
\end{proof}

\begin{proposition}\label{proposition:full-stack-Hartogs}
  For any regular local ring $A$ of dimension $2$ with closed point $0 \in \Spec{A}$, any morphism $\Spec{A} \setminus \{ 0 \} \rightarrow \modulistack{d, S}$ extends uniquely to $\Spec{A} \rightarrow \modulistack{d, S}$. In particular, $ \modulistack{d, S}$ is both $\Theta$-reductive and S-complete by \cref{remark:hartogs}.
\end{proposition}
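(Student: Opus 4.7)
The plan is to verify Hartogs's principle directly by extending the family of quiver representations, and to deduce the $\Theta$-reductivity and S-completeness assertions from \cref{remark:hartogs}. A morphism $f\colon U \to \modulistack{d,S}$, where $U := \Spec A \setminus \{0\}$, over the given structure map $\Spec A \to S$, corresponds by \cref{definition: moduli stack M_d} to a family $\cF = ((\cF_i)_{i \in Q_0}, (\cF_a)_{a \in Q_1})$ of $Q$-representations on $U$. I need to produce such a family $\tilde{\cF}$ on $\Spec A$ whose restriction to $U$ is isomorphic to $\cF$.

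For existence, \cref{lemma:Hartogs}~\ref{item:hartogs1} guarantees that each locally free sheaf $\cF_i$ is in fact free on $U$, so I pick trivializations $\cF_i \cong \cO_U^{\oplus d_i}$. Under these identifications, each morphism $\cF_a \colon \cF_{s(a)} \to \cF_{t(a)}$ is encoded by a matrix with entries in $\Gamma(U, \cO_U)$. Since $A$ is a regular local ring of dimension two, the scalar Hartogs phenomenon gives $\Gamma(U, \cO_U) = A$, so each such matrix extends uniquely to a matrix with entries in $A$. Setting $\tilde{\cF}_i := \cO_{\Spec A}^{\oplus d_i}$ and defining $\tilde{\cF}_a$ via these extended matrices produces a family $\tilde{\cF}$ on $\Spec A$ whose restriction to $U$ matches $\cF$ under the chosen trivializations, yielding the required dashed morphism $\Spec A \to \modulistack{d,S}$.

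Uniqueness of the extension follows from the affine-diagonal property established in \cref{proposition:affine-diagonal}: as noted in \cref{remark:hartogs}, once the diagonal is affine, any two extensions to $\Spec A$ agreeing on the schematically dense open $U$ are forced to coincide, so that existence implies uniqueness. The consequences that $\modulistack{d,S}$ is both $\Theta$-reductive and S-complete then follow formally from \cref{remark:hartogs}. I do not expect a substantial obstacle: the argument reduces cleanly to extending scalar functions across a codimension-two puncture, because once the underlying sheaves are trivialized, the morphism data in a family of quiver representations becomes purely matrix-valued and is therefore handled by the scalar Hartogs principle.
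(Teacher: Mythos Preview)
Your proposal is correct and follows essentially the same route as the paper: trivialize the bundles via \cref{lemma:Hartogs}~\ref{item:hartogs1}, extend the matrix entries by the scalar Hartogs identity $\Gamma(U,\cO_U)=A$, and conclude. The only minor difference is that the paper handles uniqueness by invoking the uniqueness clauses built into \cref{lemma:Hartogs} (in particular part~\ref{item:hartogs2} for automorphisms), whereas you appeal to the affine-diagonal observation in \cref{remark:hartogs}; both arguments are valid and amount to the same thing.
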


\begin{proof} The morphism $\Spec{A} \setminus \{ 0 \} \rightarrow \modulistack{d}$ corresponds to a family \[\cF = ( (\cF_i)_{i \in Q_0}, (\cF_a \colon \cF_{s(a)} \rightarrow \cF_{t(a)})_{a \in Q_1})\] over $\Spec{A} \setminus \{ 0 \}$. By
\cref{lemma:Hartogs}, each vector bundle $\cF_i$ on $\Spec{A} \setminus \{ 0 \}$ extends uniquely to $\widetilde{\cF}_i$ on $\Spec{A}$ and each map $\cF_a$ extends uniquely to $\widetilde{\cF}_a \colon \widetilde{\cF}_{s(a)} \rightarrow \widetilde{\cF}_{t(a)}$ on $\Spec{A}$. This family $\widetilde{\cF}$ thus defines the unique morphism $\Spec{A} \rightarrow \modulistack{d}$ extending the given one.
\end{proof}

We are now in a position to apply \cref{theorem:AHLH} to $\modulistack{d,S}$,
which we know is locally reductive by \cref{lemma:full-stack-loc-red},
and has affine diagonal by \cref{proposition:affine-diagonal}.

\begin{corollary}
  \label{corollary:Md-separated-gms}
  The stack $\modulistack{d, S}$ admits a separated adequate moduli space $\modulispace{d, S}$.
\end{corollary}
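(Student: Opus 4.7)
The plan is to simply assemble the pieces already established and invoke the existence theorem of Alper--Halpern-Leistner--Heinloth, namely \cref{theorem:AHLH}~\ref{enumerate:ams-existence}. To apply this result I need to verify that $\modulistack{d,S}$ meets all its hypotheses: that it is an algebraic stack of finite type over the noetherian base~$S$ with affine stabilizers and separated diagonal, that it is locally reductive, and that it is both $\Theta$-reductive and S-complete.

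First I would observe that $\modulistack{d,S}$ is algebraic, of finite type over~$S$, and has affine diagonal by \cref{proposition:smooth-cover-by-Rep} and \cref{proposition:affine-diagonal}; affine diagonal in particular implies both separated diagonal and affine stabilizers, so the basic hypotheses of \cref{theorem:AHLH} are satisfied. Next, local reductivity of~$\modulistack{d,S}$ is exactly the content of \cref{lemma:full-stack-loc-red}. Finally, $\Theta$-reductivity and S-com\-plete\-ness follow from \cref{proposition:full-stack-Hartogs}, which establishes the stronger Hartogs's principle and therefore gives both codimension-$2$ filling conditions (as noted in \cref{remark:hartogs}).

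With all hypotheses in place, \cref{theorem:AHLH}~\ref{enumerate:ams-existence} yields a separated adequate moduli space $\modulispace{d,S}$ for $\modulistack{d,S}$, which is the desired conclusion. There is essentially no obstacle to this argument: the real work was done in establishing the four ingredients listed above, and the present statement is just the formal application of the existence criterion. If anything warrants comment, it is only that one should note explicitly that affine diagonal takes care of both auxiliary diagonal conditions required in \cref{theorem:AHLH}, so no additional verification is needed.
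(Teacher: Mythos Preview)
Your proposal is correct and follows exactly the same approach as the paper's proof: assemble the hypotheses of \cref{theorem:AHLH}~\ref{enumerate:ams-existence} from the preceding results (affine diagonal from \cref{proposition:affine-diagonal}, local reductivity from \cref{lemma:full-stack-loc-red}, and $\Theta$-reductivity/S-completeness from \cref{proposition:full-stack-Hartogs}) and invoke the existence criterion. The paper's version is just more terse.
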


We next turn our attention to the stack $\modulistack[\semistable{\theta}]{d,S}$ of $\theta$-semistable representations. As the following example shows, the stack $\modulistack[\semistable{\theta}]{d,S}$ does not in general satisfy Hartogs's principle.

\begin{example}
  Consider the $2$-Kronecker quiver
  \begin{equation*}
    Q\colon
    \begin{tikzcd}[every label/.append style={font=\small}]
      1 \arrow[r,bend left] \arrow[r,bend right,swap] & 2
    \end{tikzcd}
  \end{equation*}
  and the stability function $\theta(n_1, n_2) = n_1 - n_2$. Let $k$ be a field and $\cF$ be the family of representations of $Q$ of dimension vector $d = (1,1)$
  over $\bbA^2 = \Spec{k[x,y]}$, where the arrows are multiplication by $x$ and multiplication by $y$. The representation $\cF_t$ is stable when $t \in \bbA^2 \setminus \{0\}$ but unstable when $t = 0$, as it is destabilized by the subrepresentation $k \rightrightarrows 0$. In other words, the family $\cF$ gives a map $\bbA^2\setminus\{0\} \to \modulistack[\stable{\theta}]{d}$ whose unique extension to a map $\bbA^2 \to \modulistack{d}$ does not factor through $\modulistack[\semistable{\theta}]{d}$.
\end{example}

In order to establish $\Theta$-reductivity and S-completeness, we need a moduli-theoretic interpretation of families of representations over $\Theta_R$ and $\STbar_R$. Throughout we will use $R$ to denote a DVR with fraction field $K$ and residue field $\kappa = R/\pi$, where $\pi$ is a uniformizer of $R$.

\begin{proposition}\label{proposition:theta-reductive}
  The stack $ \modulistack[\semistable{\theta}]{d,S}$ is $\Theta$-reductive.
\end{proposition}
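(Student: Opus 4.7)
The plan is to reduce $\Theta$-reductivity of $\modulistack[\semistable{\theta}]{d,S}$ to that of the full stack $\modulistack{d,S}$ (available via \cref{proposition:full-stack-Hartogs}) and then verify by hand that the extension preserves semistability at the closed point of $\Theta_R$. Given a morphism $f\colon \Theta_R \setminus \{0\} \to \modulistack[\semistable{\theta}]{d,S}$, first compose with the open immersion into $\modulistack{d,S}$ and extend uniquely to $\tilde f\colon \Theta_R \to \modulistack{d,S}$ by Hartogs (pulling back along the atlas $\Spec R[t] \to \Theta_R$, extending across the codimension-two point $(t,\pi)$, and descending by uniqueness). Since $\modulistack[\semistable{\theta}]{d,S}$ is an open substack, it then suffices to verify that $\tilde f(0) \in \modulistack[\semistable{\theta}]{d,S}$; uniqueness in the smaller stack is inherited from the extension in the full stack.

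The next step is to unpack what $\tilde f$ corresponds to. Pulling back along $\Spec R[t] \to \Theta_R$ and using the equivalence between $\Gm$-equivariant locally free sheaves on $\bbA^1$ and filtered free modules, $\tilde f$ is encoded by a family $\cF$ of representations on $\Spec R$ equipped with a descending $\ZZ$-filtration $\cdots \subseteq F^{w+1}\cF \subseteq F^{w}\cF \subseteq \cdots$ by subrepresentations, and $\tilde f(0)$ corresponds to the associated graded $\gr(\cF_\kappa) = \bigoplus_w F^w\cF_\kappa/F^{w+1}\cF_\kappa$ of the induced filtration on the special fiber. The hypothesis that $f$ lands in $\modulistack[\semistable{\theta}]{d,S}$ supplies two pieces of information: the special fiber $\cF_\kappa$ is $\theta$-semistable (via the open point $\Spec \kappa \subseteq \Theta_R \setminus \{0\}$ obtained from $\{t=1\}$), and $\gr(\cF_K)$ is $\theta$-semistable (via the closed point of $\Theta_K = [\Spec K[t]/\Gm] \subseteq \Theta_R \setminus \{0\}$).

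The final step will be an Euler-form computation. Since $R$ is a DVR and each $F^w\cF \subseteq \cF$ is a submodule of a free module, $F^w\cF$ is torsion-free and hence locally free of constant rank on $\Spec R$, giving $\dimvect F^w\cF_\kappa = \dimvect F^w\cF_K$ and in particular $\theta(\gr^w\cF_\kappa) = \theta(\gr^w\cF_K)$ for every $w$. The identity $\sum_w \theta(\gr^w\cF_K) = \theta(\cF_K) = 0$ together with $\theta$-semistability of $\gr(\cF_K)$ forces $\theta(\gr^w\cF_K) = 0$ for every $w$, and hence $\theta(\gr^w\cF_\kappa) = 0$ as well. Given any subrepresentation $N \subseteq \gr^w\cF_\kappa$, its preimage $\tilde N \subseteq F^w\cF_\kappa \subseteq \cF_\kappa$ satisfies
\[
  \theta(\tilde N) = \theta(N) + \theta(F^{w+1}\cF_\kappa) = \theta(N) + \sum_{w' \geq w+1}\theta(\gr^{w'}\cF_\kappa) = \theta(N),
\]
while $\theta$-semistability of $\cF_\kappa$ forces $\theta(\tilde N) \leq 0$, so $\theta(N) \leq 0$. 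Hence each $\gr^w\cF_\kappa$ is $\theta$-semistable, and so is $\gr(\cF_\kappa)$ as a direct sum of semistables of common slope zero, placing $\tilde f(0)$ in $\modulistack[\semistable{\theta}]{d,S}$.

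The main conceptual hurdle is correctly unpacking $\tilde f$ along the atlas of $\Theta_R$ and recognising which semistability facts the hypothesis on $f$ delivers at which ``side'' of $\Theta_R \setminus \{0\}$; from there the verification is a short rewriting using additivity of $\theta$ and constancy of dimension vectors along $\Spec R$.
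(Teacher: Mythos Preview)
The proposal is correct and follows essentially the same approach as the paper: extend into $\modulistack{d,S}$ via Hartogs, interpret the map as a family over $\Spec R$ with a filtration, and then check that the associated graded on the special fiber is $\theta$-semistable using that dimension vectors (hence $\theta$-values) are constant along $\Spec R$ and that $\cF_\kappa$ is semistable. The only cosmetic difference is in the final verification: the paper observes directly that each $\cF_\kappa^\ell$ is a slope-zero subrepresentation of the semistable $\cF_\kappa$ and hence semistable (so all successive quotients are), whereas you lift an arbitrary subrepresentation of a graded piece back into $\cF_\kappa$ to get the inequality.
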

\begin{proof}
  Since $\Theta_R \setminus \{ 0 \}$ is the union of $\Spec(R)$ and $\Theta_K$ over $\Spec(K)$,  a morphism $\Theta_R \setminus \{ 0 \} \rightarrow \modulistack[\semistable{\theta}]{d}$   is given by a family $\cF$ of $\theta$-semistable representations over $\Spec(R)$ with a filtration
  \[
    0 = \cF_K^{0} \subset \cF_K^{1} \subset \cdots \subset \cF_K^{r} = \cF_K
  \]
  of the generic fiber $\cF_K$ such that the successive quotients $\cF_K^{\ell}/\cF_K^{\ell-1}$ are $\theta$-semistable.

  Since $\modulistack{d}$ is $\Theta$-reductive by \cref{proposition:full-stack-Hartogs}, the filtration $\cF_K^{\bullet}$ extends uniquely to a filtration $\cF^{\bullet}$ of $\cF$. Thus it suffices to verify that the associated graded of this filtration over the special fiber is $\theta$-semistable, or equivalently that $\cF_\kappa^{\ell}/\cF_\kappa^{\ell-1}$ are all $\theta$-semistable. Since $\theta$ is constant in flat families, we have $\theta(\cF_\kappa^{\ell}) =\theta(\cF_K^{\ell}) = 0$, and as $\cF_\kappa^{\ell}$ is a subrepresentation of the $\theta$-semistable representation $\cF_\kappa$ of the same slope, it is also $\theta$-semistable and consequently all the $\cF_\kappa^{\ell}/\cF_\kappa^{\ell-1}$ are $\theta$-semistable.
\end{proof}

\begin{proposition}\label{proposition:S-complete}
  The stack $\modulistack[\semistable{\theta}]{d,S}$ is S-complete.
\end{proposition}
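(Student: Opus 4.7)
The strategy is to mirror the proof of \cref{proposition:theta-reductive}: extend the given map via Hartogs's principle for $\modulistack{d,S}$ (\cref{proposition:full-stack-Hartogs}), and then verify semistability of the image of the closed point $0 \in \STbar_R$. Given a morphism $\STbar_R \setminus \{0\} \to \modulistack[\semistable{\theta}]{d,S}$, we regard it as a map to $\modulistack{d,S}$ and, applying Hartogs's principle on the regular two-dimensional atlas $\Spec R[s,t]/(st-\pi)$ of $\STbar_R$ at the codimension-two closed point, obtain a unique extension $\STbar_R \to \modulistack{d,S}$, which by uniqueness is automatically $\Gm$-equivariant. Since $\modulistack[\semistable{\theta}]{d,S} \subseteq \modulistack{d,S}$ is open (\cref{corollary:open-substacks}) and the restriction to $\STbar_R \setminus \{0\}$ factors through it, the claim reduces to showing that the image of $0 \in \STbar_R$ is $\theta$-semistable.

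The extended map is a $\Gm$-equivariant family of representations over $\Spec R[s,t]/(st-\pi)$. Its restrictions to the two affine charts of $\STbar_R \setminus \{0\}$ recover $\theta$-semistable families $\cF^+, \cF^-$ over $\Spec R$ whose generic fibers are identified, while restriction to the $\Gm$-fixed point $0$ gives a $\ZZ$-graded $\kappa$-representation $M^0 = \bigoplus_n \bar F_n$. The opposite $\Gm$-weights of $s$ and $t$ endow $\cF^+_\kappa$ with an increasing filtration $0 = V_0 \subsetneq V_1 \subsetneq \cdots \subsetneq V_r = \cF^+_\kappa$ by subrepresentations, and $\cF^-_\kappa$ with an oppositely-indexed decreasing filtration $\cF^-_\kappa = U_0 \supsetneq U_1 \supsetneq \cdots \supsetneq U_r = 0$, both realizing $M^0$ as associated graded with $V_i/V_{i-1} \cong U_{i-1}/U_i$. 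In particular $\dimvect V_i + \dimvect U_i = \dimvect M^0$, and therefore $\theta(V_i) + \theta(U_i) = \theta(M^0) = 0$.

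Applying $\theta$-semistability of $\cF^+_\kappa$ and $\cF^-_\kappa$ (both of slope zero) yields $\theta(V_i) \leq 0$ and $\theta(U_i) \leq 0$ for all $i$; combined with the identity above, this forces $\theta(V_i) = \theta(U_i) = 0$, and telescoping gives $\theta(\bar F_n) = 0$ for every $n$. To see that each graded piece $\bar F_n \cong V_i/V_{i-1}$ is $\theta$-semistable, any subrepresentation $W \subseteq \bar F_n$ lifts to a subrepresentation $\tilde W \subseteq V_i \subseteq \cF^+_\kappa$ with $\theta(\tilde W) = \theta(V_{i-1}) + \theta(W) = \theta(W)$, and semistability of $\cF^+_\kappa$ then gives $\theta(W) \leq 0$. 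Thus each $\bar F_n$ is $\theta$-semistable of slope zero, and $M^0 = \bigoplus_n \bar F_n$ is $\theta$-polystable, in particular $\theta$-semistable.

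The main conceptual step is identifying the two opposite filtrations on $\cF^\pm_\kappa$ induced by the $\Gm$-grading, which requires an explicit understanding of $\Gm$-equivariant modules over $R[s,t]/(st-\pi)$; once this bookkeeping is in place, the dimension-vector identity $\dimvect V_i + \dimvect U_i = \dimvect M^0$ that promotes the inequalities $\theta(V_i), \theta(U_i) \leq 0$ to equalities is the essential input making S-completeness succeed, and it is precisely what distinguishes the argument from that of $\Theta$-reductivity, where only one filtration is available.
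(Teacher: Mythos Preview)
Your proof is correct and follows essentially the same approach as the paper: extend via Hartogs's principle for $\modulistack{d,S}$, then use the two opposite filtrations on the special fibers $\cF^\pm_\kappa$ together with their semistability to pin down $\theta$ on the graded pieces. The paper spells out the $\Gm$-equivariant module description over $R[s,t]/(st-\pi)$ more explicitly (the $\ZZ$-indexed diagram $\cF^\ell$ with maps $s,t$), whereas you package the same content as the complementarity identity $\dimvect V_i + \dimvect U_i = \dimvect M^0$; these are equivalent formulations of the same argument. One minor terminological slip: in your last sentence $M^0 = \bigoplus_n \bar F_n$ is a direct sum of $\theta$-\emph{semistable} (not necessarily stable) representations of slope zero, so it is $\theta$-semistable but need not be $\theta$-polystable---the conclusion you need is unaffected.
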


\begin{proof}
  Let $\STbar_R \setminus \{0\} \to \modulistack[\semistable{\theta}]{d}$ be a morphism. By \cref{proposition:full-stack-Hartogs} the morphism extends uniquely to $\STbar_R \to \modulistack{d}$, so we must show that the image of $0 \in \STbar_R$ is contained in $\modulistack[\semistable{\theta}]{d}$.

  The morphism $\STbar_R \to \modulistack{d}$ is equivalent to a diagram of representations
  \[\begin{tikzcd}
   \cdots \arrow[r,yshift=0.5ex,"s"]{s}  & \cF^{\ell-1}\arrow[r,yshift=0.5ex,"s"] \arrow[l,yshift=-0.5ex,"t"] &\cF^{\ell}\arrow[r,yshift=0.5ex,"s"] \arrow[l,yshift=-0.5ex,"t"] &\cF^{\ell+1}\arrow[r,yshift=0.5ex,"s"] \arrow[l,yshift=-0.5ex,"t"] & \cdots \arrow[l,yshift=-0.5ex,"t"]
 \end{tikzcd}\]
  of $Q$ over $\Spec R$, such that
  \begin{itemize}
    \item each map $s$ and $t$ is injective,
    \item $s\circ t = t\circ s = \pi$,
    \item $s$ is an isomorphism for $\ell \gg 0$ and $t$ is an isomorphism for $\ell \ll 0$,
    \item the induced maps $s\colon \cF^{\ell-1}/t \cF^{\ell} \to \cF^{\ell}/t \cF^{\ell+1}$
      and $t\colon\cF^{\ell+1}/s \cF^{\ell} \to \cF^{\ell}/s \cF^{\ell-1}$ are injective.
  \end{itemize}
  The restriction to $\STbar_R \setminus \{ 0 \} \cong \Spec R \coprod_{\Spec K} \Spec R$
  corresponds to the two $\theta$-semistable representations
  \[
    \cE \coloneqq \mathrm{colim} (\cF^{\ell-1} \xleftarrow{t} \cF^{\ell}) \quad \mathrm{and} \quad \cF \coloneqq \mathrm{colim} (\cF^{\ell} \xrightarrow{s} \cF^{\ell+1})
  \]
  such that the restrictions $\cE_K$ and $\cF_K$ to $\Spec K$ are isomorphic. Over the closed subsets $\Theta_\kappa \xhookrightarrow{s = 0} \STbar_R$ and $\Theta_\kappa \xhookrightarrow{t = 0} \STbar_R$ we obtain filtrations
  \[
    \cdots \xhookrightarrow{t} \cF^{\ell+1}/s \cF^{\ell} \xhookrightarrow{t}  \cF^{\ell}/s \cF^{\ell-1} \xhookrightarrow{t} \cdots \hookrightarrow \cE_\kappa
  \]
  and
  \[
    \cdots \xhookrightarrow{s} \cF^{\ell-1}/t \cF^{\ell} \xhookrightarrow{s} \cF^{\ell}/t \cF^{\ell+1} \xhookrightarrow{s} \cdots \hookrightarrow \cF_\kappa
  \]
  respectively. The image of $0 \in \STbar_R$ corresponds to the common associated graded
  \[
    \bigoplus_{\ell \in \ZZ} \frac{\cF^{\ell}/t \cF^{\ell+1}}{s (\cF^{\ell-1}/t \cF^{\ell})} \cong \bigoplus_{\ell \in \ZZ} \frac{\cF^{\ell}/s \cF^{\ell-1}}{t (\cF^{\ell+1}/s \cF^{\ell})} \cong \bigoplus_{\ell \in \ZZ} \frac{\cF^{\ell}}{s \cF^{\ell-1} + t \cF^{\ell+1}},
  \]
  which we must show is $\theta$-semistable.

  By assumption we have for all $\ell$
  \[
    0 = \theta(\cE_\kappa) \ge \theta(\cF^{\ell}/s \cF^{\ell-1}) = \theta(\cF^{\ell}) - \theta(\cF^{\ell-1}),
  \]
  since $\cF^{\ell-1} \cong s \cF^{\ell-1} \subseteq \cF^{\ell}$. Similarly,
  \[
    0 = \theta(\cF_\kappa) \ge \theta(\cF^{\ell}/t \cF^{\ell+1}) = \theta(\cF^{\ell}) - \theta(\cF^{\ell+1}).
  \]
  Thus, we must have $\theta(\cF^{\ell}) = \theta(\cF^{\ell+1})$ for all $\ell$. Moreover, as the value of $\theta$ is constant in flat families and $\cF^{\ell}_K \cong \cF_K$, we must have $\theta(\cF^{\ell}) = 0$ for all $\ell$. Thus, the quotients $\cF^{\ell}/t \cF^{\ell+1}$ are all semistable with $\theta(\cF^{\ell}/t \cF^{\ell+1}) = 0$, and the same is true for the quotients $(\cF^{\ell}/t \cF^{\ell+1})/s (\cF^{\ell-1}/t \cF^{\ell})$.
\end{proof}

Similarly to \cref{corollary:Md-separated-gms} we obtain the following.
\begin{corollary}
  \label{corollary:Md-ss-separated-gms}
  If $\theta = \theta_\beta$ or $\theta = \eta_\beta$ for a dimension vector $\beta$,
  then the stack $\modulistack[\semistable{\theta}]{d, S}$ admits an adequate moduli space $\modulispace[\semistable{\theta}]{d, S}$,
  separated over $S$.
\end{corollary}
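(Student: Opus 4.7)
The plan is simply to assemble the ingredients developed in this section and feed them into the existence criterion of Alper--Halpern-Leistner--Heinloth, exactly as was done for $\modulistack{d,S}$ in \cref{corollary:Md-separated-gms}. By \cref{corollary:open-substacks} the stack $\modulistack[\semistable{\theta}]{d,S}$ is smooth and of finite type over the noetherian base $S$, and has affine diagonal; in particular its stabilizers are affine and its diagonal is separated, so the hypotheses of \cref{theorem:AHLH} are in force.

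Next I would verify the three structural conditions required by \cref{theorem:AHLH}\ref{enumerate:ams-existence}. Local reductivity is exactly the content of \cref{proposition:locally-reductive}, which is the only place where the hypothesis $\theta = \theta_\beta$ or $\theta = \eta_\beta$ is used (to ensure the existence of determinantal sections cutting out affine open neighborhoods of closed points of the form $[\Spec A/\GL_N]$). The valuative conditions $\Theta$-reductivity and S-completeness were established in the preceding \cref{proposition:theta-reductive} and \cref{proposition:S-complete}, respectively, without any assumption on the shape of $\theta$. Applying \cref{theorem:AHLH}\ref{enumerate:ams-existence} then yields an adequate moduli space
\[
  \modulistack[\semistable{\theta}]{d,S} \longrightarrow \modulispace[\semistable{\theta}]{d,S}
\]
which is separated over $S$, as claimed.

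The main observation is that no new argument is needed beyond bundling together the results already in place; the role of the assumption on $\theta$ is confined to guaranteeing local reductivity in positive or mixed characteristic. If one restricts to a base scheme $S$ of characteristic~$0$, then \cref{theorem:AHLH}\ref{enumerate:gms-existence} applies without the local reductivity hypothesis, and the same argument produces a separated \emph{good} moduli space for arbitrary stability functions $\theta$.
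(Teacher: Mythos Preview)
Your proof is correct and follows essentially the same approach as the paper: verify affine diagonal (hence affine stabilizers and separated diagonal) via \cref{corollary:open-substacks}, then invoke \cref{proposition:locally-reductive}, \cref{proposition:theta-reductive}, and \cref{proposition:S-complete} to apply \cref{theorem:AHLH}\ref{enumerate:ams-existence}. Your additional remark about the characteristic~$0$ case anticipates exactly the content of the paper's subsequent \cref{remark:stability-function}.
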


\begin{proof}
  We have verified that
  $\modulistack[\semistable{\theta}]{d,S}$ is locally reductive in \cref{proposition:locally-reductive},
  $\Theta$-reductive in \cref{proposition:theta-reductive},
  and S-complete in \cref{proposition:S-complete}.
  It moreover has affine diagonal,
  and thus affine stabilizers and separated diagonal.
  Therefore, it admits an adequate moduli space $\modulispace[\semistable{\theta}]{d,S}$, separated over $S$, by \cite[Theorem 5.4]{1812.01128} (see \cref{theorem:AHLH}).
\end{proof}

Recall that, if $Q$ is acyclic, then any stability function $\theta$ for which there exists a semistable representation supported on $Q_0$ can be written in this form by \cref{lemma:stability-conditions-are-dim-vectors}.

\begin{remark}
  \label{remark:stability-function}
  It follows from the GIT construction that $\modulistack[\semistable{\theta}]{d,S}$ is locally reductive for an arbitrary stability function $\theta$. We are currently unable to remove the hypothesis $\theta = \theta_\beta$ or $
  \theta = \eta_\beta$ using our methods.

  When the base scheme~$S$ has characteristic $0$, S-completeness implies local reductivity \cite[Proposition~3.47, Theorem 2.2]{1812.01128}, and it follows that the stack $\modulistack[\semistable{\theta}]{d, S}$ admits a separated \textit{good} moduli space for any choice of $\theta$.
\end{remark}

\cref{corollary:Md-ss-separated-gms} also follow from GIT,
as explained in \cite[Theorem~1.5]{MR4226557}.
We have given a purely moduli-theoretic argument
by appealing to the existence result for adequate moduli spaces.

\subsection{Langton's semistable extension theorem for quiver representations}

In this section we will show that when $Q$ is an acyclic quiver, the
adequate moduli space $\modulispace[\semistable{\theta}]{d,S}$ is proper over~$S$,
where~$S$ is a noetherian scheme.
This is a particular instance of \cref{proposition:gms-map-proper} describing properness of maps between
adequate moduli spaces.

\begin{proposition}
  \label{proposition:gms-map-proper}
  Let $Q$ be a (not necessarily acyclic) quiver and let $\theta$ be a stability function. The morphism
  \[
    \modulispace[\semistable{\theta}]{d,S} \to \modulispace{d,S}
  \]
  on adequate moduli spaces is proper whenever the adequate moduli space $\modulispace[\semistable{\theta}]{d,S}$ exists (see \cref{corollary:Md-ss-separated-gms}).
\end{proposition}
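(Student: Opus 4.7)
The plan is to verify the valuative criterion of properness. Finite type and separatedness of the morphism follow from the corresponding properties of $\modulispace[\semistable{\theta}]{d,S}$ and $\modulispace{d,S}$ over $S$ (see \cref{theorem:gms-properties}~\ref{enumerate:ams-finite-type}, \cref{corollary:Md-separated-gms}, and \cref{corollary:Md-ss-separated-gms}), so the content lies in universal closedness. Let $R$ be a DVR with fraction field $K$ and residue field $\kappa$, and suppose we are given compatible maps $\xi_R\colon \Spec R \to \modulispace{d,S}$ and $\xi_K\colon \Spec K \to \modulispace[\semistable{\theta}]{d,S}$. The goal is to produce, after possibly replacing $R$ by a DVR extension, a lift $\Spec R \to \modulispace[\semistable{\theta}]{d,S}$ of $\xi_K$ lying over $\xi_R$.

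First, I would reduce to the stack level. After a DVR extension, surjectivity of the adequate moduli space morphism (\cref{theorem:gms-properties}~\ref{enumerate:ams-surj-univclosed-initial}) allows us to represent $\xi_K$ by a $\theta$-semistable $K$-representation $M_K$ of $Q$. Viewing $M_K$ as a $K$-point of $\modulistack{d,S}$ lying above $\xi_R|_{\Spec K}$ and invoking the valuative criterion for universal closedness of the adequate moduli space map $\modulistack{d,S} \to \modulispace{d,S}$, we obtain, after a further DVR extension, a family $\cF$ of $Q$-representations of dimension vector $d$ over $R$ with $\cF_K \cong M_K$ whose induced map $\Spec R \to \modulispace{d,S}$ coincides with $\xi_R$.

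Second, I would apply Langton's semistable reduction theorem for quiver representations (\cref{proposition:langton}): since $\cF_K \cong M_K$ is $\theta$-semistable, there exists a subrepresentation $\cF' \subseteq \cF$ with $\cF'_K = \cF_K$ and $\cF'_\kappa$ being $\theta$-semistable. The family $\cF'$ defines a morphism $\Spec R \to \modulistack[\semistable{\theta}]{d,S}$, and composing with the adequate moduli space morphism yields a map $\Spec R \to \modulispace[\semistable{\theta}]{d,S}$ extending $\xi_K$. The two compositions $\Spec R \to \modulispace[\semistable{\theta}]{d,S} \to \modulispace{d,S}$ and $\xi_R$ agree on the generic point, since both are represented there by $M_K$; separatedness of $\modulispace{d,S}$ over $S$ then forces them to agree on all of $\Spec R$, so our lift is compatible with $\xi_R$. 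The main subtlety is the extension step, where the stack-to-moduli-space valuative lifting property for universally closed morphisms is used to produce an $R$-family $\cF$ that simultaneously realizes both $M_K$ on the generic fiber and $\xi_R$ throughout.
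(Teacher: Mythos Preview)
Your proposal is correct and follows essentially the same approach as the paper: reduce to universal closedness, lift the generic $K$-point to the stack $\modulistack{d,S}$ via the valuative criterion for the universally closed adequate moduli space map (the paper cites \cite[Theorem~A.8]{1812.01128} explicitly for this step, after first reducing to an affine base), then apply Langton's semistable reduction (\cref{proposition:langton}) and conclude by separatedness of $\modulispace{d,S}$.
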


The proof of \cref{proposition:gms-map-proper} relies on the following result,
which is an analogue of the main result of \cite{MR0364255}.

\begin{proposition}\label{proposition:langton}
  Let $R$ be a DVR with uniformizer $\pi$, fraction field $K$ and residue field $\kappa$. Let $M$ be a representation over $R$ such that the generic fiber $M \otimes_R K$ is $\theta$-semistable. There exists a subrepresentation $M' \subseteq M$ such that $M' \otimes_R K$ and $M \otimes_R K$ are isomorphic, and $M' \otimes_R \kappa$ is $\theta$-semistable.
\end{proposition}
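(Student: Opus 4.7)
The strategy is to mimic Langton's semistable reduction argument \cite{MR0364255}, adapted from torsion-free sheaves to representations of $Q$. The plan is to construct a descending chain of flat subrepresentations $M = M^{(0)} \supseteq M^{(1)} \supseteq \cdots$ over $R$, each generically equal to $M$, whose special fibers become progressively ``less destabilized,'' and then to prove the chain must terminate with a $\theta$-semistable special fiber.

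Concretely, having constructed $M^{(n)}$, if $M^{(n)}_\kappa$ is not $\theta$-semistable, let $B^{(n)} \subseteq M^{(n)}_\kappa$ denote the maximal destabilizing subrepresentation (the first piece of the Harder--Narasimhan filtration), let $G^{(n)} := M^{(n)}_\kappa/B^{(n)}$, and define
\[
  M^{(n+1)} := \ker\bigl(M^{(n)} \twoheadrightarrow M^{(n)}_\kappa \twoheadrightarrow G^{(n)}\bigr).
\]
Three standard verifications follow: (i) each $M^{(n+1)}_i$ is a submodule of the free $R$-module $M^{(n)}_i$, hence free since $R$ is a PID, so $M^{(n+1)}$ is again a flat family of representations; (ii) since $G^{(n)}$ is $\pi$-torsion, $M^{(n+1)} \otimes_R K \cong M \otimes_R K$; (iii) applying $(-) \otimes_R \kappa$ to $0 \to M^{(n+1)} \to M^{(n)} \to G^{(n)} \to 0$ and using flatness of $M^{(n)}$ together with $\operatorname{Tor}_1^R(G^{(n)}, \kappa) \cong G^{(n)}$ yields the short exact sequence
\[
  0 \to G^{(n)} \to M^{(n+1)}_\kappa \to B^{(n)} \to 0.
\]

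The main obstacle is termination. Set $\mu^{(n)} := \mu_\theta(B^{(n)}) > 0$. Since $B^{(n+1)}$ is maximally destabilizing in $M^{(n+1)}_\kappa$, comparing it to the filtration above gives two inequalities: the intersection $B^{(n+1)} \cap G^{(n)} \subseteq G^{(n)}$ has slope strictly less than $\mu^{(n)}$ (as the largest HN slope of $G^{(n)}$ equals that of the second HN piece of $M^{(n)}_\kappa$, which is $<\mu^{(n)}$), while $B^{(n+1)}/(B^{(n+1)} \cap G^{(n)}) \hookrightarrow B^{(n)}$ has slope $\leq \mu^{(n)}$. A slope-weighted average yields $\mu^{(n+1)} \leq \mu^{(n)}$. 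Since the $\mu^{(n)}$ lie in a discrete subset of $\mathbb{Q}_{>0}$ (denominators divide $\sum_i d_i$, bounded above by $\mu^{(0)}$), the sequence stabilizes at some $\mu$; equality in the averaging then forces $B^{(n+1)} \hookrightarrow B^{(n)}$, so the dimension vectors $\dim B^{(n)}$ also stabilize and eventually these embeddings are isomorphisms.

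The contradiction is extracted from the ascending chain of lattices $N^{(n)} := \pi^{-n} M^{(n)} \subseteq M \otimes_R K$: a direct computation shows $N^{(n+1)}/N^{(n)} \cong B^{(n)}$, so once the $B^{(n)}$ stabilize to a fixed nonzero representation $B$ of slope $\mu > 0$, the union $\bigcup_n N^{(n)}$ carries an infinite filtration whose graded pieces are all copies of $B$. This is incompatible with $M \otimes_R K$ being a finite-dimensional $\theta$-semistable representation of slope $0$, since its HN filtration has only finitely many pieces, all of nonpositive slope. Making this last step fully rigorous---quantitatively matching the finite positive-slope content available in $M \otimes_R K$ against the infinitely accumulating factors $B$---is the technically delicate heart of the argument, exactly as in Langton's original proof.
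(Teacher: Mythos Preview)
Your proposal is correct and takes essentially the same approach as the paper: both perform the identical elementary modification along the maximal destabilizing subrepresentation of the special fiber (your kernel description $M^{(n+1)} = \ker(M^{(n)} \twoheadrightarrow G^{(n)})$ coincides with the paper's explicit basis construction), and both defer the termination argument to Langton's original paper. You in fact supply more of the termination sketch than the paper does; the one quibble is that your heuristic for the final contradiction---comparing the lattice filtration $N^{(n)}$ to the HN filtration of $M \otimes_R K$---is not quite the right picture, since the $N^{(n)}$ are $R$-lattices rather than $K$-subrepresentations and so do not directly witness slope conditions on $M_K$, but you correctly flag this as the delicate step to be handled as in \cite{MR0364255}.
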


\proof If $\overline{M}\colonequals M\otimes_R \kappa$ is $\theta$-semistable, then there is nothing to prove. If this is not the case, let $\overline{F}$ be the maximal destabilizing subrepresentation of $\overline{M}$. This defines a subrepresentation $M^{(1)} \subset M$ of $Q$ in the following way. For every $i \in Q_0$, let $(f^1_i,\dots, f^{s_i}_i, e_i^{s_i+1},\dots, e^{d_i}_i)$  be a basis of $\overline{M}_i$ extending a basis $f^1_i,\dots, f^{s_i}_i$ of $\overline{F}$. We can further lift these to bases of each $M_i$, which we denote by $(\tilde{f}^1_i,\dots, \tilde{f}^{s_i}_i, \tilde{e}_i^{s_i+1},\dots, \tilde{e}^{d_i}_i)$. For every $i\in Q_0$, we define $M^{(1)}_i$ as the subset of $M_i$ spanned by $(\tilde{f}^1_i,\dots, \tilde{f}^{s_i}_i, \pi \tilde{e}_i^{s_i+1},\dots, \pi \tilde{e}^{d_i}_i)$. For every $a \colon i \to j$, the restriction of $M_a$ to $M^{(1)}_i$ lands in $M^{(1)}_j$, thus this defines a representation $M^{(1)}$ of $Q$. If $\overline{M^{(1)}}$ is $\theta$-semistable, then we are done. Otherwise, let $\overline{F}_{1}$ be the maximal destabilizing subrepresentation of $\overline{M^{(1)}}$ which, following the above procedure, defines a subrepresentation $M^{(2)} \subset M^{(1)}$. We can apply the arguments of \cite[Section~5]{MR0364255} to show that this procedure will terminate, i.e., there is an $n$ such that $\overline{M^{(n)}}$ is $\theta$-semistable.
\endproof

\begin{proof}[Proof of \cref{proposition:gms-map-proper}]
  The map  $\modulispace[\semistable{\theta}]{d, S} \to \modulispace{d, S}$ is separated and of finite type over $S$ since both  $\modulispace[\semistable{\theta}]{d, S}$ and $\modulispace{d, S}$ are, so it suffices to show that it is universally closed.
  Moreover, since the adequate moduli space map $\modulistack[\semistable{\theta}]{d, S} \to \modulispace[\semistable{\theta}]{d, S}$ is surjective,
  it is enough to show that the map $\modulistack[\semistable{\theta}]{d, S} \to \modulispace{d, S}$ is universally closed. This is local on the base scheme $S$, so since the formation of the adequate moduli space commutes with base change along open embeddings, we may assume that $S = \Spec{B}$ for a noetherian ring $B$.

  To show that $\modulistack[\semistable{\theta}]{d, S} \to \modulispace{d, S}$ is universally closed, we verify the valuative criterion for universal closedness \cite[Tag \href{https://stacks.math.columbia.edu/tag/0H2C}{0H2C}]{stacks-project}: if for any DVR $R$ with fraction field $K$ and the square of solid arrows in the diagram
  \begin{equation*}
  \begin{tikzcd}
    \Spec K' \arrow [r, dashed] \arrow[d, dashed] & \Spec K \arrow[r] \arrow[d] & \modulistack[\semistable{\theta}]{d, S} \arrow[d] \\
    \Spec R' \arrow[r, dashed] \arrow[rru, dashed] & \Spec R \arrow[r] & \modulispace{d, S}
  \end{tikzcd}
  \end{equation*}
  commutes, there exists a field extension $K'$ of $K$, a DVR $R' \subset K'$ dominating $R$, and a dashed diagonal arrow $\Spec{R'} \to \modulistack[\semistable{\theta}]{d, S}$ making the whole diagram commute, then the rightmost morphism is universally closed.

  Since $S = \Spec{B}$ is affine, we have by \cref{proposition:smooth-cover-by-Rep} that $\modulistack{d, S} = [\Spec{A}/\GL_N]$, where $A$ is a polynomial ring over $B$. Moreover, since $B$ is noetherian, the map $\pi\colon \modulistack{d, S} \to \modulispace{d, S}$ is of finite type by \cref{theorem:gms-properties}~\ref{enumerate:ams-finite-type}.
  Thus, we may apply \cite[Theorem A.8]{1812.01128} to find a finite extension $K'$ of $K$, a DVR $R' \supseteq R$ dominating $R$, and a morphism $\psi\colon \Spec{R'} \to \modulistack{d, S}$ such that the diagram of solid arrows
  \begin{equation*}
  \begin{tikzcd}
    \Spec K' \arrow [r] \arrow[dd] & \Spec K \arrow[r] \arrow[dd] & \modulistack[\semistable{\theta}]{d, S} \arrow[d, "\iota"] \\
    & & \modulistack{d, S} \arrow[d, "\pi"] \\
    \Spec R' \arrow[rru, "\psi", near end] \arrow[r] \arrow[rruu, dashed, near end, "\psi'"] & \Spec{R} \arrow[r] & \modulispace{d, S}
  \end{tikzcd}
  \end{equation*}
  commutes.

  The map $\psi\colon \Spec R' \to \modulistack{d, S}$ corresponds to a family of representations over $R'$ with $\theta$-semistable generic fiber. By \cref{proposition:langton}, there exists another morphism $\psi'\colon \Spec R' \to \modulistack[\semistable{\theta}]{d, S}$ such that the restrictions of $\psi$ and $\psi'$ to $\Spec K'$ agree.
  Since
  $R'$ is a DVR
  and $\modulispace{d, S}$ is separated, this implies that the two morphisms $\pi \circ \psi$ and $\pi \circ \iota \circ \psi'$ are equal \cite[Tag \href{https://stacks.math.columbia.edu/tag/03KU}{03KU}]{stacks-project}. Thus, the top and bottom rows together with the arrow $\psi'$ in the above diagram are what we set out to construct.
\end{proof}

If $Q$ is not acyclic, then $\modulispace{d,k}$ is not proper over $\Spec(k)$.
Consider for instance the Jordan quiver:
\begin{equation*}
  \begin{tikzcd}[every label/.append style={font=\small}]
    \bullet \arrow[out=45, in=135, loop]
  \end{tikzcd}
\end{equation*}
In this case all representations are $\theta$-semistable,
since the only stability function is the zero function.
For an explicit example (using the notation of \cref{proposition:langton})
that illustrates how the valuative criterion for properness fails it suffices to consider
the one-dimensional representation $M_a \colon K \to K, 1 \mapsto \pi^{-1}$ over $K$.
Then $M$ has no lift to any representation over~$R$. In fact, the space of $d$-dimensional representations of $Q$ where~$d=(1)$
is represented by the affine line,
because the group $\mathrm{G}_{d,k}\cong\Gm$ acts trivially on $\Rep{d,k}=\mathbb{A}^1$.

\begin{proposition}\label{proposition:acyclic-stack-universally-closed}
  If $Q$ is acyclic, the stack $\modulistack[\semistable{\theta}]{d,S}$ is universally closed over $S$.
\end{proposition}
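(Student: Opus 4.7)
The plan is to factor the structure morphism $\modulistack[\semistable{\theta}]{d,S} \to S$ through the two adequate moduli spaces already constructed, and then observe that each arrow in the resulting chain is universally closed. Concretely, I would consider
\[
  \modulistack[\semistable{\theta}]{d,S} \xrightarrow{\pi} \modulispace[\semistable{\theta}]{d,S} \longrightarrow \modulispace{d,S} \longrightarrow S.
\]
The first arrow $\pi$ is the adequate moduli space morphism, which is universally closed by \cref{theorem:gms-properties}\ref{enumerate:ams-surj-univclosed-initial}. The second arrow is proper by \cref{proposition:gms-map-proper}, which applies for any quiver and is built on the Langton-type extension argument of \cref{proposition:langton}. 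So the only missing input is that the third arrow $\modulispace{d,S} \to S$ is universally closed in the acyclic case, which I would obtain by showing $\modulispace{d,S} \cong S$ (the forthcoming \cref{proposition:acyclic-full-ams-is-S}).

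To establish $\modulispace{d,S} \cong S$ for acyclic $Q$, I would use the quotient presentation $\modulistack{d,S} \cong [\Rep{d,S}/\mathrm{G}_{d,S}]$ from \cref{proposition:smooth-cover-by-Rep} and reduce to an invariant-theoretic statement: for acyclic $Q$, the only $\mathrm{G}_{d}$-invariants on $\Rep{d}$ are scalars, so $\mathcal{O}_S[\Rep{d,S}]^{\mathrm{G}_{d,S}} = \mathcal{O}_S$. This is classical \cite{MR958897}: polynomial invariants on $\Rep{d}$ are spanned by traces along oriented cycles, which vanish when there are none. Alternatively, one can argue moduli-theoretically by combining \cref{corollary:points-are-semisimple-reps} with \cref{proposition:closed-points-arbitrary-base}, noting that over an algebraically closed field the only semisimple representation of dimension vector $d$ is $\bigoplus_{i \in Q_0} S(i)^{\oplus d_i}$, so every geometric fiber of $\modulispace{d,S} \to S$ is a single reduced point; the base-change compatibility in \cref{theorem:gms-properties}\ref{enumerate:ams-commutes-with-flat-base-change} then forces $\modulispace{d,S} = S$.

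With those two identifications in hand, the composition $\modulistack[\semistable{\theta}]{d,S} \to S$ is a composition of universally closed morphisms, and hence universally closed, as required. I do not anticipate any genuine obstacle here: the real work is packaged inside \cref{proposition:gms-map-proper} and inside the triviality of the invariant ring for acyclic quivers, and the present statement is essentially an assembly of these pieces. The only minor point to be careful about is that, in positive characteristic, one must use that the adequate (not merely good) moduli space morphism $\pi$ is universally closed, which is exactly \cite[Theorem 5.3.1]{MR3272912} cited in \cref{theorem:gms-properties}\ref{enumerate:ams-surj-univclosed-initial}.
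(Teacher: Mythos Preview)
Your factorization is correct in principle, but the dependency arrows run the other way in the paper. \Cref{proposition:acyclic-full-ams-is-S} is proved \emph{after} the present proposition, and its argument for general $S$ uses properness of $\modulispace{d,S}\to S$ (via \cref{corollary:adequate-proper-arbitrary-base}), which in turn rests on \cref{proposition:acyclic-stack-universally-closed}. So invoking \cref{proposition:acyclic-full-ams-is-S} here is circular unless you supply an independent proof.

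Of your two suggested workarounds, the invariant-theoretic one does break the circle, but it imports precisely the GIT input the paper is designed to avoid (and \cite{MR958897} is characteristic~$0$ only; in general one would need Donkin \cite{MR1259609}). The moduli-theoretic sketch correctly handles the field case---it matches the first paragraph of the paper's own proof of \cref{proposition:acyclic-full-ams-is-S}---but the last step ``base-change compatibility in \cref{theorem:gms-properties}\ref{enumerate:ams-commutes-with-flat-base-change} then forces $\modulispace{d,S}=S$'' is a gap: that item is for \emph{flat} base change, and residue-field inclusions are not flat. Knowing that every geometric fiber is a reduced point does not force an isomorphism without some finiteness, and in the paper that finiteness comes from properness, i.e.\ from the present proposition.

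The paper's proof is instead direct and does not touch \cref{proposition:acyclic-full-ams-is-S}. It checks the valuative criterion: given a semistable representation $M$ over the fraction field $K$ of a DVR $R$ with uniformizer $\pi$, by \cref{proposition:langton} one only needs to extend $M$ to \emph{some} family over $R$. Choosing an admissible ordering of $Q_0$ (this is where acyclicity enters), one inductively picks integers $m_i$ so that the rescaled matrices $\pi^{m_{t(a)}-m_{s(a)}}M_a$ have entries in $R$; the diagonal maps $\pi^{m_i}$ give the required isomorphism over $K$. This short explicit construction is the actual content, and it is exactly what your route outsources to the invariant-theoretic description of $\modulispace{d,S}$.
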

\begin{proof}
  We check the valuative criterion \cite[Tag \href{https://stacks.math.columbia.edu/tag/0H2C}{0H2C}]{stacks-project},
  which translates to: for a discrete valuation ring $R$ with uniformizer $\pi$ and fraction field $K$, if $M$ is a semistable representation
  over $K$, there exists a semistable representation $N$ over $R$ and an isomorphism $\phi\colon M \xrightarrow{\sim} N|_K$. By \cref{proposition:langton}, it suffices to find such a family $N$ without requiring that $N \otimes_R (R/\pi)$ is semistable.

  Choose an admissible ordering of 
  $Q_0 = \{ 1, \dots, n \}$ and a $K$-basis of $M_i$ for each $i \in Q_0$.
  In these bases, the maps $M_a\colon M_{s(a)} \to M_{t(a)}$ are given by matrices $A_a$ over $K$.
  For each $i$, let $N_i$ be a free $R$-module with the same basis. 
  We define integers $m_i$ for $i \in Q_0$ by setting $m_1 = 0$ and
  inductively choosing $m_i$ in such a way that for each arrow $a$ with $t(a) = i$,
  the matrix $N_a = \pi^{m_i - m_{s(a)}} A_a$ has entries in $R$.
  Now we can set $N = (\{N_i\}_{i \in Q_0}, \{N_a\}_{a \in Q_1})$,
  and $\phi\colon M \xrightarrow{\sim} N|_K$ is given
  by taking $\phi_i$ to be multiplication by $\pi^{m_i}$.
\end{proof}

\begin{corollary}
  \label{corollary:adequate-proper-arbitrary-base}
  If~$Q$ is acyclic, the adequate moduli space~$\modulispace[\semistable{\theta}]{d,S}$
  is proper over~$S$.
\end{corollary}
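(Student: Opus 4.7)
The strategy is to assemble three ingredients that have all been prepared earlier: that $\modulispace[\semistable{\theta}]{d,S}$ exists and is separated of finite type over $S$ (\cref{corollary:Md-ss-separated-gms} and \cref{theorem:gms-properties}~\ref{enumerate:ams-finite-type}), that the adequate moduli space map $\pi \colon \modulistack[\semistable{\theta}]{d,S} \to \modulispace[\semistable{\theta}]{d,S}$ is surjective (\cref{theorem:gms-properties}~\ref{enumerate:ams-surj-univclosed-initial}), and that the stack $\modulistack[\semistable{\theta}]{d,S}$ is universally closed over $S$ (\cref{proposition:acyclic-stack-universally-closed}). With these in hand, only universal closedness of the structure morphism $p\colon \modulispace[\semistable{\theta}]{d,S} \to S$ requires argument.

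The plan for universal closedness is a standard topological argument using the surjectivity of $\pi$. Given any base change $T \to S$, I would form the diagram
\[
  \modulistack[\semistable{\theta}]{d,S} \times_S T \xrightarrow{\pi_T} \modulispace[\semistable{\theta}]{d,S} \times_S T \xrightarrow{p_T} T.
\]
By \cref{theorem:gms-properties}~\ref{enumerate:ams-base-change-univ-homeo}, the map $\pi_T$ is a universal homeomorphism and in particular surjective. By \cref{proposition:acyclic-stack-universally-closed} applied after base change, the composition $p_T \circ \pi_T$ is closed. For any closed subset $Z \subseteq |\modulispace[\semistable{\theta}]{d,S} \times_S T|$, the preimage $W = \pi_T^{-1}(Z)$ is closed; by surjectivity of $\pi_T$ one has $\pi_T(W) = Z$, so $p_T(Z) = (p_T \circ \pi_T)(W)$ is closed in $T$. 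Thus $p_T$ is closed, as required.

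There is no genuine obstacle: the real work was done in \cref{proposition:langton} and \cref{proposition:acyclic-stack-universally-closed}, which provided the stack-level valuative criterion for universal closedness. The corollary is essentially a bookkeeping step descending this from the stack to its adequate moduli space.
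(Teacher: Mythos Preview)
Your proposal is correct and follows essentially the same approach as the paper's own proof: reduce properness to universal closedness (separated and finite type being already known), and then descend universal closedness from the stack to the moduli space using surjectivity of the adequate moduli space map together with \cref{proposition:acyclic-stack-universally-closed}. The paper compresses your base-change argument into the single clause ``since $\modulistack[\semistable{\theta}]{d,S} \to \modulispace[\semistable{\theta}]{d,S}$ is surjective, this is equivalent to $\modulistack[\semistable{\theta}]{d,S} \to S$ being universally closed,'' but the content is the same.
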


\begin{proof}
  The map $\modulispace[\semistable{\theta}]{d,S} \to S$ is separated by \cref{corollary:Md-ss-separated-gms},
  so it suffices to show that it is universally closed,
  and since the map $\modulistack[\semistable{\theta}]{d,S} \to \modulispace[\semistable{\theta}]{d,S}$
  is surjective,
  this is equivalent to $\modulistack[\semistable{\theta}]{d,S} \to S$ being universally closed,
  which is \cref{proposition:acyclic-stack-universally-closed}.
\end{proof}

The following gives a complete description of the moduli space $\modulispace{d,S}$ when $Q$ is acyclic.

\begin{proposition}
  \label{proposition:acyclic-full-ams-is-S}
  If $Q$ is acyclic, the structure morphism $\modulispace{d,S} \to S$ is an isomorphism.
\end{proposition}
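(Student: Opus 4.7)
The plan is to reduce to a concrete invariant-theoretic computation. The statement is Zariski-local on $S$, so I may assume $S=\Spec A$ affine. Under the identification $\modulistack{d,S}\cong[\Rep{d,S}/\mathrm{G}_{d,S}]$ from \cref{proposition:smooth-cover-by-Rep}, the stack is the quotient of the relative affine space $\Rep{d,S}=\Spec A[\{x^{a}_{kl}\}]$ by the reductive group scheme $\mathrm{G}_{d,S}=\prod_{i\in Q_0}\GL_{d_i}$, where $x^a_{kl}$ denotes the $(k,l)$-entry of the matrix for the arrow $a$. By \cite[Theorem~9.1.4]{MR3272912}, the adequate moduli space is then $\modulispace{d,S}\cong\Spec\left(A[\{x^{a}_{kl}\}]^{\mathrm{G}_{d,S}}\right)$, so it will suffice to show that this ring of $\mathrm{G}_{d,S}$-invariants is $A$.

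For this computation I would use the acyclicity of $Q$ to fix an admissible ordering $Q_0=\{1,\dots,n\}$ and consider the one-parameter subgroup $\lambda\colon\Gm\to\mathrm{G}_{d,S}$ defined by $\lambda(t)=(t^{i}\mathrm{Id}_{d_i})_{i\in Q_0}$. Under $\lambda$, the coordinate $x^{a}_{kl}$ of an arrow $a\colon i\to j$ transforms by $x^{a}_{kl}\mapsto t^{j-i}x^{a}_{kl}$, and $j-i>0$ for every arrow by the choice of ordering. Consequently every non-constant monomial in $A[\{x^{a}_{kl}\}]$ is scaled by a strictly positive power of $t$, so $A[\{x^{a}_{kl}\}]^{\lambda}=A$. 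Since every $\mathrm{G}_{d,S}$-invariant is a fortiori $\lambda$-invariant, the containments
\[
A\subseteq A[\{x^{a}_{kl}\}]^{\mathrm{G}_{d,S}}\subseteq A[\{x^{a}_{kl}\}]^{\lambda}=A
\]
give the desired equality, and the structure morphism $\modulispace{d,S}=\Spec A\to S=\Spec A$ becomes the identity.

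The geometric picture behind the calculation is that $\lambda$ contracts every representation of $Q$ to the unique semisimple one $\bigoplus_{i}\cO_{S}^{d_i}\otimes S(i)$ of dimension vector $d$, consistent with \cref{corollary:points-are-semisimple-reps}. The only nontrivial ingredient is invoking the GIT-type identification of the adequate moduli space for affine schemes modulo reductive groups; beyond that, the existence of an admissible ordering on $Q_0$ provided by acyclicity makes the invariant computation immediate, so no real obstacle should arise.
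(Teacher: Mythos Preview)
Your argument is correct, and in fact it is essentially the classical invariant-theoretic proof: the paper itself notes, immediately after this proposition, that the result follows from the GIT methods of Donkin and Le~Bruyn--Procesi. However, your approach runs directly counter to the paper's stated aim of avoiding GIT. The step where you invoke \cite[Theorem~9.1.4]{MR3272912} to identify the adequate moduli space with the spectrum of the invariant ring is precisely the ingredient the paper deliberately sidesteps; see the discussion following the two examples at the start of \cref{section:good-moduli-spaces}.

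The paper's own proof is genuinely different and purely moduli-theoretic. Over a field $k$, it uses \cref{proposition:closed-points-arbitrary-base} to show that $\modulistack{d,k}$ has a unique closed point, represented by the semisimple representation $\bigoplus_{i}S(i)^{\oplus d_i}$; since the stack is reduced and of finite type, its adequate moduli space is then forced to be $\Spec k$. For general noetherian $S$, the paper combines properness of $\modulispace{d,S}\to S$ (from \cref{corollary:adequate-proper-arbitrary-base}, which rests on the Langton-type argument \cref{proposition:langton}) with the field case and \cref{theorem:gms-properties}~\ref{enumerate:ams-base-change-univ-homeo} to conclude that the map is finite with reduced singleton fibers, hence a closed embedding; scheme-theoretic surjectivity then forces it to be an isomorphism.

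Your route is shorter and makes the role of acyclicity very transparent through the contracting $\Gm$-action you describe (which is exactly the geometric picture behind \cref{corollary:points-are-semisimple-reps}). The paper's route, while longer, never touches the ring of invariants and illustrates how the stack-theoretic toolkit---classification of closed points, semistable reduction, and base-change behavior of adequate moduli spaces---can recover such classical statements without GIT.
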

\begin{proof}
  Consider first the case $S = \Spec{k}$ for a field $k$.
  We claim that the stack $\modulistack{d,k}$ has a unique closed point.
  Indeed, by \cref{proposition:closed-points-arbitrary-base},
  any closed point is represented by a semisimple representation $M$ defined over a finite extension $L$ of $k$.
  Since $Q$ is acyclic, the only such semisimple representation is $\bigoplus_{i \in Q_0} S(i)^{\oplus d_i}$ which is already defined over $k$.
  Thus, by \cref{theorem:gms-properties}~\ref{enumerate:gms-points}, the adequate moduli space $\modulispace{d}$
  has a unique closed point which is defined over $k$,
  and since $\modulispace{d}$ is of finite type over $k$,
  this point must be the only one.
  Finally, as the stack $\modulistack{d}$ is reduced,
  so is $\modulispace{d}$, and thus we conclude that $\modulispace{d} \cong \Spec{k}$.

  Let now $S$ be a noetherian scheme. For any point $x \in S$, the base change map
  \[ \modulispace{d,\kappa(x)} \to \modulispace{d,S} \times_S \Spec{\kappa(x)} \]
  is bijective by \cref{theorem:gms-properties}~\ref{enumerate:ams-base-change-univ-homeo},
  and by the above $\modulispace{d,\kappa(x)}$ is isomorphic to $\Spec{\kappa(x)}$. Since $\modulispace{d,S} \to S$ is also proper by \cref{corollary:adequate-proper-arbitrary-base},
  it is finite by Zariski's Main Theorem \cite[Tag \href{https://stacks.math.columbia.edu/tag/0A4X}{0A4X}]{stacks-project}.
  Thus, we have $M_{d,S} \cong \underline{\Spec}_{\cO_S} \cA$ for a sheaf $\cA$ of finite $\cO_S$-algebras.
  Moreover, since for any $x \in S$, the induced map $\cO_S|_x \to \cA|_x$ is an isomorphism,
  the structure map $\cO_S \to \cA$ is surjective, so $\modulispace{d,S} \to S$ is a closed embedding.
  On the other hand, the composition
  $\modulistack{d,S} \to \modulispace{d,S} \to S$ is
  scheme-theoretically surjective,
  hence so is $\modulispace{d,S} \to S$,
  so it must be an isomorphism.
\end{proof}

A more general result regarding the structure of $\modulispace{d,S}$ is proved using GIT-methods by Donkin in \cite[Theorem and Remark]{MR1259609},
generalizing the result for fields in characteristic $0$ due to Le Bruyn--Procesi \cite[Theorem 1]{MR958897}.

\section{Projectivity of the adequate moduli space}
\label{section:projectivity}

The aim of this section is to prove that the moduli space $\modulispace[\semistable{\theta}]{d,S}$ is projective over $S$ when the quiver $Q$ is acyclic.
Recall that we defined a line bundle $\cL_\theta$ on $\modulistack[\semistable{\theta}]{d}$ in Section~\ref{subsection:determinantal-line-bundles}.
We begin by showing that $\cL_\theta$ is semiample even if $Q$ is not acyclic, which will imply \cref{theorem:effective} from the introduction. After this, we show with increasing generality that $\cL_\theta$ is relatively ample over $S$.

\subsection{Global generation over a field}
\label{subsection:global-generation}

Let $Q$ be a quiver, $d$ a dimension vector, and $\theta$ a stability function such that $\theta(d) = 0$. 

\begin{proposition}
  \label{proposition:semiampleness}
  Suppose $k$ is a field and the stability function $\theta$ is of the form $\theta = \theta_\beta$ or $\theta = \eta_\beta$ for a dimension vector $\beta$.
  The line bundle $\cL_\theta$ on $\modulistack[\semistable{\theta}]{d, k}$ is semiample
  and descends to a line bundle $L_\theta$ on the moduli space $\modulispace[\semistable{\theta}]{d,k}$.
  In fact, if $m \in \NN$ satisfies the inequality \eqref{eq:m-inequality-beta},
  then $\cL_\theta^{\otimes m}$ is generated by finitely many global sections, and if $k$ is algebraically closed, these sections can be taken to be of the form $\sigma_V$ for a representation $V$ of dimension vector $m\beta$.
\end{proposition}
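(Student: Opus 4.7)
The strategy is to combine the effective vanishing results of \cref{section:vanishing-results} with quasi-compactness of the stack to produce finitely many global generating sections of $\cL_\theta^{\otimes m}$, first over $\bar k$ and then over an arbitrary field $k$ by flat descent. Throughout I will assume $\theta = \eta_\beta$; the case $\theta = \theta_\beta$ is entirely analogous, using the complex $\cE_{\cG, \cF^\univ}^\bullet$ together with part (a) of \cref{proposition:vanishing-sections} and \cref{corollary:hom-vanishing}~\ref{enumerate:hom-vanishing-alpha}.

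First I would establish the last assertion, namely global generation over $\bar k$ by sections of the form $\sigma_V$. Fix an arbitrary point $x \in |\modulistack[\semistable{\theta}]{d,\bar k}|$, represented by a $\theta$-semistable representation $M$ of dimension vector $d$. Since $m$ satisfies the inequality \eqref{eq:m-inequality-beta}, applying \cref{corollary:hom-vanishing}~\ref{enumerate:hom-vanishing-beta} with $\epsilon = 0$ produces a nonempty open subset of $\Rep{m\beta,\bar k}$ parametrizing representations $V$ with $\Hom(M,V) = 0$, and by \cref{proposition:vanishing-sections}(b) any such $V$ gives a section $\sigma_V$ of $\cL_\theta^{\otimes m}$ that is nonzero at~$x$. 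The nonvanishing loci $\cU_V \subseteq \modulistack[\semistable{\theta}]{d,\bar k}$ therefore form an open cover; since $\modulistack[\semistable{\theta}]{d,\bar k}$ is of finite type by \cref{corollary:open-substacks}, hence quasi-compact, finitely many $V_1, \dots, V_r$ suffice.

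To descend global generation to an arbitrary field $k$, note that each $V_i$ is defined over some finite extension $k'/k$. Using the quotient-stack presentation $\modulistack{d,k} \cong [\Rep{d,k}/\textrm{G}_d]$ with $\Rep{d,k}$ affine, flat base change gives $H^0(\modulistack{d,k'}, \cL_\theta^{\otimes m}) \cong H^0(\modulistack{d,k}, \cL_\theta^{\otimes m}) \otimes_k k'$, so I would express each $\sigma_{V_i}$ as a $k'$-linear combination of finitely many sections $t_j \in H^0(\modulistack{d,k}, \cL_\theta^{\otimes m})$. The $k'$-span of the $t_j$ contains the generating $\sigma_{V_i}$, so the $t_j$ pulled back to $\modulistack[\semistable{\theta}]{d,k'}$ also generate, and hence the restrictions of the $t_j$ to $\modulistack[\semistable{\theta}]{d,k}$ generate $\cL_\theta^{\otimes m}$ by faithful flatness of $\Spec k' \to \Spec k$. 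This proves global generation of $\cL_\theta^{\otimes m}$, hence semiampleness of $\cL_\theta$, and the descent to $L_\theta$ on $\modulispace[\semistable{\theta}]{d,k}$ is then immediate from \cref{lemma:descending-vector-bundles-to-ams}~\ref{item:descending-vector-bundles-to-ams-iii}. The only real subtlety is this flat base change for global sections, which is routine given the explicit quotient-stack presentation.
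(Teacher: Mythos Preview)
Your argument for global generation of $\cL_\theta^{\otimes m}$ is essentially the same as the paper's, and your descent of global generation from $\bar k$ to $k$ via flat base change of $H^0$ is a perfectly good way to spell out what the paper handles by a one-line reference.

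There is, however, a genuine gap at the very end. From \cref{lemma:descending-vector-bundles-to-ams}~\ref{item:descending-vector-bundles-to-ams-iii} you only obtain that $\cL_\theta^{\otimes m}$ descends to $\modulispace[\semistable{\theta}]{d,k}$, not that $\cL_\theta$ itself does: the lemma requires the line bundle in question to be generated by finitely many global sections, and you have not shown this for $\cL_\theta$ (indeed it typically fails for $m=1$). The paper closes this gap by observing that if $m$ satisfies \eqref{eq:m-inequality-beta} then so does $m+1$, so both $\cL_\theta^{\otimes m}$ and $\cL_\theta^{\otimes(m+1)}$ descend to line bundles $L_m$ and $L_{m+1}$, and one sets $L_\theta \coloneqq L_{m+1} \otimes L_m^\vee$; then $f^*L_\theta \cong \cL_\theta^{\otimes(m+1)} \otimes (\cL_\theta^{\otimes m})^\vee \cong \cL_\theta$. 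This is an easy fix, but it is not ``immediate'' from the lemma as stated.
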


\begin{proof}
  We give the proof when $\theta = \eta_\beta$, the other case follows similarly. Assume first that $k$ is algebraically closed. Let $p \in \modulistack[\semistable{\theta}]{d,k}$ be a closed point corresponding to a $\theta$-semistable representation $M$.
  By \cref{corollary:hom-vanishing}~\ref{enumerate:hom-vanishing-beta}, a general representation $V$ of dimension vector $m\beta$ satisfies $\Hom(M, V) = 0$, and by \cref{proposition:vanishing-sections}, the associated section $\sigma_V$ of $\cL_\theta^{\otimes m}$ is nonzero at the point $p$. Since $\modulistack[\semistable{\theta}]{d,k}$ is quasi-compact, the non-vanishing loci of finitely many such sections cover $\modulistack[\semistable{\theta}]{d,k}$.

  Let now $k$ be an arbitrary field. By the above, the pullback of $\cL_\theta^{\otimes m}$ to $\modulistack[\semistable{\theta}]{d,\bar{k}}$ is generated by finitely many global sections when $m$ satisfies \eqref{eq:m-inequality-beta}, so the same holds for $\cL_\theta^{\otimes m}$ by for example \cite[Exercise 19.2.I]{rising-sea}.

  To prove that $\cL_\theta$ descends to $\modulispace[\semistable{\theta}]{d,k}$, we let $m > 0$ be an integer satisfying $\eqref{eq:m-inequality-beta}$ so that both $\cL_\theta^{\otimes m}$ and $\cL_\theta^{\otimes m+1}$ are generated by finitely many globally sections.
  It follows from \cref{lemma:descending-vector-bundles-to-ams}~\ref{item:descending-vector-bundles-to-ams-iii}
  that $\cL_\theta^{\otimes m}$ and
  $\cL_\theta^{\otimes m+1}$ descend to line bundles
  $L_m$ and $L_{m+1}$ on $\modulispace[\semistable{\theta}]{d,k}$.
  Now $L_\theta \coloneqq L_{m+1} \otimes L_m^\vee$ pulls back to
  $\cL_\theta^{\otimes m+1} \otimes (\cL_\theta^{\otimes m})^\vee = \cL_\theta$,
  so we see that $\cL_\theta$ itself descends.
\end{proof}

\begin{proof}[Proof of \cref{theorem:effective}]
  This now follows from \cref{proposition:effective-bounds}
  and \cref{proposition:semiampleness},
  with the bound in \cref{theorem:effective} being derived just from the Euler matrix for~$Q$
  and the dimension vector~$d$,
  and not the more implicit bound in \eqref{eq:m-inequality-beta}.
\end{proof}

\begin{remark}
  \label{remark:effective-bpf}
  Effective basepoint-freeness results as in \cref{theorem:effective}
  are of interest in general, and for moduli spaces in particular.
  For moduli of vector bundles on (smooth projective) curves there has been significant progress;
  for an overview, see \cite[Section~7.2]{MR3135438}.
  The moduli space~$\mathrm{M}_C(r,\mathcal{L})$
  of semistable vector bundles of rank~$r$ and fixed determinant~$\mathcal{L}$
  on a curve~$C$ of genus~$g$
  has Picard rank~1
  and its Picard group is generated by a determinantal line bundle.
  The best known bound on the basepoint-freeness of
  the linear system associated to the~$k$th multiple of the generator
  is quadratic in the rank~$r$ (but independent of the genus~$g$).
  These bounds are similar to the one in \cref{theorem:effective},
  which are also quadratic in the entries of the dimension vector.

  Conjecturally \cite[Section~7.5]{MR3135438} the true bound for
  basepoint-freeness on moduli of vector bundles is \emph{linear} in the rank,
  and thus of the same order as the \emph{square root} of the dimension of the moduli space.

  For moduli of quiver representations one also expects room for improvement.
  Consider the following two ways in which~$\mathbb{P}^n$ can be realized as a moduli space of quiver representations.
  First, using dimension vector~$(1,1)$ for the~$(n+1)$-Kronecker quiver as in \cref{example:nKronecker}
  and stability function~$(1,-1)$
  we obtain a bound linear in~$n$,
  yet the Picard group is generated by the very ample line bundle~$\mathcal{O}(1)$,
  hence the bound should be constant.
  On the other hand,
  following~\cite[page 218]{MR1906875} we can also realize it as the moduli space for the~$2$-Kronecker quiver
  using dimension vector~$(n,n)$.
  Again using \cref{example:nKronecker} we see that~$\lambda=0$,
  and thus the effective basepoint-freeness bound says that the generator is globally generated.
  See also \cref{remark:extended-dynkin} for the case of general Dynkin and extended Dynkin quivers.

  In general, Fujita's conjecture predicts a bound linear in the dimension of the moduli space.
  We obtain a bound that is quadratic in the entries of the dimension vector, and thus of the same order as the dimension of the moduli space which also grows quadratically in the entries of the dimension vector.
  This can be compared to Koll\'ar's general effective basepoint-freeness
  result \cite[Theorem~1.1]{MR1233485},
  which is very far from the predicted bound.
\end{remark}

\subsection{Projectivity over a field}

From now on, we assume that $Q$ is acyclic, in which case \cref{lemma:stability-conditions-are-dim-vectors}~\ref{enumerate:stability-conditions-are-dim-vectors-beta}
implies that $\theta = \eta_\beta$ for a unique dimension vector $\beta \in \NN^{Q_0}$.
We now prove \cref{theorem:main-thm}~\ref{enumerate:main-ample} over a field.

\begin{theorem}\label{theorem:projectivity}
  Let $k$ be a field and assume that $Q$ is acyclic.
  The line bundle $\cL_\theta$ descends to an ample line bundle $L_\theta$ on the moduli space $\modulispace[\semistable{\theta}]{d,k}$.
  In particular, the moduli space $\modulispace[\semistable{\theta}]{d,k}$ of $\theta$-semistable representations with dimension vector $d$ is a projective variety.
\end{theorem}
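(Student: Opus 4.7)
By \cref{proposition:semiampleness}, the line bundle $\cL_\theta$ descends to a line bundle $L_\theta$ on $\modulispace[\semistable{\theta}]{d,k}$, and for all sufficiently large $m$, $L_\theta^{\otimes m}$ is globally generated by a finite collection of sections including those of the form $\sigma_V$ for $V$ of dimension vector $m\beta$. The plan is to fix such an $m$ and show that the induced morphism $\phi \colon \modulispace[\semistable{\theta}]{d,k} \to \PP_k^N$, which satisfies $\phi^*\cO(1) \cong L_\theta^{\otimes m}$, is finite. Since $\modulispace[\semistable{\theta}]{d,k}$ is proper over $k$ by \cref{corollary:adequate-proper-arbitrary-base}, $\phi$ is automatically proper, so I would only need to establish quasi-finiteness. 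Given this, finiteness of $\phi$ implies ampleness of $L_\theta^{\otimes m} \cong \phi^*\cO(1)$, hence ampleness of $L_\theta$, and hence projectivity of the moduli space.

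Ampleness and quasi-finiteness descend along the faithfully flat base change $\Spec \bar k \to \Spec k$, so I would reduce to the case that $k$ is algebraically closed. By \cref{proposition:closed-points-arbitrary-base}, the closed points of $\modulispace[\semistable{\theta}]{d,k}$ are then in bijection with isomorphism classes of polystable representations $M = \bigoplus_j E_j^{\oplus n_j}$ of dimension vector $d$, where the $E_j$ are pairwise non-isomorphic stable representations. By \cref{proposition:vanishing-sections}, the non-vanishing of a determinantal section $\sigma_N$ at $[M]$ depends only on the set $\{E_j\}$ of stable summands appearing in $M$ and not on the multiplicities $n_j$, since $\Hom(M,N) = 0$ if and only if $\Hom(E_j,N) = 0$ for every~$j$.

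If $[M]$ and $[M']$ are polystables of dimension vector $d$ with distinct sets of stable summands, then choosing $E$ appearing in $M$ but not in $M'$ and applying \cref{theorem:separation} to $E$ together with the finite list of distinct stable summands of $M'$ yields a representation $N$ of dimension vector $m\beta$ such that $\sigma_N([M]) = 0$ while $\sigma_N([M']) \neq 0$; in particular $\phi([M]) \neq \phi([M'])$. Therefore the fiber $\phi^{-1}(\phi([M]))$ is contained in the set of polystables of dimension vector $d$ having the same set of stable summands as $M$. Once stable representations $E_1, \ldots, E_s$ are fixed, such polystables are parameterized by tuples $(n_1,\ldots,n_s) \in \NN^s$ satisfying $\sum_i n_i \dimvect E_i = d$, of which there are only finitely many. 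Consequently $\phi$ has finite set-theoretic fibers, is therefore quasi-finite, and the argument closes.

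The main obstacle is arranging a single value of $m$ that works uniformly across the (generally infinite) family of pairs of polystable representations of dimension vector $d$ with different sets of stable summands. Tracing through the effective bounds in \cref{lemma:image-with-bound,lemma:ext-vanishing-semistability} and the inductive construction in \cref{theorem:separation-base-case}, the thresholds on $m$ depend only on the dimension vectors of the stable summands involved, which are bounded componentwise by $d$. Hence a uniform $m$ can be selected for which $\phi$ is simultaneously globally generated and separates all polystables with differing stable summand sets, yielding the claimed ampleness and projectivity.
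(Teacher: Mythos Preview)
Your argument is correct and reaches the same conclusion as the paper, but the mechanism for showing that $\phi$ has finite fibers is genuinely different. The paper fixes $m$ once, defines $\phi$ by the complete linear series of $L_\theta^{\otimes m}$, and then argues by contradiction: if a fiber were positive-dimensional, there would be a proper curve $C$ mapping nonconstantly to $\modulispace[\semistable{\theta}]{d,k}$ with $\gamma^*L_\theta^{\otimes m}$ of degree $0$. Choosing two points on $C$ whose polystable representatives have distinct stable summands, the paper applies \cref{theorem:separation} with its \emph{own} exponent $m'$, obtaining a section of $L_\theta^{\otimes mm'}$ that is nonconstant on $C$ --- impossible for a degree-$0$ line bundle. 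Thus the paper never needs the separating section to live in the original linear system defining $\phi$.

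You instead arrange a single $m$ so large that \cref{theorem:separation} produces a separating $N$ of dimension vector exactly $m\beta$ for every pair of polystables with distinct summand sets; then $\sigma_N$ lies in the complete linear series of $L_\theta^{\otimes m}$ and hence $\phi$ itself separates them. This is more elementary --- no curve argument --- but it puts real weight on the uniformity claim in your last paragraph. That claim is correct: the thresholds in \cref{lemma:image-with-bound}, \cref{lemma:ext-vanishing-semistability}, and the construction in \cref{theorem:separation-base-case} depend only on dimension vectors of stable summands (bounded by $d$), on $\dimvect \tau M^\ell$ (a linear function of $\dimvect M^\ell$, hence again bounded), and on the finitely many possible $\epsilon_0 = \dimvect I(i_0)$. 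One point worth tightening in your write-up: you should explicitly take $\phi$ to be defined by the \emph{complete} linear series of $L_\theta^{\otimes m}$ (finite-dimensional since $\modulispace[\semistable{\theta}]{d,k}$ is proper), so that every new $\sigma_N$ automatically lies in it; your phrase ``a finite collection of sections including those of the form $\sigma_V$'' is ambiguous on this point.
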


\begin{proof}
  Suppose first that $k$ is algebraically closed. By \cref{proposition:semiampleness}, the line bundle $\cL_\theta$ is semiample and descends to a line bundle $L_\theta$. To show that $L_\theta$ is ample, it suffice to show that for $m$ satisfying $\eqref{eq:m-inequality-beta}$, the map $\phi: \modulispace[\semistable{\theta}]{d} \to \PP^n$ induced by the complete linear series of $L_\theta^{\otimes m}$ is finite. For convenience, we denote $\cL = \cL_\theta^{\otimes m}$ and $L = L_\theta^{\otimes m}$.
  We first claim that $\phi$ has finite fibers.
  If not, there exists a smooth, proper, connected curve $C$
  and a nonconstant map $\gamma \colon C \to \modulispace[\semistable{\theta}]{d,k}$
  such that the composition $\phi \circ \gamma \colon C \to \PP^n$ is constant.
  This means that the line bundle $\gamma^* L$ has degree $0$ on $C$,
  so any section of any power of $\gamma^* L$ is constant.
  We will show that this is impossible.

  By \cref{theorem:gms-properties}~\ref{enumerate:gms-points} and \cref{proposition:closed-points-arbitrary-base}, the $k$-points of $\modulispace[\semistable{\theta}]{d,k}$ correspond to $\theta$-polystable representations under the adequate moduli space map $f: \modulistack[\semistable{\theta}]{d,k} \to \modulispace[\semistable{\theta}]{d,k}$. Given a polystable representation,
  there are only finitely many polystable representations of the same dimension vector
  with the same isomorphism classes of stable summands.
  Thus, since the image of $C$ in $\modulispace[\semistable{\theta}]{d,k}$
  contains infinitely many $k$-points,
  it in particular contains two points $p$ and $p'$ corresponding to polystable representations $M$ and $M'$
  such that one of the stable summands of $M$ does not appear in $M'$.
  By \cref{theorem:separation}, there exists $m' > 0$
  and a representation $V$ of dimension vector $m m' \beta$ such that
  \[
    \Hom(M, V) \neq 0 \quad \text{ and } \quad  \Hom(M', V) = 0.
  \]
  The representation $V$ induces a section $\sigma_V$ of $\cL^{\otimes m'}$,
  and by \cref{proposition:vanishing-sections} we have $\sigma_V(M) = 0$ but $\sigma_V(M') \neq 0$.
  There is a section $t \in \Gamma(\PP^n, \cO_{\PP^n}(m'))$ such that $\sigma_V = f^* \phi^* t$, and the section $s = \phi^*t \in \Gamma(\modulispace[\semistable{\theta}]{d,k}, L^{\otimes m'})$ has the property that
  $s(p) = 0$ but $s(p') \neq 0$. Hence, $\gamma^*(s)$ is a nonconstant section of $\gamma^* L^{\otimes m'}$, which gives a contradiction. Thus, $\phi$ has finite fibers.

  Since $\modulispace[\semistable{\theta}]{d,k}$ is proper by \cref{corollary:adequate-proper-arbitrary-base}, the map $\phi$ is proper, hence finite by Zariski's Main Theorem \cite[Tag \href{https://stacks.math.columbia.edu/tag/0A4X}{0A4X}]{stacks-project}. Thus, $\modulispace[\semistable{\theta}]{d,k}$ is projective and $L_\theta$ is ample. This concludes the case when $k$ is algebraically closed.

  Now let $k$ be an arbitrary field and let $\bar{k}$ be an algebraic closure.
  By the case of an algebraically closed field, $\mathcal{L}_{\theta,\bar{k}}$ descends to an ample line bundle $L_{\theta,\bar{k}}$ on $\modulispace[\semistable{\theta}]{d,\bar{k}}$. Consider the diagram
  \begin{equation*}
      \begin{tikzcd}
      \modulistack[\semistable{\theta}]{d,\bar{k}} \arrow[r] \arrow[d] & \modulistack[\semistable{\theta}]{d,k} \arrow[d] \\
      \modulispace[\semistable{\theta}]{d,\bar{k}} \arrow[r] \arrow[d] & \modulispace[\semistable{\theta}]{d,k} \arrow[d] \\
      \Spec{\bar{k}} \arrow[r] & \Spec{k}
    \end{tikzcd}
  \end{equation*}
  By \cref{theorem:gms-properties}~\ref{enumerate:ams-commutes-with-flat-base-change}, the base change morphism $\modulispace[\semistable{\theta}]{d,\bar{k}} \to \modulispace[\semistable{\theta}]{d,k} \times_{\Spec{k}} \Spec{\bar{k}}$ is an isomorphism,
  so it follows from \cref{lemma:descending-vector-bundles-to-ams}~\ref{item:descending-vector-bundles-to-ams-ii} that there exists a line bundle $L_\theta$ on $\modulispace[\semistable{\theta}]{d,k}$ whose pullback to $\modulispace[\semistable{\theta}]{d,\bar{k}}$ is $\overline{L}_\theta$.

  Finally, we claim that $L_\theta$ is ample. Since $\modulispace[\semistable{\theta}]{d,k}$ is proper over $k$, by \cite[Tag \href{https://stacks.math.columbia.edu/tag/0D38}{0D38}]{stacks-project}, it suffices to show that for any coherent sheaf $F$ on $\modulispace[\semistable{\theta}]{d,k}$, there exists $n_0$ such that
  \[ \mathrm{H}^i(\modulispace[\semistable{\theta}]{d,k}, F \otimes L_\theta^{\otimes n}) = 0 \]
  for all $i > 0$ and all $n \ge n_0$. By flat base change, we have
  \[ \mathrm{H}^i(\modulispace[\semistable{\theta}]{d,k}, F \otimes L_\theta^{\otimes n}) \otimes_k \bar{k} \cong \mathrm{H}^i(\modulispace[\semistable{\theta}]{d,\bar{k}}, \overline{F} \otimes \overline{L}_\theta^{\otimes n}), \]
  where $\overline{F}$ denotes the pullback of $F$ to $\modulispace[\semistable{\theta}]{d,\bar{k}}$. By the first part of the proof, $\overline{L}_\theta$ is ample, and so such an $n_0$ exists.
\end{proof}

\subsection{Projectivity over a general base}

We are now ready to prove \cref{theorem:main-thm}~\ref{enumerate:main-ample}, namely that $\modulispace[\semistable{\theta}]{d,S}$ is projective over an arbitrary noetherian base scheme $S$.
Here we use the notion of projectivity from \cite[Tag \href{https://stacks.math.columbia.edu/tag/01W8}{01W8}]{stacks-project}
(and not the stronger notion of H-projectivity).

\begin{theorem}\label{theorem:proj-over-noeth-S} \label{theorem:projZ}
  Suppose $Q$ is acyclic and  $S$ is a noetherian scheme.
  The line bundle $\cL_\theta$ descends to an $S$-ample line bundle $L_\theta$ on the moduli space $\modulispace[\semistable{\theta}]{d,S}$.
  In particular, $\modulispace[\semistable{\theta}]{d,S}$ is projective over~$S$.
\end{theorem}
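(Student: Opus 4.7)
The plan is to bootstrap from \cref{theorem:projectivity} (the field case) first to $S=\Spec\ZZ$ and then to an arbitrary noetherian base by pullback. The reduction from general $S$ to $\Spec\ZZ$ is mostly formal: since the adequate moduli space is initial for maps to separated algebraic spaces (\cref{theorem:gms-properties}~\ref{enumerate:ams-surj-univclosed-initial}), the composition $\modulistack[\semistable{\theta}]{d,S}\to\modulistack[\semistable{\theta}]{d,\ZZ}\to\modulispace[\semistable{\theta}]{d,\ZZ}$ factors through a canonical morphism $\phi_S\colon\modulispace[\semistable{\theta}]{d,S}\to\modulispace[\semistable{\theta}]{d,\ZZ}$, and I would define $L_{\theta,S}\coloneqq\phi_S^*L_\theta$. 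The base-change compatibility in \cref{proposition:determinantal-properties}~\ref{enumerate:determinantal-3} identifies its pullback to the stack with $\cL_\theta$. Since $\modulispace[\semistable{\theta}]{d,S}\to S$ is proper by \cref{corollary:adequate-proper-arbitrary-base}, I would verify $S$-ampleness fiberwise: each fiber is universally homeomorphic to $\modulispace[\semistable{\theta}]{d,\kappa(s)}$ by \cref{theorem:gms-properties}~\ref{enumerate:ams-base-change-univ-homeo}, so \cref{theorem:projectivity} applies (ampleness being invariant under universal homeomorphism).

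The heart of the proof is thus the case $S=\Spec\ZZ$. Following the argument of \cref{proposition:semiampleness}, I would show that both $\cL_\theta^{\otimes m}$ and $\cL_\theta^{\otimes m+1}$ are generated by finitely many global sections on $\modulistack[\semistable{\theta}]{d,\ZZ}$ for some large enough $m$, and then invoke \cref{lemma:descending-vector-bundles-to-ams}~\ref{item:descending-vector-bundles-to-ams-iii} on both to descend $\cL_\theta$ to a line bundle $L_\theta$ on $\modulispace[\semistable{\theta}]{d,\ZZ}$; ampleness of $L_\theta$ relative to $\ZZ$ then follows from the same fiberwise criterion. For the global-generation assertion, the stack $\modulistack[\semistable{\theta}]{d,\ZZ}$ is quasi-compact and by \cref{lemma:specializing to closed points} every point specializes to a closed one, so it suffices to produce, for each closed point $x$, a global section of some $\cL_\theta^{\otimes m}$ nonzero at $x$. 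By \cref{proposition:closed-points-arbitrary-base}, such a closed point is represented by a $\theta$-polystable $M$ over a finite field $\FF_{q^n}$, and \cref{corollary:hom-vanishing}~\ref{enumerate:hom-vanishing-beta} produces a non-empty Zariski-open $U_M\subseteq\Rep{m\beta,\FF_{q^n}}$ of representations $V$ with $\Hom(M,V)=0$; for any such $V$, the section $\sigma_V$ of \cref{proposition:vanishing-sections} is non-vanishing at $x$.

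The anticipated main obstacle is that a representation $V$ over $\FF_{q^n}$ is generally not the reduction of any integer-matrix representation when $n>1$, so the section $\sigma_V$ is a priori only defined on the stack over $\FF_{q^n}$, not over $\ZZ$. To bridge this gap I would intersect $U_M$ with its $\operatorname{Gal}(\FF_{q^n}/\FF_p)$-conjugates, obtaining a non-empty Galois-stable open subset that descends to a non-empty open $\widetilde U_M\subseteq\Rep{m\beta,\FF_p}$, and then apply a Lang--Weil type estimate to $\widetilde U_M$ (enlarging $m$ if necessary, which is harmless since the ambient affine space has dimension quadratic in $m$) to conclude that $\widetilde U_M(\FF_p)\neq\varnothing$. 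Any such $\FF_p$-rational point lifts to a representation $\widetilde V$ of dimension vector $m\beta$ over $\ZZ$ by integer-matrix lifts as in \cref{remark:spread-out-etale-locally}, yielding the required global section $\sigma_{\widetilde V}$ of $\cL_\theta^{\otimes m}$. Finitely many such sections suffice by quasi-compactness; to align exponents across the finitely many closed points being hit (and to secure global generation of both $\cL_\theta^{\otimes m}$ and $\cL_\theta^{\otimes m+1}$ simultaneously) I would pass to appropriate common multiples, at which point the descent and ampleness statements follow as outlined.
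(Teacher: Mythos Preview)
Your reduction from a general noetherian $S$ to $\Spec\ZZ$ is essentially the paper's, and the fibrewise ampleness argument is fine once you know $\modulispace[\semistable{\theta}]{d,S}$ is a scheme; the paper secures this by noting that the comparison map $g\colon\modulispace[\semistable{\theta}]{d,S}\to\modulispace[\semistable{\theta}]{d,\ZZ}\times_\ZZ S$ is a universal homeomorphism (\cref{theorem:gms-properties}~\ref{enumerate:ams-base-change-univ-homeo}) and proper, hence finite, so $\modulispace[\semistable{\theta}]{d,S}$ is finite over a projective $S$-scheme.

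The real problem is your production of global sections of $\cL_\theta^{\otimes m}$ over $\Spec\ZZ$ via Lang--Weil. Lang--Weil type estimates require the cardinality of the ground field to dominate the complexity (degree, number of components) of the variety; here $p$ is fixed (possibly $p=2$) while the complement of $\widetilde U_M$ in $\Rep{m\beta,\FF_p}$ is the vanishing locus of a determinant of a square matrix of size $\sum_i d_i\cdot m\beta_i$, hence a polynomial of degree growing linearly in $m$. Enlarging $m$ therefore increases both the ambient dimension and the degree of the complement, and there is no mechanism forcing $\widetilde U_M(\FF_p)\neq\varnothing$: a degree-$p$ hypersurface in $\bbA^N_{\FF_p}$ can already contain every $\FF_p$-point. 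So the step ``apply a Lang--Weil type estimate \dots\ to conclude $\widetilde U_M(\FF_p)\neq\varnothing$'' is unjustified and, for small primes, likely false in the form you need.

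The paper sidesteps this entirely by \emph{not} insisting on sections defined over $\ZZ$. For each prime $p$ the generating determinantal sections over $\overline{\FF}_p$ from \cref{proposition:semiampleness} come from representations $V_i$ defined over a finite extension $k/\FF_p$; one then takes an \'etale neighbourhood $\Spec B\to\Spec\ZZ$ whose fibre over $p$ is $\Spec k$ and extends the $V_i$ to $B$ (\cref{remark:spread-out-etale-locally}). Using that $\modulistack[\semistable{\theta}]{d,B}\to\Spec B$ is closed (\cref{proposition:acyclic-stack-universally-closed}), after shrinking $B$ these sections generate $\cL_{\theta,B}^{\otimes m}$. Doing this for every $p$ yields an \'etale cover of $\Spec\ZZ$ over which $\cL_\theta^{\otimes m}$ is globally generated, and then \cref{lemma:descending-vector-bundles-to-ams}~\ref{item:descending-vector-bundles-to-ams-ii} descends it to $\modulispace[\semistable{\theta}]{d,\ZZ}$; doing this for all large $m$ (hence for two consecutive ones) descends $\cL_\theta$ itself. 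Ampleness over $\ZZ$ then follows fibrewise exactly as you propose, again using that the comparison map to $\modulispace[\semistable{\theta}]{d,\ZZ}\times_\ZZ\overline{\FF}_p$ is finite and invoking \cref{theorem:projectivity}. The key conceptual difference is that the paper proves descent of the line bundle \'etale-locally via \cref{lemma:descending-vector-bundles-to-ams}~\ref{item:descending-vector-bundles-to-ams-ii}, rather than trying to globalise the sections themselves.
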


\begin{proof}
  Recall that $\modulispace[\semistable{\theta}]{d,S}$ is proper over $S$ \cref{corollary:adequate-proper-arbitrary-base}.
  We begin by reducing to the case when the base scheme is $\Spec{\ZZ}$. Consider the commuting diagram
  \begin{equation}\label{diagram:base-change-of-semistable-ams}
      \begin{tikzcd}
      \modulistack[\semistable{\theta}]{d,S} \arrow[rr, "\iota_\cM"] \arrow[d, "f_{S}"] \arrow[dr, "f_\ZZ|_S"]  && \modulistack[\semistable{\theta}]{d,\ZZ} \arrow[d, "f_\ZZ"] \\
      \modulispace[\semistable{\theta}]{d,S} \arrow[r, "g"] \arrow[dr, swap, "\pi_{S}"] & \modulispace[\semistable{\theta}]{d,\ZZ} \times S \arrow[r, "\iota_{\mathrm{M}}"] \arrow[d, "\pi_\ZZ|_S"] & \modulispace[\semistable{\theta}]{d,\ZZ} \arrow[d, "\pi_\ZZ"] \\
      & S \arrow[r, "\iota"] & \Spec{\ZZ}
    \end{tikzcd}
  \end{equation}
  Suppose we know that $\cL_{\theta,\ZZ}$ descends to a $\ZZ$-ample line bundle $L_{\theta,\ZZ}$ on $\modulispace[\semistable{\theta}]{d,\ZZ}$. This means that there exists a closed embedding $j\colon \modulispace[\semistable{\theta}]{d,\ZZ} \hookrightarrow \PP_\ZZ^{n}$ such that $j^*\cO(1) = L_\theta^{\otimes m}$ for some $m > 0$, and in particular $\modulispace[\semistable{\theta}]{d,\ZZ}$ is a scheme. By base change, we obtain a closed embedding $j_S\colon \modulispace[\semistable{\theta}]{d,\ZZ} \times S \hookrightarrow \PP_S^{n}$ such that $j_S^*\cO(1) = \iota_M^*L_{\theta,\ZZ}^{\otimes m}$.
  Moreover, since
  \[ f_S^* g^* \iota_M^* L_{\theta, \ZZ} = \iota_{\cM} f_\ZZ^* L_{\theta,\ZZ} = \iota_{\cM} \cL_{\theta, \ZZ} = \cL_{\theta, S}, \]
  we see that $\cL_{\theta,S}$ descends to the line bundle $L_{\theta, S} = g^*\iota_M^* L_{\theta,\ZZ}$ and that $L_{\theta,S}^{\otimes m} = g^*j_S^*\cO(1)$.

  Now by \cref{theorem:gms-properties}~\ref{enumerate:ams-base-change-univ-homeo}, the map $g$ has finite fibers, and it is proper since $\modulispace[\semistable{\theta}]{d,S}$ is.
  Thus, $g$ is finite by \cite[Tag \href{https://stacks.math.columbia.edu/tag/0A4X}{0A4X}]{stacks-project}. This implies firstly that $\modulispace[\semistable{\theta}]{d,S}$ is affine over the scheme $\modulispace[\semistable{\theta}]{d,\ZZ} \times S$, hence itself a scheme, and secondly by \cite[Tag \href{https://stacks.math.columbia.edu/tag/0D3A}{0B5V}]{stacks-project} that $L_{\theta,S}$ is ample.

  We now proceed to prove the theorem over $\Spec{\ZZ}$. First of all, we show that $\cL_{\theta,\ZZ}$ descends to the moduli space $\modulispace[\semistable{\theta}]{d,\ZZ}$. As in the proof \cref{proposition:semiampleness}, it suffices to show that $\cL_{\theta,\ZZ}^{\otimes m}$ descends for all sufficiently large integers $m$, and by combining \cref{lemma:descending-vector-bundles-to-ams}~\ref{item:descending-vector-bundles-to-ams-ii} and \cref{lemma:descending-vector-bundles-to-ams}~\ref{item:descending-vector-bundles-to-ams-iii},
  it is enough to show that for all such $m > 0$, there exists an \'etale cover of $\Spec \ZZ$ by
  affine schemes $\Spec A \to \Spec \ZZ$ such that
  $\cL_{\theta,A}^{\otimes m}$ is globally generated.

  By \cref{proposition:semiampleness}, for all sufficiently large integers $m > 0$ and for all primes $p$ the line bundle $\cL_{\theta, \overline{\FF}_p}^{\otimes m}$ is generated by determinantal sections $\sigma_0, \ldots, \sigma_n$ corresponding to representations $V_0,\ldots,V_n$ of dimension vector $m\beta$ over $\overline{\FF}_p$. These representations are defined over a finite extension $k$ of $\FF_p$, and using \cref{remark:spread-out-etale-locally} we can find an \'etale neighborhood $\Spec{B} \to \Spec{\ZZ}$ of $\Spec{k}$ and extensions $\cV_i$ of each $V_i$ to $B$. The representations $\cV_i$ define global sections $\widetilde{\sigma}_i$ of $\cL_{\theta, B}^{\otimes m}$ over $\modulistack[\semistable{\theta}]{d,B}$ which pull back to $\sigma_i$ in $\modulistack[\semistable{\theta}]{d,\overline{\FF}_p}$. Thus, the locus $\cU$ on $\modulistack[\semistable{\theta}]{d, B}$ over which the $\widetilde{\sigma}_i$ generate $\cL_{\theta,B}^{\otimes m}$ contains $\modulistack[\semistable{\theta}]{d, k}$.

  Since the structure morphism $\modulistack[\semistable{\theta}]{d,B}\to\Spec B$ is closed by \cref{proposition:acyclic-stack-universally-closed}, the image of the complement of $\cU$ is closed in $\Spec{B}$ and does not contain $\Spec{k}$, so replacing $\Spec{B}$ by an affine open neighborhood of $\Spec{k}$, we may assume that the sections $\widetilde{\sigma}_i$ generate $\cL_{d,B}^{\otimes m}$. Choosing such an \'etale neighborhood $\Spec{B}$ for each prime $p$ provides us with the required \'etale cover of $\Spec{\ZZ}$.
  
  Let $L_\ZZ$ denote the line bundle on $\modulispace[\semistable{\theta}]{d,\ZZ}$ whose pullback to $\modulistack[\semistable{\theta}]{d,\ZZ}$ is $\cL_\ZZ=\cL_{\theta,\ZZ}$, and similarly define $L_{\overline{\FF}_p}$ on $\modulispace[\semistable{\theta}]{d,\overline{\FF}_p}$. We know that $\modulispace[\semistable{\theta}]{d,\ZZ} \to \Spec{\ZZ}$ is proper, so by \cite[Tag \href{https://stacks.math.columbia.edu/tag/0D3A}{0D3A}]{stacks-project}
  it suffices to show that the restriction of $L_\ZZ$ to $\modulispace[\semistable{\theta}]{d,\ZZ}\times_\ZZ \FF_p$ is ample, and to do this, it suffices to show that the pullback of $L_\ZZ$ to $\modulispace[\semistable{\theta}]{d,\ZZ} \times_\ZZ \overline{\FF}_p$ is ample.

  Now consider the diagram \eqref{diagram:base-change-of-semistable-ams} with $S = \Spec{\overline{\FF}_p}$. As above, the base change morphism $g$ is finite, so it follows that if $g^* \iota_{\mathrm{M}}^* L_\ZZ$ is ample on $\modulispace[\semistable{\theta}]{d,\overline{\FF}_p}$, then $\iota_{\mathrm{M}}^* L_\ZZ$ is ample on $\modulispace[\semistable{\theta}]{d,\ZZ} \times_\ZZ \overline{\FF}_p$. Now, on $\modulistack[\semistable{\theta}]{d,\overline{\FF}_p}$ we have isomorphisms of line bundles
  \[ f_{\overline{\FF}_p}^* L_{\overline{\FF}_p} = \cL_{\overline{\FF}_p} = \iota_{\cM}^* \cL_{\ZZ} = \iota_\cM^* f_\ZZ^* L_\ZZ = f_{\overline{\FF}_p}^* g^* \iota_{\mathrm{M}}^* L_\ZZ, \]
  which by \cref{lemma:descending-vector-bundles-to-ams}~\ref{item:descending-vector-bundles-to-ams-i} implies that $L_{\overline{\FF}_p} = g^* \iota_{\mathrm{M}}^* L_\ZZ$. However, we know from \cref{theorem:projectivity} that $L_{\overline{\FF}_p}$ is ample on $\modulispace[\semistable{\theta}]{d,\overline{\FF}_p}$, so $\iota_{\mathrm{M}}^* L$ is ample on $\modulispace[\semistable{\theta}]{d,\ZZ} \times_\ZZ \overline{\FF}_p$.
\end{proof}

\appendix

\section{Projectivity using Theta-stability}
\label{section:appendix}

\numberwithin{theorem}{section}
\numberwithin{corollary}{section}
\numberwithin{definition}{section}
\numberwithin{example}{section}
\numberwithin{exercise}{section}
\numberwithin{lemma}{section}
\numberwithin{proposition}{section}
\numberwithin{remark}{section}
\numberwithin{claim}{section}

In this section we present a short argument for projectivity that uses Halpern--Leistner's theory of stability for stacks.
Using this theory gives a shorter argument than King's or the GIT-free approach outlined in the body of this paper,
albeit it relies on a theorem (\cite[Theorem~5.6.1 (2)]{halpernleistner2022structure})
that we treat as a black box, and only holds in characteristic $0$.
The theorem says that
if a stack $\modulistack{}$ admits a good moduli space,
then its semistable locus does too,
and the latter good moduli space is projective over the former.
In particular, if the good moduli space of $\modulistack{}$ is a point,
then the good moduli space of the semistable locus is a projective variety.

Let $k$ be a field, which we assume to be of characteristic~0,
so that in the context of \cref{theorem: BGIT proof of projectivity},
we get a good (and not merely adequate) moduli space. 
As in \cref{subsection:ThetaS}, we define the stack $\Theta_k$ as $[\bbA^1_k/\Gm]$.
We will denote the closed point of $\Theta_k$ by $0$,
and let $1$ denote the point corresponding to the open orbit $\bbA^1 \setminus \{0\}$.
As explained in \cite[Corollary~7.13]{1812.01128},
a morphism $f\colon\Theta_k\to\modulistack{}$ to a moduli stack of objects in an abelian category
corresponds to a filtration, and the closed point corresponds to the associated graded of the filtration.
As in \cite[Section~3.2]{halpernleistner2022structure},
we will say that a filtration is \emph{non-degenerate}
if the induced morphism~$\Gm\to\Aut(f(0))$ of sheaves of groups has finite kernel.

For a $k$-scheme $T$ we write $\Theta_T = \Theta_k \times T$.
Following {\cite[Tag \href{https://book.themoduli.space/tag/00F3}{00F3}]{HLbook} and \cite[Corollary 7.13]{1812.01128}}, for an algebraic stack $\cX$, we define $\Filt \cX$, called the  \emph{stack of $\ZZ$-weighted filtrations}, to be the stack corresponding to the pseudo\-functor:
  \[\Filt \cX\colon T \to\Maps(\Theta_T, \cX).\]

From now on, let $\cX$ be an algebraic stack locally of finite type and with affine automorphism groups over a noetherian base scheme $S$.

If $\cL$ is an invertible sheaf on $\cX$, then we can define a locally constant \emph{weight function} $\wt_\cL\colon | \Filt \cX | \to \ZZ$:
\[
\wt_\cL\colon \big(f\colon \Theta_k \to \cX \big)\mapsto\wt_{\Gm} \big( \cL_{|f(0)} \big),
\]
where the $\Gm$-action on $\cL_{|f(0)}$ is induced by $\Gm = \Aut_{\Theta_k} (0) \to \Aut_\cX (f(0))$.

Having a weight function allows one to define a notion of semistability for points of stacks;
see \cite[Section~4.1]{halpernleistner2022structure} or \cite[Section~7.3]{1812.01128}.
We say that a point $p \in |\cX|$ is:
\begin{enumerate}[label=(\roman*)]
    \item \emph{$\cL$-unstable} (or \emph{$\Theta$-unstable} as in \cite[Section~7.3]{1812.01128}) if there is a nondegenerate filtration $f \in \Filt \cX$ with $f(1) = p$ and $\wt_\cL(f) < 0$;
    \item \emph{$\cL$-semistable} if it is not $\cL$-unstable.
\end{enumerate}

Let~$Q$ be a quiver, possibly with oriented cycles. We fix a dimension vector~$d$ and a stability function~$\theta$ such that~$\theta(d)=0$.
Let $\modulistack{d,S}$ be the moduli stack of representations of $Q$ of dimension vector $d$ over a noetherian scheme $S$
(\cref{definition: moduli stack M_d}).
Let $\cF^{\mathrm{univ}}$ be the universal family on $\modulistack{d,S}$.
Recall that we have the following line bundle on $\modulistack{d}$ (\cref{subsection:determinantal-line-bundles}):
\[
  \cL_\theta = \bigotimes_{i \in Q_0} (\det \cF^{\mathrm{univ}}_i)^{\otimes - \theta_i}.
\]

This line bundle induces a weight function and hence a notion of semistability on $\modulistack{d,S}$ which we will show coincides with King's notion of $\theta$-stability (\cref{subsection: background on stability of reps}).

\begin{lemma}
\label{lemma: theta is Theta}
  Fix a dimension vector~$d$ and a stability function~$\theta$ such that~$\theta(d)=0$.
  \begin{enumerate}[label=(\roman*)]
      \item
      \label{item: weight filtration formula}
      The weight function for a filtration $f\colon \Theta_k \to \modulistack{d,S}$ has the following formula:
        \[\wt_{\cL_\theta}(f)
        = - \sum_{n \in \ZZ} n \cdot
        \theta \left( \gr_n M \right)
        = - \sum_{n \in \ZZ}
        \theta \left( F_n M \right),
        \]
        where $f$ corresponds to a representation $M$ with filtration $\ldots \subset F_{n+1} M \subset F_{n} M \subset \ldots$ and $\gr_n M:=F_n M/F_{n+1}M$  so that $f(0)=\gr M$.
      \item
      \label{item: theta is Theta}
      A representation $M$ is $\theta$-semistable
      if and only
      if it is $\cL_\theta$-semistable.
  \end{enumerate}
\end{lemma}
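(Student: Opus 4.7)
The plan is to prove (i) by unwinding the correspondence between morphisms $\Theta_k \to \modulistack{d,S}$ and $\ZZ$-filtered representations, and then to reduce (ii) to (i) almost immediately by testing $\cL_\theta$-semistability against very explicit two-step filtrations.

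First I would recall the standard identification \cite[Corollary~7.13]{1812.01128}: a morphism $f\colon \Theta_k \to \modulistack{d,S}$ corresponds via the Rees construction to a representation $M = f(1)$ equipped with a decreasing $\ZZ$-filtration with only finitely many distinct terms, and the point $f(0) = \gr M = \bigoplus_n \gr_n M$ carries the $\Gm$-action in which $\lambda$ acts with weight $n$ on the summand $\gr_n M$. Decomposing each vertex space $(\gr M)_i = \bigoplus_n (\gr_n M)_i$ by $\Gm$-weight, the line $\det(\gr M)_i$ is acted on through the character $\lambda \mapsto \lambda^{c_i}$ with $c_i = \sum_n n \dim_k (\gr_n M)_i$. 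Since $\cL_\theta|_{f(0)} = \bigotimes_i (\det(\gr M)_i)^{\otimes -\theta_i}$, one immediately reads off
\[
  \wt_{\cL_\theta}(f) = -\sum_i \theta_i c_i = -\sum_n n \sum_i \theta_i \dim_k (\gr_n M)_i = -\sum_n n \cdot \theta(\gr_n M),
\]
which is the first formula. The second equality then follows by Abel summation: writing $\theta(\gr_n M) = \theta(F_n M) - \theta(F_{n+1} M)$ and using that $F_n M = M$ for $n \ll 0$ and $F_n M = 0$ for $n \gg 0$ --- both contributing $0$ to $\sum_n \theta(F_n M)$ thanks to the hypothesis $\theta(d) = 0$ --- a routine telescoping yields $\sum_n n \, \theta(\gr_n M) = \sum_n \theta(F_n M)$.

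For (ii), the forward direction is then immediate from (i): if $M$ is $\theta$-semistable, every $F_n M$ is either $0$, equal to $M$, or a proper nonzero subrepresentation, so $\theta(F_n M) \leq 0$ in every case and $\wt_{\cL_\theta}(f) \geq 0$ for all (in particular all non-degenerate) filtrations $f$ with $f(1) = M$. For the converse, given a proper nonzero subrepresentation $M' \subsetneq M$, I would test $\cL_\theta$-semistability against the two-step filtration $F_0 M = M$, $F_1 M = M'$, $F_2 M = 0$: the associated graded $(M/M') \oplus M'$ has both summands nonzero and carries the $\Gm$-action with weights $0$ and $1$ respectively, so the map $\Gm \to \Aut(\gr M)$ has trivial kernel and the filtration is non-degenerate. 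The formula in (i) evaluates to $\wt_{\cL_\theta}(f) = -\theta(M')$, and $\cL_\theta$-semistability then forces $\theta(M') \leq 0$, as required.

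The main obstacle is less a conceptual difficulty than careful bookkeeping: one must identify the $\Gm$-action on $\gr M$ coming from the Rees-type correspondence, match sign conventions on $\cL_\theta$, and use the hypothesis $\theta(d) = 0$ to ensure that the $\ZZ$-indexed sum $\sum_n \theta(F_n M)$ is finite and that the Abel summation step is valid.
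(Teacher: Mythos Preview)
Your proof is correct and follows essentially the same route as the paper's: you unwind the weight of $\cL_\theta$ on $\gr M$ vertex by vertex and then pass between the two formulas via the telescoping identity $\theta(\gr_n M) = \theta(F_n M) - \theta(F_{n+1} M)$, and for (ii) you compare a general filtration against the two-step one built from a single subrepresentation. The only notable difference is that you argue the forward direction of (ii) directly from the second formula in (i), whereas the paper proves both directions by contrapositive; you are also slightly more careful than the paper in explicitly verifying non-degeneracy of the two-step filtration.
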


\begin{proof}
    To prove the first part, the weight calculation is as follows:
    \begin{equation*}
      \begin{aligned}
        \wt_{\cL_\theta}(f)
        &=  \wt_{\Gm} \left( \bigotimes_{i \in Q_0} (\det \cF^{\mathrm{univ}}_i)^{\otimes - \theta_i} \right)\Bigg|_{f(0)}
        =  \wt_{\Gm} \left(
            \bigotimes_{i \in Q_0} (\det (\gr M)_i)^{\otimes - \theta_i}
            \right) \\
        &=- \sum_{i \in Q_0} \theta_i \cdot
            \wt_{\Gm} \det \left((\gr M)_i \right).
      \end{aligned}
    \end{equation*}
    The $\Gm$-weight corresponds to the grading weight, so $\wt_{\Gm} (\det \gr_n M_i) = n \dim M_i$, and therefore:
    \[
        \wt_{\cL_\theta}(f) =
        - \sum_{i \in Q_0}
        \theta_i \cdot
        \sum_{n \in \ZZ}
            n \dim \gr_n M_i
        = - \sum_{n \in \ZZ} n \cdot
        \theta \left( \gr_n M \right).
    \]
    The second equality follows from the fact that $\theta$ is additive in short exact sequences, so $\theta \left( \dimvect \gr_n M \right) = \theta \left( \dimvect F_n M \right) - \theta \left( \dimvect F_{n+1} M \right)$.

    Both equalities are used to prove the equivalence in the second part.
    Suppose first that $M$ is $\theta$-unstable, so that there is a subrepresentation $M' \subset M$ such that $\theta( \dimvect M' ) > 0$.
    We can view it as a two-step filtration $f = (M' \subset M)$ with $f(1) = M$
    whose associated graded factors are $\gr_0 = M / M'$, $\gr_1 = M'$
    while all other components are zero. Then by \ref{item: weight filtration formula}:
    \[
      \wt_{\cL_\theta}(f) = - \theta (\gr_1 M )
      =- \theta ( M' ) < 0.
    \]
    This shows that $M$ is $\cL_\theta$-unstable.

    Conversely, assume that $M$ is $\cL_\theta$-unstable and let $f$ be the destabilizing filtration with $\wt_{\cL_\theta}(f) < 0$. By \ref{item: weight filtration formula}, this is equivalent to
    \[ \sum_{n \in \ZZ} \theta(F_n M) > 0, \]
    so there exists at least one value of $n$ for which $\theta(F_n M) > 0$. Since $\theta(M) = \theta(d) = 0$ by assumption, we see that $F_n M \subset M$ is a proper subrepresentation that destabilizes $M$.
\end{proof}

\begin{theorem}
\label{theorem: BGIT proof of projectivity}
  Let $Q$ be a quiver, $d$ a dimension vector and $\theta$ a stability function with $\theta(d)=0$.
  Fix a noetherian scheme $S$ defined over $\QQ$.
  Then $\modulispace[\semistable{\theta}]{d,S}$ is an algebraic space which is
  projective over $\modulispace{d,S}$.
  In particular, if $S = \Spec k$ for a field $k$ of characteristic $0$ and $Q$ is acyclic,
  then $\modulispace[\semistable{\theta}]{d,k}$ is a projective variety.
\end{theorem}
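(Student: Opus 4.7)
The plan is to apply the black-box theorem \cite[Theorem~5.6.1 (2)]{halpernleistner2022structure} to the stack $\modulistack{d,S}$ equipped with the determinantal line bundle $\cL_\theta$. That theorem says: if $\cM$ is an algebraic stack (with the mild finiteness and diagonal hypotheses that our moduli stack satisfies) admitting a good moduli space $M$, and $\cL$ is a line bundle on $\cM$, then the open substack $\cM^{\text{$\cL$-ss}}$ of $\cL$-semistable points admits a good moduli space, and the induced morphism of good moduli spaces is projective.

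The first step is to verify that $\modulistack{d,S}$ itself admits a good moduli space over $S$ when $S$ is a noetherian $\QQ$-scheme. The stack $\modulistack{d,S}$ has affine diagonal by \cref{proposition:affine-diagonal}, is $\Theta$-reductive and S-complete by \cref{proposition:full-stack-Hartogs}, and in characteristic $0$ S-completeness implies local reductivity \cite[Proposition~3.47, Theorem~2.2]{1812.01128}; thus \cref{theorem:AHLH}\ref{enumerate:gms-existence} upgrades the adequate moduli space $\modulispace{d,S}$ of \cref{corollary:Md-separated-gms} to a good moduli space (cf.~\cref{remark:stability-function}).

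The second step is to identify the $\cL_\theta$-semistable locus of $\modulistack{d,S}$ with $\modulistack[\semistable{\theta}]{d,S}$. This is exactly the content of \cref{lemma: theta is Theta}\ref{item: theta is Theta}, which uses the weight computation in part \ref{item: weight filtration formula} of the same lemma (together with the hypothesis $\theta(d)=0$) to match King's notion of stability with the Halpern--Leistner weight-function notion. Applying the black-box theorem with this identification yields that $\modulispace[\semistable{\theta}]{d,S}$ exists as an algebraic space and that the natural morphism $\modulispace[\semistable{\theta}]{d,S} \to \modulispace{d,S}$ is projective; being projective over a scheme, $\modulispace[\semistable{\theta}]{d,S}$ is itself a scheme.

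For the ``in particular'' assertion, take $S = \Spec k$ with $k$ a field of characteristic $0$ and $Q$ acyclic. By \cref{proposition:acyclic-full-ams-is-S} we have $\modulispace{d,k} \cong \Spec k$, so the morphism established above exhibits $\modulispace[\semistable{\theta}]{d,k}$ as projective over $\Spec k$, which is the claim. The only genuinely hard content is hidden inside the cited Halpern--Leistner theorem; the remainder of the argument is just checking hypotheses and comparing notions of semistability, and this is why the appendix is so much shorter than the body of the paper.
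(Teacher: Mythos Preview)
Your proposal is correct and follows essentially the same route as the paper's proof: invoke \cref{corollary:Md-separated-gms} (upgraded to a good moduli space in characteristic~$0$), use \cref{lemma: theta is Theta}\ref{item: theta is Theta} to identify $\cL_\theta$-semistability with $\theta$-semistability, and then apply \cite[Theorem~5.6.1(2)]{halpernleistner2022structure}. You supply a bit more detail than the paper---spelling out the hypotheses feeding into \cref{theorem:AHLH}\ref{enumerate:gms-existence} and explicitly citing \cref{proposition:acyclic-full-ams-is-S} for the ``in particular'' clause---but the argument is the same. One small overreach: you conclude that $\modulispace[\semistable{\theta}]{d,S}$ is a \emph{scheme} because it is projective over $\modulispace{d,S}$, but the theorem only asserts it is an algebraic space, and you have not argued that $\modulispace{d,S}$ is a scheme in this generality; this does not affect the statement you are asked to prove.
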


\begin{proof}
  By \cref{corollary:Md-separated-gms} and since $S$ is of characteristic $0$, the stack $\modulistack{d,S}$ admits a good moduli space $\modulispace{d,S}$.
  By \cref{lemma: theta is Theta}~\ref{item: theta is Theta}, the substack $\modulistack[\semistable{\theta}]{d,S} \subset \modulistack{d,S}$
  coincides with the substack of $\cL_\theta$-semistable objects,
  so we can apply \cite[Theorem~5.6.1(2)]{halpernleistner2022structure} to $\pi = \mathrm{id}_{\modulistack{d}}\colon\modulistack{d} \to \modulistack{d}$ and conclude that
  this substack admits a good moduli space $\modulispace[\semistable{\theta}]{d,S}$, yielding the commutative diagram
    \[\begin{tikzcd}
        \modulistack[\semistable{\theta}]{d,S} &
        \modulistack{d,S}\\
        \modulispace[\semistable{\theta}]{d,S} & \modulispace{d,S}
        \arrow[hook, from=1-1, to=1-2]
        \arrow[from=1-2, to=2-2]
        \arrow[from=1-1, to=2-1]
        \arrow[from=2-1, to=2-2]
    \end{tikzcd}\]
    where vertical arrows are good moduli spaces
    and the bottom morphism $\modulispace[\semistable{\theta}]{d,S} \to \modulispace{d,S}$ is projective. 
\end{proof}

Using the description $\modulistack{d,S} \cong [\mathrm{R}_{d,S}/\mathrm{G}_{d,S}]$ from \cref{proposition:smooth-cover-by-Rep}
and the fact that the good moduli space of such a global quotient stack
is given by the ring of invariants,
then we can also get a more streamlined proof for quivers with cycles that
the good moduli space $\modulispace[\semistable{\theta}]{d,S}$ is projective-over-affine.
It would be interesting to find a proof that $\modulispace{d,S}$ is affine without methods from GIT.

{\small
\bibliographystyle{abbrv}
\bibliography{automatic}

\begin{thebibliography}{10}

\bibitem{MR1972892}
J.~Adriaenssens and L.~Le~Bruyn.
\newblock Local quivers and stable representations.
\newblock {\em Comm. Algebra}, 31(4):1777--1797, 2003.

\bibitem{MR3237451}
J.~Alper.
\newblock Good moduli spaces for {A}rtin stacks.
\newblock {\em Ann. Inst. Fourier (Grenoble)}, 63(6):2349--2402, 2013.

\bibitem{MR3272912}
J.~Alper.
\newblock Adequate moduli spaces and geometrically reductive group schemes.
\newblock {\em Algebr. Geom.}, 1(4):489--531, 2014.

\bibitem{sponge}
J.~Alper, P.~Belmans, D.~Bragg, J.~Liang, and T.~Tajakka.
\newblock Projectivity of the moduli space of vector bundles on a curve.
\newblock In P.~Belmans, A.~J. {de Jong}, and W.~Ho, editors, {\em Stacks
  Project Expository Collection}, number 480 in London Mathematical Society
  Lecture Note Series. Cambridge University Press, 2022.

\bibitem{MR4088350}
J.~Alper, J.~Hall, and D.~Rydh.
\newblock A {L}una \'{e}tale slice theorem for algebraic stacks.
\newblock {\em Ann. Math.}, 191(3):675--738, 2020.

\bibitem{1812.01128}
J.~Alper, D.~Halpern-Leistner, and J.~Heinloth.
\newblock Existence of moduli spaces for algebraic stacks, 2022.
\newblock arXiv:1812.01128v4.

\bibitem{MR4226557}
M.~Blume and L.~Hille.
\newblock Quivers and moduli spaces of pointed curves of genus zero.
\newblock {\em Algebr. Comb.}, 4(1):89--124, 2021.

\bibitem{MR3202702}
M.~Brion.
\newblock Representations of quivers.
\newblock In {\em Geometric methods in representation theory. {I}}, volume~24
  of {\em S\'{e}min. Congr.}, pages 103--144. Soc. Math. France, Paris, 2012.

\bibitem{sponge-Mgbar}
R.~Cheng, C.~Lian, and T.~Murayama.
\newblock Projectivity of the moduli of curves.
\newblock In P.~Belmans, W.~Ho, and A.~J. {de Jong}, editors, {\em Stacks
  Project Expository Collection}, number 480 in London Mathematical Society
  Lecture Note Series. Cambridge University Press, 2022.

\bibitem{MR1348149}
W.~Crawley-Boevey.
\newblock On homomorphisms from a fixed representation to a general
  representation of a quiver.
\newblock {\em Trans. Amer. Math. Soc.}, 348(5):1909--1919, 1996.

\bibitem{MR1758750}
H.~Derksen and J.~Weyman.
\newblock Semi-invariants of quivers and saturation for
  {L}ittlewood-{R}ichardson coefficients.
\newblock {\em J. Amer. Math. Soc.}, 13(3):467--479, 2000.

\bibitem{MR3727119}
H.~Derksen and J.~Weyman.
\newblock {\em An introduction to quiver representations}, volume 184 of {\em
  Graduate Studies in Mathematics}.
\newblock American Mathematical Society, Providence, RI, 2017.

\bibitem{MR2747139}
M.~Domokos.
\newblock On singularities of quiver moduli.
\newblock {\em Glasg. Math. J.}, 53(1):131--139, 2011.

\bibitem{MR1825166}
M.~Domokos and A.~N. Zubkov.
\newblock Semi-invariants of quivers as determinants.
\newblock {\em Transform. Groups}, 6(1):9--24, 2001.

\bibitem{MR1259609}
S.~Donkin.
\newblock Polynomial invariants of representations of quivers.
\newblock {\em Comment. Math. Helv.}, 69(1):137--141, 1994.

\bibitem{MR1695802}
E.~Esteves.
\newblock Separation properties of theta functions.
\newblock {\em Duke Math. J.}, 98(3):565--593, 1999.

\bibitem{esteves.popa:2004:veryampleness}
E.~Esteves and M.~Popa.
\newblock Effective very ampleness for generalized theta divisors.
\newblock {\em Duke Math. J.}, 123(3):429--444, 2004.

\bibitem{MR1211997}
G.~Faltings.
\newblock Stable {$G$}-bundles and projective connections.
\newblock {\em J. Algebraic Geom.}, 2(3):507--568, 1993.

\bibitem{MR0332887}
P.~Gabriel.
\newblock Unzerlegbare {D}arstellungen. {I}.
\newblock {\em Manuscripta Math.}, 6:71--103; correction, ibid. 6 (1972), 309,
  1972.

\bibitem{HLbook}
D.~Halpern-Leistner.
\newblock {The Moduli Space}.
\newblock \url{https://book.themoduli.space}.

\bibitem{halpernleistner2022structure}
D.~Halpern-Leistner.
\newblock On the structure of instability in moduli theory, 2022.
\newblock arXiv:1411.0627v5.

\bibitem{MR1906875}
L.~Hille and J.~A. de~la Pe\~{n}a.
\newblock Stable representations of quivers.
\newblock {\em J. Pure Appl. Algebra}, 172(2-3):205--224, 2002.

\bibitem{Hochster}
M.~Hochster.
\newblock Prime ideal structure in commutative rings.
\newblock {\em Trans. Amer. Math. Soc.}, 142:43--60, 1969.

\bibitem{MR3261979}
V.~Hoskins.
\newblock Stratifications associated to reductive group actions on affine
  spaces.
\newblock {\em Q. J. Math.}, 65(3):1011--1047, 2014.

\bibitem{MR3882963}
V.~Hoskins.
\newblock Parallels between moduli of quiver representations and vector bundles
  over curves.
\newblock {\em SIGMA Symmetry Integrability Geom. Methods Appl.}, 14:Paper No.
  127, 46, 2018.

\bibitem{MR4117060}
V.~Hoskins and F.~Schaffhauser.
\newblock Rational points of quiver moduli spaces.
\newblock {\em Ann. Inst. Fourier (Grenoble)}, 70(3):1259--1305, 2020.

\bibitem{MR2665168}
D.~Huybrechts and M.~Lehn.
\newblock {\em The geometry of moduli spaces of sheaves}.
\newblock Cambridge Mathematical Library. Cambridge University Press,
  Cambridge, second edition, 2010.

\bibitem{MR0557581}
V.~G. Kac.
\newblock Infinite root systems, representations of graphs and invariant
  theory.
\newblock {\em Invent. Math.}, 56(1):57--92, 1980.

\bibitem{MR1315461}
A.~D. King.
\newblock Moduli of representations of finite-dimensional algebras.
\newblock {\em Quart. J. Math. Oxford Ser. (2)}, 45(180):515--530, 1994.

\bibitem{MR1064874}
J.~Koll\'{a}r.
\newblock Projectivity of complete moduli.
\newblock {\em J. Differential Geom.}, 32(1):235--268, 1990.

\bibitem{MR1233485}
J.~Koll\'{a}r.
\newblock Effective base point freeness.
\newblock {\em Math. Ann.}, 296(4):595--605, 1993.

\bibitem{MR0364255}
S.~G. Langton.
\newblock Valuative criteria for families of vector bundles on algebraic
  varieties.
\newblock {\em Ann. of Math. (2)}, 101:88--110, 1975.

\bibitem{MR958897}
L.~Le~Bruyn and C.~Procesi.
\newblock Semisimple representations of quivers.
\newblock {\em Trans. Amer. Math. Soc.}, 317(2):585--598, 1990.

\bibitem{Sveta}
S.~Makarova.
\newblock Moduli spaces of stable sheaves over quasi-polarized surfaces, and
  the relative {S}trange {D}uality morphism.
\newblock {\em \'{E}pijournal G\'{e}om. Alg\'{e}brique}, 5:Art. 19, 15, 2021.

\bibitem{MR0242185}
M.~S. Narasimhan and S.~Ramanan.
\newblock Moduli of vector bundles on a compact {R}iemann surface.
\newblock {\em Ann. of Math. (2)}, 89:14--51, 1969.

\bibitem{MR3135438}
M.~Popa.
\newblock Generalized theta linear series on moduli spaces of vector bundles on
  curves.
\newblock In {\em Handbook of moduli. {V}ol. {III}}, volume~26 of {\em Adv.
  Lect. Math. (ALM)}, pages 219--255. Int. Press, Somerville, MA, 2013.

\bibitem{reineke:2003:harder}
M.~Reineke.
\newblock The {H}arder-{N}arasimhan system in quantum groups and cohomology of
  quiver moduli.
\newblock {\em Invent. Math.}, 152(2):349--368, 2003.

\bibitem{MR2250021}
M.~Reineke.
\newblock Counting rational points of quiver moduli.
\newblock {\em Int. Math. Res. Not.}, pages Art. ID 70456, 19, 2006.

\bibitem{MR2484736}
M.~Reineke.
\newblock Moduli of representations of quivers.
\newblock In {\em Trends in representation theory of algebras and related
  topics}, EMS Ser. Congr. Rep., pages 589--637. Eur. Math. Soc., Z\"{u}rich,
  2008.

\bibitem{MR1113382}
A.~Schofield.
\newblock Semi-invariants of quivers.
\newblock {\em J. London Math. Soc. (2)}, 43(3):385--395, 1991.

\bibitem{MR1162487}
A.~Schofield.
\newblock General representations of quivers.
\newblock {\em Proc. London Math. Soc. (3)}, 65(1):46--64, 1992.

\bibitem{MR1908144}
A.~Schofield and M.~Van~den Bergh.
\newblock Semi-invariants of quivers for arbitrary dimension vectors.
\newblock {\em Indag. Math. (N.S.)}, 12(1):125--138, 2001.

\bibitem{serre:1958}
J.-P. Serre.
\newblock Espaces fibr\'es alg\'ebriques.
\newblock {\em S\'eminaire Claude Chevalley}, 3, 1958.
\newblock talk:1.

\bibitem{stacks-project}
{The Stacks project authors}.
\newblock {The Stacks project}.
\newblock \url{https://stacks.math.columbia.edu}, 2022.

\bibitem{rising-sea}
R.~Vakil.
\newblock The rising sea: Foundations of algebraic geometry.
\newblock \url{http://math.stanford.edu/~vakil/216blog/FOAGaug2922publici.pdf},
  2022.

\end{thebibliography}
}

\setlength\parindent{0pt}
\setlength\parskip{6pt}

\texttt{pieter.belmans@uni.lu} \\
University of Luxembourg, Department of Mathematics,
6, Avenue de la Fonte,
L-4364 Esch-sur-Alzette,
Luxembourg

\texttt{chiarad@sas.upenn.edu} \\
University of Pennsylvania, Department of Mathematics,
David Rittenhouse Lab,
209 South 33rd Street
Philadelphia, PA 19104-6395,
United States

\texttt{hans.franzen@math.upb.de} \\
Paderborn University, Institute of Mathematics,
Warburger Stra\ss e 100,
33098 Paderborn,
Germany

\texttt{v.hoskins@math.ru.nl} \\
Radboud University, IMAPP, PO Box 9010, 6525 AJ Nijmegen, Netherlands

\texttt{murmuno@sas.upenn.edu} \\
University of Pennsylvania, Department of Mathematics,
David Rittenhouse Lab,
209 South 33rd Street
Philadelphia, PA 19104-6395,
United States

\texttt{tuomas.tajakka@math.su.se} \\
Stockholm University, Department of Mathematics, Albanov\"agen 28, 114 19 Stockholm,
Sweden

\end{document}